\title{Decomposition of Geometric Set Systems and Graphs}
\author{Dömötör Pálvölgyi}
\date{}
\newtheorem{thm}{Theorem}[section]
\newtheorem{cor}[thm]{Corollary}
\newtheorem{defi}[thm]{Definition}
\newtheorem{lem}[thm]{Lemma}
\newtheorem{prop}[thm]{Proposition}
\newtheorem{claim}[thm]{Claim}
\newtheorem{remark}[thm]{Remark}
\newtheorem{conj}[thm]{Conjecture}
\newtheorem{obs}[thm]{Observation}
\newtheorem{quest}[thm]{Question}
\def\C{{\mathcal C}}
\def\T{{\mathcal T}}
\def\s{{\bf sl}}
\def\figsize{0.55}
\begin{document}
\pagestyle{empty}
\begin{center}
\hspace{1cm}\\

\vspace{3cm}

\textbf{\Huge Decomposition of Geometric  \\ \vspace{3mm} Set Systems and Graphs}\\

\vspace{2cm}

\textbf{\Large Dissertation}

\vspace{2cm}

\textbf{\LARGE{Dömötör Pálvölgyi}}\\

\vspace{3cm}

{\bf Jury:}\\

Prof. J. Pach, thesis director \\
Prof. F. Eisenbrand, president\\
Prof. A. Shokrollahi, rapporteur \\
Prof. G. Rote, rapporteur \\
Prof. P. Valtr, rapporteur \\

\vspace{3cm}

{\large
\begin{figure}[th]
\begin{center}
{\large \epsfig{file=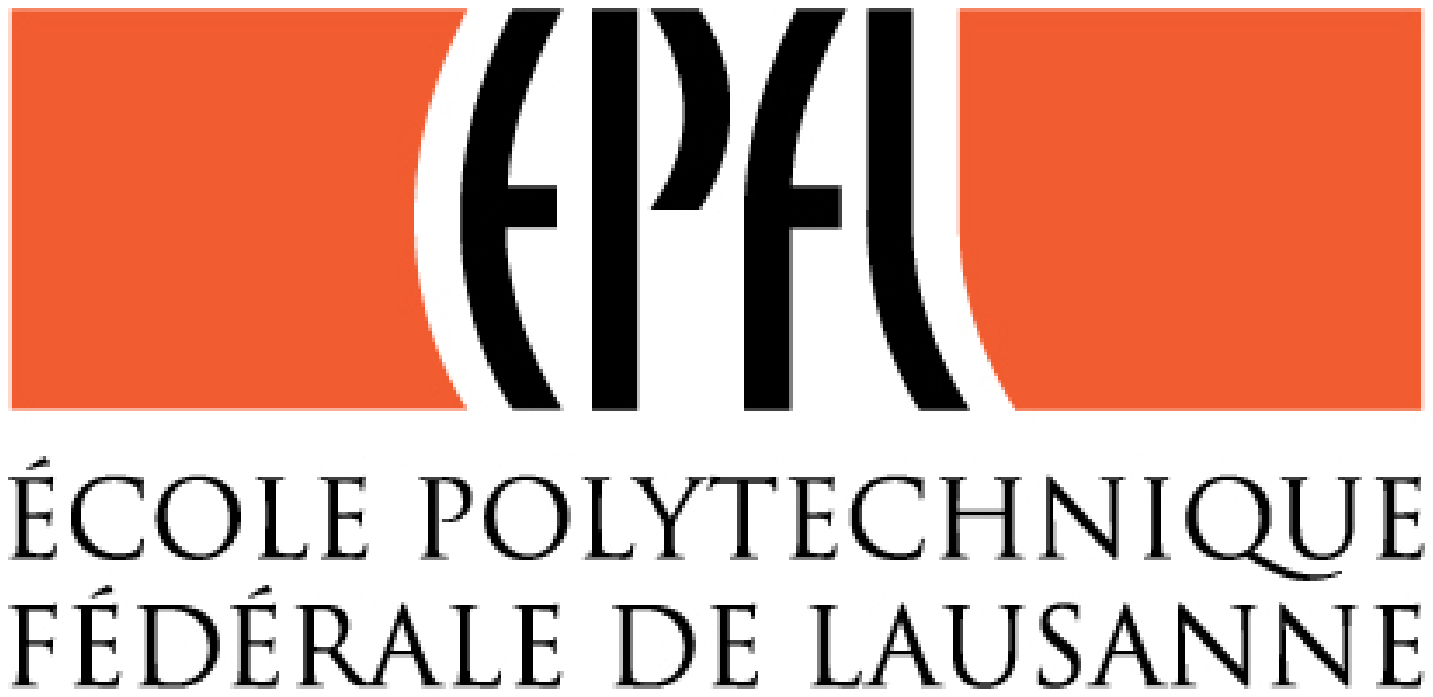,angle=0,width=0.2\textwidth}  }
\end{center}
\end{figure}
}

\end{center}

\cleardoublepage
\pagestyle{plain}
\pagenumbering{roman}
{\bf \large Abstract}\\

We study two decomposition problems in combinatorial geometry. The first part of the thesis deals with the decomposition of multiple coverings of the plane. We say that a planar set is {\it cover-decomposable} if there is a constant $m$ such that any $m$-fold covering of the plane with its translates is decomposable into two disjoint coverings of the whole plane. Pach conjectured that every convex set is cover-decomposable. We verify his conjecture for polygons. Moreover, if $m$ is large enough, depending on $k$ and the polygon, we prove that any $m$-fold covering can even be decomposed into $k$ coverings. Then we show that the situation is exactly the opposite in three dimensions, for any polyhedron and any $m$ we construct an $m$-fold covering of the space that is not decomposable. We also give constructions that show that concave polygons are usually not cover-decomposable. We start the first part with a detailed survey of all results on the cover-decomposability of polygons.\\

The second part of the thesis investigates another geometric partition problem, related to planar representation of graphs. Wade and Chu defined the {\em slope number} of a graph $G$ as the smallest number $s$
with the property that $G$ has a straight-line drawing with edges
of at most $s$ distinct slopes and with no bends.
We examine the slope number of bounded degree graphs. Our main results are that if the maximum degree is at least $5$, then the slope number tends to infinity as the number of vertices grows but every graph with maximum degree at most $3$ can be embedded with only five slopes. We also prove that such an embedding exists for the related notion called  {\em slope parameter}. Finally, we study the {\em planar slope number}, defined only for planar graphs as the smallest number $s$ with the property that the graph has a straight-line drawing in the plane without any crossings such that the edges are segments of only $s$ distinct slopes.
We show that the planar slope number of planar graphs with bounded degree is bounded.\\

{\bf Keywords. } Multiple coverings, Decomposability, Sensor networks, Hypergraph coloring, Graph drawing, Slope number, Planar graphs.

\cleardoublepage
{\bf \large Résumé}\\

Nous étudions deux probl\`emes de décomposition de la géométrie combinatoire. La premi\`ere partie de cette th\`ese s'intéresse \`a la décomposition des recouvrements multiples du plan. On dit qu'un ensemble planaire est {\it recouvrement-décomposable}\footnote{{cover-decomposable}} s'il existe une constante $ m $ de telle sorte que tous les $ m $-fois recouvrements du plan avec ses translat\'ees sont décomposables en deux recouvrements disjoints du plan tout entier. Pach a conjecturé que tout ensemble convexe est recouvrement-décomposable. Nous vérifions sa conjecture pour les polygones. De plus, si $ m $ est assez grand, en fonction de $ k $ et du polygone, nous montrons que tous les $ m $-fois recouvrements peuvent \^etre décomposés m\^eme en $ k $ recouvrements. Ensuite, nous montrons qu'en trois dimensions la situation est exactement l'inverse: pour n'importe quel poly\`edre et pour tout $ m $, nous construisons une $ m $-fois recouvrement de l'espace qui n'est pas décomposable. Nous donnons également des constructions qui montrent que les polygones concaves ne sont généralement pas recouvrement-décomposables. Nous commençons la premi\`ere partie avec une étude détaillée de tous les résultats sur la recouvrement-décomposabilité de polygones.\\

La deuxi\`eme partie de la th\`ese étudie un autre probl\`eme de partition géométrique, lié \`a la représentation planaire des graphes. Wade et Chu ont défini le
{\em nombre de pente}\footnote{{slope number}} d'un graphe $ G $ comme le plus petit nombre $ s $ avec la propriété que $ G $ peut \^etre dessin\'e avec des segments ayant au plus $s$ pentes distinctes. Nous examinons le nombre de pente des graphes de degré borné. Nos principaux résultats sont que, si le degré maximum du graphe est d'au moins $ 5 $, alors le nombre de pente tend vers l'infini quand le nombre de sommets croît, mais tout graphe de degré au plus $3$ peut \^etre plongé dans le plan avec seulement cinq pentes. Nous montrons aussi qu'un tel plongement existe pour la notion appelé {\em param\`etre de pente}\footnote{{slope parameter}}. Enfin, nous étudions le {\em nombre de pente planaire}\footnote{{planar slope number}}, défini seulement pour les graphes planaires, comme le plus petit nombre $ s $ avec la propriété que le graphe admet un dessin linéaire dans le plan sans intersections et tel que les segments sont de seulement $ s $ pentes distinctes.
Nous montrons que le nombre de pente planaire des graphes planaires de degré borné est borné.\\

{\bf Mots-cl\'es. } Recouvrements multiples, Décomposabilité, Réseaux de sensors, Coloration des hypergraphes, Dessinage des graphes, Nombre de pente, Graphes planaires.

\cleardoublepage

{\it ``I'll keep it short and sweet.  Family, religion, friendship.  These 
are the three demons you must slay if you wish to succeed in business.''
\vspace{-.3cm}
\begin{flushright} C. Montgomery Burns\\ \end{flushright} }

\vspace{0.2cm}

I would like to thank all those who in this way or that helped in the creation of the thesis. First of all, of course, my co-authors, Bal\'azs, G\'eza and Jani, without whom the thesis would consist of at most one chapter (my CV). It would be impossible to enumerate how much the three of them helped, so I have to refrain to one sentence each.
Bal\'azs, thank you for buying beer for those unforgettable research sessions at your home, and for writing the first sketch of our proofs that I could rewrite into a paper.
G\'eza, thank you for spending so much of your time with working with me, and for rewriting my always incomprehensible arguments.
J\'anos, thank you for beguiling me into the unabandonable fields of discrete geometry, for inviting my future-in-the-past wife and for rewriting G\'eza's version.

I am also indebted to my supervisor at ELTE, Zolt\'an Kir\'aly. Most papers in this thesis were started when I was your student, and although together we usually worked on other topics, you always encouraged me to also pursue this direction. I could not count how many of my conjectures (some of which were related to cover-decomposition) you have disproved. I would also thank all the others from Budapest who helped my combinatorial career, including but not restricted to Andr\'as, Bal\'azs, Cory, Dani, G\'abor, Gyula, M\'at\'e, Nathan and Zoli.

I would also like to thank Thomas for letting me copy-paste the tex files of his thesis and for showing me the very important procedure of how to attach the pages together. I am sorry that I replaced your text between with mine. I would like to thank all other members and visitors of the DCG and DISOPT groups for creating such a wonderful extraworkular environment. Thank you Adrian, Andreas, Andrew, Dave, Filip, Fritz, Genna, Jarek, Jocelyne, Judit, Laura, Martin, Nicolai, Paul, Rado, Rom, Saurabh and Scruffy. I wish you all the best and many more happy days in Lausanne!

From the third floor, I would also like to thank Dani for the time we spent together, and for simultaneously translating my abstract to french and correcting the mistakes in the english version. I would also thank here Bernadett, Panos and the anonymous referees of my papers for their valuable comments.

I would like to thank all my jury members, especially G\"unter and Pavel, for their careful reading and useful comments, most of which I implemented in this corrected version. 

Finally, I would like to thank my family for their support. Thank you anya, apa, D\'eni\footnote{\scriptsize Since I know that you will read this before you leave for the bike tour, I wanted to remind you to buy some chocolate for my birthday. You know, just because we did not meet it does not mean you can skip my present.
Damn, on the other hand, it is kinda hot, so it would melt. Or will you have a cooling box or something like that with you? You know what, just forget about it and get me something after you are back.}, Eszter, Bence.
But most importantly, I would like to thank my wife, Padmini, who fits to several above mentioned categories. Thank you for leaving me a few seconds each day to work on my research, for reading and spotting so many grammatical mistakes and for all the time we spent together, but maybe the beginning of my thesis is not the best place to discuss this in more detail.\\

And now for something completely different...

\cleardoublepage
\tableofcontents

\cleardoublepage
\pagenumbering{arabic}
\section{Introduction and Organization} 

Partitions are one of the best studied and most important notions of combinatorial mathematics. The number theoretic partition function $p(n)$, which represents the number of possible partitions of a natural number $n$, was already studied by Euler. Later many mysterious identities about it were proved by Ramanujan and many more are still studied today, including properties of Young-tableaux. In combinatorics, partitions are often called colorings, which is just another more visual way to imagine the decomposition of a set. Coloring the vertices or edges of a graph is probably the problem that fascinated mathematicians more than any other graph theoretical question, from the four-color conjecture to Ramsey theory. The investigation of Property B\footnote{Named after Felix Bernstein who first studied this property.} was popularized by Erd\H os. A set system is said to have Property B if the elements of its ground set can be colored with two colors such that no set is monochromatic, i.e. each set contains both colors. This is strongly connected to the following geometric problem.\\

Suppose we have a finite number of sensors in a planar region $R$, each monitoring some
part of $R$, called the range of the sensor. 
Each sensor has a duration for which it can be active and once
it is turned on, it has to remain active until this duration is over, after
which it will stay inactive. 
A schedule for the sensors is a starting time for each sensor that determines when it starts to be active.
The goal is to find a schedule to monitor $R$ for as long as we can.
For any instance of this problem, define a set system $\mathcal F$ as follows.
The sensors will be the elements of the ground set of $\mathcal F$ and the points of $R$ will be the sets.
An element is contained in a set if the respective sensor monitors the respective point.
In the special case when the duration of each sensor is $1$ unit of time, we can monitor $R$ for $2$ units of time if and only if $\mathcal F$ has Property B.\\

Pach posed the following related problem. Suppose that every point of the plane is covered by many translates of the same planar set. Is it always possible to decompose this covering into two coverings? Thus our goal is to partition/color the covering sets such that every point will be contained in both parts/color classes.
This question is again equivalent to asking whether certain families have Property B.
Pach conjectured that for every convex set there is a constant $m$ such that any $m$-fold covering is decomposable into two coverings. Such sets are called {\it cover-decomposable}.
The first part of this thesis is centered around this conjecture in the case when the underlying set is a polygon. We show that convex polygons are cover-decomposable. Moreover, if $m$ is large enough, depending on $k$ and the polygon, we prove that any $m$-fold covering can even be decomposed into $k$ coverings. Then we show that the situation is exactly the opposite in three dimensions. For any polyhedron and any $m$, we construct an $m$-fold covering of the space that is not decomposable. We also give constructions that show that concave polygons\footnote{A polygon is {\it concave} if it is not convex.} are not usually cover-decomposable. We start the first part with a detailed survey of all results on the cover-decomposability of polygons.\\

\newpage

In the second part we investigate another geometric decomposition problem related to planar representation of graphs. Partitioning the edges of a graph to obtain nice drawings or to show that the graph is complex in some sense was studied under various constraints.
The {\em thickness} of a graph $G$
is defined as the smallest number of planar subgraphs it can be decomposed into. It is one of the
several widely known graph parameters that measures how
far $G$ is from being planar.
The {\em geometric thickness} of $G$, defined as the
smallest number of {\em crossing-free} subgraphs of a straight-line
drawing of $G$ whose union is $G$, is another similar notion.
In this thesis we investigate a related parameter
introduced by Wade and Chu.
The {\em slope number} of a graph $G$ is the smallest number $s$
with the property that $G$ has a straight-line drawing with edges
of at most $s$ distinct slopes and with no bends.
It follows directly from
the definitions that the thickness of any graph is at most as
large as its geometric thickness, which, in turn, cannot exceed
its slope number.
Therefore the slope number is always an upper bound for the other two parameters.
The slope number is also important for the visualization of graphs.
Graphs with slope number two can be embedded in the plane using only vertical and horizontal segments. Generally, the smaller the slope number is, the simpler the visualization becomes.\\

The second part examines the slope number of bounded degree graphs. Our main results are that if the maximum degree is at least $5$, then the slope number tends to infinity as the number of vertices grow, but every graph with maximum degree at most $3$ can be embedded with only five slopes.
The degree $4$ case remains a challenging open problem.
We also prove that such an embedding exists for the related notion called slope parameter, which is defined in the second part.
Finally, we study the planar slope number of bounded degree graphs. This parameter is only defined for planar graphs. It is the smallest number $s$ with the property that the graph has a straight-line drawing in the plane without any crossings, such that the edges are segments of only $s$ distinct slopes. We show that the planar slope number of planar graphs with bounded degree is bounded.\\

In the third part we summarize the interesting open questions and conjectures about cover-decomposition and the slope number. These are followed by the bibliography and my curriculum vit\ae.

\cleardoublepage
\part{Decomposition of Multiple Coverings}
\section{Introduction and Survey}\label{sec:introdec}

This section mainly follows our manuscript with J\'anos Pach and G\'eza T\'oth, Survey on the Decomposition of Multiple Coverings \cite{PPT10}.

Let ${\cal P}=\{\ P_i\ |\ i\in I\ \}$ be a collection of planar sets.
We say that ${\cal P}$ is an {\em $m$-fold covering} if every point in the plane
is contained in at least $m$ members of $\cal P$. 
The biggest such $k$ is called the thickness of the covering. A $1$-fold covering is simply called a {\em covering}. 

\medskip

\noindent {\bf Definition.} A planar set $P$ is said to be {\em cover-decomposable}
if there exists a (minimal) constant $m=m(P)$ such that
every $m$-fold covering of the plane with translates of $P$
can be decomposed into two coverings.

\medskip

We will also refer to the problem of decomposing a covering as the {\it Cover 
Decomposition problem}.
Pach \cite{P80} proposed the problem of determining all cover-decomposable
sets in 1980 and made the following conjecture. 

\medskip

\noindent {\bf Conjecture. (Pach)} {\em All planar convex sets are
cover-decomposable.}

\medskip

This conjecture has been verified for open polygons through a series of papers.

\medskip

\noindent {\bf Theorem A.} (i) \cite{P86} {\em Every centrally symmetric
open convex polygon is cover-decomposable.}

(ii) \cite{TT07} {\em Every open triangle is cover-decomposable.}

(iii) \cite{PT10} {\em Every open convex polygon
is cover-decomposable.}

\medskip

In fact, in \cite{PT10} a slightly stronger result is proved. In particular, it
is shown that the union of finitely many copies of the same open convex
polygon is also cover-decomposable. See also Section \ref{sec:convex} for details.
There are several recent negative results as well.

\medskip

\noindent {\bf Theorem B.} \cite{PTT05} {\em Concave quadrilaterals are not cover-decomposable.}

\medskip

In \cite{PTT05} it was also shown that certain type of concave
polygons are not cover-decomposable either.
This has been generalized to a much larger class of 
concave polygons in \cite{P10}, see
Section \ref{sec:concave} for details. 
One can ask analogous questions in higher dimensions, and in \cite{P10} it is
shown that the situation is quite different.

\medskip

\noindent {\bf Theorem B'.}
\cite{P10} {\em Polytopes are not cover-decomposable in the space and in 
 higher dimensions.}

\medskip

For a cover-decomposable set $P$, one can ask for the exact value of
$m(P)$. In most of the cases, the best known upper and lower bounds 
are very far from each other. For example, for any open triangle $T$
we have $3\le m(T)\le 19$, for the best upper bound see \'Acs \cite{A10}. 

\medskip

\noindent {\bf Definition.} Let $P$ be a planar set and $k\ge 2$ integer. 
If it exists, let $m_k(P)$ denote the smallest number $m$ with the property
that every $m$-fold covering of the plane with translates of $P$
can be decomposed into $k$ coverings.

\medskip

We conjecture that $m_k(P)$ exists for all cover-decomposable $P$, but we cannot prove it in general, see also Question \ref{quest:m3}. 
In \cite{P86} it is shown that for any centrally symmetric 
convex open polygon $P$,
$m_k(P)$ exists and $m_k(P)\le f(k, P)$ where 
$f(k, P)$ is an exponential function of $k$ for any fixed $P$.
In \cite{TT07} a similar result was shown for open triangles and in \cite{PT10} for open convex polygons.
However, all these results were improved to the optimal linear bound in a series of papers.

\medskip

\noindent {\bf Theorem C.} (i) \cite{PT07} {\em 
For any centrally symmetric open convex polygon $P$, $m_k(P)=O(k^2)$.}

(ii) \cite{A08} {\em For any centrally symmetric open convex polygon $P$, $m_k(P)=O(k)$.}

(iii) \cite{GV10} {\em For any open convex polygon $P$, $m_k(P)=O(k)$.}

\medskip

The problem of determining $m_k(P)$ can be reformulated in a slightly different
way:
we try to decompose an $m$-fold covering into as many coverings as possible. 
This problem is closely related to the Sensor Cover problem. 
Gibson and Varadarajan 
in \cite{GV10} proved their result in this more general context. See Section \ref{sec:gv} for details.

The goal of this section is to sketch the methods used in the above theorems,
and to state the most important open problems.
The first paper in this topic was \cite{P86}, and its methods were used by all
later papers.
Therefore, we first concentrate on this paper, and then we turn to the others.

\subsection{Basic Tricks}

Suppose that we have a $k$-fold covering of the plane with a family of translates of an
open polygon $P$. By a standard compactness argument, we can select a
subfamily which still forms a $k$-fold covering, and is locally finite. That is,
each point is covered finitely many times. Therefore, we will assume without
loss of generality that all coverings are locally finite.

\subsubsection{Dualization method}

In \cite{P86} the results are proved in the {\em dual setting}.
Suppose we have a collection
${\cal P}=\{\ P_i\ |\ i\in I\ \}$ of translates of $P$.
Let $O_i$ be the center of gravity of $P_i$.
The collection ${\cal P}$ is a $k$-fold covering of the plane
if and only if every translate of $\bar{P}$, the reflection of $P$ through the
origin, contains at least $k$ points of the collection
${\cal O}=\{\ O_i\ |\ i\in I\ \}$.

The collection ${\cal P}=\{\ P_i\ |\ i\in I\ \}$ can be decomposed into two
coverings
if and only if the set ${\cal O}=\{\ O_i\ |\ i\in I\ \}$ can be colored with
two colors, such that every translate of $\bar{P}$ contains a point of each of the
colors. Note that the cardinality of $I$ can be arbitrary. Using that $P$ and $\bar{P}$ are either both cover-decomposable, or none of them is, we have proved the following.

\begin{lem}\label{dual} $P$ is cover-decomposable if and only if there is an $m$, 
such that given any point set $S$, with the property that any translate of $P$ 
contains at least $m$ points of $S$, can be colored with two colors such that any translate of $P$ contains points of both colors.
\end{lem}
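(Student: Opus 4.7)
The plan is to establish a point-set duality between the covering and the coloring formulations. For each translate $P_i = P + v_i$ in a collection $\mathcal{P}$, I would assign the translation vector $v_i$ as its dual point (equivalently, the center of gravity, which differs only by a fixed offset). A direct computation shows that a point $x$ lies in $P_i$ if and only if $v_i \in x + \bar{P}$, where $\bar{P} = -P$ denotes the reflection of $P$ through the origin. Hence the multiplicity with which $x$ is covered by $\mathcal{P}$ equals the number of dual points contained in the translate $x + \bar{P}$.

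From this identity, the collection $\mathcal{P}$ forms an $m$-fold covering if and only if every translate of $\bar{P}$ contains at least $m$ of the dual points, and decomposing $\mathcal{P}$ into two coverings corresponds exactly to $2$-coloring the dual points so that every translate of $\bar{P}$ contains both colors. Both directions of this correspondence go through: given an arbitrary point set $S$ in which every translate of $\bar{P}$ contains at least $m$ points, one recovers an $m$-fold covering by forming $\{s + P : s \in S\}$, since $x \in s + P$ iff $s \in x + \bar{P}$. The standard compactness reduction to locally finite coverings, sketched just above, applies in both directions, and extra uncolored or arbitrarily colored points can never remove a color from a translate that already contains both.

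This already yields the lemma with $\bar{P}$ in place of $P$. To finish, I would invoke the fact that $P$ and $\bar{P}$ are cover-decomposable under exactly the same conditions; indeed, reflection through the origin is a bijection between translates of $P$ and translates of $\bar{P}$ that preserves the covering multiplicity at every point, so the two decomposition problems are equivalent. Applying the duality statement just established to $\bar{P}$ rather than to $P$ replaces $\bar{P}$ by $\bar{\bar{P}} = P$ in the point-set condition, producing precisely the formulation appearing in the lemma. The main thing to be careful about is not to confuse $P$ with $\bar{P}$ in the setup; once the incidence identity $x \in P + v \iff v \in x + \bar{P}$ is written down, the rest is routine.
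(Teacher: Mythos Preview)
Your proposal is correct and follows essentially the same approach as the paper: set up the incidence duality $x\in P+v\iff v\in x+\bar P$ to convert $m$-fold coverings by translates of $P$ into point sets in which every translate of $\bar P$ contains at least $m$ points, and then use the observation that $P$ and $\bar P$ are cover-decomposable simultaneously (via reflection) to replace $\bar P$ by $P$ in the conclusion. This is exactly what the paper does in the paragraph preceding the lemma.
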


Note that the same argument applies if we want to decompose the covering 
into $k>2$ coverings.
All mentioned papers use the same approach, that is, they all
investigate the covering problem in the dual setting. 
Thus, from now on we will also investigate the problem in the dual setting.

\subsubsection{Divide et impera -- Reduction to wedges}

This approach is also from \cite{P86} and it is used in all papers on the topic.
Two halflines, both of endpoint $O$,
divide the plane into two parts, $W_1$ and $W_2$, which we call
{\em wedges}. 
A {\em closed wedge} contains its boundary, an {\em open wedge}
does not. 
Point $O$ is called the {\em apex} of the
wedges. 
The {\em angle} of a wedge is the angle between its two boundary
halflines, measured inside the wedge.

Let $P$ be a polygon of $n$ vertices
and we have a multiple covering of the plane with translates of $P$.
Then, the cover decomposition problem can be reduced to 
wedges as follows.

Divide the plane into small regions, say, squares, such that
each square intersects at most two consecutive sides of 
any translate of $P$. 
If a translate of $P$
contains sufficiently many points of $S$, then it contains many
points of $S$ in one of the squares, because every translate 
can only intersect a bounded number of squares.
We color the points of
$S$ separately in each of the squares such that if a translate of $P$
contains sufficiently many of them, then it contains points of both colors. 
If we focus
on the subset $S'$ of $S$ in just one of the squares, then any 
translate of $P$ ``looks like'' a wedge 
corresponding to one of the vertices of $P$.
That is, if we consider $W_1, \ldots , W_n$, the wedges corresponding to the vertices 
of $P$, then any subset of $S'$ that can be cut off from $S$ by a translate of 
$P$, can also be cut off by a translate of one of $W_1, \ldots , W_n$.
Note that $S'$ is finite because of the locally finiteness of our original covering.

\begin{lem}\label{wedge} $P$ is cover-decomposable if there is an $m$, 
such that any finite point set $S$ 
can be colored with two colors such that any translate of any wedge of $P$ 
that contains at least $m$ points of $S$, contains points of both colors.
\end{lem}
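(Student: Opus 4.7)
The plan is to pass to the dual setting via Lemma~\ref{dual} and then to localize the problem to small squares in which each translate of $P$ ``looks like'' a wedge. Specifically, I will show that if the wedge hypothesis holds with threshold $m$, then $P$ is cover-decomposable with the constant $M := mc$ for a suitable combinatorial constant $c$ depending only on $P$.

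First I would fix a side length $\ell > 0$ small enough that any axis-aligned square $Q$ of side $\ell$ can meet at most two consecutive edges of any translate of $P$; such an $\ell$ exists because $P$ has finitely many edges and bounded diameter, so a uniform choice works. I would tile the plane into such squares $\{Q_j\}$, set $S_j := S \cap Q_j$ (finite, by local finiteness of $S$), and 2-color each $S_j$ independently using the hypothesis applied to the finite set $S_j$.

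The key verification would be: given a translate $P'$ of $P$ with $|P' \cap S| \geq M$, show that $P' \cap S$ meets both color classes. Since $P'$ meets at most a fixed number $c = c(P,\ell)$ of squares, pigeonhole yields some $Q_j$ with $|P' \cap S_j| \geq m$. If $P'$ crosses $\partial Q_j$, then $P' \cap Q_j$ is cut out inside $Q_j$ by at most two consecutive edges of $P'$ meeting at some vertex $v_{P'}$; translating the corresponding wedge $W_v$ of $P$ so that its apex lies at $v_{P'}$ gives $W \cap Q_j = P' \cap Q_j$, whence $W \cap S_j = P' \cap S_j$, and the wedge hypothesis forces both colors to appear in $P' \cap S_j \subseteq P' \cap S$. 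If instead $Q_j \subseteq P'$ entirely, I would translate any wedge of $P$ far enough that it contains $Q_j$, so that $W \cap S_j = S_j \supseteq P' \cap S_j$, and apply the hypothesis to the same effect.

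The main obstacle I anticipate is the uniform geometric choice of $\ell$: one must ensure that no translate of $P$ ever has three (or more) edges simultaneously intersecting a single square of side $\ell$. This essentially reduces to bounding from below the distance between any two non-consecutive edges of $P$ (an invariant under translation), and taking $\ell$ smaller than this quantity. Once $\ell$ is pinned down, the wedge-replacement step is a clean local geometric observation and the final pigeonhole is routine.
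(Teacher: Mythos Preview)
Your proposal is correct and follows essentially the same approach as the paper: dualize via Lemma~\ref{dual}, reduce to a locally finite point set, tile the plane into small squares so that each square meets at most two consecutive sides of any translate of $P$, color each square independently by the wedge hypothesis, and use pigeonhole to find a square where the translate contains $\geq m$ points and hence behaves like a single wedge. Your explicit treatment of the case $Q_j \subseteq P'$ (handled by translating any wedge to swallow $Q_j$) is a clean addition that the paper leaves implicit; otherwise the arguments coincide, including the choice of $\ell$ governed by the minimum distance between non-adjacent boundary features of $P$.
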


Again, the same argument can be repeated in the case when we want to decompose a covering into $k>2$
coverings.
Thus, from now on, we will be interested in coloring point sets with respect to wedges when proving positive results.
But in fact coloring point sets with respect to wedges can also be very useful to prove negative results as is shown by the next lemma.

\subsubsection{Totalitarianism}\label{sec:total}
So far our definition only concerned coverings of the {\it whole} plane, but we could investigate coverings of any fixed planar point set.

\begin{defi}\label{deftotal} A planar set $P$ is said to be {\em totally-cover-decomposable}
if there exists a (minimal) constant $m^T=m^T(P)$ such that
every $m^T$-fold covering of ANY planar point set with translates of $P$
can be decomposed into two coverings. Similarly, let $m^T_k(P)$ denote the smallest number $m^T$ with the property
that every $m^T$-fold covering of ANY planar point set with translates of $P$
can be decomposed into $k$ coverings.
\end{defi}

This notion was only defined in \cite{P10}, however, the proofs in earlier papers all work for this stronger version because of Lemma \ref{wedge}. Sometimes, when it can lead to confusion, we will call cover-decomposable sets {\em plane-cover-decomposable}. By definition, if a set is totally-cover-decomposable, then it is also plane-cover-decomposable. On the other hand, there are sets (maybe even polygons) which are plane-cover-decomposable, but not totally-cover-decomposable. E.g. the disjoint union of a concave quadrilateral and a far enough halfplane is such a set. For these sets the following stronger version of Lemma \ref{wedge} is true.

\begin{lem}\label{totallywedge} The open polygon $P$ is totally-cover-decomposable if and only if there is an $m^T$ 
such that any finite point set $S$ 
can be colored with two colors such that any translate of any wedge of $P$ 
that contains at least $m^T$ points of $S$, contains points of both colors.
\end{lem}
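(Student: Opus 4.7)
The lemma is biconditional, so I need to prove both implications. The direction from the wedge-coloring property to totally-cover-decomposability is essentially the divide-et-impera argument already sketched for Lemma~\ref{wedge}, extended to the ``total'' setting. The reverse direction is the genuinely new content and I plan to handle it by a scaling/localization trick.

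For the easy direction, I would first dualize the totally-cover-decomposability statement exactly as in Lemma~\ref{dual}; the dualization argument never uses that the covering spans the whole plane. So the goal becomes: any finite $S$ can be $2$-colored so that every translate of $P$ containing sufficiently many points of $S$ carries both colors. I would tile the plane with a grid of squares so fine that any translate of $P$ meets each square in at most two consecutive edges, and let $c$ denote the maximum number of such squares that a single translate of $P$ can touch. Applying the wedge hypothesis (with constant $m^T$) independently inside each square, any $P$-translate containing at least $c \cdot m^T$ points of $S$ has at least $m^T$ of them in one square by pigeonhole; in that square the trace of $P$ coincides with the trace of some wedge-translate, so both colors appear.

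For the reverse direction, I would fix $\varepsilon > 0$ so small that three conditions hold: near every vertex $v_i$ of $P$ the polygon agrees with $v_i + W_i$ throughout a $3\varepsilon$-neighborhood of $v_i$; $\varepsilon(1 + 1/\sin(\alpha/2))$ is below the minimum ``locally-wedge'' radius, where $\alpha$ is the smallest interior angle of $P$; and $\varepsilon$ is less than a quarter of the shortest edge-length and less than the distance between any non-adjacent pair of edges of $P$. Given a finite $S$, I would scale it by $1/\lambda$ for $\lambda$ large enough that $S' := S/\lambda$ fits inside a ball $B$ of radius $\varepsilon$, apply the dualized totally-cover-decomposability to $S'$, and pull the resulting $2$-coloring back along the scaling bijection. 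Since wedges are scale-invariant, a wedge-translate $W_i + v$ containing at least $m^T$ points of $S$ corresponds under scaling to the wedge-translate $W_i + v/\lambda$ with the same number of points of $S'$, so everything reduces to showing that for every wedge $W_i$ and every vector $u$, there is a vector $u'$ with $(W_i + u) \cap S' = (P + u') \cap S'$.

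The hard part will be this geometric matching claim, which I plan to establish by case analysis on the position of the apex $u$ relative to $B$. If $u \in B$, setting $u' = u - v_i$ places the vertex $v_i$ of $P + u'$ at $u$, and by the first choice of $\varepsilon$ the polygon and the wedge then agree throughout $B$. If $u \notin B$ but both bounding rays of $W_i + u$ enter $B$, the wedge-angle bound forces the distance from $u$ to the center of $B$ to be at most $\varepsilon/\sin(\alpha/2)$, and the same choice $u' = u - v_i$ still works by the second condition on $\varepsilon$. If only one bounding ray of $W_i + u$ crosses $B$, the trace on $B$ is a half-plane intersected with $B$, which I would match by aligning one edge of a suitable $P$-translate with this ray while keeping all other edges of $P$ outside $B$, possible by the third condition on $\varepsilon$. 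Finally, if neither ray meets $B$, the trace is either empty or all of $B$, matched respectively by a very distant $P$-translate or by a $P$-translate whose interior contains the center of $B$. Together these sub-cases establish the geometric claim and complete the $\Rightarrow$ direction.
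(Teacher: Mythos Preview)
Your proof is correct and, in fact, supplies more detail than the paper does: the paper states Lemma~\ref{totallywedge} without proof, treating it as the natural two-sided strengthening of Lemma~\ref{wedge}. Your ``if'' direction is exactly the divide-et-impera argument the paper uses for Lemma~\ref{wedge}. For the ``only if'' direction, your scaling/localization trick is precisely the idea the paper invokes (briefly and only later, in the proof of Theorem~E) with the phrase ``$\bar C$ can locally behave like any of its wedges'' applied to ``a possibly scaled down copy'' of the point set; you have simply made that one-line remark into an explicit argument.

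One small point worth tightening: your bound $\varepsilon/\sin(\alpha/2)$ in the ``both rays enter $B$'' case tacitly assumes a convex wedge. For a reflex vertex (interior angle $\beta>\pi$) the same geometric reasoning applies to the complementary angle $2\pi-\beta$, so the correct uniform bound uses $\sin\bigl(\min(\beta,2\pi-\beta)/2\bigr)$. This does not affect the argument, since you only need \emph{some} bound depending on $P$ alone in order to choose $\varepsilon$ small enough.
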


Note that if we want to show that a set is not plane-cover-decomposable, then we can first show that it is not totally-cover-decomposable using this lemma for a suitable point set $S$ and then adding more points to $S$ and using Lemma \ref{dual}. Of course, we have to be careful not to add any points to the translates that show that $P$ is not totally-cover-decomposable. This is the path followed in \cite{PTT05} and also in \cite{P10}, but there the point set $S$ cannot always be extended. This will be discussed in detail in Section \ref{sec:concave}.

\subsection{Boundary Methods}


Let $W$ be a wedge, and $s$ be a point in the plane. A translate of $W$
such that its apex is at $s$, is denoted by $W(s)$.
More generally, if $W$ is convex, then for points $s_1, s_2, \ldots s_k$,
$W(s_1, s_2, \ldots s_k)$ denotes the {\em minimal} translate of $W$
(for containment) 
which contains $s_1, s_2, \ldots s_k$.

Here we sketch the proof of Theorem A (i) from \cite{P86}, in the special case
when $P$ is an axis-parallel square.
This square has an {\em upper-left}, {\em lower-left}, {\em upper-right},
and {\em lower-right} vertex.
To each vertex there is a corresponding wedge, whose apex is at this vertex and whose sides contain the sides of the square incident to this vertex.
Denote the corresponding wedges by
$W_{ul}$, $W_{ll}$, $W_{ur}$,
and $W_{lr}$, respectively. We refer to these four wedges as $P$-wedges.
Let $S$ be a finite point set.
By Lemma \ref{wedge} it is sufficient to prove the following.

\begin{lem}\label{negyzet} 
$S$ 
can be colored with two colors such that any translate of 
a $P$-wedge
which contains at least five points of $S$,
contains points of both colors.
\end{lem}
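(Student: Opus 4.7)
The plan is to construct the 2-coloring directly, exploiting the fact that the four $P$-wedges of an axis-parallel square are simply the four axis-parallel quadrants. My first step would be to reformulate the lemma in purely order-theoretic terms. For each of the four wedge types, the intersection $S\cap W(s)$ is exactly an \emph{up-set} of $S$ under one of the four coordinate-wise partial orders on the plane (for $W_{ur}$: $p\geq q$ iff $p_x\geq q_x$ and $p_y\geq q_y$; the other three are the obvious reflections). Indeed, every up-set in any of these orders can be realized as $S\cap W(s)$ for a suitable $s$. The claim thus becomes: 2-color $S$ so that no up-set of size at least $5$, in any of the four product orders, is monochromatic.

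My main approach would be induction on $n=|S|$, following the ``boundary method'' suggested by the section title. The base case $n\leq 4$ is vacuous. For the inductive step, I pick an extremal point $p\in S$ — for concreteness, the lowest point of $S$, ties broken by smallest $x$-coordinate — apply the inductive hypothesis to $S\setminus\{p\}$ to get a valid 2-coloring there, and then assign a color to $p$. Adding $p$ with color $c$ creates a new monochromatic size-$5$ up-set $U$ (in some order) only if $U\setminus\{p\}$ was already a monochromatic size-$4$ up-set of color $c$ in $S\setminus\{p\}$ whose minimum element is ``replaced'' by $p$. Because $p$ is extremal, the orders in which $p$ can be such a new minimum element are severely constrained: $p$ can be a new minimum for the $W_{ll}$- and $W_{lr}$-orders but not for $W_{ul}$ or $W_{ur}$, and even among those two the relevant size-$4$ up-sets must sit in a small ``strip'' of the plane just above $p$.

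The main obstacle is verifying that at most one of the two colors is ever forbidden for $p$. If both colors were forbidden, there would be a monochromatic red size-$4$ up-set $U_R$ and a monochromatic blue size-$4$ up-set $U_B$, both disjoint from $\{p\}$ and both ``completed'' by $p$ into size-$5$ up-sets in (possibly different) orders. I expect to rule out this configuration by a careful geometric case analysis of the four orders combined with the extremality of $p$: the monochromatic size-$4$ sets $U_R,U_B$ would have to fit inside the same narrow region above $p$, where the inductive hypothesis applied to $U_R\cup U_B$ (of total size at most $8$, but at least $5$ if they overlap) forces a contradiction. The constant $5$ should emerge here, since a size-$4$ monochromatic up-set is the largest allowed by the inductive hypothesis, and the analysis tightly exploits this slack.

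If this direct peeling runs into trouble — for example, if no single extremal point works — a fall-back strategy is to define an auxiliary hypergraph whose hyperedges are the minimal forbidden configurations (size-$5$ up-sets that become legal after removing any one element) and to show that this hypergraph has Property B, either by bounded degeneracy or by an explicit 2-colorable structure arising from the staircases of the four wedges. Either way, the proof boils down to a combinatorial verification that the constraints imposed by the four wedge orders are mutually compatible, and I expect the threshold $5$ to be sharp for this compatibility.
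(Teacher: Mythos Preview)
Your reformulation in terms of up-sets is harmless (the implication ``wedge intersection $\Rightarrow$ up-set'' is all you need), but the inductive argument has a genuine gap: the valid coloring of $S\setminus\{p\}$ produced by the induction hypothesis need \emph{not} extend to $p$, and your case analysis cannot rescue it. Here is a concrete obstruction. Let $p=(0,0)$ be the lowest point, and put $r_i=(i,i)$ and $b_i=(-i,i)$ for $i=1,2,3,4$; color the $r_i$ red and the $b_i$ blue. This coloring of $S\setminus\{p\}$ is perfectly valid for the induction hypothesis: any wedge containing at least five of the eight points must hit both color classes, since each class has only four points. Yet neither color works for $p$: a translate of $W_{ll}$ with apex just southwest of $p$ picks up exactly $\{p,r_1,r_2,r_3,r_4\}$, while a translate of $W_{lr}$ with apex just southeast of $p$ picks up exactly $\{p,b_1,b_2,b_3,b_4\}$. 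Your proposed rescue---applying the induction hypothesis to $U_R\cup U_B$---yields nothing here, because the two size-$4$ sets are disjoint and any wedge containing five of their eight points is automatically bichromatic anyway. The point is that the lemma is about the \emph{existence} of a good coloring, and a naive induction fixes the coloring below too early; a different valid coloring of $S\setminus\{p\}$ (say, alternating along each diagonal) would extend, but your induction gives you no control over which one you get.

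The paper avoids this trap by defining the coloring \emph{globally} rather than inductively. It introduces the \emph{boundary} of $S$ (points $s$ with $W(s)\cap S=\{s\}$ for some $P$-wedge $W$), together with a cyclic order on it, and singles out the \emph{rich} boundary vertices: those that are the unique boundary point in some wedge containing at least five points. The coloring rule is then: (1) rich boundary vertices are blue; (2) no two red boundary neighbors; (3) subject to this, red is maximal (so in particular all interior points are red). The proof that any wedge with five points is bichromatic uses the key structural observation that such a wedge meets the boundary in at most two intervals, together with a short minimality argument showing that a non-endpoint of these intervals cannot be rich. This global, boundary-based definition is exactly what makes the argument go through where a vertex-by-vertex induction stalls.
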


It will be very useful to define the {\em boundary} of $S$ with respect to the 
wedges of $P$. It is a generalization 
of the convex hull; a point $s$ of $S$ is on the convex hull 
if there is a halfplane which contains $s$ on its boundary,
but none of the points of $S$ in its interior.

\begin{defi}
The {\em boundary} of $S$ with respect to a wedge $W$,
$Bd^W(S)=\{s\in S:W(s)\cap S=\emptyset\}.$
Two $W$-boundary vertices, $s$ and $t$ are {\em neighbors}
if $W(s, t)\cap S=\emptyset$.
\end{defi}

It is easy to see that $W$-boundary points have a natural order
where two vertices are consecutive if and only if 
they are neighbors. Observe also that any translate of $W$ intersects
the $W$-boundary in an interval.
Now the boundary of $S$ with respect to the four $P$-wedges
is the union of the four boundaries. 

The $W_{lr}$- and 
a $W_{ll}$-boundary meets at the highest point of $S$ (the point of maximum
$y$-coordinate, which does not have to be unique, but for simplicity let us suppose it is),
the $W_{ll}$- and 
a $W_{ul}$-boundary meets at the rightmost point,
the $W_{ul}$- and 
a $W_{ur}$-boundary meets at the lowest point,
and the $W_{ur}$- and 
a $W_{lr}$-boundary meets at the leftmost point.
See Figure \ref{hatar}. For simplicity, translates of $P$-wedges, 
$W_{ul}$, $W_{ll}$, $W_{ur}$, $W_{lr}$, are denoted by $W_{ul}$, $W_{ll}$,
$W_{ur}$, $W_{lr}$, respectively.

\begin{figure}[htb]
\begin{center}
\scalebox{0.45}{\includegraphics{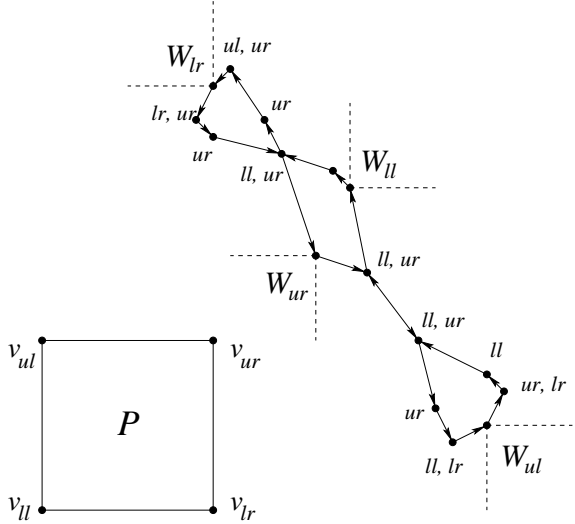}}
\caption{The boundary of a point set.}\label{hatar}
\end{center}
\end{figure}

\smallskip

Points of $S$ which are not boundary vertices, are called {\em interior} points.
The main difference between the convex hull and the boundary,
with respect to $P$, is that
in the cyclic enumeration of all boundary vertices obtained by joining the natural orders on the four parts of the boundary together,
a vertex could occur twice. These are called {\em singular} vertices, the others 
are called {\em regular} vertices.
However, it can be shown that no vertex can appear three times in the 
cyclic enumeration, and all singular vertices have the same type: either all
of them belong to $W_{ul}$ and $W_{lr}$, or all of them belong to $W_{ur}$ and
$W_{ll}$.
This also holds for any centrally symmetric convex polygon, singular boundary
vertices
all belong to the same two opposite boundary pieces.

The most important observation is the following. 

\begin{obs}\label{obs1}
If a translate of a $P$-wedge, 
say, $W_{ll}$, contains some points of $S$, then it is the
union of three subsets: (i) an interval of the boundary which contains at least
one point from the $W_{ll}$-boundary, (ii) an interval of the boundary which contains at least
one point from the $W_{ur}$-boundary, (iii) interior points. 
Note that (i) is non-empty, while (ii) and (iii) could be empty.
Analogous statements hold for the other three wedges, and also for other 
symmetric polygons.
\end{obs}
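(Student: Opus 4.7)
The plan is to fix coordinates so that $W_{ll}$ is the upper-right quadrant, making the given translate a set of the form $Q=\{(x,y):x\ge a,\ y\ge b\}$. Writing $S_Q=Q\cap S$, the interior points of $S_Q$ automatically constitute category (iii), so everything reduces to showing that $S_Q$ intersected with the boundary of $S$ is the union of at most two cyclic intervals of the boundary — one containing at least one $W_{ll}$-boundary point (interval (i)) and, if present, a second containing at least one $W_{ur}$-boundary point (interval (ii)).

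First, to verify that (i) is non-empty I would start from an arbitrary $s\in S_Q$ and iteratively replace $s$ by a point of $S$ inside $W_{ll}(s)\setminus\{s\}$; since $Q$ is closed under moving up and to the right, the new point remains in $Q$. Finiteness of $S$ forces termination at some $s^\ast\in Q$ with $W_{ll}(s^\ast)\cap S=\emptyset$, i.e., a $W_{ll}$-boundary point lying inside $Q$.

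The key structural input is the description of the boundary in cyclic order: one traverses $Bd^{W_{ll}}$ from the highest to the rightmost extreme, then $Bd^{W_{ul}}$ to the lowest, $Bd^{W_{ur}}$ to the leftmost, and finally $Bd^{W_{lr}}$ back to the highest. Both coordinates are monotone on each of the four pieces, and globally the $x$-coordinate on the boundary is cyclically unimodal — maximum at the rightmost, minimum at the leftmost — and similarly the $y$-coordinate has its max at the highest and its min at the lowest. Consequently $\{x\ge a\}$ restricted to the boundary is a single cyclic arc around the rightmost point and $\{y\ge b\}$ restricted to the boundary is a single cyclic arc around the highest, so their intersection — which is exactly $Q$ intersected with the boundary — comprises at most two disjoint arcs.

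It remains to identify these arcs. One arc necessarily contains the $Q$-portion of the highest-to-rightmost segment of the boundary and hence the $W_{ll}$-boundary point produced earlier; this is interval (i). If a second arc exists, a short case distinction confines it to $Bd^{W_{ur}}$: were it to reach the lowest extreme, then $y_{\text{bot}}\ge b$ would force $\{y\ge b\}$ to cover the entire boundary and the two arcs to merge, a contradiction; the leftmost extreme is ruled out symmetrically. Hence the second arc lies inside $Bd^{W_{ur}}$ and gives interval (ii). The main obstacle I anticipate is precisely this case analysis — ensuring that the second arc cannot leak into $Bd^{W_{ul}}$ or $Bd^{W_{lr}}$ — but once the collapsing observation is in place it becomes routine, and the analogous statements for the remaining three $P$-wedges follow by reflecting the coordinates.
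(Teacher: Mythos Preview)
The paper states this observation without proof and uses it directly, so your write-up already supplies more than the paper does. Your approach via cyclic unimodality of the two coordinates along the boundary is correct and is the natural way to see it.

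There is one genuine gap in the final step. Excluding the lowest and leftmost extremes from the second arc does not by itself force that arc into $Bd^{W_{ur}}$: removing those two points splits the cyclic boundary into \emph{two} open arcs, and a priori the second component of $Q\cap Bd(S)$ could still lie on the long arc through the highest and rightmost points (say inside $Bd^{W_{ul}}$ or $Bd^{W_{lr}}$) while remaining disjoint from interval~(i). Your ``collapsing'' argument for the lowest and leftmost points does not transfer verbatim to rule this out. What closes the gap is the observation that on the open arc running from leftmost, through highest and rightmost, to lowest, each of $\{x\ge a\}$ and $\{y\ge b\}$ restricts to a single subinterval (both coordinates are unimodal there, peaking at the rightmost and the highest respectively), so their intersection on that arc is connected. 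Hence only one component --- interval~(i) --- can live on that side, and any second component is forced into the complementary open arc, which is precisely the interior of $Bd^{W_{ur}}$. With that one sentence added your argument is complete.
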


A first naive attempt for a coloring could be to color all the boundary blue, and 
the interior red. Clearly, it is possible that there is a wedge that contains 
lots of boundary vertices and no interior vertices, so this coloring is not always
good. 
Another naive attempt could be to
color boundary vertices alternatingly red and blue. 
There is an obvious parity-problem here, and a problem with the singular
vertices.
But there is another, more serious problem, that a translate of a wedge could contain just one
boundary vertex, and lots of interior vertices. 
So, we have to say something about the colors of the interior vertices but
this leads to further complications.
It turns out that a ``mixture'' of these approaches works.

\begin{defi}
We call a boundary vertex $s$ {\em $r$-rich} 
if there is a translate $W$ of a $P$-wedge, 
such that $s$ is the only $W$-boundary vertex in $W$ but 
$W$ contains at least $r$ points of $S$.\footnote{In \cite{P86} and \cite{PT07} a slightly different definition is used, there $s$ is required to be the only vertex from the whole (ant not only from the $W$-) boundary in the translate of $W$. For symmetric polygons both definitions work, but, for example, for triangles only the above given definition can be used.}
\end{defi}

This definition is used in different proofs with a different constant $r$, but when it leads to no confusion, then we simply write rich instead of $r$-rich. In this proof rich means $5$-rich, thus a boundary vertex $s$ is rich if there is a wedge that intersects the $W$-boundary in $s$ and contains at least four other points.\footnote{Note that instead of $r=5$ we could also pick $r=4$ to define rich points in this proof and only the last line would require a little more attention.}

Our general coloring rule will be the following.\\ 
(1) Rich boundary vertices are blue.\\
(2) There are no two red neighbors.\\
(3) Color as many points red as possible, that is, let the set of red points
$R\subset S$ be {\em maximal} under condition (1) and (2).\\
\indent
Note that from (3) we can deduce\\
(4) Interior points are red.\\
\indent
A coloring that satisfies these conditions is called a {\em proper coloring}.
There could be many such proper colorings of the same point set, and for centrally
symmetric 
polygons, each of them is good for us. 
In \cite{P86} an explicit proper coloring is given.

Now we are ready to finish the argument.

\begin{proof}[Proof of Lemma \ref{negyzet}.]
Suppose that $S$ is colored properly and $W$ is a 
translate of a $P$-wedge, 
such that 
it contains at least five points of $S$.
We can assume without loss of generality that 
$W$ contains exactly five points of $S$.
By Observation \ref{obs1}, $W$ intersects the $W$-boundary of 
$S$ in an interval. 

First we find a blue point in $W$.
If the above interval contains just one point then this point is rich as the wedge contains at least five points, and rich points are blue according to (1).
If the interval contains at least two points, then one of them should be blue according to (2).

Now we show that there is also a red point in $W$. If $W$ contains any interior
point, then we are done according to (4).
So we can assume by Observation \ref{obs1} that $W\cap S$ is the union of two
intervals, and all points in $W$ are blue. 
Since we have five points, one of them, say, $x$, is not the endpoint of any
of the intervals. If it is not rich, then, according to (3),
it or one of its neighbors, all contained in $W$, is red. So, $x$ should be rich. But then there is a translate $W'$
of a $P$-wedge,  
which contains only $x$ as a boundary vertex, and contains
five points. Using that $S$ is centrally symmetric, it can be shown that
$S\cap W'$ is a proper subset of $S\cap W$, a contradiction, since both contain exactly five points.
This concludes the proof of Lemma \ref{negyzet}.\end{proof}

If we only consider wedges with more points, we can guarantee 
more red points in them. 

\begin{lem}\label{negyzet-k} 
In a proper coloring of $S$, 
any translate of a $P$-wedge
which contains at least $5k$ points of $S$,
contains at least one blue point and at least $k$ red points.
\end{lem}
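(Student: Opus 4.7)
I would prove Lemma \ref{negyzet-k} by induction on $k$, with the base case $k=1$ supplied by Lemma \ref{negyzet}. The required blue point in $W$ follows immediately from Lemma \ref{negyzet} applied to $W$ itself, since $|W\cap S|\geq 5k\geq 5$; only the lower bound of $k$ red points has to be strengthened in the induction.

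For the inductive step, assume the conclusion holds for $k-1$ and let $W$ be a translate of a $P$-wedge with $|W\cap S|\geq 5k$. The plan is to construct a proper sub-wedge $W'\subsetneq W$, a translate of the same $P$-wedge, satisfying $|W'\cap S|\geq 5(k-1)$ and such that $W\setminus W'$ contains at least one red point of $S$. The inductive hypothesis applied to $W'$ then provides $k-1$ red points in $W'$, and together with the red point in $W\setminus W'$ this gives $k$ red points in $W$, as desired.

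To build $W'$, I would continuously shrink $W$ by translating its apex inward along a well-chosen direction, and let $W_t$ denote the resulting one-parameter family of sub-wedges of the same type. As $t$ grows, points of $W\cap S$ leave $W_t$ one by one (in general position), and I would stop at the first moment $t^\star$ when exactly five points have been lost, setting $W':=W_{t^\star}$. Then $|W'\cap S|=|W\cap S|-5\geq 5(k-1)$, which is the input required for the inductive hypothesis.

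The main obstacle, and the heart of the argument, is to exhibit a red point among the five ``lost'' points of $W\setminus W'$. By choosing the direction of apex motion carefully and exploiting the central symmetry of $P$, these five points can be arranged to be extremal with respect to the opposite-oriented $P$-wedge, so that they coincide with the intersection of $S$ with some translate $W^\star$ of that opposite wedge; applying Lemma \ref{negyzet} to $W^\star$ then supplies the desired red (and blue) point among the five lost points. The delicate step that must be verified is that the shrinking direction and the stopping time $t^\star$ can indeed be chosen so that the five lost points realize $W^\star\cap S$ for such a wedge translate $W^\star$, using the interval structure of boundaries established in Observation \ref{obs1} together with the central symmetry of $P$.
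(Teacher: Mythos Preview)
Your inductive strategy differs from the paper's approach, and the gap you flag is real.

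The paper does not argue by induction on $k$. It repeats the direct argument of Lemma~\ref{negyzet} after adjusting the notion of ``rich'' in the definition of proper coloring: the threshold becomes $5k$ rather than $5$. With this modified proper coloring one checks directly that any $P$-wedge with at least $5k$ points contains a blue point (same argument: either the $W$-boundary interval has at least two points and rule (2) gives a blue one, or it has a single point which is then $5k$-rich, hence blue), and a more careful count, handling singular vertices, yields at least $k$ red points. In particular, the coloring itself depends on $k$; there is no single proper coloring that witnesses the conclusion simultaneously for all $k$, which already undermines an induction on $k$ carried out with a fixed coloring.

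Even setting that issue aside, the ``delicate step'' you isolate is a genuine obstacle, not a detail to be filled in. Take $W$ a translate of $W_{ll}$ and shrink to $W'\subsetneq W$. The lost region $W\setminus W'$ is L-shaped, while any translate $W^\star$ of the opposite wedge $W_{ur}$ is an infinite quadrant. If $W^\star$ is to contain all five lost points, then typically (i) $W^\star$ also contains points of $S\setminus W$ lying to the southwest of $W$, and (ii) when the five lost points occupy both arms of the L, the apex of the minimal such $W^\star$ lands inside $W'$, so $W^\star$ meets $W'\cap S$ as well. In either case $W^\star\cap S$ strictly contains the five lost points, and applying Lemma~\ref{negyzet} to $W^\star$ no longer pins the red point inside $W\setminus W'$. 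No choice of shrinking direction circumvents both (i) and (ii) in general, so the inductive step does not go through as outlined.
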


The proof is very similar to the proof of Lemma \ref{negyzet}, the difference is that now we color $5k$-rich points red and we have to be a little more careful when counting red points, especially because of the possible singular points.
Then, we can recolor red points recursively by Lemma \ref{negyzet-k}, 
and we obtain an exponential upper bound on $m_k(P)$. 
Analogous statement holds for any centrally symmetric open convex polygon,
therefore, we have

\begin{thm}\label{negyzet-expon} 
For any  symmetric open convex polygon $P$, there is a $c_P$ such that
any $c_P^k$-fold covering of the plane with translates of $P$ can be
decomposed into $k$ coverings.
\end{thm}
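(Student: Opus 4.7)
The plan is to combine the generalized version of Lemma \ref{negyzet-k} with induction on $k$, using the machinery already set up in the excerpt. The crux is to first establish the analogue of Lemma \ref{negyzet-k} for an arbitrary centrally symmetric open convex polygon $P$: namely, that there exists a constant $c_P$ (depending only on $P$) such that, in a proper coloring of any finite point set $S$, every translate of any $P$-wedge containing at least $c_P \cdot k$ points of $S$ contains both a blue point and at least $k$ red points. Once this is in hand, the rest of the argument is a straightforward peeling scheme.

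Here is the inductive skeleton. The base case $k=1$ is trivial since any $1$-fold covering is already decomposable into $1$ covering. For the inductive step, suppose the statement holds for $k-1$ with constant $c_P$, and take any $c_P^{\,k}$-fold covering of the plane by translates of $P$. Invoking Lemma \ref{dual} and Lemma \ref{wedge}, we pass to the dual setting and reduce to showing that the point set $S$ dual to the covering admits a $2$-coloring with the property that every $P$-wedge translate containing at least $c_P$ points meets both colors, while moreover every $P$-wedge translate containing at least $c_P^{\,k}$ points contains at least $c_P^{\,k-1}$ points of one fixed color (red). Fix a proper coloring and apply the generalized Lemma \ref{negyzet-k} with threshold $c_P^{\,k-1}$: every translate of a $P$-wedge which contains at least $c_P \cdot c_P^{\,k-1} = c_P^{\,k}$ points of $S$ contains at least one blue point and at least $c_P^{\,k-1}$ red points. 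The blue set therefore yields one covering of the plane (by Lemma \ref{wedge} applied in the reverse direction), and the red set forms a $c_P^{\,k-1}$-fold covering, which by the inductive hypothesis decomposes into $k-1$ coverings. Together, this gives the desired decomposition into $k$ coverings, so $m_k(P) \le c_P^{\,k}$.

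The main obstacle, and where the substance of the proof lies, is the generalization of Lemma \ref{negyzet-k} from the axis-parallel square to an arbitrary centrally symmetric open convex polygon $P$ with, say, $2n$ vertices. The same four ingredients must be adapted: (i) the definition of the boundary with respect to each of the $2n$ $P$-wedges and its cyclic structure along the boundary of $S$; (ii) the dichotomy for singular vertices, which for general centrally symmetric $P$ still splits boundary vertices into antipodal pairs of wedge-types and therefore can be handled by the same parity argument; (iii) the definition of $r$-rich points and the rule that rich points are colored blue while the rest form a maximal set of red points with no two red neighbors; and (iv) Observation \ref{obs1}, which for a centrally symmetric polygon still describes a wedge translate as the union of two boundary intervals on antipodal wedges plus some interior points. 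The key geometric lemma that must be re-proved in this more general setting is the monotonicity statement used at the end of the proof of Lemma \ref{negyzet}: if $x$ is rich in $W$ via a witnessing translate $W'$ whose only $W$-boundary vertex of that wedge-type is $x$, then $S \cap W' \subsetneq S \cap W$. This is precisely where the central symmetry of $P$ is used, and it is what forces $c_P$ to depend on $P$ (the reduction to wedges via Lemma \ref{wedge} already absorbs a factor depending on the number of sides).

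Finally, to obtain the statement as written, one absorbs the extra constant factor coming from the plane-to-wedges reduction in Lemma \ref{wedge} (each translate of $P$ meets only a bounded number of the small squares in the partition, so an $O(1)$-fold blow-up is enough) into a new constant, still denoted $c_P$. The resulting recurrence $m_k(P) \le c_P \cdot m_{k-1}(P)$ with $m_1(P) = 1$ yields the exponential bound $m_k(P) \le c_P^{\,k}$ claimed in the theorem. I expect no serious difficulty in the inductive step itself; the entire weight of the argument sits in verifying that the boundary/rich-point machinery of \cite{P86} goes through verbatim for any centrally symmetric open convex polygon, which is already implicit in the remark preceding the theorem statement.
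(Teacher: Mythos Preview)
Your proposal is correct and follows essentially the same route as the paper: establish the analogue of Lemma~\ref{negyzet-k} for an arbitrary centrally symmetric open convex polygon (the paper asserts this ``analogous statement holds'' without writing out the details), then recursively recolor the red points to peel off one covering at a time, yielding the exponential bound $m_k(P)\le c_P^{\,k}$. The ingredients you list---the cyclic boundary structure, the antipodal control on singular vertices, Observation~\ref{obs1}, and the containment $S\cap W'\subsetneq S\cap W$ via central symmetry---are exactly those the paper relies on.
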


\subsubsection{Decomposition to $\Omega(\sqrt{m})$ parts for symmetric polygons}

Here we sketch the proof of Theorem C (i), following the proof of \cite{PT07}, which is a modification of the previous proof. 
We still assume for simplicity that
$P$ is an axis parallel square. 
The basic idea is the same as in the previous proof.
Let $k\ge 2$. We will color $S$ 
with $k$ colors such that any $P$-wedge that contains 
at least $m=18k^2$ points contains all $k$ colors.
We define $k$ boundary layers and denote them by $B_1, B_2, \ldots , B_k$,
respectively.
That is, 
denote the boundary of $S$ by $B_1$ and let $S_2=S\setminus
B_1$.
Similarly, for any $i< k$,
once we have $S_{i}$,  let $B_i$ be the boundary of $S_{i}$ and let $S_{i+1}=S\setminus
B_{i}$.
Boundary layer $B_i$ will be ``responsible'' for color $i$.
Color $i$ takes the role of blue from the previous proof,
while red points are distributed ``uniformly'' among the other $k-1$ colors.

Slightly more precisely, 
a vertex $v\in B_i$ is rich if there is a translate of a 
$P$-wedge that intersects
$S_{i}$ in at least $18k^2-18ki$ points, and $v$ is the only boundary vertex
in it. We color rich vertices of $B_i$ with color $i$, and color first the remaining singular, then the regular points periodically: $1, i, 2, i, \ldots, k, i, 1, \ldots$
The main 
observation is that 
if a $P$-wedge intersects $B_i$ (for any $i$) in at least $18k$ points, then it contains a
long interval which contains a 
point of each color.\footnote{This $18$ could be improved with a more careful analysis.}
Otherwise, it has to intersect 
each of the boundary layers, but then for each $i$, its intersection with
$B_i$ contains a rich point of color $i$.

\subsubsection{Triangles}

The main difficulty with non-symmetric polygons is that Observation \ref{obs1} does
not 
hold here; the intersection with a translate of a $P$-wedge is not the union
of two boundary intervals and some interior points. 
In the case of triangles Tardos and T\'oth \cite{TT07} managed to overcome 
this difficulty, with a particular version of a proper coloring,
thus proving Theorem A (ii), 
we sketch their proof in this section. For other polygons a
different approach was necessary, we will see it later.

Suppose that $P$ is a triangle with vertices $A$, $B$, $C$. 
There are three $P$-wedges, $W_A$, $W_B$, and $W_C$.
We define the boundary just like before, it has three parts, the $A$-, $B$-, and
$C$-boundary, each of them is an interval in the cyclic enumeration of the 
boundary vertices. 
Here comes the first difficulty, there could be a singular
boundary vertex which appears three times in the cyclic enumeration of
boundary vertices, once in each boundary.
It is easy to see that there is at most one such vertex, and we can get rid of
it by 
decomposing $S$ into at most four subsets, such that in each of them
singular boundary points all belong to the same two boundaries, just like in
the case of centrally symmetric polygons. 
For simplicity of the description, assume that $S$ has only regular boundary vertices.

\begin{figure}[ht]
\centering
		\includegraphics[scale=0.36]{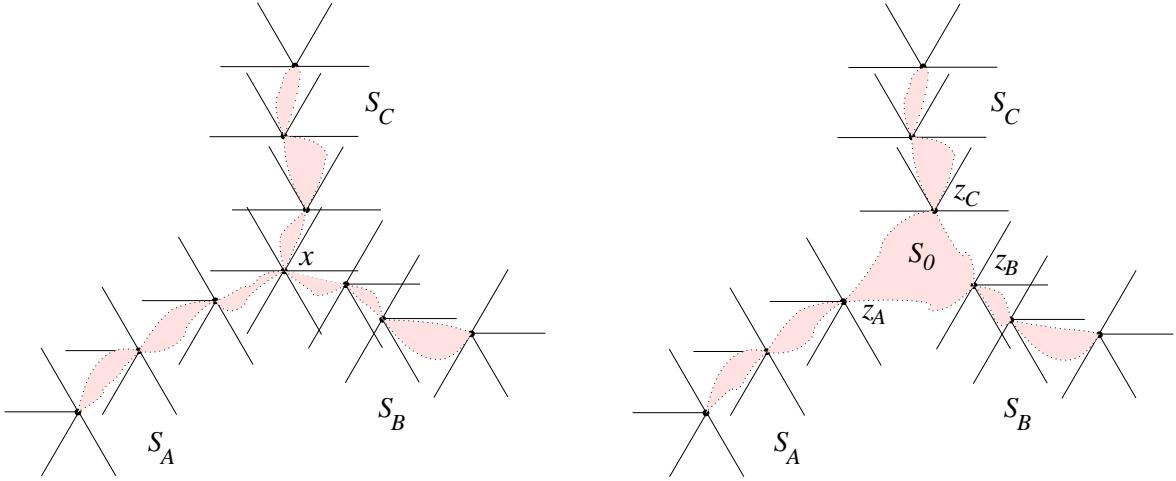}              
		 \caption{On the left, $x$ is singular, on the right, there are only regular boundary vertices.}
		\label{fig:sing}
\end{figure}

Again, we call a boundary vertex $s$ rich 
if there is a translate $W$ of a $P$-wedge, 
such that $s$ is the only $W$-boundary vertex in $W$ but 
$W$ contains at least five points of $S$.

Our coloring will still be a proper coloring, that is\\ 
(1) Rich boundary vertices are blue.\\
(2) There are no two red neighbors.\\
(3) Color as many points red as possible, that is, let the set of red points
$R\subset S$ be {\em maximal} under condition (1) and (2).\\
(4) Interior points are red.

But in this case, we will describe explicitly, how to obtain the set of red
points.
The coloring will be a kind of greedy algorithm.
Consider the 
linear order on the lines of the plane that are parallel to the side $BC$,
so that 
the line through $A$ defined smaller than the line $BC$. 
We define the partial
order $<_A$ on the points with $x<_Ay$ if the line through $x$ is smaller than
the line through $y$. We have $A<_AB$ and $A<_AC$.
Similarly define the partial order $<_B$ according to the lines parallel
to $AC$ with $B<_BC$ and  $B<_BA$, 
and the partial order $<_C$ according to the lines
parallel to $AB$ with $C<_CA$ and $C<_CB$.

First, color all rich boundary vertices blue.
Now 
take the $A$-boundary vertices of $S$ 
and consider them in
{\em increasing} order according to $<_A$. If we get to a point that is not colored,
we color it red and we color every neighbor of it blue. 
These neighbors may have already been
colored blue (because they are rich, or because of an earlier red neighbor) but
they are not colored red since 
any neighbor of any red point is immediately
colored blue. 
Continue, until all of the $A$-boundary is colored.
Color the $B$- and $C$-boundaries similarly, using the other two partial
orders.

Suppose that $W$ is a 
translate of a $P$-wedge, 
such that 
it contains at least five points of $S$.
We can assume without loss of generality that 
$W$ contains exactly five points of $S$.
Assume that $W$ is a translate of $W_A$. The other two cases 
are exactly the same. 
To find a blue point, we proceed just like in the previous section, and it
works for any proper coloring. 
We know that $W$ intersects the $A$-boundary of 
$S$ in an interval. 
If this interval contains just one point, then it is rich, so it is blue.
It the interval contains at least two points, then one of them should be blue.

Now we show that there is also a red point in $W$. If $W$ contains any interior
point, then we are done. Therefore, we assume that all five points in $W$ are
boundary vertices. 
Since there are five points in $W$, one of them, say, $x$,
is {\em not} (i) the first or last $A$-boundary vertex in $W$, and 
(ii) not the $<_A$-minimal $B$-boundary point in $W$, and (iii)
     not the $<_A$-minimal $C$-boundary point in $W$.

Suppose that $x$ is rich. Then there is a translate $W'$
of a $P$-wedge,  
which contains only $x$ as a boundary vertex, and contains
five points. It can be shown by some straightforward geometric observations that
$S\cap W'$ is a proper subset of $S\cap W$, a contradiction, since $S\cap W$ both contain five points. So, $x$ can not be rich. 
But then why would it be blue? 
The only reason could be that in the coloring process one of its neighbors on
the boundary,
$y$,
was colored red earlier. But then again,  some geometric observations 
show that $y\in W$, which shows that there is a red point in $W$.
This concludes the proof.

The same idea works if we have 
singular boundary vertices which all belong to, say, to the $A$- and $B$-boundaries. The only difference is that we have to synchronize the coloring
processes on the $A$- and $B$-boundaries, so that we get to the common vertices at the same time. 

By a slightly more careful argument we obtain

\begin{lem}\label{haromszog-k} 
The points of $S$ can be colored with red and blue such that
any translate of a $P$-wedge which contains at least $5k+3$ of the points,
contains a
blue point and at least $k$ red points.
\end{lem}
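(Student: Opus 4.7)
The plan is to repeat the greedy proper-coloring construction from the triangle proof with richness defined at threshold $5k+3$, and then add a counting step that extracts the required $k$ red points. Call a boundary vertex $s$ rich if some translate of a $P$-wedge contains at least $5k+3$ points of $S$ with $s$ as the unique vertex of the matching boundary type, and build the coloring as before: color all rich vertices blue, then scan the $A$-, $B$-, and $C$-boundaries in the partial orders $<_A$, $<_B$, $<_C$, coloring each uncolored vertex red and its uncolored boundary neighbors blue. As in the $k=1$ case, decompose $S$ first into a bounded number of subsets so that no vertex is singular on all three boundaries.

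Fix a wedge translate $W$, say of $W_A$, with $|W\cap S|\ge 5k+3$ and shrink it so equality holds. The blue vertex is found exactly as in the $k=1$ proof: a unique $A$-boundary vertex of $W$ must be rich, hence blue, and otherwise any two $A$-boundary vertices of $S$ in $W$ that are neighbors cannot both be red. For the red count, call a vertex of $W\cap S$ \emph{special} if it is the first or last $A$-boundary vertex of $W$ or the $<_A$-minimal $B$- or $C$-boundary vertex of $W$, and \emph{ordinary} otherwise; there are at most four special and hence at least $5k-1$ ordinary vertices in $W$. The non-richness argument from the $k=1$ case applies verbatim to every ordinary vertex: if one were rich, the witnessing wedge $W'$ would satisfy $W'\cap S\subsetneq W\cap S$, impossible since both sides have the same cardinality $5k+3$. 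Consequently every ordinary blue vertex has, by maximality of the red set, a red boundary neighbor, and the geometric observation used in the $k=1$ proof shows this neighbor lies in $W$.

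It remains to show $R\ge k$, where $R$ is the number of red vertices in $W$. Each red vertex is a boundary vertex on at most two of the three boundaries (by the preliminary decomposition), with at most two boundary neighbors on each, so it can serve as the witness for at most four ordinary blue vertices. Hence the number of ordinary blues in $W$ is at most $4R$, whereas there are at most four special vertices, so $5k+3\le R+4R+4=5R+4$ and $R\ge k$ by integrality. The main obstacle is the combinatorial bookkeeping for singular vertices: one must synchronize the greedy processes on the two boundaries sharing singular vertices exactly as in the $k=1$ proof, so that the geometric ``red-neighbor-in-$W$'' step and the bound of four witnesses per red vertex both remain valid. With this in place, the counting argument above delivers the lemma.
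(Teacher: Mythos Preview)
Your proposal is correct and follows precisely the route the paper indicates: the paper does not give a detailed proof of this lemma but only remarks that it follows ``by a slightly more careful argument'' from the $k=1$ triangle proof, and your write-up is exactly such an argument---same greedy coloring with richness threshold raised to $5k+3$, same geometric observations, plus the natural counting step (at most four special vertices, each red vertex witnessing at most four ordinary blues) to extract $k$ reds. The bookkeeping caveat you flag for singular vertices is the same one the paper flags in the $k=1$ case and handles by synchronizing the $A$- and $B$-boundary processes.
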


If we apply Lemma \ref{haromszog-k} recursively, we get an exponential 
bound on $m_k(P)$.

\begin{lem}\label{expon-haromszog} 
For any open triangle $P$, every $\frac{7\cdot\: 5^k - 15}{20}$-fold covering 
of the plane with translates of $P$ can be decomposed into $k$ 
coverings. 
\end{lem}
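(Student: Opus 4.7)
The plan is to prove the lemma by induction on $k$, using Lemma \ref{haromszog-k} to peel off one covering per step. First, by Lemma \ref{dual} and Lemma \ref{wedge} it is enough to establish the following dual statement about $P$-wedges: any finite point set $S$ in which every translate of every $P$-wedge contains at least $m_k := \frac{7\cdot 5^k - 15}{20}$ points of $S$ admits a coloring with $k$ colors such that every translate of every $P$-wedge contains all $k$ colors. (Note that $\bar P$ is again a triangle, so the dualized setup has the same shape, and the reduction to wedges from Lemma \ref{wedge} applies verbatim with $k$ colors in place of two.)

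A direct computation confirms that $m_k$ satisfies the recurrence $m_k = 5 m_{k-1} + 3$ with initial value $m_1 = 1$; this matches the base case $k=1$, in which any point set is trivially colored with a single color. For the inductive step, assume the wedge-coloring statement for $k-1$ and consider $S$ satisfying the hypothesis with threshold $m_k$. Apply Lemma \ref{haromszog-k} to $S$ with its parameter set to $m_{k-1}$. Since every wedge translate contains at least $m_k = 5 m_{k-1} + 3$ points of $S$, the resulting red/blue coloring has the property that every such wedge translate contains at least one blue point and at least $m_{k-1}$ red points. The blue class is therefore already a covering. Let $R \subset S$ denote the red subset; by the above, every wedge translate contains at least $m_{k-1}$ points of $R$, so the inductive hypothesis applied to $R$ produces a $(k-1)$-coloring of $R$ in which every wedge translate meets each of the $k-1$ color classes. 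Appending blue as an additional color yields the desired $k$-coloring of $S$.

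The main bookkeeping point — and the only place where a little care is required — is checking that the induction hypothesis can be invoked on the subset $R$ rather than on the original set $S$. This is unproblematic: Lemma \ref{haromszog-k} is stated for arbitrary finite point sets, and the covering condition needed on $R$ is delivered directly by the conclusion of the previous splitting step. No geometric argument beyond what is already bundled into Lemma \ref{haromszog-k} is required, so the full proof amounts to the clean recursion above. The resulting bound is inevitably exponential in $k$, which is precisely the reason one needs the quite different techniques that appear later (Section \ref{sec:gv}) to obtain the linear bound of Theorem C (iii).
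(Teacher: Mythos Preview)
Your overall strategy---recursively applying Lemma~\ref{haromszog-k} to peel off one color at a time, driven by the recurrence $m_k = 5m_{k-1}+3$ with $m_1=1$---is exactly what the paper has in mind, and the arithmetic is correct.

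There is, however, a genuine slip in how you phrase the dual statement. You reduce to: ``any finite point set $S$ in which every translate of every $P$-wedge contains at least $m_k$ points of $S$ admits a $k$-coloring such that every wedge translate contains all $k$ colors.'' But no finite point set satisfies this hypothesis (translate a wedge far enough and it is empty), so this statement is vacuous and cannot be what Lemmas~\ref{dual} and~\ref{wedge} deliver. The correct inductive statement is the conditional one, matching the form of Lemma~\ref{wedge}: \emph{any} finite $S$ can be $k$-colored so that every wedge translate containing at least $m_k$ points of $S$ contains all $k$ colors. With this fix your induction goes through unchanged: Lemma~\ref{haromszog-k} with parameter $m_{k-1}$ guarantees that every translate with $\ge m_k=5m_{k-1}+3$ points of $S$ has a blue point and $\ge m_{k-1}$ red points; the corrected inductive hypothesis applied to the red set $R$ then provides $k-1$ further colors, each present in every translate with $\ge m_{k-1}$ points of $R$, hence in particular in every translate with $\ge m_k$ points of $S$.

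A minor secondary remark: the reduction in Lemma~\ref{wedge} from plane coverings to wedges picks up a multiplicative constant $K$ depending on $P$ (cf.\ the summary of the proof of Theorem~A), so strictly speaking $\tfrac{7\cdot 5^k-15}{20}$ is the wedge bound; the paper is itself somewhat informal on this point.
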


\subsection{Path Decomposition and Level Curves}

In this chapter we present two generalizations of the boundary method that are used to prove the other positive theorems, Theorem A (iii), C (ii) and C (iii).

\subsubsection{Classification of wedges}\label{sec:class}

In order to prove Theorem A (iii), that says all open convex polygons are cover-decomposable, in \cite{PT10} some new ideas were developed. 
In the previous results we colored a point set 
with respect to $P$-wedges, for some polygon $P$. 
In this paper, point sets are colored with respect to an {\em arbitrary} 
set of wedges. 

\medskip

\noindent {\bf Definition.}
Suppose that ${\cal W}=\{\ W_i\ |\ i\in I\ \}$ is a collection of wedges.
$\cal W$ is said to be {\em non-conflicting} or simply NC, if there is a
constant $m$ with the
following property.
Any finite set of points $S$ can be colored with two colors such that
any translate of a wedge $W\in {\cal W}$ that contains at least $m$ points of
$S$, contains points of both colors.\footnote{Note that if a collection of wedges, ${\cal W}=\{\ W_i\ |\ i\in I\ \}$ is NC, then so is ${\cal W}=\{\ W_i^*\ |\ i\in I\ \}$ where $W_i^*$ is the closure or interior of the wedge $W_i$. This is true because if we perturbate any $S$ such that the segment determined by any two points becomes non-parallel to any side of any of the wedges, then the collection of sets of points that can be cut off from $S$ by a translate of a wedge from ${\cal W}$ will not decrease.}

\medskip

It turns out that a single wedge is always NC. 
Then pairs of wedges which are NC are characterized.
Finally, it is shown that a set of wedges is NC if and only if 
each pair is NC. From this characterization it follows directly that
for any convex polygon $P$, the set of $P$-wedges is NC.

\begin{lem}\label{1wedge} 
A single wedge is NC.
\end{lem}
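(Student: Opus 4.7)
The plan is to show that a single (convex) wedge $W$ is NC with the optimal constant $m=2$, via a coloring built from the natural staircase structure associated to $W$.

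After a linear change of coordinates I may assume that $W$ is the closed upper-right quadrant, so that each translate $W(t)$ is the upper set $\{p : p\ge t\}$ in the componentwise partial order on the plane; a small generic perturbation of $S$ makes all $x$- and $y$-coordinates distinct. Let $B\subset S$ be the set of componentwise-maximal points. Its elements can be listed as $b_1,b_2,\dots,b_k$ with strictly increasing $x$-coordinates and strictly decreasing $y$-coordinates, i.e.\ as a staircase. Two easy structural facts drive the whole argument. First, $W(t)\cap B$ is a contiguous interval of the staircase, since $(b_i)_x\ge t_x$ holds from some index onwards while $(b_i)_y\ge t_y$ holds up to some index. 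Second, every non-empty $W(t)\cap S$ contains a staircase point: a componentwise-maximal element of $W(t)\cap S$ cannot be strictly dominated by anything in $S$ (a strict dominator would lie in $W(t)$ too), so it lies in $B$.

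The coloring is then straightforward. Color the staircase alternately, say $b_i$ red for $i$ odd and blue for $i$ even. For each $p\in S\setminus B$ let $i_0(p):=\min\{i : b_i\ge p\}$, which is well-defined because some boundary point of $S$ dominates $p$, and color $p$ with the color opposite to that of $b_{i_0(p)}$.

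To verify that every $W(t)$ with $|W(t)\cap S|\ge 2$ contains both colors, I split on $|W(t)\cap B|$. If this number is at least $2$, the first structural fact produces two consecutive staircase points $b_i,b_{i+1}$ inside $W(t)$, which have opposite colors by the alternation. If $W(t)\cap B=\{b\}$ and $p\in W(t)\setminus B$, then $b$ is the unique maximum of $W(t)\cap S$, so $b\ge p$; moreover the chain $b_{i_0(p)}\ge p\ge t$ shows $b_{i_0(p)}\in W(t)\cap B=\{b\}$, hence $b_{i_0(p)}=b$ and $p$ is colored opposite to $b$. The case $W(t)\cap B=\emptyset$ is ruled out by the second structural fact.

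The step I expect to carry the argument is this first-dominator observation: the chain $b_{i_0(p)}\ge p\ge t$ forces the earliest staircase dominator of any interior point to lie inside any translate containing that point, and it is precisely this "capture" that keeps the threshold $m$ constant (and in fact equal to $2$). The only remaining bookkeeping is handling coordinate ties, which disappear after perturbation, and the degenerate half-plane case, where the staircase collapses to a single rightmost point and the verification reduces to sorting $S$ along the normal direction and alternating.
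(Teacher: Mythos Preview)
Your argument is correct and is essentially the paper's second proof of this lemma (the one via the $W$-boundary and shadows rather than the path decomposition): your staircase $B$ is precisely $Bd^W(S)$, the alternation is the same, and your first-dominator rule agrees with the paper's shadow rule on every point the verification actually needs---namely the points lying in a translate that meets the boundary in a single element---while the paper simply colors the remaining interior points arbitrarily. One small omission: the lemma is stated for arbitrary wedges, and the paper handles the case of angle $\ge\pi$ separately (writing such a wedge as the union of two halfplanes and taking $m=3$); your closing remark covers only the half-plane limit and not angles strictly greater than $\pi$.
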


A very important tool in the the proof of Lemma \ref{1wedge},
and the following lemmas, is the {\em path decomposition}
which is the generalization of the concept of the boundary.
We give the proof of  Lemma \ref{1wedge} to illustrate this method.

\medskip

\noindent {\bf Proof.}
Let $S$ be a finite point set and $W$ a wedge. 
We prove the statement with $k=3$, that is, 
$S$ can be colored with two colors such that
any translate of $W$ that contains at least $3$ points of
$S$, contains a point of both colors. 
Suppose first that the angle of $W$ is at least $\pi$. Then $W$ is the union
of two halfplanes, $A$ and $B$.
Take the translate of $A$ (resp. $B$) that contains exactly two points of $S$,
say,
$A_1$ and $A_2$ (resp. $B_1$ and $B_2$).
There might be coincidences between $A_1$, $A_2$ and $B_1$, $B_2$, but 
still, 
we can color the set $\{ A_1, A_2, B_1, B_2 \}$
such that $A_1$ and $A_2$ (resp. $B_1$ and $B_2$) are of different colors.
Now, if a translate of $W$ contains three points, it contains either
$A_1$ and $A_2$, or $B_1$ and $B_2$, and we are done.

\smallskip

Suppose now that the angle of $W$ is less than $\pi$. 
We show that in this case the NC property holds with $k=2$. 
We can assume that the positive $x$-axis is in $W$, 
this can be achieved by an appropriate rotation. For simplicity, also suppose that no direction determined by two points of $S$ is parallel to the sides of $W$ as with a suitable perturbation this can be achieved. 

For any fixed $y$, let $W(2;y)$ be the translate of $W$
which\\
(1) contains at most two points of $S$,\\
(2) its apex has $y$-coordinate $y$, and\\
(3) its apex has minimal $x$-coordinate.\\
It is easy to see that for any $y$, $W(2;y)$ is uniquely defined.
Examine, how $W(2;y)$ changes as $y$ runs over the real numbers.
If $y$ is very small (smaller than the $y$-coordinate of the points of $S$),
then  $W(2;y)$ contains two points, say $X$ and $Y$, and one more, $Z$, on its boundary.
As we increase $y$, the apex of $W(2;y)$ changes continuously.
How can the set $\{ X, Y\}$, of the two points in $W(2;y)$ change?
For a certain value of $y$, one of them, say, $X$, moves to the boundary. At
this point we have $Y$ inside, and two points, $X$, and $Z$ on the boundary.
If we slightly further increase $y$, then $Z$ {\em replaces} $X$, that is,
$Y$ and $Z$ will be in $W(2;y)$ (see Figure \ref{fig:replace}). As $y$ increases to infinity,
the set $\{ Z, Y\}$ could change several times, but each time it changes in the above described manner.
Define a directed graph whose vertices are the points of $S$, and there is an edge from $u$ to 
$v$ if $v$ replaced $u$ during the procedure.
We get two paths, $P_1$ and $P_2$. The pair $(P_1, P_2)$ is called
the {\em path decomposition of $S$ with respect to $W$, of order two} (see Figure \ref{fig:path2}).

\begin{figure}[htb]
\begin{center}
\scalebox{0.4}{\includegraphics{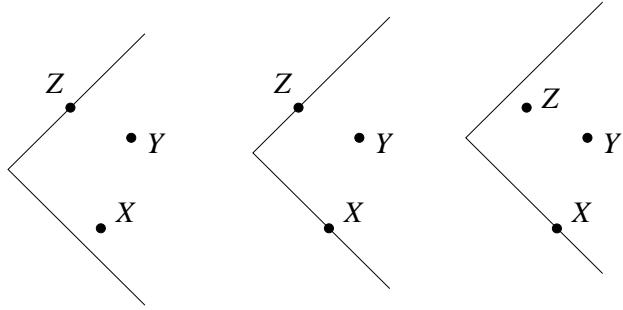}}
\caption{$Z$ replaces $X$ in $W(2; y)$.}
\label{fig:replace}
\end{center}
\end{figure}

Color the vertices of $P_1$ red, the vertices of $P_2$ blue.
Observe that each translate of $W$ that
contains at least two points, contains at least one vertex of both $P_1$ and
$P_2$. This completes the proof. \hfill $\Box$

\begin{figure}[htb]
\begin{center}
\scalebox{0.8}{\includegraphics{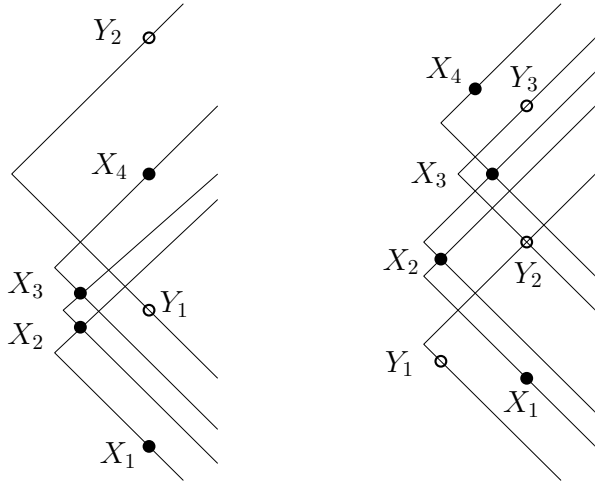}}
\caption{Path decompositions of order two. $P_1=X_1X_2\ldots $,
  $P_2=Y_1Y_2\ldots $.}
  \label{fig:path2}
\end{center}
\end{figure}

We can define the {\em path decomposition of $S$ with respect to $W$, of order
$k$} very similarly.
Let $W(k;y)$ be the translate of $W$
which (1) contains at most $k$ points of $S$, (2) its apex has $y$-coordinate
$y$, and (3) its apex has minimal $x$-coordinate.
Suppose that for $y$ very small, $W(k;y)$ contains the 
points $r_1, r_2, \ldots, r_k$, and at least one more on its boundary.
Just like in the previous description, as we increase $y$, the set 
$\{ r_1, r_2, \ldots, r_k \}$ changes several times, such that 
one of its elements is {\em replaced} by some other vertex.
Define a directed graph on the vertices of $S$ such that there is 
an edge from $r$ to $s$ if $s$ replaced $r$ at some point. 
We get the union of $k$ directed paths, $P^W_1$, $P^W_2$, $\ldots$, $P^W_k$, 
which
is called 
the {\em order $k$ path decomposition of $S$ with respect to $W$}.
Note that the order $1$ path decomposition is just the $W$-boundary of $S$, so this notion is a generalization of the boundary.\footnote{But in general, $P_1(r)$ is not the boundary for higher order path decompositions! Although the union of the paths contains the boundary, the points of the boundary do not necessarily form a path.}

\begin{obs}\label{obs2}
(i) Any translate of $W$ contains an interval of each of 
$P^W_1$, $P^W_2$, $\ldots$, $P^W_k$, and (ii) if a translate of $W$ 
contains $k$ points of $S$, then it contains exactly one point of each of
$P^W_1$, $P^W_2$, $\ldots$, $P^W_k$.
\end{obs}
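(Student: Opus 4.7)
My plan is to prove (ii) by a direct comparison with the canonical wedge $W(k;y)$, and then derive (i) from a monotonicity property of each path $P^W_j$.

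For (ii), let $W'=W(A')$ be a translate with apex $A'=(\alpha',y')$ satisfying $|W'\cap S|=k$. Compare $W'$ to $W(k;y')=W(A(y'))$. If $\alpha'<\alpha(y')$, then $W'\supsetneq W(k;y')$ and, by the minimality in the definition of $\alpha(y')$, $W'$ would contain more than $k$ points of $S$, contradicting our assumption. Thus $\alpha'\ge\alpha(y')$, so $W'\subseteq W(k;y')$, and the chain $k=|W'\cap S|\le|W(k;y')\cap S|\le k$ forces equality of the two intersections. By construction this common $k$-element set consists of the points occupying the $k$ slots at parameter $y'$, which is precisely one point from each path $P^W_j$.

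For (i), write $W(A)=\{p:f_1(p)\ge f_1(A) \text{ and } f_2(p)\ge f_2(A)\}$, where $f_1,f_2$ are the two linear functionals defining the sides of $W$. Since the positive $x$-axis lies in the interior of $W$ (which we may arrange by a harmless rotation) and the angle of $W$ is less than $\pi$, we may label $f_1,f_2$ so that both have positive $x$-coefficient while $f_1$ has positive and $f_2$ has negative $y$-coefficient. The key claim is that along every path $P^W_j$, the values of $f_1$ are strictly increasing and the values of $f_2$ are strictly decreasing. Given the claim, for any translate $W'=W(A')$ we have
\[
W'\cap P^W_j=\{x\in P^W_j:f_1(x)\ge f_1(A')\text{ and }f_2(x)\ge f_2(A')\},
\]
which is the intersection of a suffix of $P^W_j$ (coming from the first condition, by monotonicity of $f_1$) with a prefix of $P^W_j$ (coming from the second), hence an interval of $P^W_j$ as required.

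The main obstacle is proving the monotonicity claim, which I would establish at each replacement edge $x_i\to x_{i+1}$ separately. At the critical parameter $y^*$ of such an edge, both $x_i$ and $x_{i+1}$ lie on the boundary of $W(A(y^*))$; the plan is to show via a sign analysis of the rate $\alpha'(y^*)$ that $x_i$ must lie on the $f_1$-boundary (the side whose constraint tightens as $y$ increases, since $b_1>0$) and $x_{i+1}$ on the $f_2$-boundary, as the opposite assignment $\sigma=2,\tau=1$ would force simultaneously $\alpha'(y^*)>-b_2/a_2>0$ and $\alpha'(y^*)<-b_1/a_1<0$, which is impossible. The boundary conditions then yield $f_1(x_i)=f_1(A(y^*))<f_1(x_{i+1})$ (since $x_{i+1}$ is strictly inside with respect to $f_1$ in generic position) and $f_2(x_i)>f_2(A(y^*))=f_2(x_{i+1})$, establishing the strict monotonicity. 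The delicate points are handling possible jump discontinuities of $\alpha(y)$ at critical parameter values (where the sign analysis must be carried out with one-sided difference quotients in place of derivatives) and invoking the standing genericity hypothesis on $S$ to rule out simultaneous boundary coincidences.
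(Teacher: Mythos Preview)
The paper states this result as an \emph{Observation} and does not supply a proof, so there is nothing to compare against directly. Your argument is correct and is essentially the intended one.

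Your proof of (ii) is clean and complete. For (i), your monotonicity claim (that $f_1$ strictly increases and $f_2$ strictly decreases along each $P^W_j$) is exactly the right mechanism, and your sign analysis ruling out $\sigma=2,\tau=1$ is valid. One simplification: rather than reasoning about the possibly non-existent derivative $\alpha'(y^*)$, it is cleaner to use the left derivative. On the interval just before $y^*$, the outside point $Z$ sits on a fixed side, say the $f_\tau$-side, which forces $\alpha'(y)=-b_\tau/a_\tau$ there; then $\frac{d}{dy}f_\sigma(A(y))=a_\sigma(-b_\tau/a_\tau)+b_\sigma$, and this is positive (so $X$ can approach the $f_\sigma$-boundary from inside) only when $\sigma=1,\tau=2$. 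This sidesteps the discontinuity issue you flagged. The genericity assumption (no two points of $S$ determine a line parallel to a side of $W$) is already in force in the paper and handles the remaining degeneracies.

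It is worth noting that your monotonicity claim is equivalent to the assertion, used later in the paper's proof of Lemma~\ref{convlem2}, that whenever $X_1,X_2,X_3$ are consecutive on a path $P^W_j$ one has $X_2\in W(X_1,X_3)\cap(-W)(X_1,X_3)$; the paper treats that as evident too. A small geometric corollary of your analysis that you might find useful elsewhere: the leaving point always lies on the lower boundary ray and the entering point on the upper one, so each path $P^W_j$ is also strictly $y$-increasing.
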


Now we investigate the case when we have
{\em two} wedges.
We distinguish several cases according to the relative
position of the two wedges, $V$ and $W$.

\smallskip

{\bf Type 1 (Big):} One of the wedges has angle at least $\pi$.

\smallskip

For the other cases, we can assume without loss of generality that
$W$ contains the positive $x$-axis.
Extend the boundary halflines of $W$ to lines, they divide the plane
into four parts, Upper, Lower, Left, and Right, which latter is $W$ itself.
See Figure \ref{fig:wedgeW}.

\begin{figure}[p]
\begin{center}
\scalebox{0.4}{\includegraphics{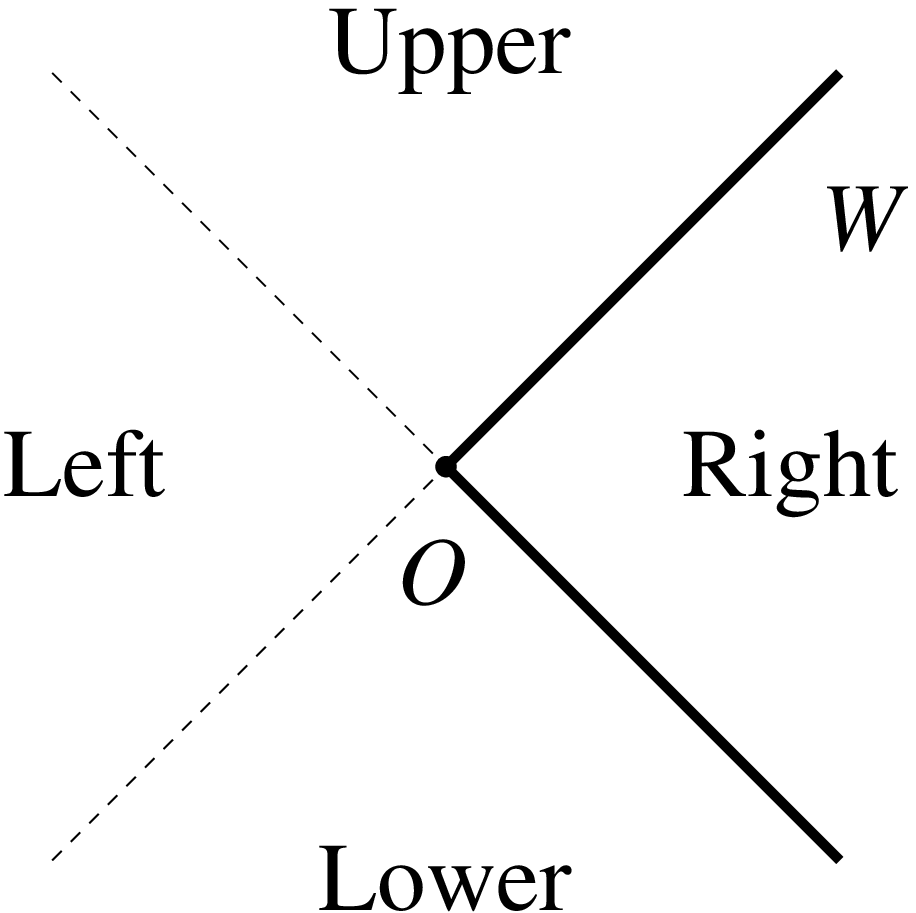}}
\caption{Wedge $W$}
\label{fig:wedgeW}
\end{center}
\end{figure}

\smallskip

{\bf Type 2 (Halfplane):} One side of $V$ is in Right and the other one is in
Left. That is, the union of the wedges cover a halfplane. See Figure \ref{fig:type2}.

\begin{figure}[p]
\begin{center}
\scalebox{0.5}{\includegraphics{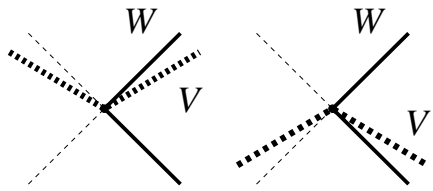}}
\caption{Type 2 (Halfplane)}
\label{fig:type2}
\end{center}
\end{figure}

\smallskip

{\bf Type 3 (Contain):}
Either (i) one side of $V$ is in Upper, the other one is in Lower, or
(ii) both sides are in Right or (iii) both sides are in Left. See Figure \ref{fig:type3}. 

\begin{figure}[p]
\begin{center}
\scalebox{0.5}{\includegraphics{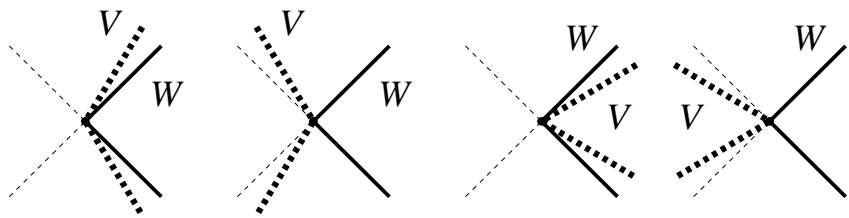}}
\caption{Type 3 (Contain)}
\label{fig:type3}
\end{center}
\end{figure}

\smallskip

{\bf Type 4. (Hard):} One side of $V$ is in Left and the other one is in Upper or
Lower.
This will be the hardest case. See Figure \ref{fig:type4}. 

\begin{figure}[p]
\begin{center}
\scalebox{0.5}{\includegraphics{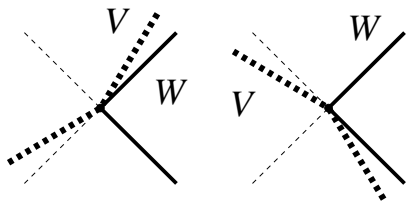}}
\caption{Type 4 (Hard)}
\label{fig:type4}
\end{center}
\end{figure}

\smallskip

{\bf Type 5. (Special):} Either (i)
one side of $V$ is in Right and the other one is in Upper or Lower,
or (ii) both sides are in Upper, or (iii) both sides are in Lower.
That is, the union of the wedges is in an open
halfplane whose boundary contains the origin, but
none of them contain the other. See Figure \ref{fig:type5}. 

\begin{figure}[p]
\begin{center}
\scalebox{0.5}{\includegraphics{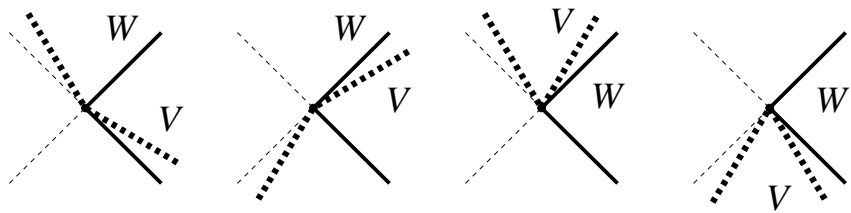}}
\caption{Type 5 (Special)}
\label{fig:type5}
\end{center}
\end{figure}

\smallskip

It is not hard to see that there are no other possibilities.

\begin{lem}\label{2wedge}
Let ${\cal W}=\{ V, W\}$ be
a set of two wedges, of Type 1, 2, 3, or 4.
Then $\cal W$ is NC.
\end{lem}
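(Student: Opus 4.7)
The plan is to handle the four configurations separately, with the common tool being the order-$k$ path decomposition of $S$ with respect to a wedge, together with a coloring that alternates along the resulting paths. In each case the goal is to exhibit an $m$ and a 2-coloring of $S$ such that every translate of $V$ or $W$ containing at least $m$ points meets both color classes.

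For Type 1, write the wedge of angle $\geq \pi$, say $V$, as a union of two halfplanes $V = A \cup B$. Following the proof of Lemma \ref{1wedge}, fixing the colors of the two pairs of extremal points (one pair for $A$, one pair for $B$) ensures that every translate of $V$ containing at least $3$ points meets both colors. The remaining vertices are colored by alternation along a sufficiently high-order path decomposition with respect to $W$, with a bounded local adjustment near the forced extremal points to keep the alternation consistent.

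For Type 2, the union $V \cup W$ at the common apex is a halfplane $H$, and every translate of $V$ or $W$ is contained in the corresponding translate of $H$; applying Lemma \ref{1wedge} to $H$ with threshold $3$ directly yields a valid 2-coloring of $\{V,W\}$. For Type 3, one of the wedges is contained in (or is related by reflection to) the other; we use a path decomposition of sufficiently high order with respect to the larger wedge, coloured by alternation along each path, and by Observation \ref{obs2} every large enough translate of either wedge meets at least two consecutive points on some path, hence both colors.

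Type 4 is the main obstacle: neither a halfplane reduction nor a containment is available. The plan is to take path decompositions of sufficiently high order with respect to $V$ and $W$ simultaneously, and define a coloring combining both: introduce rich vertices (boundary vertices forced blue by some small translate containing them as the only boundary vertex, taken with respect to $V$ and with respect to $W$ separately), and then greedily alternate red and blue along the remaining vertices in a way compatible with both path decompositions. The technical heart is to verify that these two alternation rules can be reconciled, and that the resulting coloring guarantees, for every sufficiently populous translate of $V$ or $W$, either a rich vertex (hence blue) or a long interval along one of the corresponding paths that forces both colors; this reduces to a careful geometric analysis of how translates of $V$ and $W$ interleave in the particular ``crossing'' configuration of Type 4, which is what excludes Type 5 from the statement.
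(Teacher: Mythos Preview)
Your proposal has genuine gaps in at least two of the four cases.

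\textbf{Type 2 is wrong.} You write that $V\cup W=H$ is a halfplane and that every translate of $V$ or $W$ is contained in the corresponding translate of $H$, then invoke Lemma~\ref{1wedge} for $H$. But the containment $V\subset H$ goes the wrong way: if a translate of $V$ contains $m$ points, the surrounding translate of $H$ contains \emph{at least} $m$ points and therefore both colors --- but those bichromatic points need not lie in $V$. Lemma~\ref{1wedge} applied to a big wedge $H$ only forces four specific extremal points to carry both colors, and a small wedge $V$ inside $H$ need not contain any of them. So this reduction does not work.

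\textbf{Type 3 is incomplete.} ``Alternation along each path'' of an order-$k$ path decomposition with respect to the larger wedge $W$ does not by itself handle translates of $W$: by Observation~\ref{obs2}, a translate of $W$ with exactly $k$ points hits each path in \emph{one} point, not two consecutive ones, so independent alternation along the paths can make all $k$ of those points the same color. The paper's proof of this case (Lemma~\ref{convlem2}) is the most technical: it uses an order-$4$ decomposition, introduces the friend/star/fan structure between $P_1^W$ and $P_2^W$, and carefully synchronizes the coloring across paths so that $W$-translates see a blue point while $V$-translates of size $\ge 7$ on $P_1^W\cup P_2^W$ see both colors. Your sketch does not address this synchronization at all.

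\textbf{The paper's overall architecture is different.} Rather than attacking NC directly in each case, the paper first proves Type~3 is NC (the hard direct case), then introduces the weaker asymmetric notion ANC (Definition~\ref{convdefi2}: red for $V$, blue for $W$) and proves the key reduction Lemma~\ref{convlem3}: any non-Type-5 pair that is ANC is automatically NC. This reduction uses the Type~3 result internally. After that, Types~1, 2, and 4 are each shown to be ANC by short boundary arguments (Lemmas~\ref{convlem4}, \ref{convlem5}, \ref{convlem6}); in particular Type~4 becomes a two-paragraph claim about $V$- and $W$-boundaries rather than the open-ended ``reconciling two alternation rules'' you describe. Your proposal misses this ANC$\Rightarrow$NC reduction, which is what makes the remaining cases tractable.
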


This lemma is proved in Section \ref{sec:convex}, for each case separately.
It is also shown in \cite{P10} that if ${\cal W}=\{ V, W\}$ is a set of two wedges
of Type 5 (Special), then $\cal W$ is {\em not} NC. For the proof and its
consequences see Section \ref{sec:concave}.
In case of several wedges we have

\begin{lem}\label{sokwedge}
A set of wedges ${\cal W}=\{ W_1, W_2, \ldots ,
W_t \}$ is NC if and only if any pair $\{W_i, W_j \}$ is NC.
\end{lem}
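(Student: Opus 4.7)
The ``only if'' direction is immediate: if $\mathcal W$ is NC with some constant $m$, then the same $m$ witnesses the NC property for any subcollection, in particular for any pair $\{W_i,W_j\}$. So the content lies entirely in the converse.

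For the ``if'' direction, I plan to proceed by induction on $t$. The cases $t=1$ and $t=2$ are covered by Lemma~\ref{1wedge} and by the hypothesis itself. For the inductive step, assume every pair in $\mathcal W=\{W_1,\ldots,W_t\}$ is NC with a common constant $m$, and by the inductive hypothesis $\mathcal W'=\{W_1,\ldots,W_{t-1}\}$ is NC with some constant $m'$. Given a finite set $S$, my plan is to use the order-$N$ path decomposition of $S$ with respect to $W_t$, for $N$ chosen suitably large in terms of $m,m',t$. By Observation~\ref{obs2} this yields directed paths $P_1^{W_t},\ldots,P_N^{W_t}$ such that every translate of $W_t$ containing at least $N$ points of $S$ meets each path in a nonempty interval. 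I will first invoke the inductive hypothesis to produce a coarse 2-coloring of $S$ that is proper for $W_1,\ldots,W_{t-1}$, and then locally recolor along each $W_t$-path so that every sufficiently long sub-interval of it contains both colors; the pairwise NC property of each pair $\{W_i,W_t\}$ (with $i<t$) should supply the compatibility needed to perform these local adjustments without destroying propriety for $W_i$.

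The principal obstacle is exactly this compatibility step: the alternation that has to be forced along the $W_t$-paths can generically conflict with the coloring dictated by the inductive hypothesis for the first $t-1$ wedges. My plan is to handle it in two phases---a coarse IH coloring, followed by a refinement inside each $P_j^{W_t}$ that uses the pairwise NC property of $\{W_i,W_t\}$ as a structural tool rather than a black box---and to choose $N$ so large (polynomial in $m$, $m'$, and $t$) that the refinements remain confined to short segments and hence cannot ruin the global proper coloring for any earlier $W_i$. Tracking the resulting constant $m(\mathcal W)$ explicitly as a function of $m$, $m'$, and $t$ is likely to be the most delicate part of the write-up.
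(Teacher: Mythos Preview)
Your ``only if'' direction is fine, and the inductive setup is reasonable. The genuine problem is the step you yourself flag as the principal obstacle: you propose to take an IH-coloring that is good for $W_1,\ldots,W_{t-1}$ and then \emph{recolor} along the $W_t$-paths to enforce both colors in long intervals. You assert that the pairwise NC property of $\{W_i,W_t\}$ ``should supply the compatibility'' to do this without damaging the coloring for $W_i$, but you give no mechanism. Pairwise NC is an existence statement about \emph{some} 2-coloring of a point set; it does not tell you that an \emph{arbitrary} given coloring can be locally repaired for one wedge while preserving another. Opening up the pairwise proofs ``as a structural tool rather than a black box'' does not obviously help either, since those arguments (Lemmas~\ref{convlem2}, \ref{convlem4}, \ref{convlem5}, \ref{convlem6}) are type-by-type and build specific colorings from scratch rather than modify pre-existing ones. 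As written, this step is a gap, not a plan.

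The paper sidesteps the compatibility issue entirely by reversing the order of operations. Instead of coloring first and repairing later, it first \emph{partitions} $S$ into parts $S_1,\ldots,S_t$ (this is Lemma~\ref{convlem7}) with the property that any translate of $W_i$ containing at least $f(3,t)$ points of $S$ already contains at least $3$ points of $S_i$. The pairwise hypothesis (in its ANC form) is what drives this partition, via iterated path decompositions. Once the partition is in hand, each $S_i$ is colored \emph{independently} using only the single-wedge Lemma~\ref{1wedge}; there is no interaction between different wedges at the coloring stage, and hence no repair step at all. If you want to salvage your inductive scheme, the clean route is to prove a statement like Lemma~\ref{convlem7} first---it is itself proved by induction on $t$, and its base case $f(s,2)$ is exactly where the pairwise hypothesis enters.
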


It is obvious that if two wedges are not NC then ${\cal W}$ 
can not be NC. Therefore, a set of wedges is NC if and only if none of the pairs is of Type 5 (Special).
The proof of Lemma \ref{sokwedge} can again be found in Section \ref{sec:convex}. In fact, a somewhat stronger statement is true. At the end of Section \ref{sec:convex} it is shown that if ${\cal W}$ is NC, then for any $k$ there is an $m_k$ such that any finite point set can be colored with $k$ colors such that if a translate of a wedge from ${\cal W}$ contains at least $m_k$ points, then it contains all $k$ colors.

To finish the proof of Theorem A (iii), 
observe that two wedges corresponding to the vertices 
of a convex polygon cannot be of Type 1 (Big) or of Type 5 (Special).
A summary of the whole proof of the theorem can be found at the end of Section \ref{sec:convex}.

\subsubsection{Level curves and decomposition to $\Omega(k)$ parts for symmetric polygons}

The {\em level curve} method was invented by Aloupis et. al. \cite{A08} at the same time and independently from the path decomposition. 
Again suppose that the angle of $W$ is less than $\pi$ and $W$ contains the positive $x$-axis.
Now define the level curve of depth $r+1$, $\C(r)$, as the collection of the apices of $W(r;y)$.\footnote{In \cite{A08} they denote this by $\C(r+1)$ because they work with closed wedges.}
Another equivalent way to define $\C(r)$ is as the boundary of the union of all the translates of $W$ containing at most $r$ points. 

Note that this curve consists of straight line segments that are parallel to the sides of $W$.
$\C(0)$ goes through all the boundary points, this shows that this notion is a generalization of the boundary.
If $p\in\C(r)$, then $|W(p)\cap S|$ is either $r$ or $r-1$ and it is $r-1$ only a finite number of times, when the respective translate has a point on both of its sides.

Now with the level curve method, we prove Theorem C (ii), as was done in \cite{A08}.

Suppose our symmetric polygon $P$ has $2n$ vertices.
We denote the $P$-wedges belonging to them in clockwise order by $W_0,\ldots, W_{2n-1}$. All the indices should be considered in this section modulo $2n$.
We call two wedges $W_i$ and $W_j$ {\em antipodal} if $i+n\equiv j$ modulo $2n$, that is, if they are the wedges belonging to two opposite vertices of the polygon.
A crucial observation is (already used in \cite{P86}) that any two $P$-wedges that are not antipodal, cover a half-space.

For every side of $P$, take two lines parallel to it that cut off $2r+2$ points from each side of $S$. Denote the intersection of the $n$ stripes formed by these lines by $\T$. Any large enough wedge has to intersect $\T$, thus it is enough to care about the wedges whose apex lies in $\T$. Now if we consider the level curves $\C^{W_i}(r)$, a simple geometric observation shows that only level curves belonging to antipodal wedges may cross inside $\T$ and some further analysis shows that in fact there can be only one such pair (note the similarity to the singular points in case of symmetric polygons). This means that the regions cut off from $\T$ by the curves $\C^{W_i}(r)$ are all disjoint with the possible exception of one pair. Without loss of generality, these are the curves of $W_0$ and $W_n$.

Another easy observation shows that any translate of $W_i$ that contains at least $3r+5$ points, must contain a point from $\C^{W_i}(r)\cap\T$, thus also a translate of $W_i$ whose apex is on the level curve inside $\T$, containing $r$ points from $S$. Therefore it is enough to care about these wedges, whose apex lies on the respective level curve.
It is possible to parametrize these wedges with the circle parameterized by $[0,2n)$ such that $W(t)$ is a translate of $W_{\left\lfloor t\right\rfloor}$.
A crucial geometric observation is that if $p\in W(\left\lfloor t\right\rfloor+x)\cap W(\left\lfloor t\right\rfloor+z)$, where $0\le x\le 1$ and $0\le z\le n$, then $p\in W(\left\lfloor t\right\rfloor+y)$ for all $x\le y\le z$.
If $p\in W(\left\lfloor t\right\rfloor+x)\cap W(\left\lfloor t\right\rfloor+z)$, where $0\le x\le 1$ and $n\le z\le n+1$, then $p$ is contained in two antipodal wedges implying that it is contained in translates of $W_0$ and $W_n$ but in no translates of any other other wedge from $W(t)$.
Therefore, every $p$ is contained either in an interval of the circle $[0,2n)$, or in two intervals, one of which is a subinterval of $[0,1]$, the other of $[n,n+1]$.
The simplest is if we take care of these two types separately, as any big wedge contains a lot of points from one of these groups. The first type forms a circular interval graph, if every point of the circle is covered $m'$-fold, then we can decompose this to $m'/3$ coverings with a simple greedy algorithm. In the second type, we want to color points with respect to a wedge and its rotation with $180$ degrees. The greedy algorithm again gives a good decomposition from an $m''$-fold covering into $m''/3$ coverings. Putting the numbers together this implies that $m_k\le 18k+5$ for any system of wedges derived from a symmetric polygon. This has to be multiplied by a constant depending on the shape of the polygon that comes from Lemma \ref{wedge} to get a bound for the multiple-cover-decomposability function $m_k$ of the polygon.

\subsubsection{Decomposition to $\Omega(k)$ parts for all polygons}\label{sec:gv}
The decomposition to multiple coverings is also motivated by the following problem, called {\it Sensor Cover problem}.

Suppose we have a finite number of sensors in a region $R$, each monitoring some
part of $R$, which is called the range of the sensor. 
Each sensor has a duration for which it can be active and once
it is turned on, it has to remain active until this duration is over, after
which it will stay inactive. The load of a point is the sum of the durations 
of all ranges that contain it, and the load of the 
arrangement of sensors is the minimum load of the points of $R$.
A schedule for the sensors is a starting time for each sensor that determines when it starts to be active.

The goal is to find a schedule to 
monitor the given area, $R$, for as long
as we can. Clearly, the cover decomposability problem is a special case of 
the Sensor Cover problem, when the duration of each sensor is the same. Gibson and Varadarajan 
in \cite{GV10} proved their result in this more general context.

\medskip

\noindent {\bf Theorem D.} \cite{GV10} {\em For any open convex polygon $P$ there is a $c(P)$ such that for any instance of the Sensor Cover problem with load $k\cdot c(P)$
where each range is a translate of $P$,
there is a polynomial time computable schedule such that every point is monitored for $k$ time units.}

\medskip

In the special case where the duration of each sensor is $1$ unit of time and $R$ is the whole plane, this is equivalent to Theorem C (iii). 
As the proof is essentially the same, we will only sketch the proof for this special case to avoid changing terminology. In their proof they use the usual dualization and reduction to wedges, because of which it is enough to prove the following theorem (for the special case).

\medskip

\noindent {\bf Theorem D'.} \cite{GV10} {\em If ${\cal W}=\{\ W_i\ |\ i\in [n]\ \}$ is a system of $P$-wedges, then there is an $\alpha_n$ depending only on $n$, such that any point set $S$ can be colored with $k$ colors such that any translate of a wedge from ${\cal W}$ that contains at least $\alpha_n k$ points, contains all $k$ colors.}

\medskip

Note that any two $P$-wedges are of Type 2 (Halfplane), 4 (Hard) or a special case of 3 (Contain). Their main lemma is the following easy observation. 

\begin{lem}\label{partialcolor} For any point set $Q\subset P$, any wedge $W$, any $k$ and any $L\ge 2k$, we can partially color the points of $Q$ with $k$ colors such that any translate of $W$ that contains $L$ points of $P$ and at least $2k$ points of $Q$ contains at most $2k$ colored points but contains all $k$ colors. Moreover, if a point $z$ is colored, then all points in $Q\cap W(z)$ are colored.
\end{lem}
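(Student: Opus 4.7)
The plan is to exploit the wedge partial order $\prec$ on $Q$ defined by $q\prec z$ iff $z\in W(q)$, together with the path decomposition of $Q$ of order $2k$ introduced in the proof of Lemma~\ref{1wedge}. Since $q\prec z$ implies $W(z)\subseteq W(q)$, the depth function $\delta(z):=|Q\cap W(z)|$ is antitone in $\prec$, and the ``moreover'' clause of the lemma is equivalent to requiring that the colored set $C\subseteq Q$ be upward-closed in $\prec$; any sublevel set of $\delta$ satisfies this automatically.

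For the explicit construction I would first take the path decomposition of $Q$ with respect to $W$ of order $2k$, producing paths $\rho_1,\ldots,\rho_{2k}$, and assign to $\rho_i$ the color $\lceil i/2\rceil$, so that each of the $k$ colors corresponds to two paths. The colored set would be $C=\{z\in Q:\delta(z)<2k\}$, i.e., those points of $Q$ whose upward $W$-shadow contains fewer than $2k$ other $Q$-points, each carrying the color of its path. For any translate $W^*$ of $W$ with $|Q\cap W^*|\ge 2k$, the containment $W(z)\subseteq W^*$ for $z\in W^*$ implies that $\delta(z)$ counts exactly the $Q$-points of $W^*$ strictly $\prec$-above $z$, so $C\cap W^*$ is precisely the top-$2k$ layer of $Q\cap W^*$ in $\prec$; this gives the bound $|C\cap W^*|\le 2k$. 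By Observation~\ref{obs2}, every such $W^*$ meets each $\rho_i$ in a nonempty interval whose $\prec$-maximal point lies in the top-$2k$, so the cyclic color assignment should make all $k$ colors appear in $C\cap W^*$.

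The hard part will be verifying that the top-$2k$ of $Q\cap W^*$ really does distribute one-per-path across the $2k$ intervals $\rho_i\cap W^*$. In generic position each path contributes its $\prec$-maximal element to the top layer, but in degenerate configurations where two paths' intervals are $\prec$-comparable this may fail, concentrating the top layer in fewer than $k$ colors. The hypothesis $|P\cap W^*|=L$ with $L\ge 2k$ would enter here: the $L$-level curve of $P$ inside $W^*$ provides a secondary order (via $\delta_P(z)=|P\cap W(z)|$) fine enough to refine $\prec$ to a consistent linear extension placing exactly one point of each path-interval in the top-$2k$. Making this tiebreaking argument precise, and verifying that the resulting coloring achieves both the $\le 2k$ bound and all $k$ colors in every relevant $W^*$, is the core technical obstacle.
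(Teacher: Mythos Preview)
Your argument has a genuine error in the step where you claim $|C\cap W^*|\le 2k$. You treat the wedge order $\prec$ as if it were a total order, so that ``the top-$2k$ layer'' would automatically consist of at most $2k$ points. But $\prec$ is only a partial order, and the set $\{z\in Q\cap W^*:\delta(z)<2k\}$ can be arbitrarily large. Concretely, take $W$ to be the north-east quadrant and let $Q=P$ consist of $n$ points on a line of slope $-1$. These points form an antichain in $\prec$, so $\delta(z)=1$ for every $z\in Q$, and hence $C=Q$. Now fix any $L$ with $2k<L\le n$ and take a translate $W^*$ containing exactly $L$ of the points; it contains $L>2k$ colored points, violating the bound. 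The later remark that the hypothesis $|P\cap W^*|=L$ should enter via ``a secondary order fine enough to refine $\prec$'' cannot rescue this: the colored set $C$ you defined depends only on $Q$, not on $P$ or $L$, so no hypothesis on $P$ can limit $|C\cap W^*|$ after the fact.

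The paper's proof is entirely different and this is where $L$ and $P$ actually do the work. One does not color a sublevel set of a depth function on $Q$; instead one looks at the level curve $\C^W(L)$ of $P$ and observes that each $q\in Q$ covers an interval of this curve (the set of apices $p$ on the curve with $q\in W(p)$). One then greedily selects points of $Q$ whose intervals cover the curve and assigns them color $1$, and repeats $k$ times. The key property of the greedy interval cover is that any point of the curve is covered by at most two selected intervals per round; since a translate $W^*$ with exactly $L$ points of $P$ has its apex on $\C^W(L)$, it contains at most $2k$ colored points. The ``moreover'' clause comes from the maximality criterion in the greedy step. Your path-decomposition idea for assigning the colors is a natural thought, but the selection of \emph{which} points to color must be tied to the $L$-level curve of $P$, not to a depth threshold in $Q$.
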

\begin{proof} In every step take a point from $Q$ that covers a maximal, yet uncovered interval of $\C^{W}(L)$\footnote{Here and later, the level curves are always with respect to $P$ and not to $Q$.} until the whole curve is covered, then color these points with one color and repeat.
\end{proof}

The trick is that we obtain a partial coloring using this lemma for a carefully chosen subset of $P$, any one of the wedges, $W_1\in{\cal W}$, $k$ and an $L=f(n) k$ (constant depending on $n$ to be specified later) such that $\alpha_{n-1} k$ points remain uncolored in any translate of any wedge from ${\cal W'}={\cal W}\setminus W_1$ that previously contained at least $\alpha_n k$ points. After applying this partial coloring $n$ times, we are done.

Before we can specify $Q$, we need to define an order on the plane for every line that is parallel to the side of a wedge. (Thus together this gives at most $2n$ orders for general wedges, for $P$-wedges it would give $n$.) The order is very similar to the one used for triangles in \cite{TT07}. For a wedge $W$ and a line $\ell$ parallel to one of its sides, define $p<_\ell q$ if the line parallel to $\ell$ through $q$ intersects $W(p)$. (So in the special case of $P$-wedges, $p_i$ and $p_{i+1}$ are the minimal vertices of $P$ according to $<_{p_ip_{i+1}}$.) For simplicity, we just refer to these orders as the $2n$ orders defined by the system ${\cal W}$.

Now we can define $Q$.
A point $p\in P$ is in $Q$ if there is a translate of $W_1$ containing exactly $L$ points from $P$ in which $p$ is not among the first $\alpha_{n-1} k$ points in any of the $2n-2$ orders defined by ${\cal W'}$.
Now let us apply Lemma \ref{partialcolor} to this $Q$, $W_1$, $k$ and $L=((2n-1)\alpha_{n-1}+6) k$.
Note that each translate of $W_1$ whose apex lies on the curve $\C^{W}(2k)$ will contain at least $2k$ points of $Q$ as $L\ge ((2n-2)\alpha_{n-1}+2) k$.

\begin{claim} If $W_i\in {\cal W'}$ contains $\alpha_n k$ points from $P$, where $\alpha_n\ge 3\alpha_{n-1}+6$, then it contains $\alpha_{n-1} k$ uncolored points after applying the coloring of Lemma \ref{partialcolor} to $Q$, $W_1$, $k$ and $L=((2n-1)\alpha_{n-1}+6) k$.
\end{claim}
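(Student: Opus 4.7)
The plan is to exhibit $\alpha_{n-1}k$ points of $W_i(s)\cap P$ that cannot belong to $Q$ and are therefore uncolored, since the coloring only ever touches points of $Q$.

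First I would fix an order $<_\ell$ among the $2n-2$ orders coming from ${\cal W'}$, choosing $\ell$ to be parallel to a side of $W_i$ itself. Let $U$ denote the set of the first $\alpha_{n-1}k$ points of $W_i(s)\cap P$ in this order. The aim is to show $U\cap Q=\emptyset$, which immediately gives the claim.

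Suppose, for contradiction, that $p\in U\cap Q$. By the definition of $Q$, there exists a translate $W_1(t)$ containing exactly $L$ points of $P$ with $p\in W_1(t)$, in which $p$ is not among the first $\alpha_{n-1}k$ points of $W_1(t)\cap P$ in any of the $2n-2$ orders; in particular there are at least $\alpha_{n-1}k$ points of $W_1(t)\cap P$ strictly $<_\ell$-below $p$. I would then argue that each such point lies in $W_i(s)$, which produces $\alpha_{n-1}k$ points of $W_i(s)\cap P$ strictly $<_\ell$-below $p$ and contradicts $p\in U$.

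The geometric transfer step --- ``$q<_\ell p$ with $q\in W_1(t)$, together with $p\in W_i(s)\cap W_1(t)$, implies $q\in W_i(s)$'' --- is the main obstacle, and it does not hold in complete generality. Since $W_1$ and $W_i$ are both $P$-wedges of the convex polygon $P$, the unordered pair $\{W_1,W_i\}$ falls into Type 2, 3 or 4 from Section \ref{sec:class}, and in each of these cases a short geometric case analysis, possibly after committing to the correct one of the two sides of $W_i$ when defining $\ell$, should show that the ``$<_\ell$-below $p$'' part of $W_1(t)$ sits inside $W_i(s)$. The slack given by $\alpha_n\ge 3\alpha_{n-1}+6$ is there to absorb low-order effects --- interactions with singular boundary points, and the need in the transfer step to perhaps invoke both orders of $W_i$ or to inflate $L$ slightly beyond the naive $2n\alpha_{n-1}k$ --- so that once the geometric step is in place, the counting concludes the proof cleanly.
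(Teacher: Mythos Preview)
Your strategy --- show that the $\alpha_{n-1}k$ smallest points of $W_i(s)\cap P$ in some $<_\ell$ order cannot lie in $Q$ --- is clean, but the transfer step you flag as the obstacle is a genuine obstacle, and not one a short case check removes. The implication ``$q<_\ell p$, $q\in W_1(t)$, $p\in W_i(s)\cap W_1(t)$ $\Rightarrow$ $q\in W_i(s)$'' is false for Type~2 and for the relevant Type~3 pairs, whichever side of $W_i$ you take for $\ell$. In Type~2 (the union of $W_1,W_i$ covers a halfplane) a translate $W_1(t)$ can meet $W_i(s)$ only near the boundary of $W_i(s)$ while extending arbitrarily far outside $W_i(s)$ in the direction of $W_1$; the $\alpha_{n-1}k$ points of $W_1(t)$ that are $<_\ell$-below $p$ then sit outside $W_i(s)$. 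For Type~3 with $W_i\subset -W_1$ the picture is similar. So the uniform argument collapses exactly where you feared, and no amount of slack in $\alpha_n$ repairs a failed containment.

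The paper's proof takes a different route. It splits by type and, crucially, for Types~2 and~3 it never argues that points avoid $Q$; instead it bounds the number of \emph{colored} points in $W_i(x)$ directly via the level curve $\C^{W_1}(L)$. For Type~2 the point is that $W_i(x)$ meets $\C^{W_1}(L)$ in at most one point $z$, and every colored point of $W_i(x)$ lies in $W_1(z)$, so there are at most $2k$ of them. For Type~3 the paper uses a continuity argument on $\C^{W_1}(L)\cap W_i(x)$ together with the ``moreover'' clause of Lemma~\ref{partialcolor} (colored points are downward-closed in $W_1$ within $Q$); this is where the large value of $L$ is actually spent, and it is what forces $L\ge((2n-1)\alpha_{n-1}+6)k$. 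Only in the final ``triangle'' case --- where $W_1$ and $W_i$ share a side direction --- does the paper invoke the definition of $Q$ in the spirit of your argument, and even there the factor $3$ in $\alpha_n\ge 3\alpha_{n-1}+6$ comes from first chopping a general $W_i$ into three sub-wedges, not from the low-order effects you describe. Your proposal is closest to this last case; to make it a proof you need the other two cases too, and they require different ideas (level-curve geometry and the downward-closure of colored points) that your outline does not supply.
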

\begin{proof}
The proof depends on the type of $W_1$ and $W_i$. First suppose they are of Type 2 (Halfplane). Take a translate of $W_i$, $W_i(x)$ containing $\alpha_n k$ points of $P$. If it does not intersect the level curve $\C^{W_1}(L)$, then we did not color any of its points, we are done. If it intersects this level curve, then the intersection can be only one point, $z$. Moreover, $W_1(z)$ contains all the colored points contained in $W_i(x)$. Since $W_1(z)$ contains at most $2k$ colored points, we are done if $\alpha_n k\ge (\alpha_{n-1}+2) k$.

The second case we consider is, if they are of Type 3 (Contain), such that a translate of $W_i$ is (not necessarily properly) contained in $-W_1$ (the wedge obtained by reflecting $W_1$ to the origin). 
Take a translate of $W_i$, $W_i(x)$ containing $\alpha_n k$ points of $P$.
If it does not intersect the level curve $\C^{W_1}(L)$, then we did not color any of its points, we are done.
If there is a $z\in \C^{W_1}(L)$ for which $W_1(z)\cap W_i(x)$ contains at least $(\alpha_{n-1}+2) k$ points, then we are done as only $2k$ of these can be colored.
Otherwise, for any $z\in \C^{W_1}(L)\cap W_i(x)$ denote by $a_z$ and $b_z$ the points where the boundary of $W_1(z)$ and $W_i(x)$ meet. So if $z$ is one of the two ends of the interval $\C^{W_1}(L)\cap W_i(x)$, we have $a_z=z$ or, respectively, $b_z=z$.
We also know that for any $z\in \C^{W_1}(L)$, the wedge $W_1(z)$ contains at least $L-((2n-2)\alpha_{n-1}) k$ points of $Q$.
A continuity argument shows that there is a $z'$ for which both $W_1(a_{z'})$ and $W_1(b_{z'})$ contain at least $\left(L-((2n-2)\alpha_{n-1}) k-(\alpha_{n-1}+2) k\right)/2$ points.
If this number is at least $2k$, then $W_i(x)\setminus W_1(z')$ cannot contain any colored points because of the moreover part of Lemma \ref{partialcolor}.
This implies that $W_i(x)$ can contain only the at most $2k$ colored points of $W_1(z')$.
So for this case we need the additional condition $L\ge ((2n-1)\alpha_{n-1}+6) k$.

Finally, notice that in the remaining cases, $W_i$ can be cut into three parts, $W_i^1$, $W_i^2$ and $W_i^3$, such that $W_i^1$ and $W_i^3$ have a side parallel to one of the sides of $W_1$ and for each there is a halfplane that contains it with $W_1$, while $W_i^2$ is contained in $-W_1$. If $W_i(x)$ contains at least $\alpha_n k$ points of $P$, then at least one of these three wedges must contain at least $\alpha_n k/3$ points. If it is $W_i^2$, then we are done as in the previous case if $\alpha_n k\ge 3(\alpha_{n-1}+2) k$. If it is one of the other two wedges, then we arrive to our last case.

Suppose $W_i(x)$ contains at least $\alpha_n k/3$ points of $P$, and there is a triangle such that $W_i$ and $W_1$ are among its three wedges.
Without loss of generality, suppose that their parallel side is the horizontal, they are contained in the ``upward'' halfplane and $W_1$ looks right, $W_i$ left. 
Again, if $W_i(x)$ does not intersect the level curve $\C^{W_1}(L)$, then we did not color any of its points, we are done. 
If it does, then consider the colored points in $W_i(x)$ in increasing order with respect to the order defined by the non-horizontal side of $W_1$.
If there are at most $2k$ colored points in $W_i(x)$, then we are done.
Otherwise, denote the $2k+1^{st}$ colored point according to this order by $y$.
Since $y$ is colored, there is a $z\in W_i(x)\cap \C^{W_1}(L)$ for which $y\in W_1(z)$. 
If $z\notin W_i(x)$, then $W_1(z)$ must contain all the points that are smaller than $y$ in the order, a contradiction, as it can contain at most $2k$ colored points.
If $z\in W_i(x)$, then we can use the property that $y\in Q$. A point was selected to $Q$ from $P$ only if there is a translate of $W_1$ containing exactly $L$ points from $P$ in which $p$ is not among the first $\alpha_{n-1} k$ points in any of the $2n-2$ orders defined by ${\cal W'}={\cal W}\setminus W_1$. The apex of this translate of $W_1$ must be on $W_i(x)\cap \C^{W_1}(L)$. But then $W_i(x)$ also contains the $\alpha_{n-1} k$ points smaller than $y$ in the order defined by the non-horizontal side of $W_i$, which are necessarily uncolored, thus we are done.
\end{proof}

Therefore we are also done with the proof of the theorem.
Note that the bound that we get for $\alpha_n$ grows superexponentially with $n$ because apart from $\alpha_n\ge 3\alpha_{n-1}+6$, we must also guarantee $\alpha_n\ge L=((2n-1)\alpha_{n-1}+6) k$ to make sure that also the translates of $W_1$ contain all $k$ colors.
We would like to remark that this bound can be made exponential by introducing a more sophisticated notation and demanding a different ``$\alpha$'' for each wedge in each step (so when there are $j$ wedges left, then the ``$\alpha$'' of $W_i$ should be approximately $2^i3^j$).

\subsection{Indecomposable Constructions}
In this section we survey results about coverings that cannot be decomposed into two coverings. The first such example was given in \cite{MP86}, where it was shown that the unit ball is not cover-decomposable. Thus for any $k$ there is a covering of $\mathbb R^3$ with unit balls such that every point is covered by at least $k$ balls, but the covering cannot be decomposed into two coverings. Later in \cite{PTT05} several other constructions were given, all based on the geometric realization of the same hypergraph not having Property B\footnote{We say that a hypergraph has Property B if the elements of the ground set can be colored with two colors such that any hyperedge contains both colors.}.
It was shown by Erd\H os \cite{E63} that the smallest number of sets of size $k$ that do not have Property B is at least $2^{k-1}$, so any indecomposable construction must be exponentially big. With a standard application of the Lov\'asz Local Lemma \cite{EL75} it can also be shown for ``nice'' geomteric sets that if every point is covered by less than exponentially many translates, then the covering is decomposable.

We start by presenting the construction of \cite{PTT05} using concave quadrilaterals proving Theorem B. Then we briefly preview the results of Section \ref{sec:concave}.

\subsubsection{Concave quadrilaterals}
We present the construction in the dual case.
We suppose that the vertices of the quadrilateral, $Q$, are $A, B, C$ and $D$ in this order, the obtuse angle being at $D$. This implies that $W_A$ and $W_C$ are of Type 5 (Special), moreover, they belong to an even more special subclass: When we translate the wedges such that their apices are in the origin, then they are disjoint and there is an open halfplane that contains both of their closures (see the two right examples in Figure \ref{fig:type5}). For simplicity, let us suppose that $W_A$ is a very thin wedge that contains a horizontal segment and $W_C$ is a very thin wedge that contains a vertical segment, the construction would work for any other two wedges that are derived from a concave quadrilateral.

First we give a finite set of points and a finite number of translates that show that $Q$ is not totally-cover-decomposable. Then we show how this construction is extendable to give a covering of the whole plane. The construction is based on a construction using translates of the wedges $W_A$ and $W_C$. We will use these wedges to realize the following $k$-uniform hypergraph, $\mathcal H$. The vertices of the hypergraph are sequences of length less than $k$ consisting of the numbers from $1$ through $k$: $V(\mathcal H)=[k]^{< k}$. There are two kinds of hyperedges. The first kind contains sequences of length $l$ whose restriction to their first $l-1$ members is the same. The second kind consists of a length $k-1$ sequence and all its possible restrictions. So $\mathcal H$ has roughly $k^k$ vertices and edges. 

\begin{figure}[htb]
\begin{center}
\scalebox{0.5}{\includegraphics{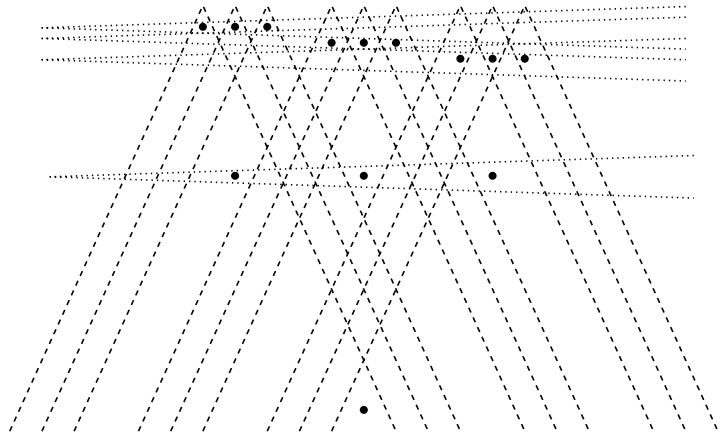}}
\caption{Indecomposable covering with two special wedges of a concave quadrilateral.}
\label{fig:quadrilateral}
\end{center}
\end{figure}

The hyperedges of the first kind are realized by translates of $W_A$, the second kind by translates of $W_C$. The vertices of the hypergraph are all very close to a vertical line. Also, vertices that belong to a hyperedge of the first kind are all on a horizontal line, for each edge on a different one (see Figure \ref{fig:quadrilateral}). It is easy to see that this is indeed a geometric realization of $\mathcal H$, so the points cannot be colored with two colors such that every translate of $W_A$ and $W_C$ of size $k$ contains both colors.

\begin{figure}[htb]
\begin{center}
\scalebox{0.5}{\includegraphics{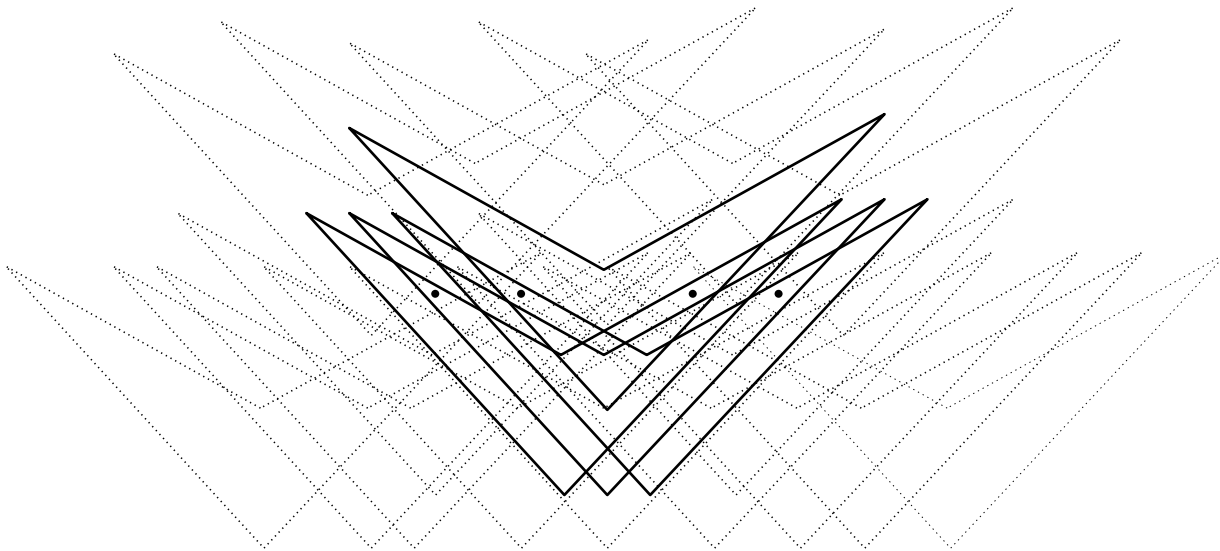}}
\caption{Extending the original $2$-fold covering of the four points by the solid quadrilaterals to a $2$-fold covering of the whole plane by adding the dotted quadrilaterals.}
\label{fig:quadriplane}
\end{center}
\end{figure}

Now we need to extend the corresponding covering to the whole plane. Before we do, notice that it can be achieved that the centers of all translates of $Q$ used in the construction lie on the same line. After going back from the dual to the primal, this means that we have a set of points, $S$, on a line, $\ell$, and an indecomposable, $k$-fold covering of them with translates of $Q$. Add all translates of $Q$ to our covering that are disjoint from $S$ (see Figure \ref{fig:quadriplane}). It is clear that the resulting covering remains indecomposable. Moreover, now every point not in $S$ will be covered infinitely many times\footnote{The construction can of course be modified to get a locally finite covering using a standard compactness argument.}, because $Q$ cannot have two sides that are parallel to $\ell$, so we can ``go in'' between any two points of $S$. Note that this statement is not necessarily true for arbitrary concave polygons, this is why the construction of \cite{P10} is not always extendable this way.

\subsubsection{General concave polygons and polyhedra}
The construction for concave polygons differs from the quadrangular case because it is no longer true that any pair of Type 5 (Special) wedges have the property that they can be translated such that their apices are in the origin and they are disjoint (see the two left examples in Figure \ref{fig:type5}). Because of this a different hypergraph (also not having Property B) is realized. This construction has less points (about $4^k$) and is more general as it can be realized by any pair of Type 5 (Special) wedges. The details can be found in Section \ref{sec:concave}.

However, this construction is not always extendable to give an indecomposable covering of the whole plane. Different notions of cover-decomposability and their connections are also studied in Section \ref{sec:concave}.
Finally, as a corollary of the construction, it is also shown at the end of Section \ref{sec:concave} that polyhedra (both convex and concave) are not cover-decomposable. This construction is extandable, thus we obtain that polyhedra are not space-cover-decomposable.

\clearpage
\section{Decomposition of Coverings by Convex Polygons}\label{sec:convex}

This section is based on our paper with G\'eza T\'oth, Convex polygons are cover-decomposable \cite{PT10}.

Our main result is the strongest statement, (iii), of Theorem A, which claims

\medskip

\noindent {\bf Theorem A.} {\em Every open convex polygon
is cover-decomposable.}

\medskip

We start by recalling some old definitions and making some new ones. Then we establish the earlier unproved Lemma \ref{2wedge} and \ref{sokwedge}, and finally, we summarize the proof of Theorem A.

\subsection{Preparation}

Now let $W$ be a wedge, and $X$ be a point in the plane. A translate of $W$
such that its apex is at $X$, is denoted by $W(X)$.
More generally, for points $X_1, X_2, \ldots X_k$,
$W(X_1, X_2, \ldots X_k)$ denotes the {\em minimal} translate of $W$
(for containment) whose
closure
contains $X_1, X_2, \ldots X_k$.
The set of all translates
of $W$ is denoted by $Tr^W$ and the set of those translates that contain exactly $k$ points from a point set $S$ is denoted by $Tr^W_k(S)$.
The reflection of $W$ about the origin is denoted by $-W$.

We can assume without loss of generality that the positive $x$-axis is in $W$, and that no two points from our point set, $S$, have the same $y$-coordinate.
Both of these can be achieved by an appropriate rotation.
We say that $X<_yY$ if the $y$-coordinate of $X$ is smaller than the
$y$-coordinate of $Y$. This ordering is called the $y$-ordering.
A subset $I$ of $S$ is an {\em interval} of $S$
if $\forall X<_y Y<_y Z\in S: X,Z\in I \rightarrow Y\in I.$

The {\em boundary} of $S$ with respect to $W$,
$Bd^W(S)=\{X\in P:W(X)\cap S=\emptyset\}.$
Note that a translate of $W$ always intersects the boundary in an interval.
For each $X\in Bd^W(S)$ the {\em shadow} of $X$ is
$Sh^W(X)=\{Y\in S: W(Y)\cap Bd^W(S)=X\}.$
Observe that $\forall X,Y\in Bd^W(S): Sh^W(X)\cap Sh^W(Y)=\emptyset.$

Now we give another proof using these notions for Lemma \ref{1wedge} that claims that any single wedge, $W$, is NC.
In fact, we show that if the angle of $W$ is less than $\pi$, then the points of $S$ can be colored with two colors such that any wedge that has at least two points contains both colors (so the NC property holds with $k=2$).

\medskip

\noindent {\bf Proof.} Color the points of the boundary alternating, according to the order $<_y$.
For every boundary point $X$,
color every point in the shadow of $X$ to the other color than $X$.
Color the rest of the points arbitrarily.
Any translate of $W$ that contains at least two points,
contains one or two boundary points.
If it contains one boundary point, then the other point is in its shadow,
so they have different colors.
If it contains two boundary points, then they are consecutive points according
to the $y$-order, so they have different colors again. \hfill $\Box$.

\subsection{NC wedges - Proof of Lemma \ref{2wedge} and \ref{sokwedge}}

Now we can turn to the case when we have
translates of two or more wedges at the same time.
Remember that any pair of wedges belong to a Type determined by their relative
position. The different types are classified in Section \ref{sec:class} and are depicted in Figures \ref{fig:type2}, \ref{fig:type3}, \ref{fig:type4}, \ref{fig:type5}.
It is shown in \cite{P10} that if ${\cal W}=\{ V, W\}$ is a set of two wedges
of Type 5 (Special), then $\cal W$ is {\em not} NC. In a series of lemmas we show that all
other pairs are NC, thus proving Lemma \ref{2wedge}.

\begin{lem}\label{convlem2} Let ${\cal W}=\{ V, W\}$ be
a set of two wedges, of Type 3 (Contain).
Then $\cal W$ is NC.
\end{lem}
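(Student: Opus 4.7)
The plan is to reduce Type 3 to two representative sub-cases by symmetry: sub-case (a), in which $V\subseteq W$ with both wedges having angle less than $\pi$ and opening in the same direction (say both containing the positive $x$-axis); and sub-case (b), in which $V\subseteq -W$, so that $V$ and $W$ open in opposite directions. The remaining variant of Type 3 (where $V$ is the \emph{wider} wedge) reduces to sub-case (a) by interchanging the names of $V$ and $W$.

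In sub-case (a), I will apply the path decomposition of Observation \ref{obs2} with respect to the narrower wedge $V$ at a suitably large order $m$, producing paths $P^V_1,\dots,P^V_m$. I will color the points of $P^V_1$ red and every other point of $S$ blue. By Observation \ref{obs2}(ii), every translate of $V$ containing exactly $m$ points of $S$ picks up exactly one point from each of these paths, and therefore contains both colors. For translates of $W$, I will decompose $W$ as $V\cup U_1\cup U_2$, where $U_1$ and $U_2$ are the two ``sliver'' wedges lying on the two sides of $V$ (each bounded by one side of $V$ and one side of $W$, all sharing the apex). Given a translate $W(a)$ with at least $m'$ points of $S$, either $V(a)$ already contains at least $m$ of them (in which case the coloring for $V$ applies inside $W(a)$), or at least $(m'-m)/2$ of them lie in one of the sliver translates $U_i(a)$. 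In the latter case I will argue that the sliver translate must meet at least two distinct paths of the $V$-decomposition, so it too contains points of both colors.

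In sub-case (b), $V$ and $W$ open in opposite directions but share a common perpendicular ($y$-) axis. Here the plan is to combine the shadow-based coloring from the proof of Lemma \ref{1wedge} applied to $V$ with the analogous coloring applied to $W$, using the fact that the containment $V\subseteq -W$ forces $Bd^V(S)$ and $Bd^W(S)$ to sit on opposite ``sides'' of the point set. Conflicts between the two colorings can only arise at the finitely many points of $S$ that are extremal in the $y$-direction, and a careful tie-breaking rule resolves them; afterwards the single-wedge argument of Lemma \ref{1wedge} applied separately to each wedge delivers both colors in every translate of $V$ or $W$ containing enough points.

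The hardest step will be the sliver argument in sub-case (a): I will need to show quantitatively that a sliver translate $U_i(a)$ containing many points of $S$ must intersect at least two different paths of the $V$-path decomposition. This will rely on the geometric fact that a sliver has one of its boundary halflines parallel to a side of $V$, so that sliver translates are elongated in a direction transverse to the sweeping direction that defines the $V$-paths, and consequently cut across many consecutive paths. Once this is established and combined with a suitable choice of the parameter $m$ (large enough relative to the geometric constants of $V$ and $W$), the NC property follows for the whole pair $\{V,W\}$.
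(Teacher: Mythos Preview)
Your sub-case (a) contains a genuine gap. With the coloring ``$P^V_1$ red, everything else blue,'' the inference ``the sliver meets at least two distinct $V$-paths, hence contains both colors'' is simply false: a sliver translate could meet, say, $P^V_2$ and $P^V_3$ and be entirely blue. To get a red point you would need the sliver to meet $P^V_1$ specifically, and nothing in your argument forces that. Even if you switch to an alternating coloring of the paths, you would then need the sliver to hit two \emph{consecutive} paths, and your remark that slivers are ``elongated transverse to the sweeping direction'' does not establish this: the paths $P^V_i$ are combinatorial objects defined by a replacement process, not parallel strips, and a sliver translate (which is just another wedge) can pick out points from arbitrary paths.

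The deeper issue is that you have taken the path decomposition with respect to the \emph{narrower} wedge $V$ and then tried to control the \emph{wider} wedge $W$. The paper does the opposite: it takes the order-$4$ path decomposition with respect to $W$, and the crucial observation is that every translate of $V$ meets each $W$-path in an \emph{interval} (this follows from $W\supset V$ or $W\supset -V$). That interval property is what makes the subsequent coloring argument work: on pairs of $W$-paths one designs a ``friends/fans/stars'' coloring so that every $W$-translate with four points gets a blue, and every $V$-translate meeting seven points of the pair gets both colors. Your reversal loses exactly this structural control, and the sliver decomposition does not recover it.

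Your sub-case (b) is too schematic to evaluate; ``a careful tie-breaking rule resolves them'' is a placeholder, not an argument. In the paper both sub-cases ($W\supset V$ and $W\supset -V$) are handled uniformly by the same $W$-path decomposition, since the interval property for $V$-translates holds in either case.
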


\begin{figure}[htb]
\begin{center}
\scalebox{0.5}{\includegraphics{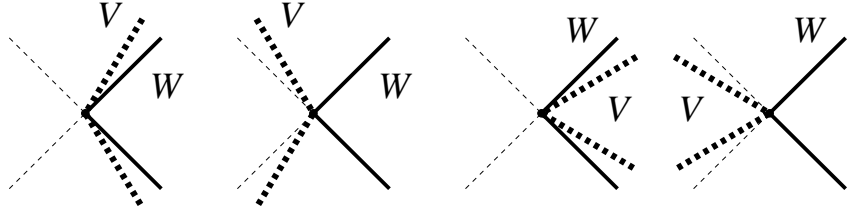}}
\caption{Type 3 (Contain)}
\label{localcontain}
\end{center}
\end{figure}

Note that this proof could be made slightly simpler with an argument similar to the one used in the proof of Lemma \ref{partialcolor}, here we reproduce the original proof.

\medskip

\noindent {\bf Proof.}
We can assume that $W\supset V$ or  $W\supset -V$
and $W$ contains the positive $x$-axis, just like on the two right diagrams of Figure \ref{localcontain}.
Let $(P^W_1, P^W_2, \ldots, P^W_k)$ be
the path decomposition of $S$ with respect to $W$, of order $k$.

Observe that any translate of $V$ intersects any $P^W_i$ in an interval of
it. Indeed, if
$X_1<_yX_2<_yX_3\in P^W_i$, then $X_2\in W(X_1, X_3)\cap -W(X_1, X_3)$, which
is a subset of $V(X_1, X_3)\cap -V(X_1, X_3)$. See Figure \ref{fig:vw}.

\begin{figure}[htb]
\begin{center}
\scalebox{0.5}{\includegraphics{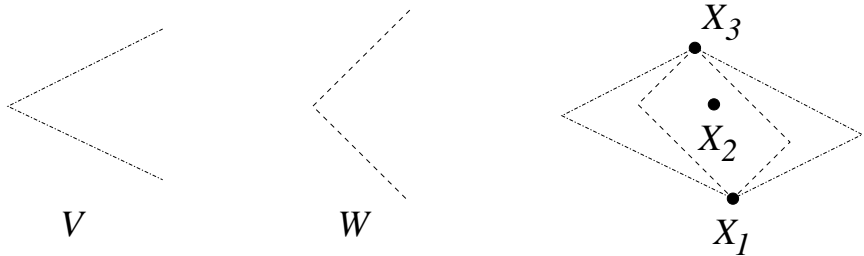}}
\caption{$W(X_1, X_3)\cap -W(X_1, X_3)\subset V(X_1, X_3)\cap -V(X_1, X_3)$.}
\label{fig:vw}
\end{center}
\end{figure}

We show that we can color the points of $S$ with red and blue such that
any translate of $W$ which contains at least 4 points, and any translate of $V$
which contains at least 14 points, contains points of both colors.
Consider $(P^W_1, P^W_2, P^W_3, P^W_4)$,
the path decomposition of $S$ with respect to $W$, of order $4$.
We color $P^W_1$ and $P^W_2$ such that every $W'\in Tr^W_4(S)$
contains a blue point of them, and every $V'\in Tr^V_7(P^W_1\cup P^W_2)$
contains points of both colors.
Similarly, we color $P^W_3$ and $P^W_4$ such that every $W'\in Tr^W_4(S)$
contains a red point of them, and every $V'\in Tr^V_7(P^W_3\cup P^W_4)$
contains points of both colors.
Finally, we color the rest of the points
$R=S\setminus(P^W_1\cup P^W_2\cup P^W_3\cup P^W_4)$
such that every $V'\in Tr^V_2(R)$ contains points of both colors.

Recall that for any $W'\in Tr^W_4(S)$,
$|W'\cap P^W_1|=|W'\cap P^W_2|=|W'\cap P^W_3|=|W'\cap P^W_4|=1$.
For any $X\in P^W_1$,  $Y\in P^W_2$, if there is a $W'\in Tr^W_4(S)$ with
$W'\cap P^W_1=\{X\}$ and
$W'\cap P^W_2=\{Y\}$, then
we say that $X$ and $Y$ are {\em friends}.
If $X$ (resp. $Y$) has only one friend $Y$ (resp. $X$), 
then we call it a {\em fan} (of $Y$, resp. of $X$).
If $X$ or $Y$ has at least one fan, then we say that it is a {\em star}.
Those points that are neither fans, nor stars are called {\em regular}.

For an example, see Figure \ref{fig:path2}. On the left figure, $Y_1$ is a star, its fans are $X_2$ and $X_3$, the other points are regular. On the right, $Y_2$ is a star, its fan is $X_2$, the other points are regular.

Suppose first that all points of $P^W_1$ and $P^W_2$ are regular.
Color every third point of $P^W_1$,
red and
the others blue. In $P^W_2$, color the friends of the red points blue,
and color the rest of the points of $P^W_2$ (every third) red.
For any $W'\in Tr^W_4$, $W'\cap P^W_1$ and
$W'\cap P^W_2$ are friends, therefore, at least one of them is blue.
On the other hand,
any $V'\in Tr_7^V(P^W_1\cup P^W_2)$ contains three consecutive
points of $P^W_1$ or $P^W_2$, and they have both colors.

Suppose now that not all the points of  $P^W_1$ and $P^W_2$ are regular.
Color all stars blue.
The first and last friend of a star, in the $y$-ordering,
is either a star or a regular vertex,
the others are fans.
Color the friends of each star alternatingly, according to the
$y$-ordering, starting with blue, except the last two friends; color the last
one
blue, the previous one red.
The so far uncolored regular points of
$P^W_1$ and $P^W_2$ form pairs of intervals.
We color each such pair of interval the same way as we did in the all-regular
case,
coloring the first point of each pair of intervals red. See Figure \ref{fig:friends}.

Clearly,
if $W'\in Tr^W_4$ then it contains at least one blue point of $P^W_1\cup P^W_2$.
If $V'\in Tr_7^V(P^W_1\cup P^W_2)$, then it contains four consecutive
points of $P^W_1$ or $P^W_2$, say, $X_1, X_2, X_3, X_4$, in $P^W_1$.
If $X<_y Y<_y Z\in P^W_1\cap V'$ and $Y$ is a
star,
then $V'$ must contain all fans of $Y$ as well.
Indeed, the fans of $Y$ are in $W(X,Z)\setminus(W(X)\cup W(Z))$,
and by our earlier observations, this is in
$V(X,Z)\subset V'$.
So, if either $X_2$ or $X_3$ is a star, then
$V'$ contains
a red point, since every star has a red fan.
Since the star itself is blue, we are done in this case.
If $X_1, X_2, X_3, X_4$ contains three consecutive regular
vertices then we are done again, by the coloring rule for the regular
intervals.
So we are left with the case when $X_1$ and $X_4$ are stars, $X_2$ and $X_3$
are regular.
But in this case $V'$ also contains the common friend $Y$ of $X_2$ and $X_3$ in
$P^W_2$, which is also a regular vertex.
By the coloring rule for the regular
intervals, one of $Y$, $X_2$ and $X_3$ is red, the other two are blue, so we
are done.

\begin{figure}[htb]
\begin{center}
\scalebox{0.5}{\includegraphics{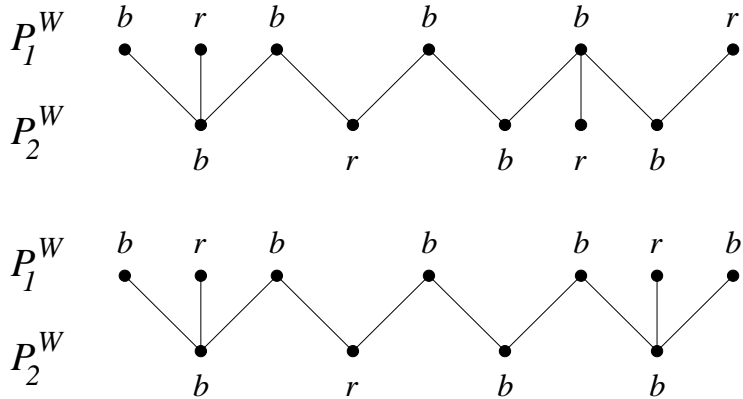}}
\caption{Two examples of coloring of $P^W_1\cup P^W_2$. Friends are connected
 by edges.}
 \label{fig:friends}
\end{center}
\end{figure}

\vskip 0.2cm

For $P^W_3\cup P^W_4$ we use the same coloring rule as for
$P^W_1\cup P^W_2$ but we switch the roles of the colors.
So any $W'\in Tr_4^W$ contains at least one red point of
$P^W_3\cup P^W_4$ and any $V'\in Tr_7^V(P^W_3\cup P^W_4)$ contains both
colors.

Finally, we have to color the rest of the points
$R=S\setminus(P^W_1\cup P^W_2\cup P^W_3\cup P^W_4)$
such that every $V'\in Tr^V_2(R)$ contains points of both colors.
This can be achieved by the first proof of Lemma \ref{1wedge}.

Now
any $W'\in Tr_4^W$ contains at least one blue and at least one red point.
If $V'\in Tr_{14}^V$, then either it contains at least two points of $R=P\setminus
(P^W_1\cup P^W_2\cup P^W_3\cup P^W_4)$,
or at least seven points of $P^W_1\cup P^W_2$, or at least seven points of
$P^W_3\cup P^W_4$, and in all cases it contains points of both colors.
This completes the proof of Lemma \ref{convlem2}. \hfill $\Box$

\begin{defi}\label{convdefi2} Suppose that ${\cal W}=\{ V, W \}$ is a pair of wedges.
$\cal W$ is said to be {\em asymmetric non-conflicting} 
or simply ANC, if there is a
constant $k$ with the
following property.
Any finite set of points $S$ can be colored with red and blue such that
any translate of $V$
that contains at least $k$ points of
$S$, contains a red point, and
any translate of $W$
that contains at least $k$ points of
$S$, contains a blue point.
\end{defi}

The next technical result allows us to simplify all following proofs.

\begin{lem}\label{convlem3} If a pair of wedges is not of Type 5 (Special),
and ANC, then it is also NC.
\end{lem}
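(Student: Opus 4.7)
The plan is to refine the given ANC coloring into an NC coloring by combining it with the single-wedge NC guarantee of Lemma \ref{1wedge}. The first step would be to boost ANC to an ``order-two'' version: for some constant $M$, any finite $S$ admits a 2-coloring $R\sqcup B$ such that every translate of $V$ containing at least $M$ points meets $R$ in at least two points, and every translate of $W$ containing at least $M$ points meets $B$ in at least two points. A natural attempt is to iterate ANC on nested subsets. Set $B_0=S$, apply ANC to $B_i$ to peel off a layer $R_{i+1}\sqcup B_{i+1}$, and let $R=\bigcup_i R_i$; a counting argument (if $V$ has $\leq k-1$ points in each layer $R_i$, then it still has many points in the remaining $B_i$, so ANC forces $V$ to meet the next layer) shows that any sufficiently large translate of $V$ meets $R$ at least twice. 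A parallel nesting inside the red class handles $W$.

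With such a strengthened coloring in hand, I would apply Lemma \ref{1wedge} to the red set $R$ with wedge $V$ to obtain a sub-partition $R=R_1\sqcup R_2$ in which any translate of $V$ meeting $R$ in at least two points meets both $R_1$ and $R_2$. Symmetrically, split $B=B_1\sqcup B_2$ using Lemma \ref{1wedge} for $W$. Define the final 2-coloring by $R_1\cup B_1\mapsto\text{red}$ and $R_2\cup B_2\mapsto\text{blue}$. Any large translate of $V$ has at least two points of $R$ (by the boosted ANC), hence points of both $R_1$ and $R_2$ (by the single-wedge NC applied to $R$), hence both final colors; the argument for $W$ is symmetric. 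This gives the desired NC property with constant on the order of $M$.

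The main obstacle I expect is the initial boosting step. A one-sided ANC-$2$ boost is straightforward, but arranging the nested iterations so that both color classes simultaneously retain two witnesses on both the $V$-side and the $W$-side is delicate: pushing extra points into the red class to help $V$-translates tends to starve $W$-translates of blue witnesses, since ANC only guarantees one point of the prescribed colour per big translate. The non-Type-5 hypothesis should enter precisely here: in the four non-Type-5 configurations (Big, Halfplane, Contain, Hard) the wedges $V$ and $W$ overlap, cover a halfplane, or are nested in such a way that any translate of one wedge with many points contains, or is contained in, a translate of a wedge in which ANC can be reapplied, so the nested iterations preserve guarantees on both sides. For Type 5 (Special) pairs this interaction is absent, which is consistent with the fact that such pairs are not NC at all and so the implication we are proving would have to fail in that case.
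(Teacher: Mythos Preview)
Your reduction from an ``order-two ANC'' coloring to NC via Lemma~\ref{1wedge} is sound; the gap is exactly where you locate it, and your proposed repair does not close it. The iteration you sketch (peel off red layers $R_1, R_2, \ldots$ from successive blue remainders $B_0 \supset B_1 \supset \cdots$) does force every large $V$-translate to meet $R = \bigcup_i R_i$ at least twice, but it gives no lower bound on how many points a large $W$-translate retains in the terminal blue set: such a translate may dump all but one of its points into $R_1$ already at the first step, since ANC promises it only a single blue witness. A ``parallel nesting inside the red class'' produces a \emph{second} partition of $S$, not the same one, and you give no mechanism for merging the two into a single two-coloring with both guarantees. Your appeal to the non-Type-5 hypothesis at this point is too vague to be an argument, and in fact it cannot be the missing ingredient: the implication ANC $\Rightarrow$ NC actually holds without any type restriction (Type~5 pairs are simply never ANC, cf.\ the remark following the paper's proof), so whatever fixes the boosting step must work uniformly.

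The paper's argument is entirely different and never attempts to boost ANC. It first colors the $V$-boundary $Bd^V(S)$ by introducing a very narrow auxiliary wedge $U$ along the axis of $V$; the non-Type-5 hypothesis enters precisely here, to justify a rotation after which $\{U, W\}$ is of Type~3 (Contain), so the already-established Lemma~\ref{convlem2} gives a coloring of the boundary that is good for both $V$ and $W$. The interior $S\setminus Bd^V(S)$ is then split into $S_{\mbox{b}}, S_{\mbox{r}}, S_0$ according to which boundary colors a point sees through $V$; the procedure is repeated once with the roles of $V$ and $W$ exchanged, and ANC is invoked only at the very end, on the residual pieces $S_{\mbox{b,r}}$ and $S_{\mbox{r,b}}$. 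If you want to salvage your own route, the correct tool for the boosting step is the path-decomposition argument used later in the proof of Lemma~\ref{convlem7} to pass from $f(1,2)$ to $f(s,2)$: that argument needs only ANC and no type hypothesis.
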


\begin{figure}[htb]
\begin{center}
\scalebox{0.5}{\includegraphics{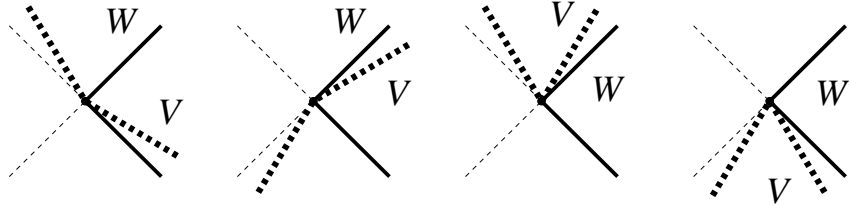}}
\caption{Type 5 (Special)}
\end{center}
\end{figure}

\noindent {\bf Proof.}
We can assume without loss of generality that
$V$ contains the positive $x$-axis, and $W$ contains either the positive or
the negative $x$-axis.
Suppose that $\{V, W\}$ is ANC, let $k>0$ arbitrary, and let $S$ be a set
of points.
First we color $Bd^V(S)$.
Let $U$ be a wedge that also contains the
positive $x$-axis, but has a very small angle.
Then translates of $V$ and translates of
$U$ both intersect  $Bd^V(S)$ in its intervals.
Clearly, the pair $\{U, W\}$ is of Type 3 (Contain), therefore, by Lemma \ref{convlem2},
we can color  $Bd^V(S)$ such that any translate of $W$,
$W'\in Tr_4^W(Bd^V(S))$ and any translate of $U$,
$U'\in Tr_{14}^U(Bd^V(S))$ contains both colors.
But then any translate of $V$,
$V'\in Tr_{14}^V(Bd^V(S))$ contains both colors as well.

Now we have to color $S\setminus Bd^V(S)$.
We divide it into three parts as follows.
$$S_{\mbox{b}}=\{ X\in S\setminus Bd^V(S)\ |\  \forall Y\in V(X)\cap
Bd^V(S), Y \ {\mbox {is blue}} \},$$
$$S_{\mbox{r}}=\{ X\in S\setminus Bd^V(S)\ |\  \forall Y\in V(X)\cap
Bd^V(S), Y \ {\mbox {is red}} \},$$
$$S_0=S\setminus (Bd^V(S)\cup S_{\mbox{b}}\cup S_{\mbox{r}}).$$

Any translate $V'\in Tr^V$
that intersects $S_{\mbox{b}}$ in at least one point,
must contain at least one blue point, from $Bd^V(S)$,
so we only have to make sure that it contains a red point too.
Similarly, any
$V'\in Tr^V$ that intersects $S_{\mbox{r}}$ in at least one point,
must contain a red point,
and any $V'\in Tr^V$ that intersects $S_0$ must contain points of both colors.

Thus, we can simply color $S_0$ such that any $W'\in Tr_2^W(S_0)$ contains
both colors, which can be done by Lemma \ref{1wedge}.

With  $S_{\mbox{b}}$, and with $S_{\mbox{r}}$, respectively, we proceed
exactly the same way as we did with $S$ itself, but now we change the roles of
$V$ and $W$.
We get the (still uncolored)
subsets $S_{\mbox{b,b}}$, $S_{\mbox{b,r}}$, $S_{{\mbox{b}},0}$,
$S_{\mbox{r,b}}$, $S_{\mbox{r,r}}$, $S_{{\mbox{r}},0}$
with the following properties.

\begin{itemize}
\item Any translate $V'\in Tr^V$ or $W'\in Tr^W$,
that intersects  $S_{\mbox{b,b}}$
(resp. $S_{\mbox{r,r}}$)
in at least one point,
must contain at least one blue (resp. red) point.

\item Any translate $V'\in Tr^V$
that intersects  $S_{\mbox{b,r}}$
(resp. $S_{\mbox{r,b}}$)
contains a blue (resp. red) point,
and any translate $W'\in Tr^W$
that intersects  $S_{\mbox{b,r}}$
(resp. $S_{\mbox{r,b}}$)
contains a red (resp. blue) point.

\item Any translate $V'\in Tr^V$
that intersects  $S_{{\mbox{b}},0}$
(resp. $S_{{\mbox{r}},0}$)
contains a blue (resp. red) point,
and any translate $W'\in Tr^W$
that intersects  $S_{{\mbox{b}},0}$
(resp. $S_{{\mbox{r}},0}$)
contains points of both colors.

\end{itemize}

Color all points of
$S_{\mbox{b,b}}$ and $S_{{\mbox{b}},0}$ red, color
all points of $S_{\mbox{r,r}}$ and $S_{{\mbox{r}},0}$ blue.
Finally, color $S_{\mbox{b,r}}$ using the ANC property of the pair
$(V, W)$, and similarly, color $S_{\mbox{r,b}}$ also using the ANC
property, but the roles of red and blue switched.
Now it is easy to check that in this coloring any translate of $V$ or $W$ that
contains sufficiently many points of $S$, contains a point of both colors. \hfill $\Box$

\begin{remark}  In \cite{P10} it has been proved that if $\{V, W\}$
is a Special pair,
then $\{V, W\}$ is {\em not} ANC. So, the following statement
holds as well.
\end{remark}

\noindent {\bf Lemma \ref{convlem3}'.} {\em If a pair of wedges is
ANC, then it is also NC.}

\bigskip


\begin{lem}\label{convlem4} Let ${\cal W}=\{ V, W\}$ be
a set of two wedges, of Type 1 (Big).
Then $\cal W$ is NC.
\end{lem}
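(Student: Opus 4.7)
The plan is to prove the ANC property for the pair $\{V,W\}$ and then invoke the strengthened reduction Lemma~\ref{convlem3}' (from the Remark), which converts ANC to NC with no type restriction. So the entire task reduces to exhibiting a red/blue coloring of an arbitrary finite point set $S$ in which every translate of $V$ with sufficiently many points contains a red point and every translate of $W$ with sufficiently many points contains a blue point.

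To build such a coloring I will exploit the Type~1 hypothesis. Since $V$ has angle at least $\pi$, I can write $V = A \cup B$, where $A$ and $B$ are halfplanes whose boundary lines pass through the apex of $V$ (take $A = B = V$ if $V$ is already a halfplane). This decomposition is translation-equivariant: $V(X) = A(X) \cup B(X)$ for every $X$, where $A(X)$ and $B(X)$ are the translates of the halfplanes $A$ and $B$ whose boundary lines pass through $X$. Let $p_A\in S$ be the point of maximal depth in the inward-normal direction of $A$; equivalently, $p_A$ is the unique point of $S$ contained in every nonempty translate of $A$. Define $p_B$ analogously. Color $p_A$ and $p_B$ red and every other point of $S$ blue, so at most two points of $S$ are red in total.

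The verification is then almost immediate. If $V(X)$ contains at least one point $P$ of $S$, then $P$ lies in $A(X)$ or in $B(X)$; the halfplane translate in question is nonempty, hence contains the extremal point $p_A$ or $p_B$, both of which are red. If $W(X)$ contains at least three points of $S$, then, since only two points are red, at least one of the points in $W(X)$ must be blue. Thus ANC holds with constant $k=3$, and Lemma~\ref{convlem3}' yields NC. I do not anticipate a real obstacle; the only steps needing care are checking that the decomposition $V=A\cup B$ really holds when the angle of $V$ strictly exceeds $\pi$, and that the extremal points $p_A,p_B$ are well-defined (which can be arranged by the usual generic-perturbation assumption on $S$).
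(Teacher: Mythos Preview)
Your argument is essentially identical to the paper's: it too shows ANC by writing the big wedge as a union of two halfplanes, coloring the two extremal points red and everything else blue, and then invokes the ANC-to-NC reduction. The only cosmetic differences are that the paper names the big wedge $W$ rather than $V$ and cites Lemma~\ref{convlem3} directly (Type~1 and Type~5 are mutually exclusive by definition, so the unprimed version already applies).
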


\noindent {\bf Proof.} By Lemma \ref{convlem3}, it is enough to show that $\{ V, W\}$ is ANC. Let $W$ be the
wedge whose angle is at least $\pi$. Then $W$ is the union of two halfplanes,
say, $H_1$ and $H_2$. Translate both halfplanes such that they contain exactly
one point of $S$, denote them by
$X_1$ and $X_2$, respectively. Note that $X_1$ may coincide with $X_2$.
Color $X_1$ and $X_2$ red, and all the other points blue.
Then any translate of $W$ that contains at least one point, contains a red
point, and any translate of $V$ that contains at least three points, contains
a blue point.
\hfill $\Box$

\begin{lem}\label{convlem5} Let ${\cal W}=\{ V, W\}$ be
a set of two wedges, of Type 2 (Halfplane).
Then $\cal W$ is NC.
\end{lem}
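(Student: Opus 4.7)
The plan is to apply Lemma \ref{convlem3} and reduce the problem to showing that $\{V,W\}$ is ANC: we seek a constant $k$ so that any finite $S$ admits a red/blue coloring in which every translate of $V$ containing at least $k$ points of $S$ has a red point and every translate of $W$ containing at least $k$ points of $S$ has a blue point. After reorienting, we may assume that the halfplane $H$ whose interior is contained in $V\cup W$ is the (closed) upper halfplane, so that for any apex $Y$ and any point $X$ with larger $y$-coordinate we have $X\in V(Y)\cup W(Y)$.

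My coloring would mirror the strategy used in the proof of Lemma \ref{convlem2}. Fix $k$ (to be determined) and consider the order-$k$ path decomposition $(P^V_1,\dots,P^V_k)$ of $S$ with respect to $V$. Color all points of $P^V_1$ red and color the remaining points blue. Observation \ref{obs2} immediately gives the $V$-direction: any translate of $V$ containing at least $k$ points of $S$ contains one point from each $P^V_i$, in particular a red point from $P^V_1$.

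The main effort goes into verifying the $W$-direction, that is, showing that any translate of $W$ containing sufficiently many points of $S$ must contain a point outside $P^V_1$. This reduces to the technical claim that there exists a constant $c=c(V,W)$ with $|W(Y)\cap P^V_1|<c$ for every $Y$; then $k:=c+1$ suffices. The proof of the claim must exploit what is specific to Type 2, namely that $V(Y)\cup W(Y)\supseteq H+Y$. The idea is that points $A_1<_y\cdots<_y A_m$ of $P^V_1$ lying inside $W(Y)$ all lie in the halfplane $H+Y$, and the path $P^V_1$ was produced by sweeping $V(1;y)$, so each $A_i$ is characterized by the existence of a translate of $V$ that isolates it as the unique point in a $V$-translate moving through a specific position. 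Confining $m$ such points inside the wedge $W(Y)$ together with the halfplane constraint should force, for some interior index $i$, a translate of $V$ containing both $A_{i-1}$ (or $A_{i+1}$) and $A_i$, contradicting the path structure of $P^V_1$.

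The main obstacle is precisely this combinatorial-geometric claim bounding $|W(Y)\cap P^V_1|$. In case a direct argument is too delicate, a natural fallback is to strengthen the coloring: combine the order-$k$ path decompositions with respect to both $V$ and $W$ and use the halfplane-covering property to synchronize the two, in the spirit of the coloring of $P^W_1\cup P^W_2$ in the proof of Lemma \ref{convlem2}. Either way, once the claim is established, Lemma \ref{convlem3} converts ANC into NC and concludes the proof.
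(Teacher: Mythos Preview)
Your reduction to ANC via Lemma~\ref{convlem3} is correct and matches the paper. The gap is in the ANC argument: the claim that $|W(Y)\cap P^V_1|$ is bounded by a constant $c=c(V,W)$ is not only unproved but false in the most natural reading (order one, where $P^V_1=Bd^V(S)$). Take for instance $W=\{p:-30^\circ<\arg p<30^\circ\}$ and $V=\{p:10^\circ<\arg p<170^\circ\}$; the sides of $V$ lie in Right and in Left, so this pair is Type~2. Let $S=\{X_i=(i,-\epsilon i):1\le i\le m\}$ with $0<\epsilon<\tan 10^\circ$, and let $O$ be the origin. Every $X_i$ lies in $W(O)$, while for $j\ne i$ the vector $X_j-X_i$ has argument near $0^\circ$ or near $180^\circ$, hence outside $(10^\circ,170^\circ)$; thus $V(X_i)$ contains no other point of $S$ and every $X_i\in Bd^V(S)$. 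So $|W(O)\cap Bd^V(S)|=m$ is unbounded. Higher-order decompositions do not obviously help, and your choice $k:=c+1$ is circular since $P^V_1$ already depends on $k$.

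The paper's argument is far simpler. The halfplane property is used to control the \emph{two} boundaries rather than one: if $X\in Bd^V(S)\cap Bd^W(S)$ then $V(X)\cup W(X)$, which contains a halfplane through $X$, misses $S\setminus\{X\}$, so $X$ is the unique extreme point of $S$ in that direction; hence $|Bd^V(S)\cap Bd^W(S)|\le 1$. One then colors $Bd^V(S)$ one color and $Bd^W(S)$ the other, with a short case analysis (passing once more to boundaries inside $Bd^V(S)\cup Bd^W(S)\setminus\{X\}$) to handle the possible shared point. Since every nonempty translate of $V$ meets $Bd^V(S)$ and likewise for $W$, ANC follows with $k=2$. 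Your fallback of combining both decompositions points in the right direction, but only the two order-one boundaries are needed, and what governs their interaction is the single-intersection fact, not any bound on $|W(Y)\cap Bd^V(S)|$.
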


\begin{figure}[htb]
\begin{center}
\scalebox{0.5}{\includegraphics{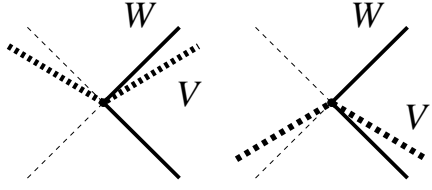}}
\caption{Type 2 (Halfplane)}
\end{center}
\end{figure}

\noindent {\bf Proof.}
Again, it is enough to show that they are ANC. Since $\{ V, W\}$ is
of Type 2 (Halfplane),
$Bd^V(S)$ and $Bd^W(S)$ have at most one point in
common.
If $Bd^V(S)$ and $Bd^W(S)$ are disjoint, then color  $Bd^V(S)$ blue,
$Bd^W(S)$ red, and the other points arbitrarily.
Then any nonempty translate of $V$ (resp. $W$) contains a blue (resp. red) point.

Otherwise, let $X$ be their common point.
Let $P=Bd^V(S)\cup Bd^W(S)\setminus X$, and consider its $V$-boundary,
$Bd^V(P)$, and $W$-boundary,
$Bd^W(P)$.
Clearly, each point in $P=Bd^V(S)\setminus X$ belongs to $Bd^V(P)$,
and each point in $P=Bd^W(S)\setminus X$ belongs to $Bd^W(P)$.

If $Bd^V(P)$ and
$Bd^W(P)$ are disjoint, then color  $Bd^V(S)$ blue,
$Bd^W(P)$ and the other points red.
Then any nonempty translate of $V$ contains a blue point.
Suppose that we have a translate of $W$ with two points,
both blue. Then it should contain $X$, and a point
of  $Bd^V(P)$. But this contradicts our assumption that
$Bd^V(P)$ and
$Bd^W(P)$ are disjoint.
So, any
translate of $W$ which contains at least two points of
$S$,
contains a red point.

If $Bd^V(P)$ and
$Bd^W(P)$ are not disjoint,
then they have one point in common, let
$Y$ be their common point.
If $Y$ belongs to $Bd^W(P)$, then color
$Bd^V(S)$ blue,
$Bd^W(P)$ and the other points red.
Then, by the same argument as before,
any nonempty translate of $V$ contains a blue point, and
any
translate of $W$ which contains at least two points of
$S$,
contains a red point.
Finally, if $Y$ belongs to $Bd^V(P)$, then we proceed
analogously, but the roles of $V$ and $W$, and
the colors, are switched. \hfill $\Box$

\begin{lem}\label{convlem6} Let ${\cal W}=\{ V, W\}$ be
a set of two wedges, of Type 4 (Hard).
Then $\cal W$ is NC.
\end{lem}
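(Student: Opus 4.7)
By Lemma \ref{convlem3} it suffices to show that $\{V,W\}$ is ANC: I need a constant $k$ and, for every finite point set $S$, a red/blue colouring so that every translate of $V$ with at least $k$ points of $S$ contains a red point and every translate of $W$ with at least $k$ points contains a blue point. After rotating so that $W$ contains the positive $x$-axis, and after possibly reflecting (the Upper and Lower cases being symmetric), I may assume that one boundary halfline of $V$ lies in Left and the other lies in Upper.

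I would follow the scheme of Lemma \ref{convlem2}. Take the order-$4$ path decomposition $(P^W_1,P^W_2,P^W_3,P^W_4)$ of $S$ with respect to $W$. By Observation \ref{obs2}, every translate $W'\in Tr^W_4(S)$ picks up exactly one point from each $P^W_i$, and in general meets each path in an interval. The goal is to colour $\{P^W_1,P^W_2\}$ so that every $W$-translate with $\geq 4$ points has a blue point of this pair while every $V$-translate meeting $P^W_1\cup P^W_2$ in sufficiently many points is bichromatic; colour $\{P^W_3,P^W_4\}$ symmetrically to force a red point into every large $W$-translate; and finally colour the remaining set $R=S\setminus(P^W_1\cup\cdots\cup P^W_4)$ by Lemma \ref{1wedge} applied to $V$ so that every $V$-translate with at least two points of $R$ is bichromatic. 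Summing up, any translate of $W$ with $\geq k$ points is handled by the $W$-path colouring, and any translate of $V$ with $\geq k$ points either contains many points from one of the pairs $\{P^W_1,P^W_2\}$, $\{P^W_3,P^W_4\}$, or at least two points of $R$, so in every case both colours appear.

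The main obstacle is the geometric ingredient that replaces the containment $W(X_1,X_3)\cap -W(X_1,X_3)\subset V(X_1,X_3)\cap -V(X_1,X_3)$ used in Type 3: I must show that every translate of $V$ intersects each path $P^W_i$ in an interval of that path. The previous argument used $V\subset W$ or $V\subset -W$, which fails in Type 4. Here I would argue directly from the orientation: for $X<_yY<_yZ$ on the same $P^W_i$, one has $Y\in W(X,Z)\cap -W(X,Z)$ by the path-decomposition construction, and I would verify by case analysis on which halfline of $V(X,Z)$ contains $X$ or $Z$ that the Left-plus-Upper opening of $V$ forces $Y\in V(X,Z)$ as well; the key is that neither $V$'s Left side nor its Upper side can separate $Y$ from $\{X,Z\}$ once they are already jointly enclosed by the narrower wedge $W$. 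If the clean ``single interval'' statement fails in borderline angle configurations, I would weaken it to ``at most a bounded number of intervals,'' which still suffices for the coloring argument after enlarging $k$ by a constant factor.

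Once this interval property is established, the friend/fan/star colouring from the proof of Lemma \ref{convlem2} transplants verbatim to $\{P^W_1,P^W_2\}$ and (with the roles of red and blue swapped) to $\{P^W_3,P^W_4\}$, and Lemma \ref{1wedge} finishes off $R$. This yields the required $k$ and completes the ANC verification, hence by Lemma \ref{convlem3} the NC property for Type 4 pairs.
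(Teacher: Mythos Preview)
Your plan has a genuine gap: the interval property you single out as the ``main obstacle''---that every translate of $V$ meets each $W$-path $P_i^W$ in an interval---is \emph{false} for Type~4 pairs, so the Type~3 argument cannot be transplanted.

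Here is a concrete counterexample. Take $W$ to be the $90^\circ$ right-opening wedge with sides at $\pm 45^\circ$, and let $V$ be the Type~4 wedge with boundary rays in directions $100^\circ$ (in Upper) and $200^\circ$ (in Left), containing the negative $x$-axis. Consider the three-point set $X=(0,0)$, $Y=(1-\varepsilon,1)$, $Z=(0,2)$ for small $\varepsilon>0$. All three points lie on the $W$-boundary (the order-$1$ $W$-path), in the $y$-order $X<_y Y<_y Z$, and indeed $Y\in W(X,Z)\cap -W(X,Z)$. But the minimal translate $V(X,Z)$, whose apex sits near $(0.33,0.12)$, has $X$ and $Z$ on its two boundary rays while $Y$ lies strictly outside: the direction from the apex to $Y$ is about $53^\circ$, well outside the interval $[100^\circ,200^\circ]$. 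So $W(X,Z)\cap -W(X,Z)\not\subset V(X,Z)$, and your heuristic that ``neither $V$'s Left side nor its Upper side can separate $Y$ from $\{X,Z\}$'' fails---the Upper side does exactly that. Your fallback to ``a bounded number of intervals'' may well be true, but it is unproven, and even granting it the friend/fan/star colouring would not carry over without substantial new work.

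The paper's proof takes an entirely different and much shorter route. The geometric fact specific to Type~4 is that whenever translates $V'$ and $W'$ intersect, one of them contains the apex of the other. From this one deduces that for any point set $P$ and any $X\in P$, removing $X$ cannot reveal new boundary points for \emph{both} wedges simultaneously: either $Bd^V(P\setminus X)\setminus Bd^V(P)=\emptyset$ or $Bd^W(P\setminus X)\setminus Bd^W(P)=\emptyset$. One then colours $Bd^V(S)\setminus Bd^W(S)$ red, $Bd^W(S)\setminus Bd^V(S)$ blue, interior points arbitrarily, and handles the common boundary $Bd^V(S)\cap Bd^W(S)$ by a simple rule (red if removing the point uncovers new $V$-boundary, blue if it uncovers new $W$-boundary, otherwise alternate with the previous such point in the $y$-order). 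This already gives ANC with $k=2$, and Lemma~\ref{convlem3} finishes. No path decomposition is used at all.
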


\begin{figure}[htb]
\begin{center}
\scalebox{0.5}{\includegraphics{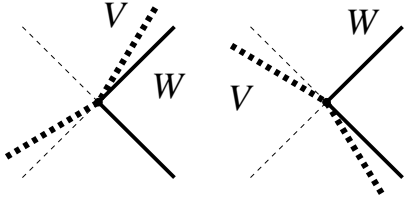}}
\caption{Type 4 (Hard)}
\label{figlochard}
\end{center}
\end{figure}

\noindent {\bf Proof.}
As usual, we only prove that $\{ V, W\}$ is ANC.
Assume that
$W$ contains the positive $x$-axis.
Just like in the definition of the different types,
extend the boundary halflines of $W$ to lines,
they divide the plane
into four parts, Upper, Lower, Left, and Right, latter of which is $W$ itself.
We can assume without loss of generality that
$V$ contains the negative $x$-axis, one side of $V$ is in Upper, and one side
is in Left, just like on the left of Figure \ref{figlochard}.

Observe that if a translate of $V$ and a translate of $W$ intersect each
other, then one of them contains the apex of the other one.

\begin{claim}\label{convclaim1} For any point set $P$ and $X\in P$,
either $Bd^V(P\setminus X)\setminus Bd^V(P)=\emptyset$ or $Bd^W(P\setminus X)\setminus Bd^W(P)=\emptyset$.
\end{claim}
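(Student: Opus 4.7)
The plan is to derive Claim \ref{convclaim1} directly from the geometric observation stated immediately before it: for Type 4 pairs, any two intersecting translates $V'$ and $W'$ have the property that one of them contains the apex of the other.

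First I would unravel what it means for a point to lie in $Bd^V(P\setminus X)\setminus Bd^V(P)$. Such a point $Y$ is in $P\setminus X$, satisfies $V(Y)\cap(P\setminus X)=\emptyset$, but $V(Y)\cap P\neq\emptyset$. These three conditions force $V(Y)\cap P=\{X\}$. In particular $X\in V(Y)$ and $Y\neq X$. By the completely symmetric reasoning, any $Z\in Bd^W(P\setminus X)\setminus Bd^W(P)$ must satisfy $W(Z)\cap P=\{X\}$, so $X\in W(Z)$ and $Z\neq X$.

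Now I would argue by contradiction: suppose both sets are nonempty, and pick such $Y$ and $Z$. The translates $V(Y)$ and $W(Z)$ both contain the point $X$, so their intersection is nonempty. The observation for Type 4 wedges then says that either $Z\in V(Y)$ or $Y\in W(Z)$. In the first case, $Z$ is a point of $P$ lying in $V(Y)$, and since $V(Y)\cap P=\{X\}$, we must have $Z=X$, contradicting $Z\neq X$. In the second case, analogously $Y\in W(Z)\cap P=\{X\}$ yields $Y=X$, again a contradiction. Hence at least one of the two sets is empty, which is the statement of the claim.

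The only subtle point is justifying the apex-containment observation itself, but this is asserted as an immediate geometric fact about the Type 4 configuration just above the claim, so I would treat it as given. No case analysis on the coloring, no path decomposition, and no boundary combinatorics are needed here; the claim is purely a one-step consequence of how Type 4 wedges intersect. The main thing to be careful about is writing the minimality condition $V(Y)\cap P=\{X\}$ correctly and checking that $Y,Z\in P$ so that forcing $Y=X$ or $Z=X$ really does produce a contradiction.
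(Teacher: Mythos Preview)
Your proof is correct and follows essentially the same approach as the paper: assume both sets are nonempty, observe that $V(Y)$ and $W(Z)$ share the point $X$, invoke the apex-containment property of Type~4 wedges, and derive a contradiction. The paper phrases the final contradiction as ``$Z\in V(Y)$ contradicts $Y\in Bd^V(P\setminus X)$'' rather than going through $V(Y)\cap P=\{X\}$, but this is the same argument with slightly less unpacking.
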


{\bf Proof.} Suppose on the contrary that
$Y\in Bd^V(P\setminus X)\setminus Bd^V(P)$ and $Z\in Bd^W(P\setminus X)\setminus Bd^W(P)$.
Then $X\in V(Y)$ and $X\in W(Z)$, so $V(Y)$ and $W(Z)$ intersect each other,
therefore, one of them contains the other one's apex, say,
$Z\in V(Y)$. But this is a contradiction, since $Y$ is a boundary point of
$P\setminus X$. \hfill $\Box$

\smallskip

Return to the proof of Lemma \ref{convlem6}.
Color
$Bd^V(S)\setminus Bd^W(S)$ red, and
$Bd^W(S)\setminus Bd^V(S)$ blue, the interior points arbitrarily.
Now consider the points of $Bd^V(S)\cap Bd^W(S)$.
For any $X\in Bd^V(S)\cap Bd^W(S)$,
if
$Bd^V(S\setminus X)\setminus Bd^V(S)\ne\emptyset$, then color it red,
if $Bd^W(S\setminus X)\setminus Bd^W(S)\ne\emptyset$, then color it blue.
For each of the remaining points $Y$ we have
$Bd^V(S\setminus Y)\setminus Bd^V(S)=
 Bd^W(S\setminus Y)\setminus Bd^W(S)=\emptyset$.
Color each of these points such that they have
the {\em opposite} color than the the previous point
of $Bd^V(S)\cap Bd^W(S)$,
in the $y$-ordering.

To prove that this coloring is
good, let
$V'\in Tr_2^V$, $V'\cap S = \{ X, Y\}$.
If it intersects $Bd^V(S)\setminus Bd^W(S)$, we are done.
So assume that $V'\cap Bd^V(S)\subset Bd^V(S)\cap Bd^W(S)$.
Let $X\in V'\cap Bd^V(S)$. If $X$ is red, then by the coloring rule,
$Bd^V(S\setminus X)\setminus Bd^V(S)=\emptyset$. But then $Y$ is also a
$V$-boundary point, so
we have $Y\in  Bd^V(S)\cap Bd^W(S)$. Again we can assume that $Y$ is
red, so
$Bd^V(S\setminus Y)\setminus Bd^V(S)=\emptyset$.
Suppose that $X<_yY$. Since $V'\cap S = \{ X, Y\}$, $X$ and $Y$ are
consecutive points of $Bd^V(S)\cap Bd^W(S)$.
Now it is not hard to see that
$Bd^W(S\setminus Y)\setminus Bd^W(S)=\emptyset$. Therefore, by the coloring
rule, $X$ and $Y$ have different colors.
For the translates of $W$ the argument is analogous, with the colors switched.\hfill $\Box$

\medskip

Now we turn to the case when we have more than two wedges.

\begin{lem}\label{convlem7} For any $s, t>0$ integers, there is a number $f(s, t)$ with the
following property.

Let ${\cal W}=\{ W_1, W_2, \ldots , W_t \}$ be a set of $t$ wedges, such that
any pair $\{W_i, W_j \}$ is NC, and let $S$ be a set of points.
Then $S$ can be decomposed into $t$ parts, $S_1, S_2, \ldots , S_t$, such that
for $i=1, 2, \ldots , t$, for any translate $W'_i$ of $W_i$,
if $|W'_i\cap S|\ge f(s, t)$ then
$|W'_i\cap S_i|\ge s$.
\end{lem}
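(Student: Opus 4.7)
I plan to proceed by induction on $t$. The base case $t = 1$ is trivial: set $S_1 = S$ and $f(s, 1) = s$. The crux is the $t = 2$ case, which is equivalent to the following strengthening of pair-NC: if $\{V, W\}$ is NC then for every $s \geq 1$ there exists a constant $m^\star(s)$ such that any finite point set admits a 2-coloring in which every translate of $V$ or $W$ with at least $m^\star(s)$ points contains at least $s$ points of each color. I intend to establish this by iterating the basic pair-NC 2-coloring: if $R \cup B$ is an NC coloring with constant $m$, then for $|W' \cap S| \geq 2m$ the pigeonhole principle forces $|W' \cap R| \geq m$ or $|W' \cap B| \geq m$, so applying pair-NC afresh to each of $R$ and $B$ and then recombining the four resulting sub-classes in a balanced way augments the minority count in large translates by one; iterating this $O(s)$ times reaches the desired threshold $m^\star(s)$. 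An alternative is to revisit the structural proofs of Lemmas \ref{convlem2}, \ref{convlem4}, \ref{convlem5} and \ref{convlem6} and extend their path-decomposition or boundary-alternation colorings by taking path decompositions of higher order and coloring periodically, in the spirit of the extension of Lemma \ref{negyzet} to Lemma \ref{negyzet-k}.

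For the inductive step $t \geq 3$, the plan is to apply the inductive hypothesis to the subset $\{W_1, \ldots, W_{t-1}\}$ (which is again pairwise NC) with a sufficiently large parameter $N$ to be fixed, producing a first partition $S = T_1 \cup \cdots \cup T_{t-1}$ such that any translate of $W_i$ (for $i < t$) containing at least $f(N, t-1)$ points of $S$ contains at least $N$ points of $T_i$. Then, for each $i < t$, I apply the $t = 2$ strengthening to the pair $\{W_i, W_t\}$ on the point set $T_i$, producing a further 2-coloring $T_i = T_i^\star \cup T_i^\sharp$ such that any $W_i$-translate containing at least $m^\star(s)$ points of $T_i$ contains at least $s$ points of $T_i^\star$, and any $W_t$-translate containing at least $m^\star(s)$ points of $T_i$ contains at least $s$ points of $T_i^\sharp$. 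Finally, set $S_i := T_i^\star$ for $i < t$ and $S_t := \bigcup_{i<t} T_i^\sharp$. Choosing $N := m^\star(s)$ and $f(s, t) := \max\{f(m^\star(s), t-1),\, (t-1)\, m^\star(s)\}$, the verification is routine: a $W_i$-translate ($i < t$) with $\geq f(s, t)$ points of $S$ has $\geq N$ points of $T_i$ by the outer induction and hence $\geq s$ points of $S_i$, while a $W_t$-translate with $\geq (t-1)\, m^\star(s)$ points of $S$ has, by pigeonhole across the $t-1$ sets $T_i$, at least $m^\star(s)$ points in some $T_i$ and therefore at least $s$ points in $T_i^\sharp \subseteq S_t$.

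The main obstacle is the $t = 2$ case, that is, the pair-NC strengthening itself: the inductive step is only a clean pigeonhole reduction, but strengthening pair-NC from ``both colors present'' to ``at least $s$ points of each color present'' is delicate because reapplying NC to a single color class $R$ only yields information on translates that are large within $R$, not merely within $S$. The pigeonhole observation that $|W'| \geq 2m$ forces $|W' \cap R| \geq m$ or $|W' \cap B| \geq m$ is what makes the iteration feasible, at the price of constants that grow with $s$.
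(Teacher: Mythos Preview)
Your inductive step for $t\ge 3$ is correct and is essentially the paper's argument verbatim. The gap is in your primary plan for $t=2$. The iteration does not work: knowing that a translate $W'$ with $|W'\cap S|\ge 2m$ satisfies, say, $|W'\cap R|\ge m$ tells you (via NC on $R$) that $|W'\cap R_1|\ge 1$ and $|W'\cap R_2|\ge 1$, but you only know $|W'\cap B|\ge 1$, and that single $B$-point may lie in $B_2$. Then the ``balanced'' class $R_1\cup B_1$ still meets $W'$ in only one point. Whichever side of the pigeonhole you land on, the \emph{other} color class can contribute a single, adversarially placed point, so the minority count need not increase and no finite number of such iterations reaches $s\ge 2$. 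The underlying obstruction is that NC (or ANC) gives no lower bound beyond $1$ on the smaller color class inside a translate, so you can never reapply it there.

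The alternative you allude to---higher-order path decompositions---is precisely the paper's route, and this structural input is what replaces the failed iteration. The paper takes the order-$s^2 f(1,2)$ path decomposition of $S$ with respect to $V$, groups the paths into $s$ blocks $H_1,\ldots,H_s$ of $s\,f(1,2)$ paths each, then inside each $H_i$ takes the order-$s\,f(1,2)$ path decomposition with respect to $W$, groups into $s$ sub-blocks $H_i^j$ of $f(1,2)$ paths each, and applies ANC inside every $H_i^j$. The key is Observation~\ref{obs2}: a $V$-translate containing $s^2 f(1,2)$ points meets each $V$-path exactly once, hence each block $H_i$ in exactly $s\,f(1,2)$ points, so pigeonhole forces some sub-block to receive $\ge f(1,2)$ of them and hence a red point---one per block, $s$ in total. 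A $W$-translate with $s^2 f(1,2)$ points lands, by pigeonhole, with $\ge s\,f(1,2)$ points in some $H_i$, and then the $W$-path decomposition of $H_i$ distributes them evenly across all $s$ sub-blocks $H_i^j$. This \emph{even distribution across blocks} is exactly what a black-box recoloring cannot deliver.
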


\noindent {\bf Proof.}
The existence of $f(1,2)$ is equivalent to the property that
the corresponding two wedges are ANC.
Now we show that $f(s, 2)$ exists for every $s$.
Let $V$ and $W$ be two wedges that form a NC pair.
Let $P^V_1, P^V_2, \ldots , P^V_{s^2f(1,2)}$ be the path decomposition
of $S$ of order $s^2f(1,2)$, with respect to $V$.
For $i=1, 2, \ldots , s$, let
$$H_i=\cup_{j=(i-1)sf(1,2)+1}^{isf(1,2)} P_j^V.$$
For each $H_i$, take the $W$-path decomposition, $P_1^W(H_i),\ldots,
P_{sf(1,2)}^W(H_i)$, and for each $j=1, 2, \ldots , s$, let
$$H_i^j=\cup_{k=(j-1)f(1,2)+1}^{jf(1,2)} P_k^W(H_i).$$
For every
$i, j=1, 2, \ldots , s$,
color $H_i^j$, such that any translate of $V$ (resp. $W$) that intersects it
in at least $f(1,2)$ points,
contains at least one red (resp. blue) point of it. This is possible, since
the
pair $\{ V, W\}$ is ANC.

Consider a translate $V'$ of $V$ that
contains at least $s^2f(1,2)$ points of $S$.
For every $i$, $V'$ intersects $H_i$ in
$sf(1,2)$ points, so there is a $j$ such that
it intersects $H_i^j$ in at least $f(1,2)$
points. Therefore, $V'$ contains at least one red point
of $H_i^j$, so at least $s$ red points of $S$.

Consider now a translate $W'$ of $W$ that
contains at least $s^2f(1,2)$ points of $S$.
There is an $i$ such that
$W'$ intersects $H_i$ in at least $sf(1,2)$
points. Therefore, it intersects
each of $P_1^W(H_i),\ldots,
P_{sf(1,2)}^W(H_i)$, in at least one point, so
for $j=1, 2, \ldots, s$, $W'$ intersects
$H_i^j$ in at least $f(1,2)$ points. Consequently,
it contains at least one blue point of each
$H_i^j$, so at least $s$ blue points of $S$.

\smallskip

Now let $s, t>2$ fixed and suppose that $f(s',t-1)$ exists for every $s'$.
Let $\{W_1, \ldots , W_t\}$ be our set of wedges, such that any pair
of them is NC. Let $s'=f(s, 2)$.
Partition our point set $S$ into
$S'_1, S'_2, \ldots, S'_{t-1}$ such that
for $i=1, 2, \ldots , t-1$, for any translate $W'_i$ of $W_i$,
if $|W'_i\cap S|\ge f(s', t-1)$ then
$|W'_i\cap S'_i|\ge s'=f(s, 2)$.
For each $i=1, 2, \ldots , t-1$, partition
$S'_i$ into two parts, $S''_i$ and $S^t_i$,
such that for any translate $W'_i$ of $W_i$,
if $|W'_i\cap S'_i|\ge f(s, 2)$ then
$|W'_i\cap S''_i|\ge s$, and
for any translate $W'_t$ of $W_t$,
if $|W'_t\cap S'_i|\ge f(s, 2)$ then
$|W'_t\cap S^t_i|\ge s$.
Finally,
for $i=1, 2, \ldots , t-1$, let $S_i=S''_i$ and
let $S_t=\cup_{j=1}^{t-1}S^t_j$.
For $i=1, 2, \ldots , t-1$,
any translate $W'_i$ of $W_i$, if
$|W'_i\cap S|\ge f(s', t-1)$ then
$|W'_i\cap S'_i|\ge s'=f(s, 2)$, so
$|W'_i\cap S_i|\ge s$,

And for any translate $W'_t$ of $W_t$, if
$|W'_i\cap S|\ge f(s', t-1)$,
then for some $i=1, 2, \ldots t-1$,
$|W'_t\cap S'_i|\ge \frac{f(s', t-1)}{t-1}\ge f(s,2)$, therefore,
$|W'_t\cap S^t_i|\ge s$, so
$|W'_t\cap S_t|\ge s$.
This concludes the proof of Lemma \ref{convlem7}. \hfill $\Box$

\begin{remark} The proofs of Lemmas \ref{convlem2}, \ref{convlem4}, \ref{convlem5}, and \ref{convlem6} imply that
$f(1, 2)\le 8$. Combining it with the proof of Lemma \ref{convlem7} we get the bound
$f(s,t)\le (8s)^{2^{t-1}}$.
\end{remark}

As a corollary, we have can now prove Lemma \ref{sokwedge}.
\medskip

\noindent {\bf Lemma \ref{sokwedge}.} {\em A set of wedges ${\cal W}=\{ W_1, W_2, \ldots ,
W_t \}$ is NC if and only if any pair $\{W_i, W_j \}$ is NC.}

\smallskip

\noindent {\bf Proof.}
Clearly, if some pair $\{W_i, W_j \}$ is {\em not} NC, then the whole set
$\cal W$ is not NC either.
Suppose that every pair $\{W_i, W_j \}$ is NC.
Decompose $S$ into $t$ parts $S_1, S_2, \ldots , S_t$
with the property that
for $i=1, 2, \ldots , t$, for any translate $W'_i$ of $W_i$,
if $|W'_i\cap S|\ge f(3, t)$ then
$|W'_i\cap S_i|\ge 3$.
Then, by Lemma \ref{1wedge}, each $S_i$ can be colored with red and blue such
that if
$|W'_i\cap S_i|\ge 3$
then $W'_i$ contains points of both colors.
So this coloring of $S$ has the property that
for $i=1, 2, \ldots , t$, for any translate $W'_i$ of $W_i$,
if $|W'_i\cap S|\ge f(3, t)$ then it contains points of both colors. \hfill $\Box$

\subsection{Summary of Proof of Theorem A}
Although we have already established Theorem A, we find it useful to give another summary of the complete proof.

Suppose that $P$ is an open convex polygon of $n$ vertices and
${\cal P}=\{\ P_i\ |\ i\in I\ \}$ is a collection of translates of
$P$ which forms an
$M$-fold covering of the plane.
We will set the value of $M$ later.
Let $d$ be the minimum distance
between any vertex and non-adjacent side of $P$.
Take a square grid $\cal G$ of basic distance $d/2$.
Obviously, any translate of $P$ intersects at most
$K=4\pi(diam(P)+d)^2/d^2$ basic squares.
For each (closed) basic square $B$, using its compactness,
we can find a {\em finite} subcollection of the translates such that
they still form an $M$-fold covering of $B$.
Take the union of all these subcollections.
We have a {\em locally finite} $M$-fold covering of the plane.
That is, every compact set is intersected by finitely many of the translates.
It is sufficient to decompose this covering.
For simplicity, use the same notation
${\cal P}=\{\ P_i\ |\ i\in I\ \}$ for this subcollection.

We formulate
and solve the problem in its dual
form.
Let $O_i$ be the center of gravity of $P_i$.
Since ${\cal P}$ is an $M$-fold covering of the plane,
every translate of $\bar{P}$, the reflection of $P$ through the
origin, contains at least $M$ points of the locally finite set
${\cal O}=\{\ O_i\ |\ i\in I\ \}$.

The collection ${\cal P}=\{\ P_i\ |\ i\in I\ \}$ can be decomposed into two
coverings
if and only if the set ${\cal O}=\{\ O_i\ |\ i\in I\ \}$ can be colored with
two colors, such that every translate of $\bar{P}$ contains a point of both
colors.

Let ${\cal W}=\{ W_1, W_2, \ldots , W_n \}$ be the set of wedges
that correspond to the vertices of $\bar{P}$.
By the convexity of $\bar{P}$,
no pair $\{W_i, W_j \}$ is of Type 5 (Special), therefore,
by the previous Lemmas,
each pair is NC. Consequently, by Lemma \ref{sokwedge}, ${\cal W}$ is
NC as well. So there is an $m$ with the following property.

\smallskip

\noindent {\bf *}  Any set of points $S$ can be colored with two colors such that
any translate of $W_1, \ldots , W_n$
that contains at least $m$ points of $S$, contains points of both colors.

\smallskip

Choose $M$ such that $M\ge mK$, and color the points of
$\cal O$ in each basic square separately, with property {\bf *}.

Since any translate $P'$ of $\bar{P}$ intersects at most
$K$ basic squares of the grid $\cal G$, $P'$ contains at least $M/K\ge m$ points of
${\cal O}$ in the same basic square $B'$.
By the choice of the grid $\cal G$,
$B'$ contains at most one vertex of $P'$, hence
$B'\cap P'=B'\cap W$, where $W$ is a translate
of some $W_i\in {\cal W}$. So, by property {\bf *},
$P'$ contains points of ${\cal O}\cap B'$ of both colors.
This concludes the proof of Theorem A. \hfill $\Box$

\subsection{Concluding Remarks}
Throughout this section we made no attempt to optimize the constants.
However, it may be an interesting problem to determine (asymptotically) 
the smallest $m$ in the proof of Theorem A. 

Another interesting question is to decide whether
this constant depends only on the number of vertices of the polygon, or on the
shape as well. In particular, we cannot verify the following.

\begin{conj}\label{conj:quad} There is a constant $m$ such that any
$m$-fold covering of the plane with translates of a convex quadrilateral
can be decomposed into two coverings.
\end{conj}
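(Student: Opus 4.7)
The natural strategy is to revisit the proof of Theorem A and isolate where the shape-dependence of $m$ actually enters. First I would observe that the NC constant for a collection of $4$ wedges is already uniform: the bound $f(s,t)\le (8s)^{2^{t-1}}$ from the remark after Lemma \ref{convlem7}, instantiated at $t=4$, depends only on the number of wedges (and not on their angles), provided no pair is of Type 5 (Special), which is guaranteed by convexity of the quadrilateral. So the obstruction to a universal $m$ is concentrated entirely in the grid-based reduction from the plane-covering problem to the wedge problem in Lemma \ref{wedge}, where the basic square size $d/2$ and the bound $K=4\pi(\mathrm{diam}(P)+d)^2/d^2$ depend on the polygon $P$ through the minimum distance $d$ from a vertex to a non-adjacent side.

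Second, I would try to eliminate this dependence by replacing the fixed square grid by a shape-adapted tiling. Concretely, I would use a parallelogram tiling whose fundamental domain is affinely equivalent to the quadrilateral $Q$ itself, chosen so that every translate of $Q$ meets at most a universal constant number of tiles. This is possible because affine transformations map translates to translates and preserve the covering structure, so we may first apply a suitable affine map to bring $Q$ into a canonical position — for instance, with three of its vertices fixed at $(0,0)$, $(1,0)$, $(0,1)$. In this normalization the fourth vertex ranges over the open quadrant of ``convexifying'' positions. One then hopes that for this canonical form the ratio $\mathrm{diam}(Q)/d(Q)$ is bounded by a universal constant whenever the fourth vertex stays in a fixed compact subregion of its domain, so that Theorem A applies with a uniform constant there.

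The main obstacle is that the space of convex quadrilaterals modulo affine transformations is \emph{not compact} after any such normalization: the fourth vertex can approach the line through two adjacent vertices, in which case $Q$ degenerates toward a triangle, or approach infinity, in which case the aspect ratio blows up. To handle the degenerate regimes I would argue in two complementary ways. For quadrilaterals that are very close to being a triangle, I would try to invoke Theorem A (ii) for triangles directly, together with a robustness lemma saying that a small perturbation of a triangle into a quadrilateral (adding one ``shallow'' vertex) increases the decomposability threshold only by a constant factor; this would rely on showing that the fourth wedge, being very ``thin'', can only affect the coloring of local wedge intersections whose global NC bound remains controlled. For quadrilaterals with a very acute angle (large aspect ratio after normalization), I would rotate which three vertices get fixed by the affine map, so that the ``bad'' vertex becomes one of the normalized ones; by symmetry, every quadrilateral falls into at least one regime with bounded aspect ratio.

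Putting the pieces together, the proof would run: apply an affine normalization; if the result is in the ``bounded aspect ratio'' region, apply Theorem A with the universal NC constant for $4$ wedges and the universal tile-meeting constant; otherwise, reduce to the triangle case via Theorem A (ii) plus the perturbation lemma. The core difficulty — and where I expect the proof to be hardest — is precisely the perturbation/interpolation step, because as the fourth vertex moves close to a side, the corresponding wedge becomes a near-halfplane, and controlling the combinatorics of translates containing that near-halfplane's slab without picking up a shape-dependent factor requires a genuinely new idea beyond the methods in this section.
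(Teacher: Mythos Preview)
This statement is a \emph{conjecture}, not a theorem: the paper explicitly introduces it with ``we cannot verify the following'' and lists it again among the open problems in Section~\ref{sec:conj}. There is no proof in the paper to compare your proposal against.

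That said, your diagnosis of where the difficulty lies is accurate and matches the paper's own discussion. You correctly observe that the NC bound $f(s,t)\le (8s)^{2^{t-1}}$ for $t=4$ wedges is already uniform in the shape, so the only shape-dependence in Theorem~A enters through the grid constant $K=4\pi(\mathrm{diam}(P)+d)^2/d^2$. Your plan to kill this dependence via affine normalization is natural, and you are right that the moduli space of convex quadrilaterals modulo affine equivalence is non-compact (it is two-dimensional, with degenerations toward triangles). However, your proposed remedy---a ``perturbation lemma'' interpolating between the triangle case and the quadrilateral case---is not a proof sketch but a restatement of the open problem. The issue is not that one wedge becomes a near-halfplane (Type~1 wedges are handled uniformly in Lemma~\ref{convlem4}); it is that as the quadrilateral degenerates, the ratio $\mathrm{diam}(P)/d(P)$ genuinely blows up, so a single translate can intersect arbitrarily many cells of any fixed-shape tiling, and the localization step of Lemma~\ref{wedge} fails quantitatively. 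Your suggestion to ``rotate which three vertices get fixed'' does not escape this: every choice of normalization has degenerations at the boundary of the moduli space, and the triangle limit is intrinsic, not an artifact of coordinates.

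In short, you have correctly located the obstruction but not removed it; the conjecture remains open.
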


With a slight modification of our proof of Theorem A, we get the following more general result about decomposition to $k$ coverings.

\medskip

\noindent {\bf Theorem A'.} {\em For any open convex polygon (or concave polygon without Type 5 (special) wedges), $P$, and any 
$k$, there exists a (smallest) number $m_k(P)$, such that 
any $m_k(P)$-fold covering of the plane with translates of
$P$ can be decomposed into $k$ coverings. }

\medskip

Our proof gives $m_k(P)<K_P(8k)^{2^{n-1}}$, where $K_P$ is the constant $K$ from the
proof of Theorem A and $n$ is the number of vertices of $P$. 
The best known lower bound on $m_k(P)$ is $\lfloor 4k/3\rfloor-1$
\cite{PT07}.
Recently Gibson and Varadarajan \cite{GV10} proved Theorem C (iii), which is a linear upper bound for all convex polygons.
However, their proof does not work for any cover-decomposable polygon, because they handle only one case of Type 3 (Contain).
For a summary of their result see the end of Section \ref{sec:introdec}.
We conjecture that a linear upper bound also holds for cover-decomposable concave polygons.

\begin{conj}\label{conj:multipoly} For any cover-decomposable polygon $P$, $m_k(P)=O(k)$.
\end{conj}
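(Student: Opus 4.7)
The plan is to extend the Gibson--Varadarajan argument behind Theorem~D' from the convex case to every cover-decomposable polygon. By Lemma~\ref{wedge} it suffices, given a collection ${\cal W}=\{W_1,\dots,W_n\}$ of $P$-wedges, to produce a constant $\alpha=\alpha(P)$ such that any finite point set can be coloured with $k$ colours so that every translate of every $W_i$ containing at least $\alpha k$ points receives all $k$ colours. Since $P$ is cover-decomposable, Lemma~\ref{sokwedge} together with the classification of Section~\ref{sec:class} guarantees that no pair in ${\cal W}$ is of Type~5 (Special); every pair is therefore of Type~1, 2, 3 or 4.

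I would implement the same iterative partial-colouring scheme as in Section~\ref{sec:gv}: for each $i$, apply Lemma~\ref{partialcolor} to a subset $Q_i$ of the still-uncoloured points using the level curves of $W_i$ at depth $L$ linear in $k$, where $Q_i$ excludes any point that appears among the first $\alpha_{n-1}k$ elements in any of the linear orders induced by the remaining wedges. The inductive goal is to verify that every translate of $W_j$, $j\neq i$, that previously held at least $\alpha_n k$ points still holds $\alpha_{n-1}k$ uncoloured points. Gibson and Varadarajan carry out this verification for Types~2 (Halfplane) and~4 (Hard), and for the Type~3 (Contain) subcase in which a translate of $W_j$ lies inside $-W_i$. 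What remains is to treat the other Type~3 subcases, namely $W_j\subseteq W_i$ and $W_i\subseteq W_j$.

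When $W_j\subseteq W_i$ the curve $\C^{W_i}(L)$ intersects a translate of $W_j$ in at most a single point $z$, so that $W_i(z)$ captures all coloured points of $W_j(x)$, and the Halfplane-style bound gives the required inequality at once. The reverse inclusion $W_i\subseteq W_j$ is the genuinely new case: a translate of $W_j$ can contain a long arc of $\C^{W_i}(L)$, and many apices of small $W_i$-translates may lie inside it, allowing coloured points to accumulate. To handle this, I would slide an apex $z$ along the arc $\C^{W_i}(L)\cap W_j(x)$ and, using an intermediate-value argument, find a $z^\ast$ at which the two boundary pieces of $W_j(x)$ lying outside $W_i(z^\ast)$ each contain at least $2k$ points of $Q_i$; the ``moreover'' clause of Lemma~\ref{partialcolor} then forces every coloured point of $W_j(x)$ to lie inside $W_i(z^\ast)$, bounding the total number of coloured points by $2k$.

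The main obstacle will be precisely this $W_i\subseteq W_j$ subcase. Unlike the $-W_i$ geometry, the two cutting curves of $W_j(x)$ now lie on the same side of $W_i$, so the clean three-piece decomposition of $W_j$ used in the Type~4 argument is no longer available, and one must instead analyse carefully how the level curves of $W_i$ foliate the interior of $W_j(x)$ and combine this with the order-pruning built into the definition of $Q_i$. Once this geometric lemma is established, the recursion for $\alpha_n$ produces a constant depending only on $P$ and linear in $k$, and the grid reduction of Lemma~\ref{wedge} then yields $m_k(P)=O(k)$ for every cover-decomposable polygon $P$.
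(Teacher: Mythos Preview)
This statement is labelled a \emph{conjecture} in the paper and is not proved there; the paper explicitly notes (end of Section~\ref{sec:convex}) that the Gibson--Varadarajan argument ``does not work for any cover-decomposable polygon, because they handle only one case of Type~3 (Contain).'' So there is no proof in the paper to compare your attempt against --- Conjecture~\ref{conj:multipoly} is posed as an open problem.

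Your proposal correctly localises the obstruction to the two remaining Type~3 subcases, but it does not close either of them. For $W_j\subseteq W_i$ you assert that $\C^{W_i}(L)$ meets a translate $W_j(x)$ in at most a single point. This is not true: $\C^{W_i}(L)$ is a staircase of segments parallel to the sides of $W_i$, and a narrower wedge $W_j(x)$ opening in the same overall direction typically cuts it along an arc of positive length, not a single point (since the sides of $W_j$ lie strictly between the sides of $W_i$ in slope, the difference between the staircase and the boundary of $W_j(x)$ is monotone on each side of the apex, which yields an interval of intersection). There is then no single apex $z$ for which $W_i(z)$ captures all coloured points of $W_j(x)$, so the Halfplane-style count does not transfer. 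For $W_i\subseteq W_j$ you yourself flag that the key geometric lemma is still ``to be established''; the continuity sketch you offer is modelled on the $-W_1$ geometry of the original proof, where the two exit points $a_z,b_z$ of $\partial W_1(z)$ sweep in opposite senses as $z$ slides along the arc, and it is not clear that the order-pruning built into $Q_i$ supplies the needed monotonicity when the wedges nest with the same orientation rather than oppose.

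In short, your plan identifies exactly why the conjecture remains open, but both missing subcases are genuine gaps, not routine verifications.
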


For more related conjectures see Section \ref{sec:conj}.
 
\medskip

Our proofs use the assumption that the covering is locally finite, and 
for open polygons we 
could find a locally finite subcollection which is still a $m$-fold covering.
Still, we strongly believe that Theorem A holds for closed convex polygons
as well.

\clearpage
\section{Indecomposable Coverings by Concave Polygons}\label{sec:concave}

This section is based on my paper, Indecomposable coverings with concave polygons \cite{P10}.

The main goal of this section is to prove results about {\it non}-cover-decomposable polygons. 
To understand some of the results, we need to recall the notions introduced in Section \ref{sec:total}.

\medskip

\noindent {\bf Definition \ref{deftotal}.} A planar set $P$ is said to be {\em totally-cover-decomposable}
if there exists a (minimal) constant $m^T=m^T(P)$ such that
every $m^T$-fold covering of ANY planar point set with translates of $P$
can be decomposed into two coverings. Similarly, let $m^T_k(P)$ denote the smallest number $m^T$ with the property
that every $m^T$-fold covering of ANY planar point set with translates of $P$
can be decomposed into $k$ coverings.

\medskip

When we want to emphasize the difference from the original definition, we will call the cover-decomposable sets {\em plane-cover-decomposable}. By definition, if a set is totally-cover-decomposable, then it is also plane-cover-decomposable. On the other hand, we cannot rule out the possibility that
there are sets, or even polygons, which are plane-cover-decomposable,
but not totally-cover-decomposable.

The results of Theorem A all remain true if we write totally-cover-decomposable instead of cover-decomposable and similarly, in Theorem C we can replace $m_k(P)$ with $m^T_k(P)$. In fact in the proofs of these theorems, these more general claims are proved. The proof of Theorem B establishes first that concave quadrilaterals are not totally-cover-decomposable and then extends the covering, proving that they are also not plane-cover-decomposable.
The main result of this section is a generalization of Theorem B. We show that {\em almost} all (open or closed) concave polygons are {\it not} totally-cover-decomposable and prove that most of them are also not plane-cover-decomposable. We need the ``almost'' because Theorem A' implies that any concave polygon without Type 5 (Special) wedges is totally-cover-decomposable.

Our main result is the following

\medskip

\noindent {\bf Theorem E.} {\em If a polygon has a pair of Type 5 (Special) wedges, then it is not totally-cover-decomposable.}

\medskip

\begin{figure}[htb]
\begin{center}
\scalebox{0.5}{\includegraphics{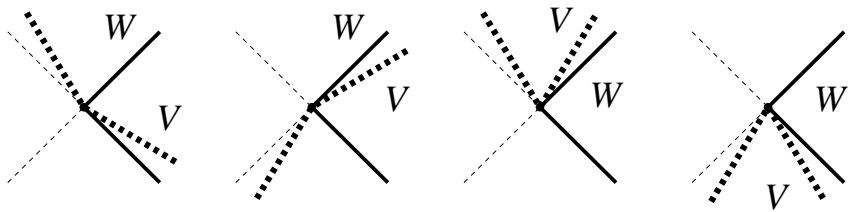}}
\caption{Type 5: Special pair of wedges}
\label{onlyspecial}
\end{center}
\end{figure}

Together with the previous theorem, this gives a complete characterization
of totally-cover-de\-com\-posable open polygons; an open polygon is
totally-cover-decomposable if and only if it does not have a Special pair of
wedges.

We show that every concave polygon with no parallel sides
has a pair of Special wedges, therefore we have

\medskip

\noindent {\bf Theorem E'.} {\em If a concave polygon has no parallel sides, then it is not totally-cover-decomposable.}

\medskip

The proof of these theorems can be found in Section \ref{concsec2}.
The problem of deciding plane-cover-decomposability for concave polygons is still open. However, in Section \ref{concsec3}, we prove that a large class of concave polygons are not plane-cover-decomposable. We also show that any ``interesting'' covering of the plane uses only countably many translates. (However, we do not consider here the problem when we want to decompose into infinitely many coverings; the interested reader is referred to the paper of Elekes, M\'atrai and Soukup \cite{EMS10}.)\\

Finally, in Section \ref{concsec4}, we investigate the problem in three or more dimensions. The notion of totally-cover-decomposability extends naturally and we can also introduce space-cover-decomposability. Previously, the following result was known.

\medskip

\noindent {\bf Theorem F.} {Mani-Levitska, Pach }\cite{MP86} {\em  The unit ball is not space-cover-decomposable.}

\medskip

Using our construction, we establish the first theorem for polytopes which shows that the higher  dimensional case is quite different from the two dimensional one.

\medskip

\noindent {\bf Theorem B'.} {\em Polytopes are not cover-decomposable in the space and in 
 higher dimensions.}

\medskip

\subsection{The Construction - Proof of Theorems E and E'}\label{concsec2}

In this section, for any $k$ and any polygon $C$ that has a Special pair of wedges, we present a (finite) point set and an indecomposable $k$-fold covering of it by (a finite number of) the translates of the polygon. We formulate (and solve) the problem in its dual form, like we did before. Here we recall how the dualization goes. Fix $O$, the center of gravity of $C$ as our origin in the plane. For the planar set $C$ and a point $p$ in the plane we use $C(p)$ to denote the translate of $C$ by the vector $\vec{Op}$. Let $\bar{C}$ be the reflection through $O$ of $C$. For any point $x$, $x\in C(p_i)$ if and only if $p_i\in \bar{C}(x)$.
To see this, apply a reflection through the midpoint of the segment $xp_i$. This switches $C(p_i)$ and $\bar{C}(x)$, and also switches $p_i$ and $x$.

Consider any collection ${\cal C}=\{C(p_i)\ |\ i\in I\}$ of
translates of $C$ and a point set $X$. The collection ${\cal C}$ covers $x$ at least $k$ times if and only if
$\bar{C}(x)$ contains at least $k$ elements of the set
${S}=\{p_i\ |\ i\in I\}$. Therefore a $k$-fold covering of $X$ transforms into a point set such that for every $x \in X$ the set $\bar{C}(x)$ contains at least $k$ points of $S$.
The required decomposition of ${\cal C}$ exists if and only if
the set $S$ can be colored with two colors
such that every translate $\bar{C}(x)$ that contains at least $k$ elements
of $S$ contains at least one element of each color. Thus constructing a finite system of translates of $\bar{C}$ and a point set where this latter property fails is equivalent to constructing an indecomposable covering using the translates of $C$.

If $C$ has a Special pair of wedges, then so does $\bar{C}$. We will use the following theorem to prove Theorem E.

\begin{thm}\label{thm} For any pair of Special wedges, $V$ and $W$, and for every $k,l$, there is a point set of cardinality ${k+l \choose k}-1$, such that for every coloring of $S$ with red and blue, either there is a translate of $V$ containing $k$ red points and no blue points, or there is a translate of $W$ containing $l$ blue points and no red points.
\end{thm}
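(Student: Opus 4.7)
The plan is to prove Theorem \ref{thm} by induction on $k+l$, exploiting Pascal's identity
$$\binom{k+l}{k}-1 \;=\; \Bigl(\binom{k+l-1}{k-1}-1\Bigr) + \Bigl(\binom{k+l-1}{k}-1\Bigr) + 1.$$
The base cases $k=0$ or $l=0$ hold trivially with $S=\emptyset$, since an empty translate of $V$ (resp.\ of $W$) contains $0=k$ red and $0$ blue points (resp.\ $0=l$ blue and $0$ red points). For the inductive step, assuming constructions $S^{k-1,l}$ and $S^{k,l-1}$ of the correct sizes have been built, I would take $S^{k,l} = A \cup B \cup \{p\}$, where $A$ is a translated and scaled copy of $S^{k-1,l}$, $B$ is a translated and scaled copy of $S^{k,l-1}$, and $p$ is an ``anchor'' point whose role is to bridge the two sub-constructions.

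The geometric placement is the heart of the argument. Since $\{V,W\}$ is a Special pair, both wedges lie in a common open halfplane $H$ whose boundary passes through the origin, and neither wedge contains the other. Hence I can fix rays $d_A$ in the interior of $V \setminus \overline{W}$ and $d_B$ in the interior of $W \setminus \overline{V}$. I would then place $p$ somewhere, $A$ far from $p$ in the direction $d_A$ (so that $A$ lies deep inside $V(p)$ but outside $W(p)$), and $B$ far from $p$ in the direction $d_B$ (so that $B$ lies inside $W(p)$ but outside $V(p)$). With $A$ and $B$ scaled down sufficiently relative to their distances from $p$ and from each other, one can arrange the following no-interference properties: (i) no $V$- or $W$-translate meets both $A$ and $B$; (ii) every $V$-translate $V(q)$ that witnesses the inductive property on $A$ can be replaced by a $V$-translate $V(q')$ with $V(q')\cap S^{k,l} = (V(q)\cap A) \cup \{p\}$, obtained by shifting the apex of $V(q)$ along the ray $-d_A$ until it meets $p$, exploiting $d_A \in \mathrm{int}\, V$; (iii) the symmetric statement holds for $W$ and $B$.

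Granted this placement, the inductive step reduces to a routine case analysis on the color of $p$. If $p$ is red, applying the inductive hypothesis to the restricted coloring on $A$ produces either a $V$-translate containing $k-1$ red and $0$ blue points of $A$ --- which by (ii) extends to a $V$-translate witnessing $k$ red and $0$ blue points of $S^{k,l}$ --- or a $W$-translate containing $l$ blue and $0$ red points of $A$, which by (i) and (iii) is disjoint from $B \cup \{p\}$ and thus witnesses the conclusion directly. The case $p$ blue is symmetric, using $B$ instead of $A$.

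The main obstacle I foresee is verifying the no-interference conditions (i)--(iii) uniformly over every Special pair. The concave-quadrilateral case --- where $V$ and $W$ can be translated so as to share an apex and still be disjoint within an open halfplane --- allows an especially clean separation of $A$ from $B$. The remaining Type~5 configurations, in which the two wedges overlap after such a translation, require a more delicate simultaneous choice of directions $d_A, d_B$ and of the scales. The key leverage is that $d_A$ lies strictly in $V$ but strictly outside $\overline{W}$: translating along $d_A$ keeps $A$ inside $V(p)$ for arbitrarily large distances, while eventually pushing $A$ out of $W(p)$, and the symmetric statement for $d_B$ then produces the required geometric separation between $A$, $B$, and the $V$- and $W$-translates relevant to each part's inductive argument.
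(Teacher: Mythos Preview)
Your inductive skeleton --- anchor point $p$, two scaled sub-constructions, Pascal's identity, case split on the color of $p$ --- is exactly the paper's approach. The problem is that your geometric placement is reversed, and this breaks property (ii).

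You put $A$ at (roughly) $p + t\,d_A$ with $d_A \in \mathrm{int}(V)\setminus \overline{W}$, so that $A \subset V(p)$. But now observe: if $V(r)$ is \emph{any} translate with $p \in V(r)$, then $r \in p - V$, say $r = p - v$ with $v \in V$; hence for each $a \in A$ we have $a - r \approx t\,d_A + v$, which lies in the convex cone $V$, so $a \in V(r)$. In other words, every $V$-translate containing $p$ already swallows all of $A$. Your proposed shift of the apex along $-d_A$ enlarges $V(q)$ monotonically (since $V(q) \subset V(q - s\,d_A)$ for $d_A \in V$), so by the time $p$ enters, all of $A$ has entered too --- including any blue points of $A$. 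Thus the conclusion $V(q') \cap S^{k,l} = (V(q) \cap A) \cup \{p\}$ in (ii) is false in general, and the red-$p$ branch of your case analysis collapses.

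The correct placement is the opposite one: put $A$ in direction $-d_A$ from $p$, i.e.\ near $p - t\,d_A$ (the paper says ``left from $p$'' after rotating the wedges into the right halfplane). Then the inductive witness $V(q)$ with apex $q$ near $A$ already contains $p$ automatically, since $p - q \approx t\,d_A \in V$; and the $W$-witness with apex near $A$ automatically \emph{avoids} $p$, since $t\,d_A \notin \overline{W}$. No extension of the witness wedge is needed at all, so the over-capture problem never arises. Symmetrically, $B$ (the copy of $S^{k,l-1}$) goes near $p - t'\,d_B$. With this reversal your proof goes through; without it, it does not.
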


\begin{proof}

Without loss of generality, suppose that the wedges are contained in the right halfplane.

For $k=1$ the statement is trivial, just take $l$ points such that any one is contained alone in a translate of $W$. Similarly $k$ points will do for $l=1$. Let us suppose that we already have a counterexample for all $k'+l'<k+l$ and let us denote these by $S(k',l')$. The construction for $k$ and $l$ is the following.

Place a point $p$ in the plane and a suitable small scaled down copy of $S(k-1,l)$ left from $p$ such that any translate of $V$ with its apex in the neighborhood of $S(k-1,l)$ contains $p$, but none of the translates of $W$ with its apex in the neighborhood of $S(k-1,l)$ does. Similarly place $S(k,l-1)$ such that any translate of $W$ with its apex in the neighborhood of $S(k,l-1)$ contains $p$, but none of the translates of $V$ with its apex in the neighborhood of $S(k,l-1)$ does. (See Figure \ref{concfig2}.)

\begin{figure}[ht]
\begin{center}
\scalebox{0.55}{\includegraphics{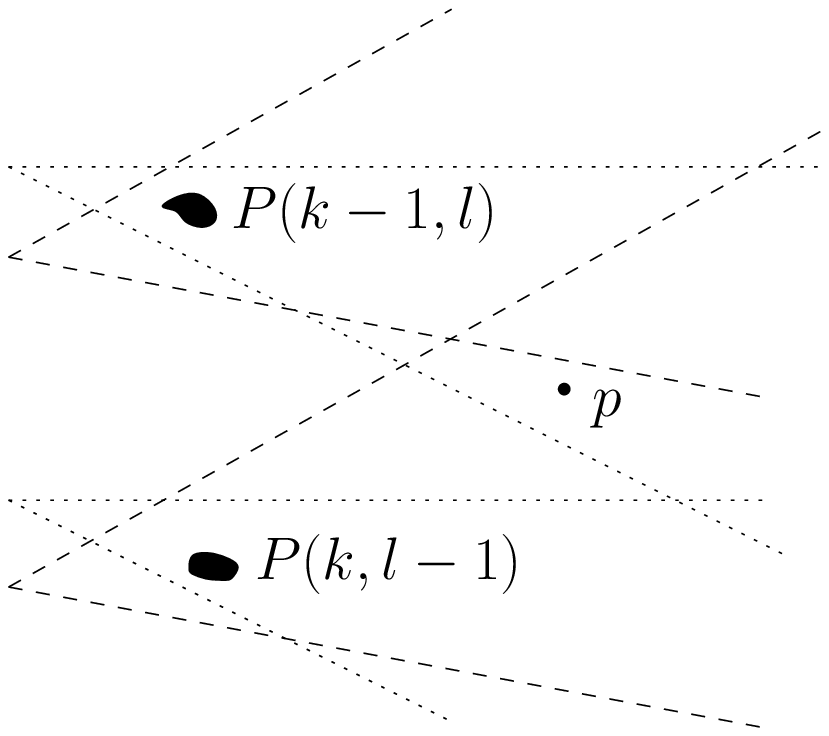}}
\hspace{2cm}
\scalebox{0.65}{\includegraphics{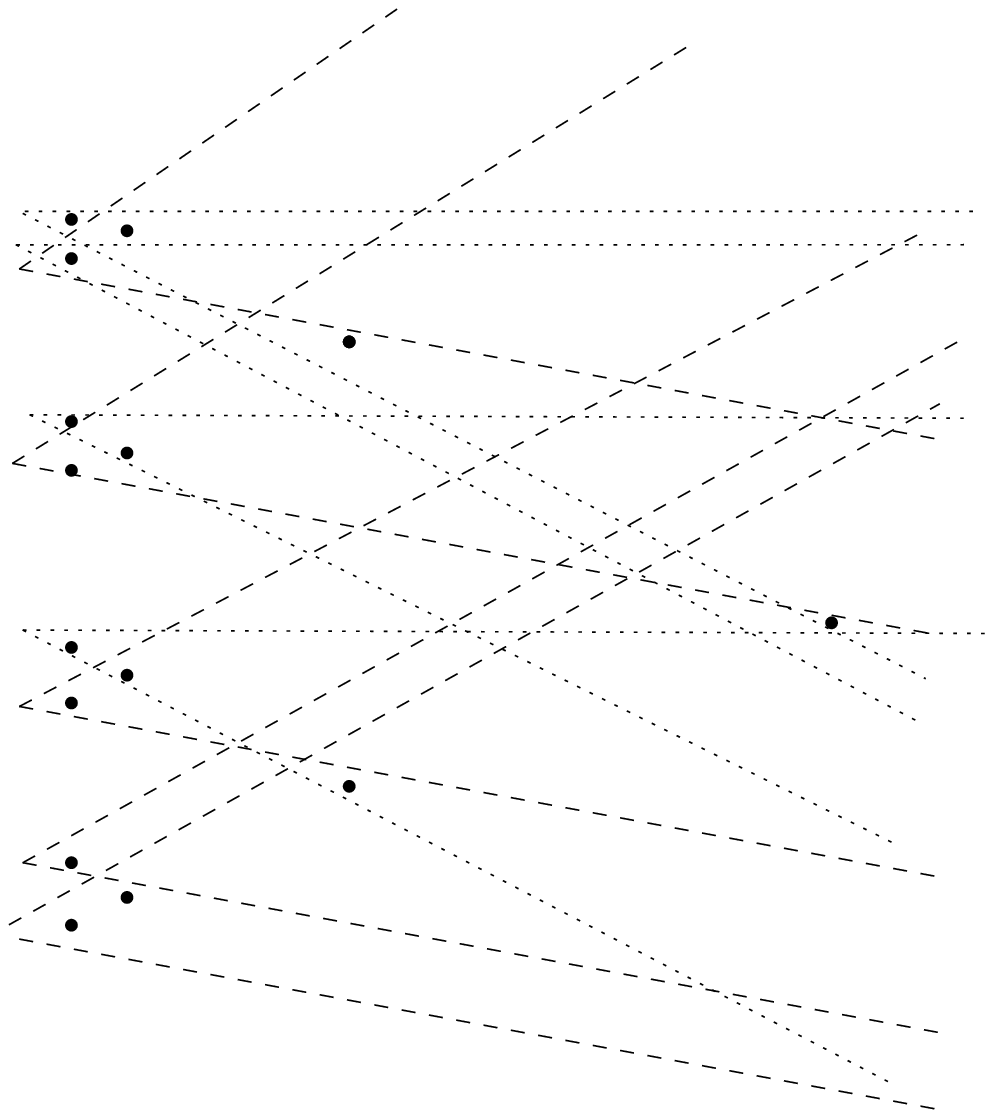}}
\caption{Sketch of one step of the induction and the first few steps.}
\label{concfig2}
\end{center}
\end{figure}

If $p$ is colored red, then\\
\indent
-- either the $S(k-1,l)$ part already contains a translate of $V$ that contains $k-1$ reds and no blues, and it contains $p$ as well, which gives together $k$ red points\\
\indent
-- or the $S(k-1,l)$ part contains a translate of $W$ that contains $l$ blues and no reds and it does not contain $p$.\\
\indent
The same reasoning works for the case when $p$ is colored blue.

Now we can calculate the number of points in $S(k,l)$. For $l=1$ and for $k=1$ we know that $|S(k,1)|=k$ and $|S(1,l)|=l$, while the induction gives $|S(k,l)|=1+|S(k-1,l)|+|S(k,l-1)|$. From this we have $|S(k,l)|={k+l \choose k}-1.$ 
\end{proof}

It is easy to see that if we use this theorem for a pair of Special wedges of $\bar{C}$ and $k=l$, then for every coloring of (a possibly scaled down copy of) the above point set with two colors there is a translate of $\bar{C}$ that contains at least $k$ points, but contains only one of the colors. This is because ``$\bar{C}$ can locally behave like any of its wedges''. Therefore this construction completes the proof of Theorem E. \hfill $\Box$

\begin{remark}\label{remark:oneline} Note that we can even give a finite collection of translates of $V$ and $W$ whose apices all lie on the same line such that one of them will satisfy the conclusion of the theorem. Moreover, this line can be any line that can touch a translate of each wedge in only its apex.
\end{remark}

\begin{remark} We note that for $k=l$ the cardinality of the point set is approximately $4^{k}/\sqrt k$, this significantly improves the previously known construction of Pach, Tardos and Tóth \cite{PTT05} which used approximately $k^k$ points and worked only for quadrilaterals, and in general, for ``even more Special'' pairs of wedges (the ones on the right side of Figure \ref{onlyspecial}). It can be proved that this exponential bound is close to being optimal. Suppose that we have $n$ points and $n<2^{k-2}.$ Since there are two kinds of wedges, there are at most $2n$ essentially different translates that contain $k$ points. There are $2^n$ different colorings of the point set and each translate that contains $k$ points is monochromatic for $2^{n-k+1}$ of the colorings. Therefore, there are at most $2n2^{n-k+1}<2^n$ bad colorings, so there is a coloring with no monochromatic translates.
\end{remark}

Theorem E' follows directly from the next result.

\begin{lem}\label{lem} Every concave polygon that has no parallel sides, has a Special pair of wedges. 
\end{lem}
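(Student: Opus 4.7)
The plan is to exhibit a pocket of the concave polygon and extract a Special pair of wedges from its two bounding hull vertices. Since $P$ is concave, the complement $\mathrm{conv}(P)\setminus P$ decomposes into pockets, each bounded by a chord $\overline{uw}$ of the hull (with $u,w$ consecutive hull vertices that are separated in polygon order by at least one non-hull vertex) together with the sub-chain of the polygon from $u$ to $w$ through that non-hull vertex. First I would pick any such pocket.

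Both $u$ and $w$ lie on the convex hull, so their polygon interior angles are strictly less than $\pi$, and $W_u$, $W_w$ each have angle less than $\pi$. Next I would show that the two wedges, translated to a common apex at the origin, lie in an open halfplane through the origin: each of the four bounding rays of $W_u$ and $W_w$ points either into the pocket (which lies strictly on the hull-interior side of the chord line through $\overline{uw}$) or to an adjacent polygon vertex that, by convexity of the hull, lies in the closed halfplane on the hull-interior side of that chord line. The no-parallel-sides hypothesis ensures that at most one of the four rays can be parallel to the chord (two such rays would arise from two parallel polygon edges), so a sufficiently small rotation of the separating boundary yields an open halfplane strictly containing all four rays, and hence both wedges.

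Finally, I would argue that neither wedge contains the other, so the pair is Special rather than Big, Halfplane, Contain, or Hard. The no-parallel-sides assumption implies that the four bounding rays are pairwise non-parallel, so $W_u\ne W_w$; to preclude strict containment I would appeal to the fact that the ray from $u$ into the pocket and the ray from $w$ into the pocket lie on opposite angular sides of the chord's perpendicular, forcing the arcs of $W_u$ and $W_w$ to occupy different angular regions within the open halfplane. The hard part will be this last containment step: making the ``opposite sides'' reasoning rigorous in polygons with multiple pocket vertices or skewed interior-angle distributions requires a careful geometric analysis. In borderline configurations one may need to invoke the no-parallel-sides hypothesis a second time to exclude arcs that share an endpoint direction, or to fall back on the pair $W_{r-1},W_{r+1}$ adjacent to a reflex vertex $v_r$, whose relative position is controlled by the turning angle $\tau_r \in (-\pi,0)$ and which always has its two chord-pointing rays within an open arc of length $\pi - |\tau_r| < \pi$.
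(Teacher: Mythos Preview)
Your strategy of taking the two hull vertices $u,w$ bounding a pocket correctly forces $W_u$ and $W_w$ into a common open halfplane (your rotation argument using the no-parallel-sides hypothesis is fine). But the ``hard part'' you flag is a real gap, and your proposed reasoning does not close it: the pair $(W_u,W_w)$ can very well be of Type~3 (Contain), not Special.

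Here is a concrete counterexample. Take the hexagon with vertices $(-1,1),(0,0),(1,3),(10,0),(11,1),(5,10)$ in this cyclic order. No two sides are parallel, the unique reflex vertex is $p=(1,3)$, and $u=(0,0)$, $w=(10,0)$ are the consecutive hull vertices bounding the pocket. One computes that $W_u$ occupies the angular interval from about $71.6^\circ$ to $135^\circ$, while $W_w$ occupies roughly $45^\circ$ to $161.6^\circ$; hence $W_u\subset W_w$. Your ``opposite sides of the chord's perpendicular'' heuristic is actually satisfied here (the pocket rays from $u$ and $w$ do lie on opposite sides of the vertical), yet containment still occurs because the \emph{outer} rays are what control it. Your fallback to $W_{r-1},W_{r+1}$ for a reflex vertex $r$ is, in this example, exactly the same pair $(W_u,W_w)$ and fails identically.

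The paper's proof avoids this trap by not committing to a single candidate pair. It argues by contradiction: start with any tangent line touching two vertices $v_1,v_2$ (which exists since the polygon is concave with no parallel sides); if $(W_{v_1},W_{v_2})$ is not Special then one wedge contains the other, say $W_{v_1}\supset W_{v_2}$. Then take tangent lines parallel to the sides of the \emph{smaller} wedge $W_{v_2}$; at least one of them touches a new vertex $v_3\neq v_1,v_2$, and $(W_{v_2},W_{v_3})$ again lies in a common halfplane. Iterating produces an infinite sequence of distinct vertices, a contradiction. In the hexagon above this procedure moves from $(W_u,W_w)$ to $(W_u,W_{(11,1)})$, which is indeed Special. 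Your approach can be salvaged, but only by incorporating some such iterative refinement when containment occurs.
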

\begin{proof}
Assume that the statement does not hold for a polygon $C$. There is a touching line $\ell$ to $C$ such that the intersection of $\ell$ and $C$ contains no segments and contains at least two vertices, $v_1$ and $v_2$. (Here we use that $C$ has no parallel sides.)
Denote the wedges at $v_i$ by $W_i$. Is the pair $W_1,W_2$ Special? They clearly fulfill the property (i), the only problem that can arise is that the translate of one of the wedges contains the other wedge. This means, without loss of generality, that the angle at $v_1$ contains the angle at $v_2$. Now let us take the two touching lines to $C$ that are parallel to the sides of $W_2$. It is impossible that both of these lines touch $v_2$, because then the touching line $\ell$ would touch only $v_2$ as well. Take a vertex $v_3$ from the touching line (or from one of these two lines) that does not touch $v_2$. (See Figure \ref{concfig3}.) This cannot be $v_1$ because then the polygon would have two parallel sides. Is the pair $W_2,W_3$ Special? They are contained in a halfplane (the one determined by the touching line). This means, again, that the angle at $v_2$ contains the angle at $v_3$. Now we can continue the reasoning with the touching lines to $C$ parallel to the sides of $W_3$, if they would both touch $v_3$, then the touching line $\ell$ would touch only $v_3$. This way we obtain the new vertices $v_4, v_5, \ldots$ what contradicts the fact the $C$ can have only a finite number of vertices. \end{proof}

\begin{figure}[ht]
\begin{center}
\scalebox{0.8}{\includegraphics{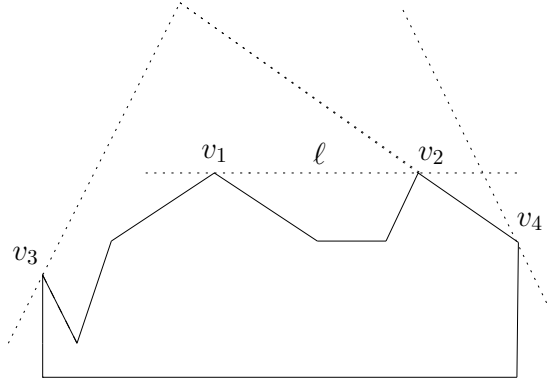}}
\caption{How to find a Special pair of wedges.}
\label{concfig3}
\end{center}
\end{figure}

\subsection{Versions of Cover-decomposability}\label{concsec3}

Here we consider different variants of cover-decomposability and prove relations between them.

\subsubsection{Number of sets: Finite, infinite or more}\label{subsec:total}

We say that a set is {\em finite/countable-cover-decomposable}, if there exists a $k$ such that every $k$-fold covering of any point set by a finite/countable number of its translates is decomposable. 
So by definition we have: totally-cover-decomposable $\Rightarrow$ countable-cover-decomposable $\Rightarrow$ finite-cover-decomposable. But which of these implications can be reversed? We will prove that the first can be for ``nice'' sets.

It is well-known that the plane is hereditary Lindelöf, i.e. if a point set is covered by open sets, then countably many of these sets also cover the point set. It is easy to see that the same holds for $k$-fold coverings as well. This observation implies the following lemma.

\begin{lem} An open set is totally-cover-decomposable if and only if it is countable-cover-decomposable. \hfill $\Box$
\end{lem}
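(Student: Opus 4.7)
The forward implication (totally-cover-decomposable $\Rightarrow$ countable-cover-decomposable) is immediate, since the collection of all coverings by translates is at least as rich as the collection of coverings by countably many translates: whatever constant $k$ witnesses total-cover-decomposability will certainly work for the countable version. So the plan is to prove the converse: assuming $P$ is open and countable-cover-decomposable with witnessing constant $k$, I will show that $k$ itself also works for arbitrary $k$-fold coverings.

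The main technical step is a $k$-fold strengthening of the hereditary Lindel\"of property, which the paper claims without proof just before the lemma. Concretely, I claim that for any point set $X\subseteq\mathbb{R}^2$ and any (possibly uncountable) collection $\{P_i\}_{i\in I}$ of open translates of $P$ that covers $X$ at least $k$ times, there is a countable $J\subseteq I$ such that $\{P_i\}_{i\in J}$ still covers $X$ at least $k$ times. To prove this, for every $k$-element subset $F\subseteq I$ define the open set $U_F=\bigcap_{i\in F}P_i$. For each $x\in X$, the index set $I(x)=\{i\in I:x\in P_i\}$ has at least $k$ elements, so picking any $k$-subset $F\subseteq I(x)$ shows $x\in U_F$. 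Thus $\{U_F\}_{F\in\binom{I}{k}}$ is an open cover of $X$, and by the usual hereditary Lindel\"of property of $\mathbb{R}^2$ it has a countable subcover $\{U_{F_n}\}_{n\in\mathbb{N}}$. Setting $J=\bigcup_n F_n$ gives a countable index set, and for any $x\in X$, if $x\in U_{F_n}$ then $F_n\subseteq J\cap I(x)$, so $|J\cap I(x)|\geq k$ as required.

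Given this subcover extraction, the lemma follows quickly. Take an arbitrary $k$-fold covering $\{P_i\}_{i\in I}$ of an arbitrary point set $X$ by translates of $P$. Apply the claim above to obtain a countable $J\subseteq I$ for which $\{P_j\}_{j\in J}$ is still a $k$-fold covering of $X$. By countable-cover-decomposability with constant $k$, there is a partition $J=J_1\sqcup J_2$ such that each of $\{P_j\}_{j\in J_1}$ and $\{P_j\}_{j\in J_2}$ covers $X$. Now extend the two-coloring to all of $I$ by assigning each $i\in I\setminus J$ to either class (say $J_1$). The resulting partition $I=I_1\sqcup I_2$ with $I_\ell\supseteq J_\ell$ gives two coverings of $X$, witnessing that the original covering is decomposable. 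Hence $P$ is totally-cover-decomposable with the same constant $k$.

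The only nontrivial step is the $k$-fold Lindel\"of extraction, and the slight subtlety there is to work with the intersections $U_F$ rather than with the $P_i$ themselves, since a naive iterative use of the Lindel\"of property on $\{P_i\}$ would not control the multiplicity of coverage after removing successive 1-fold subcovers. Openness of $P$ is essential precisely so that the $U_F$ are open and the standard Lindel\"of property applies.
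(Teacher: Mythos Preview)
Your proof is correct and follows exactly the approach the paper indicates: the paper simply remarks that the hereditary Lindel\"of property of the plane extends to $k$-fold coverings and that this observation implies the lemma, while you have supplied the details of both steps (the intersection trick with the $U_F$'s for the $k$-fold extraction, and the extension of the two-coloring from $J$ to all of $I$).
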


The same holds for ``nice'' closed sets, such as polygons or discs. We say that a closed set $C$ is {\em nice} if there is a $t$ and a set $\mathcal D$ of countably many closed halfdiscs such that if $t$ different translates of $C$ cover a point $p$, then their union covers a halfdisc from $\mathcal D$ centered at $p$ (meaning that $p$ is halving the straight side of the halfdisc) and the union of their interiors covers the interior of the halfdisc. For a polygon, $t$ can be the number of its vertices plus one, $\mathcal D$ can be the set of halfdiscs whose side is parallel to a side of the polygon and has rational length. For a disc, $t$ can be $2$ and $\mathcal D$ can be the set of halfdiscs whose side has a rational slope and a rational length. In fact every convex set is nice.

\begin{claim} Every closed convex set is nice.
\end{claim}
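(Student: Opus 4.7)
The plan is to recast the hypothesis in terms of tangent cones. Each translate $C_i$ of $C$ containing $p$ can be written uniquely as $C + (p - q_i)$ for some $q_i \in C$, and for all sufficiently small $\varepsilon > 0$, $p + \varepsilon \vec v \in C_i$ if and only if $\vec v \in T_{q_i}(C)$, the tangent cone of $C$ at $q_i$. So locally at $p$, the union $\bigcup_i C_i$ extends in precisely the directions $\bigcup_i T_{q_i}(C)$, and covering a halfdisc of sufficiently small radius centered at $p$ amounts to exhibiting a closed half-circle of directions in the interior of the tangent-cone union.

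I take as the countable family
$$\mathcal{D} = \{H_{\vec u, r} : \vec u \in \mathcal{N}(C),\ r \in \mathbb{Q}_{>0}\},$$
where $H_{\vec u, r}$ denotes the halfdisc with outward normal $\vec u$ and radius $r$, and
$$\mathcal{N}(C) := \mathcal{N}_{\mathrm{rat}} \cup \mathcal{N}_{\mathrm{edge}}(C),$$
with $\mathcal{N}_{\mathrm{rat}}$ the countable dense set of rational unit directions on $S^1$ and $\mathcal{N}_{\mathrm{edge}}(C)$ the at-most-countable set of outward normals of flat edges of $\partial C$ (flat edges of a convex set are countable since each has positive length). The integer $t = t(C)$ will be chosen depending on the ``corner complexity'' of $C$, as described below; I may also assume $C$ has nonempty interior, since a degenerate closed convex set in the plane is a point or a segment, handled separately.

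Two easy subcases come first. If some $q_i \in \mathrm{int}(C)$, then $T_{q_i}(C) = \mathbb{R}^2$ and $C_i$ alone contains a small halfdisc of any rational orientation. If some $q_i$ lies in the relative interior of a flat edge $e$ of $\partial C$, then $T_{q_i}(C)$ is a closed halfplane with outward normal $\vec n(e) \in \mathcal{N}_{\mathrm{edge}}(C)$, and a halfdisc of that orientation of small radius is contained in $C_i$. Otherwise, each $q_i$ is either a smooth non-flat-edge boundary point (tangent cone = halfplane, with outward normal equal to the unique supporting normal there) or a corner of $C$ (tangent cone = wedge of angular extent $\pi - \theta(q_i)$, where $\theta(q_i)$ is the exterior angle).

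The remaining case is the crux. The key structural observation is that distinct boundary points of a convex set have disjoint outward-normal cones on $S^1$, and $\sum_{\text{corners}} \theta \leq 2\pi$, so for any fixed $\varepsilon > 0$ the number of corners of $C$ with exterior angle exceeding $\varepsilon$ is at most $\lfloor 2\pi/\varepsilon\rfloor$. Setting $t$ larger than this bound plus a small constant, I would pigeonhole to locate two $q_i$'s that are consecutive along $\partial C$ in the cyclic ordering of the chosen points and whose tangent-cone arcs overlap; for such a pair, a direct calculation using the disjointness of normal cones shows that the union arc has length equal to $\pi$ plus the total normal rotation of $\partial C$ between them. If this rotation is positive (generic case), the union strictly exceeds $\pi$ and a rational direction $\vec u \in \mathcal{N}_{\mathrm{rat}}$ places the corresponding closed half-circle strictly inside the interior, yielding the desired $H_{\vec u, r} \in \mathcal{D}$; if instead only a flat edge lies between them, the union is exactly a halfplane whose orientation is the edge-normal direction in $\mathcal{N}_{\mathrm{edge}}(C)$, so the halfdisc of that orientation still lies in $\mathcal{D}$. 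The main obstacle is handling pathological $C$ with infinitely many corners whose bisectors cluster: here the adversary may select $t$ corner-translates whose tangent-cone arcs are diametrically offset and fail to overlap. Controlling this requires calibrating $t$ to exploit both the exterior-angle bound and the fact that among any $t$ points on $\partial C$ some consecutive pair spans a short arc of normal direction, preventing antipodal configurations; if needed, one further enlarges $\mathcal{N}(C)$ to include the countable set of corner-bisector directions of $C$, so that even tight configurations meeting the bound exactly yield an orientation present in $\mathcal{D}$.
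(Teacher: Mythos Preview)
Your setup is sound and in fact matches the paper exactly: reduce to tangent cones at the points $q_i\in C$, and take $\mathcal D$ to consist of halfdiscs of rational radius whose straight side is either parallel to an edge of $C$ or has rational slope. The divergence comes in how you choose $t$, and this is where your argument has a real gap. You let $t$ depend on the ``corner complexity'' of $C$, invoke a pigeonhole on exterior angles exceeding some $\varepsilon$, and then concede that pathological $C$ with infinitely many corners may defeat this scheme, suggesting further enlargement of $\mathcal N(C)$. None of this is resolved; you have outlined a strategy and then stopped at precisely the hard step.

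The paper's observation is that all of this machinery is unnecessary: $t=5$ works for \emph{every} closed convex set, with no dependence on corners. Here is the missing idea. Five distinct boundary points of $C$ span a convex pentagon (inscribed in $C$), whose interior angles sum to $3\pi$; hence two cyclically adjacent vertices, say $q_1,q_2$, have interior angles $\beta_1+\beta_2>\pi$. Place coordinates so $q_1q_2$ is horizontal with the pentagon above. The pentagon angle at $q_1$ is the arc of directions $[0,\beta_1]$ and at $q_2$ the arc $[\pi-\beta_2,\pi]$; since the tangent cone of $C$ at each $q_i$ contains the pentagon's angle there, and $\beta_1>\pi-\beta_2$, the union $T_{q_1}(C)\cup T_{q_2}(C)$ already covers the closed half-circle $[0,\pi]$. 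Thus the two translates $C_1,C_2$ alone cover a small halfdisc at $p$ whose straight side is parallel to $q_1q_2$. If the segment $q_1q_2$ lies on $\partial C$, that direction is an edge-normal direction in $\mathcal N_{\mathrm{edge}}(C)$; otherwise the strict inequality leaves room to rotate to a nearby rational direction. This two-line pigeonhole replaces your entire corner/exterior-angle analysis and removes the ``pathological $C$'' obstacle altogether.
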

\begin{proof} Some parts of the boundary of the convex set $C$ might be segments, we call these sides. Trivially, every convex set can have only countably many sides. Choose $t=5$ and let the set of halfdiscs $\mathcal D$ be the ones whose side is either parallel to a side of $C$ or its slope is rational and has rational length. Assume that $5$ different translates of $C$ cover a point $p$. Shifting these translates back to $C$, denote the points that covered $p$ by $p_1, \ldots, p_5$. If any of these points is not on the boundary of $C$, we are done. The $p_1p_2p_3p_4p_5$ pentagon has two neighboring angles the sum of whose degrees is strictly bigger than $2\pi$, without loss of generality, $p_1$ and $p_2$. If $p_1p_2$ is also the side of $C$, then the $5$ translates cover a halfdisc whose side is parallel to $p_1p_2$, else they cover one whose side has a rational slope.
\end{proof}

Taking a rectangle verifies that $t=5$ is optimal in the previous proof.

\begin{lem} A nice set is totally-cover-decomposable if and only if it is countable-cover-decomposable.
\end{lem}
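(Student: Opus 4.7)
The ``only if'' direction is immediate from the definitions, since a countable family of translates is a particular case of an arbitrary one. So the substance lies in the converse: assuming $C$ is nice (in particular closed) and countable-cover-decomposable with constant $m$, the plan is to show $m^T(C)\le m$.

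Given an arbitrary $m$-fold covering $\mathcal{F}$ of an arbitrary point set $X$ by translates of $C$, I would aim to extract a \emph{countable} subfamily $\mathcal{F}_0\subseteq\mathcal{F}$ that is still an $m$-fold covering of $X$. Once this is achieved, countable-cover-decomposability splits $\mathcal{F}_0$ into two coverings of $X$, and the remaining translates in $\mathcal{F}\setminus\mathcal{F}_0$ may be assigned to either colour arbitrarily; both colour classes then contain a covering of $X$ inherited from the decomposition of $\mathcal{F}_0$, so $\mathcal{F}$ is decomposable.

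To carry out the extraction, I would use that $X\subseteq\mathbb{R}^2$ is separable and thus contains a countable dense subset $X_0$. For each $p\in X_0$ select $m$ distinct translates $T_1(p),\ldots,T_m(p)\in\mathcal{F}$ containing $p$, and set $\mathcal{F}_0=\bigcup_{p\in X_0}\{T_1(p),\ldots,T_m(p)\}$, a countable union of finite sets and hence countable. For $q\in X\setminus X_0$, pick a sequence $p_n\in X_0$ with $p_n\to q$. The tuple $(T_1(p_n),\ldots,T_m(p_n))$ lies in the countable set $\mathcal{F}_0^m$, so by pigeonhole there is an infinite subsequence along which this tuple is constant, say equal to $(T_1,\ldots,T_m)$. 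Along that subsequence one has $p_n\in T_j$ for every $j$, and since translates of the closed set $C$ are closed, passing to the limit yields $q\in T_j$ for every $j$; thus $q$ is covered by the $m$ distinct members $T_1,\ldots,T_m$ of $\mathcal{F}_0$.

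The only topological ingredient used is closedness of $C$, which is built into the definition of nice. The main obstacle worth flagging is that the natural analogue of the open-set proof -- building a countable open subcover of $X$ from the halfdiscs in $\mathcal{D}$ and invoking hereditary Lindel\"ofness -- fails, because each $p\in X$ sits on the \emph{straight boundary} of its halfdisc rather than in its interior, so a halfdisc-cover never actually contains $p$. The pigeonhole/sequential argument above sidesteps this entirely and does not need to invoke the halfdiscs at all; niceness enters only through the closedness it entails.
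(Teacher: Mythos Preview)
Your pigeonhole step is incorrect. You claim that since the sequence of tuples $(T_1(p_n),\ldots,T_m(p_n))$ takes values in the countable set $\mathcal{F}_0^m$, it has a constant infinite subsequence. But pigeonhole only yields this when the target set is \emph{finite}; a sequence in a countably infinite set (e.g.\ $n\mapsto n$ in $\mathbb{N}$) need not repeat any value.

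Worse, the conclusion you draw from it is genuinely false for your construction. Take $C$ to be the closed unit disk (which is nice), $X=\mathbb{R}^2$, $\mathcal{F}$ the family of \emph{all} translates of $C$ (an $m$-fold cover for every $m$), $X_0=\mathbb{Q}^2$, and fix $p=(\pi,0)\in X\setminus X_0$. For every $q\in X_0$ the region $\overline D(q,1)\setminus\overline D(p,1)$ has positive area (since $q\ne p$), so one may select the two centres $c_1(q),c_2(q)$ there. With this selection every translate in $\mathcal{F}_0$ has its centre at distance greater than $1$ from $p$, so $p$ is not covered even once by $\mathcal{F}_0$. Thus $m$-fold covering a dense subset by closed translates does \emph{not} force an $m$-fold cover of $X$, and your final remark that ``niceness enters only through closedness'' is mistaken: closedness alone is not enough.

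The paper's argument uses the halfdisc condition in an essential way. It first handles points lying in the interior of some translate via the hereditary Lindel\"of property; for the remaining boundary points it invokes niceness to attach to each such point one of the countably many halfdiscs from $\mathcal{D}$, partitions the points by halfdisc type, and then applies Lindel\"of to the associated \emph{open} discs together with the halfdisc geometry to extract, from any $t$-fold cover, a countable subfamily that still covers every point once. Iterating $k$ times turns a $kt$-fold cover into a countable $k$-fold one. The price is a factor $t$ in the constant, whereas your attempted route would have preserved the constant $m$ --- but, as the disk example shows, that stronger conclusion is simply not available from closedness alone.
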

\begin{proof} We have to show that if we have an infinite covering of some point set $S$ by the translates of our nice, countable-cover-decomposable set $C$, then we can suitably color the points of $S$. Denote by $S^*$ the points that are covered by $2$ copies of the same translate of the nice set $C$. Color one of these red, the other blue. Now we only have to deal with $S'=S\setminus S^*$ and we can suppose that there is only one copy of each translate. Now instead of coloring these translates, we rather show that we can choose countably many of them such that they still cover every point of $S'$ many times. Using after this that the set is countable-cover-decomposable finishes the proof. So now we show that if there is a set of translates of $C$ that cover every point of $S'$ at least $kt$ times, then we can choose countably many of these translates that cover every point of $S'$ at least $k$ times. It is easy to see that it is enough if we show this for $k=1$ (since we can repeat this procedure $k$ times).

Denote the points that are contained in the interior of a translate by $S_0$. Because of the hereditary Lindelöf property, countably many translates cover $S_0$.
If a point $p\in S'$ is covered $t$ times, then because of the nice property of $C$, a halfdisc from $\mathcal D$ centered at $p$ is covered by these translates. We say that this (one of these) halfdisc(s) belongs to $p$. Take a partition of $S'\setminus S_0$ into countably many sets $S_1\cup S_2\cup\ldots$ such that the $i^{th}$ halfdisc belongs to the points of $S_i$. Now it is enough to show that $S_i$ can be covered by countably many translates. Denote the halfdisc belonging to the points of $S_i$ by $D_i$. Using the hereditary Lindelöf property for $S_i$ and open discs (not halfdiscs!) with the radius of $D_i$ centered at the points of $S_i$, we obtain a countable covering of $S_i$. Now replacing the open discs with closed halfdiscs still gives a covering of $S_i$ because otherwise we would have $p,q\in S_i$ such that $p$ is in the interior of $q+D_i$, but interior of $q+D_i$ is covered by the interiors of translates of $C$, which would imply $p\in S_0$, contradiction. Finally we can replace each of the halfdiscs belonging to the points of $S_i$ by $t$ translates of $C$, we are done.
\end{proof}

Unfortunately, we did not manage to establish any connection among the finite- and the  countable-cover-decomposability. We conjecture that they are equivalent for nice sets (with a possible slight modification of the definition of nice).
If one manages to find such a statement, then it would imply that considering cover-decomposability, it does not matter whether the investigated geometric set is open or closed, as long as it is nice. For example, it is unknown whether closed triangles are cover-decomposable or not. We strongly believe that they are.

\subsubsection{Covering the whole plane}\label{subsec:plane}

Remember that by definition if a set is totally-cover-decomposable, then it is also plane-cover-decomposable. However, the other direction is not always true. For example take the lower halfplane and ``attach'' to its top a pair of Special wedges (see Figure \ref{concfig4}). Then the counterexample using the Special wedges works for a special point set, thus this set is not totally-cover-decomposable, but it is easy to see that a covering of the whole plane can always be decomposed.

For a given polygon $C$, our construction gives a set of points $S$ and a non-decomposable $k$-fold covering of $S$ by translates of $C$. It is not clear when we can extend this covering to a $k$-fold covering of the whole plane such that none of the new translates contain any point of $S$. This would be necessary to ensure that the covering remains non-decomposable.


We show that in certain cases it can be extended, but it remains an open problem to decide whether plane- and totally-cover-decomposability are equivalent or not for open polygons/bounded sets. 

\begin{figure}[ht]
\begin{center}
\scalebox{0.77}{\includegraphics{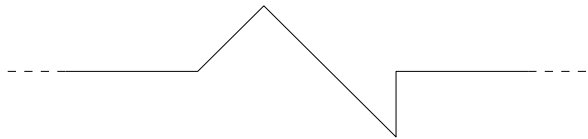}}
\caption{The lower halfplane with a Special pair of wedges at its top.}
\label{concfig4}
\end{center}
\end{figure}


\begin{thm}\label{plane} If a concave polygon $C$ has two Special wedges that have a common locally touching line, and one of the two touching lines parallel to this line is touching $C$ in only a finite number of points (i.e. does not contain a side), then it is not plane-cover-decomposable.
\end{thm}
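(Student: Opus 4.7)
The strategy is to upgrade the finite indecomposable construction of Theorem E to one that covers the entire plane. First, I would invoke Remark \ref{remark:oneline} applied to the pair of Special wedges of $C$, choosing the apex line to be the common locally touching line $\ell_{0}$ of these two wedges; this is a legitimate choice because, by hypothesis, $\ell_{0}$ touches each wedge only at its apex. Pulling back through the duality of Lemma \ref{dual}, the construction produces a finite point set $X$ lying on a single line $m$ parallel to $\ell_{0}$, together with a finite family $\mathcal{C}_{0}$ of translates of $C$ that forms an indecomposable $k$-fold covering of $X$.

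The next step is to extend $\mathcal{C}_{0}$ to a $k$-fold covering $\mathcal{C}$ of the whole plane by adjoining translates of $C$ whose interiors contain no point of $X$. Any such extension preserves indecomposability, because for each $x\in X$ the sub-family of $\mathcal{C}$ covering $x$ is still exactly $\mathcal{C}_{0}$, so a proper $2$-coloring of $\mathcal{C}$ would restrict to one of $\mathcal{C}_{0}$, contradicting Theorem E. The extension is possible provided the following \emph{Avoidance Lemma} holds: for every $p\in\mathbb{R}^{2}\setminus X$ there is a translate of $C$ whose interior contains $p$ but no point of $X$. For $p\notin m$ this is elementary: choose a translate of $C$ whose bottom (or top) supporting line lies on $m$. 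Since $C$ is open, its interior lies strictly on one side of $m$ and therefore avoids $X\subset m$ entirely, while a suitable horizontal shift places $p$ in the interior.

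The hard case is $p\in m\setminus X$, and this is precisely where the parallel-touching-line hypothesis enters. Without loss of generality, the supporting line of $C$ parallel to $m$ on the ``top'' side meets $C$ in only finitely many vertices $v_{1},\dots,v_{r}$, so no edge of $C$ is parallel to $m$ on that side. I would then seek a translate $C+t$ whose top vertex sits just above $p$: fix a small $\varepsilon>0$ and choose $t$ so that $v_{1}+t=(p_{x},\varepsilon)$. Because the two edges of $C$ emanating from each $v_{k}$ are transverse to $m$, the trace $(C+t)\cap m$ is a union of at most $r$ intervals, each of length $O(\varepsilon)$ and centered at $p_{x}+v_{k,x}-v_{1,x}$. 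For all sufficiently small $\varepsilon$ these intervals lie in an arbitrarily small neighborhood of a fixed finite set of points, so they are disjoint from the finite set $X$ for all but finitely many ``degenerate'' positions of $p_{x}$.

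The main obstacle is handling these remaining degenerate positions, where two or more top vertices of $C$ conspire to project exactly onto points of $X$. I would resolve them by exploiting the continuous freedom in the Theorem E construction: the inductive placement of the witness points uses scalings and offsets from positive-measure families of valid choices, so a generic choice avoids the finitely many algebraic equalities $a_{j}-a_{j'}=v_{k,x}-v_{k',x}$ (where the $a_{j}$ denote the coordinates of the points of $X$ on $m$) that define the degenerate set. With this genericity condition in force, every $p\in m\setminus X$ is non-degenerate, the Avoidance Lemma holds without exception, and the extended covering $\mathcal{C}$ is an indecomposable $k$-fold covering of the whole plane by translates of $C$, proving that $C$ is not plane-cover-decomposable.
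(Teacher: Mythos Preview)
Your approach is the paper's own, transposed from the dual to the primal. The paper keeps the point set $S$ and the finitely many wedge-translates from the Theorem~E construction (all vertical shifts of one another, via Remark~\ref{remark:oneline}), adds to $S$ every plane point lying outside their union, and then checks that no further translate of $\bar C$ is covered by that union---precisely the dual of your Avoidance Lemma, with your two cases matching the paper's: off-line translates have the wrong horizontal extent and so cannot be covered, while on-line translates are handled by comparing finite subsets of the touching line and perturbing the construction. The dual phrasing is slightly cleaner, since the off-line case becomes a one-line extent argument and the on-line case is pure one-dimensional set containment, with no need to analyze traces of translates on $m$.

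Two minor points to tighten in your write-up: your chosen translate in the $p\notin m$ case need not contain $p$ when $p$ lies farther from $m$ than the width of $C$ (but then any translate through $p$ misses $m$ entirely, so that sub-case is trivial); and the genericity conditions actually required are that the $a_j-a_{j'}$ avoid a finite set of constants built from \emph{sums of differences} $v_{k,x}-v_{k',x}$, not just single differences.
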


\begin{proof} Assume, without loss of generality, that this locally touching line is vertical. We just have to extend our construction with the Special wedges into a covering of the whole plane. Or, in the dual, we have to add more points to our construction such that every translate of $C$ will contain at least $k$ points. Of course, to preserve that the construction works, we cannot add more points into those translates that we used in the construction. Otherwise, our argument that the construction is correct, does not work. Because of Remark \ref{remark:oneline} we can suppose that the apices of the wedges all lie on the same vertical line. Therefore, the translates can all be obtained from each other via a vertical shift, because we had a vertical locally touching line to both wedges. Now we can simply add all points that are not contained in any of these original translates. Proving that every translate of $C$ contains at least $k$ points is equivalent to showing that the original translates do not cover any other translate of $C$. It is clear that they could only cover a translate that can be obtained from them via a vertical shift. On the touching vertical line each of the translates has only finitely many points. In the construction we have the freedom to perturbate the wedges a bit vertically, this way we can ensure that the intersection of each other translate (obtainable via a vertical shift) with this vertical line is not contained in the union of the original translates.
\end{proof}

\begin{figure}[ht]
\begin{center}
\scalebox{0.88}{\includegraphics{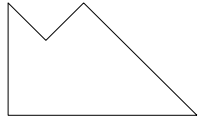}}
\caption{A pentagon that is cover-decomposable but is not the union of a finite number of translates of the same convex polygon.}
\label{concfig5}
\end{center}
\end{figure}

\begin{cor} A pentagon is totally-cover-decomposable if and only if it is plane-cover-decomposable.
\end{cor}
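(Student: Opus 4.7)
The only non-trivial direction is that a pentagon which is not totally-cover-decomposable also fails to be plane-cover-decomposable; the reverse is immediate since the plane is itself a planar point set. Since Theorem A' (in its total form) shows that every convex pentagon, and every concave pentagon without Type 5 wedges, is totally-cover-decomposable, I may restrict attention to concave pentagons possessing a Type 5 pair of wedges, and by Theorem E these are precisely the pentagons that are not totally-cover-decomposable. My plan is then to verify the hypotheses of Theorem \ref{plane} for every such pentagon.

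The first step is a general observation: any Type 5 pair of wedges $V,W$ of $\bar{C}$ at vertices $u,v$ automatically shares a common supporting line of $C$. Indeed, their translates to the origin lie in a common open halfplane $H$ whose boundary has some direction $\theta$; in direction $\theta$ there are exactly two supporting lines of $C$, one on each side of its convex hull, and if $u$ and $v$ lay on different ones then translating the wedges back to the origin would place $V$ and $W$ on opposite sides of $H$'s boundary, contradicting $V,W\subseteq H$. Call the resulting common supporting line $\ell$.

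The second step is to exploit the smallness of a pentagon. In generic position no three vertices of $C$ are collinear, so $\ell$ meets $C$ only at $u$ and $v$. Moreover $u$ and $v$ cannot be polygon-adjacent: were the side $uv$ to lie on $\ell$, then a boundary ray of $V$ would coincide with the boundary of $H$, contradicting the openness of $H$. Hence $\ell$ contains no side of $C$ at all, and $\ell$ itself is a touching line of $C$ meeting it in only finitely many points. The three conditions of Theorem \ref{plane}, a Special pair, a common locally touching line, and a parallel touching line touching $C$ only finitely often, are thereby all satisfied, and the theorem concludes that $C$ is not plane-cover-decomposable.

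The principal obstacle I foresee is the degenerate case in which $\ell$ happens to pass through a third vertex of $C$, which can occur only when a reflex vertex lies on a side of the convex hull. Because a concave pentagon has a convex hull of size three or four and at most two reflex vertices, only a handful of such configurations arise; I would resolve them either by a slight perturbation of $C$ that removes the collinearity while preserving the Type 5 structure of the pair $(V,W)$, or by a short case analysis showing that even in those arrangements a suitable line $\ell$ containing no side of $C$ can still be produced from the Special pair.
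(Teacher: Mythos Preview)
Your overall strategy—reduce to Theorem~\ref{plane} by exhibiting a Special pair together with a common locally touching direction and a parallel supporting line meeting $C$ in only finitely many points—is exactly the paper's approach, and your first observation (that any Special pair automatically admits a common locally touching direction~$\theta$) is correct.

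The gap is in your second step. You assert that $u$ and $v$ lie on a \emph{common} supporting line of $C$ in direction $\theta$, arguing that if they lay on different supporting lines the translated wedges would end up on opposite sides of $\partial H$. But this dichotomy is incomplete: from $W_u,W_v\subset H$ you only get that $u$ and $v$ are strict \emph{local} maxima of the linear functional orthogonal to $\theta$, not that each is a \emph{global} extremum. For a concave polygon it can happen that $u$ is the global maximum while $v$ is merely a local one, lying on neither supporting line; your argument never excludes this case, so the sentence ``Call the resulting common supporting line $\ell$'' is unjustified.

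Fortunately you do not need both vertices on $\ell$. Theorem~\ref{plane} only asks that one of the two supporting lines of $C$ in direction $\theta$ contain no side. Here is the clean fix: since $u$ and $v$ are both strict local maxima and a pentagon admits at most two, they are the \emph{only} local maxima in this direction, so every vertex on the upper supporting line belongs to $\{u,v\}$. Because each wedge lies in the \emph{open} halfplane $H$, neither $u$ nor $v$ has an incident edge parallel to $\theta$; hence that supporting line cannot contain a side of $C$ and already satisfies the hypothesis of Theorem~\ref{plane}. This bypasses your ``principal obstacle'' and the proposed perturbation/case analysis entirely.
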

\begin{proof} All totally-cover-decomposable sets are also plane-cover-decomposable. (See an example on Figure \ref{concfig5}.) If our pentagon is not totally-cover-decomposable, then it has a Special pair of wedges and it must also have a touching line that touches it in these Special wedges, thus we can use the previous theorem.
\end{proof}

The same argument does not work for hexagons, for example we do not know whether the hexagon depicted in Figure \ref{pcd}/c is plane-cover-decomposable or not.

\begin{figure}[ht]
\begin{center}
\scalebox{0.5}{\includegraphics{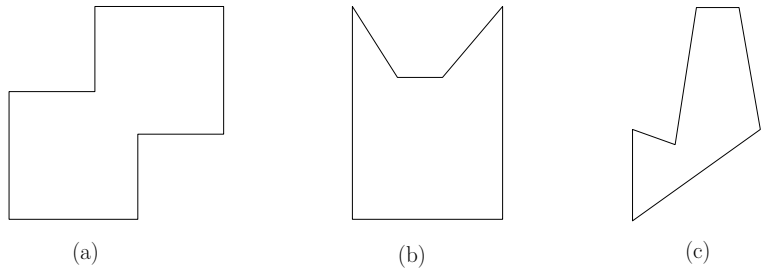}}
\caption{Three different polygons. (a): totally-cover-decomposable (hence, also plane-cover-decomposable),
(b): not plane-cover-decomposable, (hence neither totally-cover-decomposable),
(c): not totally-cover-decomposable, but not known if plane-cover-decomposable.}  
\label{pcd}
\end{center}
\end{figure}

\subsection{Higher Dimensions - Proof of Theorem B'}\label{concsec4}
The situation is different for the space. For any polytope and any $k$, one can construct a $k$-fold covering of the space that is not decomposable. First note that it is enough to prove this result for the three dimensional space, since for higher dimensions we can simply intersect our polytope with a three dimensional space, use our construction for this three dimensional polytope and then extend it naturally. To prove the theorem for three dimensional polytopes, first we need some observations about polygons. Given two polygons and one side of each of them that are parallel to each other, we say that these sides are {\em directedly parallel} if the polygons are on the ``same side'' of the sides (i.e. the halfplane which contains the first polygon and whose boundary contains this side of the first polygon can be shifted to contain the second polygon such that its boundary contains that side of the second polygon). We will slightly abuse this definition and say that a side is directedly parallel if it is directedly parallel to a side of the other polygon. We can similarly define {\em directedly parallel faces} for a single polytope. We say that a face is directedly parallel to another, if they are parallel to each other and the polytope is on the ``same side'' of the faces (e.g. every face is directedly parallel to itself and if the polytope is convex, then to no other face).

\begin{lem}\label{bernadett} Given two convex polygons, both of which have at most two sides that are directedly parallel, there is always a Special pair among their wedges.
\end{lem}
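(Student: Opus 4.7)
The plan is to parametrize pairs of wedges by the direction of a supporting open half-plane. For each convex polygon $P$, the outward normals of its sides partition $S^1$ into arcs, one per vertex; the arc belonging to vertex $v$ consists precisely of the directions $\vec n\in S^1$ for which the wedge $V_v$ at $v$, translated to the origin, lies in the open half-plane $H_{\vec n}^-=\{x:x\cdot\vec n<0\}$. Hence for every generic $\vec n\in S^1\setminus N$ (where $N=N_1\cup N_2$ is the union of the outward normals of sides of $P_1$ and $P_2$) there is a uniquely determined pair $(V_i,W_j)$, one wedge from each polygon, with $V_i\cup W_j\subseteq H_{\vec n}^-$. Such a pair cannot be of Type 1 (both angles are smaller than $\pi$), nor Type 2 (the union lies in an open half-plane, so cannot contain one), nor Type 4 (which again forces the union to span more than a half-plane), so it is either of Type 3 (containment) or of Type 5 (Special). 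I would assume for contradiction that no such pair is Special and then derive a contradiction from the hypothesis $|N_1\cap N_2|\le 2$.

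The first step is a crossing lemma. Suppose $\vec n$ crosses a mark $\vec n_*\in N_1\setminus N_2$; then $W_j$ stays fixed and $V$ jumps from $V_i$ to the adjacent wedge $V_{i+1}$. Of the four logical combinations of the containment direction on the two sides of the crossing, three can be ruled out: $V_i\cup V_{i+1}\subseteq W_j$ fails because the boundary halflines of $V_i$ and $V_{i+1}$ along their common side are antipodal on $S^1$, so their union spans an arc of length at least $\pi$ and cannot fit in a single wedge of angle less than $\pi$; the two cases $V_i\subseteq V_{i+1}$ and $V_{i+1}\subseteq V_i$ fail for the same antipodality reason. Only the case $W_j\subseteq V_i\cap V_{i+1}$ can survive, and it requires $\alpha_i+\alpha_{i+1}>\pi$ so that the intersection is a non-trivial wedge. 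If this inequality fails for some adjacent pair in $P_1$ (or symmetrically in $P_2$), a Special pair exists immediately at that crossing. So I may assume all adjacent interior angles in each polygon sum to strictly more than $\pi$, and then the containment direction is forced to be ``$W\subseteq V$'' on both sides of every $N_1$-only mark and ``$V\subseteq W$'' on both sides of every $N_2$-only mark.

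Consider now a combined arc of the partition of $S^1$ by $N$. If its two bounding marks are of opposite ``only''-type, the two forced directions would give $V=W$ on that arc, forcing the two boundary normals of $V$ both to lie in $N_1\cap N_2$ and contradicting their being ``only'' marks. Therefore the mixed marks in $N_1\cap N_2$ must interleave between the $N_1$-only and the $N_2$-only marks on $S^1$. Since $n_1,n_2\ge 3$ and $|N_1\cap N_2|\le 2$, both types of ``only'' marks exist, and a cyclic separation needs at least two mixed marks, forcing $|N_1\cap N_2|=2$. The two mixed marks $\vec m_1,\vec m_2$ then split $S^1$ into arcs $A$ and $B$ with $|A|+|B|=2\pi$, where $A$ contains all $N_1$-only marks and $B$ contains all $N_2$-only marks. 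But then the normal cone of some vertex of $P_2$ spans all of $A$, so $|A|=\pi-\alpha$ for some interior angle $\alpha>0$, giving $|A|<\pi$; symmetrically $|B|<\pi$, contradicting $|A|+|B|=2\pi$. This contradiction establishes the existence of a Special pair.

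The main obstacle is the crossing lemma: ruling out three of the four logical containment scenarios across an $N_1$-only crossing, which rests on the elementary but crucial observation that adjacent wedges of a convex polygon share an antipodal pair of boundary halflines when both are translated to the same apex. Once this is in hand the rest is a short combinatorial analysis of how the $N_1$-only, $N_2$-only and mixed marks can be arranged around $S^1$ under the hypothesis $|N_1\cap N_2|\le 2$.
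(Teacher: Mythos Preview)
Your argument is correct, and it takes a genuinely different route from the paper. The paper's proof is a short minimality argument: it selects the wedge of smallest angle among all wedges of the two polygons (with a careful exclusion clause to handle the at-most-one vertex per polygon whose two adjacent sides are both directedly parallel), orients one of its non-directedly-parallel sides horizontally, and then observes that the other polygon must have a vertex whose wedge lies strictly in the open upper half-plane; this second wedge either forms a Special pair with the first, or is strictly contained in it, contradicting minimality.

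Your approach is instead global and topological. You encode the choice of wedge-pair as a function on $S^1$, note that (under the no-Special-pair hypothesis) it takes values in $\{V\subseteq W,\ W\subseteq V\}$, prove via the antipodal-side observation that this value is forced and unchanged across each $N_1$-only and each $N_2$-only mark, and conclude that the value can flip only at mixed marks. The finishing arc-length count is clean: with $|N_1\cap N_2|\le 2$ and both ``only'' types present, the two mixed marks must bound a single normal cone of $P_2$ on one side and of $P_1$ on the other, forcing $|A|,|B|<\pi$ while $|A|+|B|=2\pi$.

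What each buys: the paper's argument is shorter and immediately produces an explicit Special pair. Your argument is longer but more structural --- it explains exactly \emph{why} two mixed normals are too few, and the crossing lemma (the containment direction cannot flip at an ``only'' mark) isolates the combinatorial core. One small presentational point: when you say the pair is ``Type 3 or Type 5'' you are implicitly using that, within a common open half-plane, the Type~3 case reduces to direct containment (the sub-cases of Type~3 involving $-V$ cannot occur there); it is worth saying this explicitly, since the paper's Type~3 also covers $W\supset -V$.
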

\begin{proof} Take the smallest wedge of the two polygons, excluding the ones that contain a wedge whose sides are both directedly parallel, if it exists. Without loss of generality, we can suppose that the right side of this minimal wedge is not directedly parallel and it is going to the right (i.e. its direction is $(1,0)$), while the left side goes upwards. Take a wedge of the other polygon both of whose sides go upwards (there always must be one since the right side of the first wedge was not directedly parallel). If this second wedge is not contained in the first, we found a Special pair. If the second wedge is contained in the first wedge, then because of the minimality of the first wedge, we get a contradiction.
\end{proof}

\medskip

\noindent {\bf Theorem B'.} {\em Polytopes are not cover-decomposable in the space and in 
 higher dimensions.}

\medskip

\begin{proof} We will, as usual, work in the dual case. This means that to prove that our polytope $C$ is not totally-cover-decomposable, we will exhibit a point set for any $k$ such that we cannot color it with two colors such that any translate of $C$ that contains at least $k$ points contains both colors. These points will be all in one plane, and the important translates of $C$ will intersect this plane either in a concave polygon or in one of two convex polygons. It is enough to show that this concave polygon is not cover-decomposable or that among the wedges of these convex polygons there is a Special pair.\\Take a plane $\pi$ that is not parallel to any of the segments determined by the vertices of $C$. The touching planes of $C$ parallel to $\pi$ are touching $C$ in one vertex each, $A$ and $B$. Denote the planes parallel to $\pi$ that are very close to $A$ and $B$ and intersect $C$, by $\pi_A$ and $\pi_B$. Denote $C\cap \pi_A$ by $C_A$ and $C\cap \pi_B$ by $C_B$. Now we will have two cases.\\
\indent
{\bf Case 1.} $C_A$ or $C_B$ is concave.\\
Without loss of generality, assume $C_A$ is concave. Then since no two faces of $C$ incident to $A$ can be parallel to each other, with a perturbation of $\pi_A$ we can achieve that the sides of $C_A$ are not parallel. After this, using Theorem E', we are done.\\\indent
{\bf Case 2.} Both $C_A$ and $C_B$ are convex.\\
Now by perturbing $\pi$, we cannot necessarily achieve that $C_A$ and $C_B$ have no parallel sides, but we can achieve that they have at most two directedly parallel sides. This is true because there can be at most two pairs of faces that are directedly parallel to each other and one of them is incident to $A$, the other to $B$, 
since $A$ is touched from above, $B$ from below by the plane parallel to $\pi$. Therefore $C_A$ and $C_B$ satisfy the conditions of Lemma \ref{bernadett}, this finishes the proof of totally-cover-decomposability.

To prove non-space-cover-decomposability, just as in  the proof of Theorem \ref{plane}, we have to add more points to the constructions, such that every translate will contain at least $k$ points, but we do not add any points to the original translates of our construction. This is the same as showing that these original translates do not cover any other translate. Note that there are two types of original translates (depending on which wedge of it we use) and translates of the same type can be obtained from each other via a shift that is parallel to the side of the halfplane in $\pi$ that contains our Special wedges. This means that the centers of all the original translates lie in one plane. With a little perturbation of the construction, we can achieve that this plane is in general position with respect to the polytope. But in this case it is clear that the translates used in our construction cannot cover any other translate, this proves space-cover-decomposability.
\end{proof}

\subsection{Concluding Remarks}

A lot of questions remain open. In three dimensions, neither polytopes, nor unit balls are cover-decomposable. Is there any nice (e.g. open and bounded) set in three dimensions that is cover-de\-com\-posable? Maybe such nice sets exist only in the plane.

\begin{conj}\label{conj:threed} Three-dimensional convex sets are not cover-decomposable.
\end{conj}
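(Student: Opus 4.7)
The plan is to attack the conjecture by combining the two existing negative results in three dimensions: Theorem B' for polytopes and Theorem F for the unit ball, via a case analysis on the local structure of the boundary of $C$. As in both proofs, I would work in the dual: for each $k$ it suffices to construct a finite point set $S \subset \mathbb{R}^3$ and a collection of translates of $\bar C$ such that every point of $S$ lies in at least $k$ of the translates, but $S$ admits no $2$-coloring that makes every translate bichromatic. The extension of this finite witness to a genuine $k$-fold covering of $\mathbb{R}^3$ would then follow the pattern already used in Theorem B' and Theorem \ref{plane}: arrange the constructed translates so their centers lie on a common plane in general position with respect to $C$, then fill in the rest of space with translates that avoid $S$.

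First I would fix a plane $\pi$ in general position with respect to $C$ and look at the two parallel supporting planes, touching $\partial C$ at extrema $A$ and $B$. Sliding $\pi$ slightly toward each extremum gives two planar convex cross-sections $C_A$ and $C_B$. I split into three cases according to the local geometry of the boundary of $C$ near these points. \emph{Case 1 (polyhedral flavor):} at least one of $A$, $B$ lies in a flat face of $C$, or is a vertex whose tangent cone is polyhedral. Then, after perturbing $\pi$, either $C_A$ or $C_B$ is a convex polygon with no parallel sides, and Lemma \ref{lem} together with Theorem E produces a Special pair of wedges; the lifting argument of Theorem B' then supplies the desired non-decomposable construction. \emph{Case 2 (smooth flavor):} the boundary of $C$ is smooth and strictly convex near both $A$ and $B$. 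Here I would import the Mani-Levitska--Pach construction of Theorem F. Near $A$ the body is osculated by an ellipsoid, and since cover-decomposability is invariant under affine transformations this ellipsoidal model is equivalent to a ball. The sphere construction should then be pushed forward into a family of translates of $\bar C$ realizing the same non-two-colorable hypergraph on a suitable small point set near $A$. \emph{Case 3 (mixed):} whichever of $A$, $B$ lands in Case 1 or Case 2 suffices; a general convex body always falls under one of these.

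The main obstacle will be Case 2. The sphere construction of Mani-Levitska and Pach exploits the rotational symmetry of the ball to realize a specific rich hypergraph via translates; this symmetry is broken when one replaces the ball by a general smooth convex body, and it is not immediate that the combinatorial obstruction to $2$-coloring survives. The likely route is a stability / continuity argument: view the boundary of $\bar C$ near a smooth strictly convex point as a small perturbation of its osculating ellipsoid, parameterize the corresponding family of set systems realized by translates, and show that the failure of Property B persists in an open neighborhood of the ellipsoidal limit, uniformly in the number of points needed for a given $k$. A potentially subtle point is that, as $k$ grows, one must scale the construction down toward the osculating point so that the higher-order deviation from the ellipsoid remains negligible on the scale of the construction; checking that the resulting translates still exhibit exactly the required intersection pattern is where the bulk of the work should lie.
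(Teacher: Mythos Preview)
This statement is a \emph{conjecture} in the paper, not a theorem: the paper offers no proof, and explicitly lists it among the open problems (Section~\ref{sec:conj}). So there is nothing to compare your proposal against; what you have written is an attempted attack on an open problem.

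Regarding the attack itself, there are real gaps. First, your trichotomy is not exhaustive: the boundary of a general convex body at an extreme point need not be locally polyhedral nor smooth and strictly convex. The tangent cone at $A$ could be a round cone (as for a circular cone), or the boundary could be $C^1$ but contain a flat segment through $A$ (as for a cylinder), and neither your Case~1 nor your Case~2 covers these.

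Second, your Case~1 is incorrect as stated. The cross-sections $C_A$ and $C_B$ of a \emph{convex} body are always convex, so Lemma~\ref{lem} (which concerns concave polygons) never applies, and a convex polygon has no Special pair of wedges --- that is precisely why convex polygons \emph{are} cover-decomposable. The paper's proof of Theorem~B' in its convex Case~2 does not use a single section: it uses Lemma~\ref{bernadett}, which requires \emph{both} $C_A$ and $C_B$ to be polygons and finds a Special pair by mixing a wedge from each. Having one polyhedral extremum is not enough; you need polygonal structure at both, which a general convex body will not provide.

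Third, your Case~2 is, as you concede, only a plan. The Mani-Levitska--Pach construction is not local to a single boundary point, and it is not at all clear that its combinatorics survive when the sphere is replaced by an arbitrary smooth strictly convex surface, even after affine normalization of the osculating ellipsoid. The ``stability in an open neighborhood of the ellipsoidal limit, uniformly in $k$'' you invoke is exactly the missing content; absent a concrete mechanism this is a hope, not an argument.
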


In Section \ref{subsec:total} we have seen that interesting covers with translates of nice sets only use countably many translates. We could not prove, but conjecture, that every cover can be somehow reduced to a locally finite cover. Is it true that if a nice set is finite-cover-decomposable, then it is also countable-cover-decomposable? This would have implications about the cover-decomposability of closed sets.

\begin{conj}\label{conj:closed} Closed, convex polygons are cover-decomposable.
\end{conj}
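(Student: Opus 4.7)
The plan is to bootstrap from Theorem A by decomposing a closed cover into an ``interior'' part, handled by the open case, plus a ``boundary'' part, handled by a one-dimensional argument. Let $C$ be a closed convex polygon with interior $C^\circ$; by Theorem A there is a constant $m_0=m(C^\circ)$ such that every $m_0$-fold covering of the plane by translates of $C^\circ$ decomposes into two coverings. I aim to bound $m(C)$ by a constant multiple of $m_0$ depending only on the number of sides of $C$. Since every closed convex polygon is nice in the sense of Section \ref{subsec:total}, and the plane is hereditarily Lindel\"of, it suffices to decompose a countable $m$-fold covering $\mathcal F=\{C+v_i\}_{i\in I}$.

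Partition the plane into the set $A$ of points covered at least $m_0$ times by the open translates $\{C^\circ+v_i\}_{i\in I}$, and its complement $B$. On $A$, Theorem A directly supplies a two-coloring of the open-contributing translates under which both color classes cover $A$; this coloring lifts to the closed family since $C\supset C^\circ$. A point $p\in B$, on the other hand, must be covered at least $m-m_0+1$ times on the \emph{boundary} of some translates, which forces the corresponding translation vectors $v_i$ to lie on finitely many lines through $p$ parallel to the sides of $C$. Hence $B$ is contained in a union of countably many line segments, and along each such segment the boundary-contributing translates cut out a one-dimensional cover by intervals, which is easily decomposable by a greedy argument.

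The main obstacle will be reconciling the two colorings: a single translate $C+v_i$ may be forced to play both an interior role (covering points of $A$) and a boundary role along one or more side-directions (covering points of $B$), and the colors demanded by these roles need not agree. I would attempt to resolve this by splitting $\mathcal F$ a priori into a constant number of sub-covers, one dedicated to each role, using a multi-scale or Hall-type argument to guarantee that each sub-cover inherits multiplicity at least $m_0$ on the region for which it is responsible. A plausible alternative is to perform a small generic perturbation of the vectors $v_i$, handle the resulting ``open-like'' cover via Theorem A, and then take a limit, using the niceness of $C$ to show that a compatible coloring of the original closed cover exists. Either way, proving that such a clean splitting or limiting procedure is always feasible when $m$ is sufficiently large relative to $m_0$ and the number of sides of $C$ is where I expect the real difficulty to lie.
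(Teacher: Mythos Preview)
This statement is listed in the paper as an \emph{open conjecture}; the paper does not prove it. Indeed, the surrounding text says explicitly that the proofs of Theorem~A rely on passing to a locally finite subcover, which works for open polygons but not obviously for closed ones, and that the authors ``strongly believe'' but cannot show the closed case. So there is no paper proof to compare against.

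As for your proposal itself: it is honestly labeled as a plan rather than a proof, and the obstacle you flag at the end is exactly the genuine gap. A few concrete points where the outline breaks down even before that obstacle:

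\begin{itemize}
\item Your application of Theorem~A on the set $A$ is not quite right. Theorem~A, as proved, concerns coverings of the \emph{whole plane} (or, in the totally-cover-decomposable version, of an arbitrary point set) by a family of open translates; but the family $\{C^\circ+v_i\}$ restricted to $A$ need not be locally finite, and the reduction to local finiteness in the open case uses compactness of closed squares together with openness of the cover. With a countable but not locally finite family of open translates, you are back to the unproved ``countable-cover-decomposable $\Leftrightarrow$ finite-cover-decomposable'' question the paper raises just before stating this conjecture.
\item The one-dimensional picture on $B$ is messier than you suggest. A point $p\in B$ lies on the boundary of many translates, but those boundary pieces may come from sides in \emph{several} directions, or from vertices. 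Along a fixed line $L$ parallel to one side direction, the translates whose boundaries contain a full segment of $L$ give intervals, but a large share of the coverage of $p$ could come from sides in other directions meeting $L$ transversally, so the induced interval system on $L$ need not have high multiplicity. You would need to argue that at least one direction carries a constant fraction of the boundary coverage, and then you are back to coordinating colorings across directions.
\item The perturbation/limit alternative is not a proof strategy that can be completed as stated: a two-coloring of a perturbed family does not converge to anything, and there is no compactness of colorings to invoke. One would need a uniform combinatorial statement that survives the limit, which is essentially the original problem.
\end{itemize}

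In short, your intuition that the boundary contribution is ``lower-dimensional'' and should be tractable is reasonable, but the reconciliation step you identify as the crux is, as far as the paper is concerned, genuinely open.
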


In Section \ref{subsec:plane} we have seen that our construction is not naturally extendable to give an indecomposable covering of the whole plane. Maybe the reason for this is that it is impossible to find such a covering. 

\begin{quest}\label{quest:pcd} Are there polygons that are not totally-cover-decomposable but plane-cover-decomposable?
\end{quest}

\cleardoublepage
\part{Slope Number of Graphs}
\section{Introduction and a Lower Bound}\label{sec:introslope}

A planar layout of a graph $G$ is called a {\em drawing} if the vertices of $G$ are represented  by distinct points in the plane and every edge is represented by a continuous arc connecting the corresponding pair of points and not passing through any other point representing a vertex \cite{DETT99}. If it leads to no confusion, in notation and terminology we make no distinction between a vertex and the corresponding point, and between an edge and the corresponding arc. If the edges are represented by line segments, the drawing is called a {\em straight-line drawing}. The {\em slope} of an edge in a straight-line drawing is the slope of the corresponding segment.

Wade and Chu \cite{WC94} introduced the following graph parameter:
The {\em slope number} of a graph $G$ is the smallest number $s$
with the property that $G$ has a straight-line drawing with edges
of at most $s$ distinct slopes. Let us compare this with two other well studied graph parameters. The {\em thickness} of a graph $G$
is defined as the smallest number of planar subgraphs it can be decomposed into \cite{MuOS}. It is one of the
several widely known graph parameters that measures how
far $G$ is from being planar.
The {\em geometric thickness} of $G$, defined as the
smallest number of {\em crossing-free} subgraphs of a straight-line
drawing of $G$ whose union is $G$, is another similar notion. \cite{Ka}.
It follows directly from
the definitions that the thickness of any graph is at most as
large as its geometric thickness, which, in turn, cannot exceed
its slope number.
For many interesting results about these
parameters, consult \cite{DiEH, DEK04, DSW04, DuW, E04, HuSW}.

Obviously, if
$G$ has a vertex of degree $d$, then its slope number is at least
$\lceil d/2\rceil$, because, according to the above definitions,
in a proper drawing two edges are not allowed to partially
overlap. The question arises whether the slope number can be
bounded from above by any function of the maximum degree $d$ (see
\cite{DSW07}). Bar\'at, Matou\v sek, and Wood \cite{BMW06} and,
independently, in our paper with Pach \cite{PP06}, we proved using a counting argument that the answer is no for $d\ge 5$. We present this latter proof, which gives a better bound, in Section \ref{sec:slopedeg5}. We show that for any $d\ge 5$ and $n$, there
exist a graph with $n$ vertices of maximum degree
$d$, whose slope number is at least $n^{\frac{1}{2}-\frac{1}{d-2}-o(1)}$.
Since then this bound was improved for $d\ge 9$ by Dujmovi\'c, Suderman and Wood
 \cite{DSW07},  they showed that the slope number is at least $n^{1-\frac{8+\epsilon}{d+4}}$. 
  Note that for smaller $d$'s our bound is still the best. 

Trivially, every graph of maximum
degree two has slope number at most three. The case $d= 3$ was solved in our paper \cite{KPPT08}, in which we prove that every cubic graph\footnote{A graph is {\it cubic} if its maximum degree is at most 3.} can be drawn with 5 slopes. This proof is presented in Section \ref{sec:slopenum}. Later, Mukkamala and Szegedy \cite{MSz07} showed that 4 slopes suffice if the graph is connected. 
However, for disconnected graphs, still five slopes is the best bound. This is because it cannot be guaranteed that different components are drawn with the same four slopes. It would be interesting to decide whether four fixed directions always suffice.

The case $d=4$ remains an interesting open problem.

\begin{conj}\label{slope4} The slope number of graphs with maximum degree 4 is unbounded.
\end{conj}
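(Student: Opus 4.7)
The plan is to combine a refined counting argument with a rigidity obstruction, since the purely combinatorial approach from Section~\ref{sec:slopedeg5} breaks down at exactly $d=4$. That argument fails here because both sides of the pigeonhole inequality scale like $n^{2n}$: the number of labeled $4$-regular graphs on $n$ vertices matches, up to subexponential factors, the crude upper bound (combinatorial point/slope-line arrangements)$\cdot\binom{2s}{4}^n$ for the number of graphs drawable with $s$ slopes. My first step would be to sharpen the arrangement count via incidence-geometric tools such as Szemer\'edi--Trotter or Beck's theorem: each vertex can use at most $2s$ slot-directions, but each slope line hosts many vertices, so the distribution of incidences is heavily constrained, which should cut the enumeration by a polynomial factor in $n$.

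Next, I would attach a linear-algebraic rigidity obstruction. Fix the $s$ slopes as direction vectors $u_1,\ldots,u_s\in\mathbb{R}^2$ and write each edge displacement as $t_e u_{\sigma(e)}$ with $t_e>0$. For every cycle $C$ in the graph, the vector identity $\sum_{e\in C}\pm t_e u_{\sigma(e)}=0$ must hold; a $4$-regular graph on $n$ vertices has $m=2n$ edges and $n+1$ independent cycles, producing $2(n+1)$ real equations in only $2n$ length variables. Thus a generic slope assignment $\sigma\colon E\to[s]$ admits no positive realization, and the realizable $\sigma$'s form a subvariety of $[s]^{E}$ of substantially reduced ``effective dimension.'' Quantifying this codimension and coupling it with the sharpened arrangement count is, in my view, the most promising route to a lower bound of the form $s\ge s(n)\to\infty$. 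As a backup explicit family I would try line graphs of cubic expanders: these are $4$-regular, inherit good expansion, and contain $|V(G)|$ rigidly interlocked triangles that should obstruct embedding into few slopes.

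The main obstacle is exactly the tight balance at $d=4$: neither the sharpened enumeration nor the rigidity count is strong enough alone, and the two interact subtly because collinearity among many vertices on a single slope line genuinely relaxes cycle rigidity (collinear incidences collapse the $2(n+1)$-dimensional constraint system). Producing a clean coupling — likely by decomposing any purported drawing into rigid ``blocks'' separated by high-multiplicity collinear clusters, then bounding each block's contribution via incidence bounds while bounding inter-block contributions via rigidity — is the step where I would expect to get stuck, and is plausibly the reason this conjecture has resisted the techniques that resolved $d\ge 5$.
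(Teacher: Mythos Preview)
The statement you are attempting is Conjecture~\ref{slope4}, and the paper contains \emph{no proof of it}: it is explicitly listed as an open problem (``The case $d=4$ remains an interesting open problem'') and is repeated verbatim in the final section of open questions. There is therefore nothing in the paper to compare your proposal against.

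Your write-up is not a proof but an honest research outline, and you acknowledge this yourself (``is the step where I would expect to get stuck''). The counting heuristic you describe is exactly the reason the paper's $d\ge 5$ argument fails at $d=4$: with a frame graph of maximum degree four and only a perfect matching to add, the number of labelings $S^{|F|+n}$ and the number of matchings are both of order $n^{\Theta(n)}$, so no contradiction arises. Your proposed fixes---tightening the arrangement count via incidence bounds, and imposing cycle-closure rigidity constraints---are reasonable directions, but neither is carried through. In particular, the rigidity count $2(n+1)$ equations versus $2n$ edge-length unknowns is misleading: the system is homogeneous in the lengths and highly degenerate once many vertices are collinear (which a bounded-slope drawing forces), so the na\"{\i}ve codimension estimate does not survive. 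You identify this yourself as the main obstacle.

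In short: there is no gap to name relative to the paper, because the paper claims nothing here. Your proposal is a plausible but incomplete strategy for an open problem, not a proof.
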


In Section \ref{sec:slopepar} we investigate a similar notion, called {\em slope parameter}, first defined as follows by Ambrus,
Bar\'at, and P. Hajnal \cite{AmBH}. Given a set $P$ of points in
the plane and a set $\Sigma$
of slopes, define $G(P,\Sigma)$ as
the graph on the vertex set $P$, in which two vertices $p,q\in P$
are connected by an edge if and only if the slope of the line $pq$
belongs to $\Sigma$. The {\em slope parameter} $s(G)$ of $G$ is
the size of the smallest set $\Sigma$
of slopes such that $G$ is
isomorphic to $G(P,\Sigma)$ for a suitable set of points $P$ in
the plane. This definition was motivated by the fact that all
connections (edges) in an electrical circuit (graph) $G$ can be
easily realized by the overlay of $s(G)$ finely striped
electrically conductive layers.

The slope parameter, $s(G)$, is closely related to the slope number.
For instance, for triangle-free
graphs, $s(G)$ is at least as large as the slope number of $G$, thus also bigger than the thickness and the geometric thickness. Indeed, in the drawing
realizing the slope parameter, there are no three points on a line,
so this drawing proves that the slope number is smaller or equal to the
slope parameter, the only difference being that in case of the slope number it is not obligatory to connect two vertices if their slope is in $\Sigma$.
The slope parameter of a triangle-free graph is also at least its {\em edge chromatic number}, $\chi'(G)$, as there can be at most one edge with the same slope from any vertex.

On the other hand, the slope parameter sharply
differs from other parameters in the sense that the slope parameter of a
complete graph on $n$ vertices is {\em one}, while the thickness, the geometric thickness, and the slope number of $K_n$ tend to infinity as $n\rightarrow\infty$. Jamison
\cite{Ja} proved that the slope number of $K_n$ is $n$.

Our main result in Section \ref{sec:slopepar} is that the slope parameter of every cubic graph is also bounded. In our drawing no three vertices will be collinear, therefore as a corollary we obtain another proof for the fact that the slope number of cubic graphs is bounded, with a worse constant.

Finally, in Section \ref{sec:slopeplan} we investigate the {\em planar slope number} of bounded degree planar graphs. The planar slope number of a planar graph $G$ is the smallest number $s$ with the property that $G$ has a straight-line drawing with non-crossing edges
of at most $s$ distinct slopes. We prove that any bounded degree planar graph has a bounded planar slope number. Then we investigate planar drawings where we allow one or two bends on each edge, in which cases we prove better bounds. For the exact statement of our results, see the beginning of Section \ref{sec:slopeplan}.

\clearpage
\subsection{A Lower Bound for the Slope Number of Graphs with Bounded Degree}\label{sec:slopedeg5}

This section is based on our paper with J\'anos Pach, Bounded-degree graphs can have arbitrarily large slope numbers \cite{PP06}.

If it creates no confusion, the vertex (edge) of $G$ and the
point (segment) representing it will be denoted by the
same symbol.  Dujmovi\'c et al. \cite{DSW04} asked whether the
slope parameter of bounded-degree graphs can be
arbitrarily large. The following short argument shows that
the answer is yes for graphs of degree at most {\em
five}. 

\medskip

Define a ``frame'' graph $F$ on the vertex set
$\{1,\ldots,n\}$ by connecting vertex 1 to 2 by an edge
and connecting every $i>2$ to $i-1$ and $i-2$. Adding a 
perfect matching $M$ between these $n$ points, we obtain 
a graph $G_M:=F\cup M$. The number of different matchings 
is at least $(n/3)^{n/2}$. Let $G$ denote the huge graph 
obtained by taking the union of disjoint copies of all 
$G_M$. Clearly, the maximum degree of the vertices of $G$ 
is {\em five}. Suppose that $G$ can be drawn using at 
most $S$ slopes, and fix such a drawing. 

For every edge $ij\in M$, label the points in $G_M$
corresponding to $i$ and $j$ by the slope of $ij$ in
the drawing. Furthermore, label  each frame edge 
$ij\; (|i-j|\le 2)$ by its slope. Notice that no two 
components of $G$ receive the same labeling. Indeed, 
up to translation and scaling, the labeling of the 
edges uniquely determines the positions of the points 
representing the vertices of $G_M$. Then the labeling
of the vertices uniquely determines the edges belonging
to $M$. Therefore, the number of different possible 
labelings, which is $S^{|F|+n}<S^{3n}$, is an upper bound for
the number of components of $G$. On the other hand, we
have seen that the number of components (matchings) is
at least $(n/3)^{n/2}$. Thus, for any $S$ we obtain a
contradiction, provided that $n$ is sufficiently
large.  \hfill $\Box$

\medskip

With some extra care one can refine this argument to obtain 

\begin{thm} For any $d\ge 5$ and $n$, there
exist a graph with $n$ vertices of maximum degree
$d$, whose slope number is at least $n^{\frac{1}{2}-\frac{1}{d-2}-o(1)}$.
\end{thm}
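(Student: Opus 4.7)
The plan is to generalize the construction used for the $d=5$ case by replacing the perfect matching $M$ by an arbitrary graph $H$ of maximum degree $d-4$ on the vertex set $\{1,\ldots,n\}$, so that $G_H := F\cup H$ still has maximum degree at most $d$. The idea is then to run a counting/pigeonhole argument comparing (i) the number of such graphs $H$ with (ii) the number of combinatorial labelings that a drawing of $G_H$ with at most $S$ slopes can produce; if (i) exceeds (ii), some $G_H$ fails to have slope number $\le S$, yielding the desired graph on $n$ vertices.

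For (i), restrict attention to $(d-4)$-regular graphs on $n$ vertices (assuming the necessary parity); standard configuration-model asymptotics give at least $n^{n(d-4)/2 - O(n)}$ such graphs. For (ii), I would refine the labeling from the $d=5$ proof: label each of the $|F|=2n-3$ frame edges by its slope, and label each vertex $v$ by the multiset of \emph{oriented slopes} (a slope together with one of its two directions along the line through $v$) of the $H$-edges at $v$. Since $\deg_H(v)\le d-4$, each such multiset has size $\le d-4$ drawn from a universe of $2S$ oriented slopes, so there are at most $O(S^{d-4})$ labels per vertex, for a total of at most
\[
S^{2n-3}\cdot O(S^{d-4})^{n} \;=\; S^{(d-2)n}\cdot 2^{O(n)}
\]
distinct labelings.

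The step that requires real care is the injectivity claim: that the labeling of a drawing uniquely determines $G_H$. As in the $d=5$ case, the frame-edge slopes pin down the positions of all vertices up to translation and scaling, since for $i\ge 3$ the vertex $i$ is the intersection of two lines of prescribed slopes through $i-1$ and $i-2$. Given these positions and the oriented-slope multiset at each vertex $v$, every $H$-neighbor of $v$ is then forced to be the unique closest vertex to $v$ along the appropriate oriented ray, because in a proper straight-line drawing an edge cannot pass through another vertex. The main obstacle I foresee is precisely this bookkeeping for $d-4\ge 2$: one must check that the oriented-slope refinement really removes the left/right ambiguity when several $H$-edges at $v$ share a slope, without inflating the per-vertex label count beyond $S^{d-4}$ up to a constant factor.

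Granting the injectivity, if every $G_H$ admitted a drawing with $S$ slopes, then distinct $H$'s would yield distinct labelings, giving
\[
S^{(d-2)n}\cdot 2^{O(n)}\;\ge\; n^{n(d-4)/2 - O(n)}.
\]
Taking logarithms, this rearranges to $\log S\ge \frac{d-4}{2(d-2)}\log n\cdot(1-o(1))$, i.e.\
\[
S \;\ge\; n^{\frac{d-4}{2(d-2)}-o(1)} \;=\; n^{\frac12-\frac{1}{d-2}-o(1)}.
\]
Contrapositively, for any $S$ below this threshold at least one $G_H$ on $n$ vertices has slope number exceeding $S$, which is the statement of the theorem. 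The argument reduces to the original one when $d=5$ (where $H$ is a matching and $n^{1/6-o(1)}$ is recovered), and the only new ingredient for $d\ge 6$ is the oriented-slope multiset labeling together with its injectivity proof.
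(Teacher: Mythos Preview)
Your counting has a genuine gap at the injectivity step. For the reconstruction you describe to work---intersecting lines through $i-1$ and $i-2$ to locate vertex $i$, then picking the closest vertex along each oriented ray---you must know the actual \emph{values} of the slopes. But your bound of $S^{(d-2)n}\cdot 2^{O(n)}$ labelings is valid only if ``slope'' means an index in $\{1,\ldots,S\}$. These two readings are incompatible: different graphs $G_H$ may be drawn with entirely different sets of $S$ real slopes, so no common finite alphabet exists. (In the $d=5$ sketch this problem vanishes because one draws the disjoint union of \emph{all} $G_M$ as a single graph, forcing one slope set; but that trick produces a graph on $N\gg n$ vertices and only a polylogarithmic-in-$N$ bound, not the polynomial in $n$ you need.) With purely symbolic labels, two distinct $H$'s can receive the same labeling while the underlying slope values---hence the vertex positions, hence the ``closest neighbor along each ray''---differ; your injectivity simply fails.

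The paper's proof confronts exactly this difficulty. It labels by slope \emph{symbols} $\sigma_1,\ldots,\sigma_S$ treated as variables, expresses every vertex coordinate as a rational function of degree $O(n)$ in these variables, and shows that for a fixed symbolic labeling the graph $H$ is determined by the sign pattern of fewer than $dn^2$ polynomials of degree at most $2n$ (encoding the left/right order of vertices along each relevant line). The Thom--Milnor theorem then bounds the number of sign patterns by $(Cdn^3)^S$, so the total number of graphs is at most $S^{(d-2)n}(Cdn^3)^S$; since $(Cdn^3)^S=n^{O(S)}$ is subexponential in $n$ once $S=n^{1-\Omega(1)}$, this extra factor is absorbed into the $o(1)$ of the exponent. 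This real-algebraic step is the missing ingredient, and without it your argument does not yield any polynomial lower bound in $n$.
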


\noindent
{\bf Proof.} Now instead of a matching, we add to the
frame $F$ in every possible way a $(d-4)$-regular graph
$R$ on the vertex set $\{1,\ldots,n\}$. Thus, we obtain
at least $(cn/d)^{(d-4)n/2}$ different graphs
$G_R:=F\cup R$, each having maximum degree at most $d$
(here $c>0$ is a constant; see e.g. \cite{BC78}).
Suppose that each $G_R$ can be drawn using $S$ slopes
$\sigma_1<\ldots<\sigma_S$. Now we cannot insist that
these slopes are the same for all $G_R$, therefore, these
numbers will be regarded as variables.

Fix a graph $G_R=F\cup R$ and one of its drawings with the above
properties, in which vertex 1 is mapped into the origin and vertex
2 is mapped into a point whose $x$-coordinate is 1. Label every
edge belonging to $F$ by the symbol $\sigma_k$ representing its
slope. Furthermore, label each vertex $j$ with a $(d-4)$-tuple of
the $\sigma_k$s: with the symbols corresponding to the slopes of
the $d-4$ edges incident to $j$ in $R$ (with possible repetition).
Clearly, the total number of possible labelings of the frame edges
and vertices is at most $S^{|F|+(d-4)n} < S^{(d-2)n}$. Now the
labeling itself does not necessarily identify the graph $G_R$,
because we do not know the {\em actual} values of the slopes
$\sigma_k$.

However, we can show that the number of different $G_R$s that
receive the same labeling cannot be too large. To prove this,
first notice that for a fixed labeling of the edges of the frame,
the coordinates of every vertex $i$ can be expressed as the ratio
of two polynomials of degree at most $n$ in the variables
$\sigma_1,\ldots, \sigma_S$. Indeed, let $\sigma(ij)$ denote the
label of $ij\in F$, and let $x(i)$ and $y(i)$ denote the
coordinates of vertex $i$. Since, by assumption, we have
$x(1)=y(1)=0$ and $x(2)=1$, we can conclude that
$y(2)=\sigma(12)$. We have the following equations for the
coordinates of $3$:
$$y(3)-y(1)=\sigma(13)(x(3)-x(1)), \; \; \; \;
y(3)-y(2)=\sigma(23)(x(3)-x(2)).$$ Solving them, we
obtain
$$x(3)=\frac{\sigma(12)-\sigma(23)}{\sigma(13)-\sigma(23)},
\; \; \; \;
y(3)=\frac{\sigma(13)(\sigma(12)-\sigma(23))}{\sigma(13)-\sigma(23)},$$
and so on. In particular,
$x(i)=\frac{Q_i(\sigma_1,\ldots,\sigma_S)}{Q'_i(\sigma_1,\ldots,\sigma_S)},$
for suitable polynomials $Q_i$ and $Q'_i$ of degree at most $i-1$.
Moreover, $Q'_j$ is a multiple of $Q'_i$ for all $j>i$.

Since $$x(i)-x(j)=\frac{Q_i{\frac{Q'_j}{Q'_i}}-Q_j}{Q'_j},$$ we
can decide whether the image of $i$ is to the left of the image of
$j>i$, to the right of it, or they have the same $x$-coordinate,
provided that we know the ``sign pattern'' of the polynomials
$P'_{ij}:={Q_i{\frac{Q'_j}{Q'_i}}-Q_j}$ and $Q'_j$, i.e., we know
which of them are positive, negative, or zero.

Now if we also know that $\sigma_k$ is one of the labels
associated with vertex $i$, the condition that the line connecting
$i$ and $j$ has slope $\sigma_k$ can be rewritten as
$$\frac{y(i)-y(j)}{x(i)-x(j)}-\sigma_k
=\frac{\sigma(1i)Q_iQ_j'-\sigma(1j)Q_i'Q_j}{Q_iQ_j'-Q_i'Q_j}-\sigma_k=0,$$
that is, as a polynomial equation
$P_{ijk}(\sigma_1,\ldots,\sigma_S)=0$ of degree at most $2n$. For
a fixed labeling of the frame edges and vertices, there are $d-4$
labels $k$ associated with a vertex $i$, so that the number of
these polynomials $P_{ijk}$ is at most $(d-4)n(n-1)$. Thus,
together with the ${n\choose 2}+n$ polynomials $P'_{ij}$ and
$Q'_j$, we have fewer than $dn^2$ polynomials, each of degree at
most $2n$.

It is easy to verify that, for any fixed labeling, the sign
pattern of these polynomials uniquely determines the graph $G_R$.
(Observe that if the label of a vertex $i$ is a $(d-4)$-tuple
containing the symbol $\sigma_k$, then from the sign pattern of
the above polynomials we can reconstruct the sequence of all
vertices that belong to the line of slope $\sigma_k$ passing
through $i$, from left to right. From this sequence, we can select
all elements whose label contains $\sigma_k$, and determine all
edges of $R$ along this line.)

To conclude the proof, we need the Thom-Milnor theorem \cite{BPR03}:
Given $N$ polynomials in $S\le N$ variables, each of degree at most
$2n$, the number of sign patterns determined by them is at most
$\left(CNn/S\right)^S$, for a suitable constant $C>0$.

In our case, the number of graphs $G_R$ is at most the number of
labelings ($< S^{(d-2)n}$) multiplied by the maximum number of
sign patterns of the above $<dn^2$ polynomials of degree at most
$2n$. By the Thom-Milnor theorem, this latter quantity is smaller
than $\left(Cdn^3\right)^S$. Thus, the number of $G_R$s is at most
$S^{(d-2)n}(Cdn^3)^S$. Comparing this to the lower bound
$(cn/d)^{(d-4)n/2}$ stated in the first paragraph of the proof, we
obtain that $S\ge n^{\frac{1}{2}-\frac{1}{d-2}-o(1)},$ as
required.Now instead of a matching, we add to the
frame $F$ in every possible way a $(d-4)$-regular graph
$R$ on the vertex set $\{1,\ldots,n\}$. Thus, we obtain
at least $(cn/d)^{(d-4)n/2}$ different graphs
$G_R:=F\cup R$, each having maximum degree at most $d$
(here $c>0$ is a constant; see e.g. \cite{BC78}).
Suppose that each $G_R$ can be drawn using $S$ slopes
$\sigma_1<\ldots<\sigma_S$. Now we cannot insist that
these slopes are the same for all $G_R$, therefore, these
numbers will be regarded as variables.

Fix a graph $G_R=F\cup R$ and one of its drawings with the above
properties, in which vertex 1 is mapped into the origin and vertex
2 is mapped into a point whose $x$-coordinate is 1. Label every
edge belonging to $F$ by the symbol $\sigma_k$ representing its
slope. Furthermore, label each vertex $j$ with a $(d-4)$-tuple of
the $\sigma_k$s: with the symbols corresponding to the slopes of
the $d-4$ edges incident to $j$ in $R$ (with possible repetition).
Clearly, the total number of possible labelings of the frame edges
and vertices is at most $S^{|F|+(d-4)n} < S^{(d-2)n}$. Now the
labeling itself does not necessarily identify the graph $G_R$,
because we do not know the {\em actual} values of the slopes
$\sigma_k$.

However, we can show that the number of different $G_R$s that
receive the same labeling cannot be too large. To prove this,
first notice that for a fixed labeling of the edges of the frame,
the coordinates of every vertex $i$ can be expressed as the ratio
of two polynomials of degree at most $n$ in the variables
$\sigma_1,\ldots, \sigma_S$. Indeed, let $\sigma(ij)$ denote the
label of $ij\in F$, and let $x(i)$ and $y(i)$ denote the
coordinates of vertex $i$. Since, by assumption, we have
$x(1)=y(1)=0$ and $x(2)=1$, we can conclude that
$y(2)=\sigma(12)$. We have the following equations for the
coordinates of $3$:
$$y(3)-y(1)=\sigma(13)(x(3)-x(1)), \; \; \; \;
y(3)-y(2)=\sigma(23)(x(3)-x(2)).$$ Solving them, we
obtain
$$x(3)=\frac{\sigma(12)-\sigma(23)}{\sigma(13)-\sigma(23)},
\; \; \; \;
y(3)=\frac{\sigma(13)(\sigma(12)-\sigma(23))}{\sigma(13)-\sigma(23)},$$
and so on. In particular,
$x(i)=\frac{Q_i(\sigma_1,\ldots,\sigma_S)}{Q'_i(\sigma_1,\ldots,\sigma_S)},$
for suitable polynomials $Q_i$ and $Q'_i$ of degree at most $i-1$.
Moreover, $Q'_j$ is a multiple of $Q'_i$ for all $j>i$.

Since $$x(i)-x(j)=\frac{Q_i{\frac{Q'_j}{Q'_i}}-Q_j}{Q'_j},$$ we
can decide whether the image of $i$ is to the left of the image of
$j>i$, to the right of it, or they have the same $x$-coordinate,
provided that we know the ``sign pattern'' of the polynomials
$P'_{ij}:={Q_i{\frac{Q'_j}{Q'_i}}-Q_j}$ and $Q'_j$, i.e., we know
which of them are positive, negative, or zero.

Now if we also know that $\sigma_k$ is one of the labels
associated with vertex $i$, the condition that the line connecting
$i$ and $j$ has slope $\sigma_k$ can be rewritten as
$$\frac{y(i)-y(j)}{x(i)-x(j)}-\sigma_k
=\frac{\sigma(1i)Q_iQ_j'-\sigma(1j)Q_i'Q_j}{Q_iQ_j'-Q_i'Q_j}-\sigma_k=0,$$
that is, as a polynomial equation
$P_{ijk}(\sigma_1,\ldots,\sigma_S)=0$ of degree at most $2n$. For
a fixed labeling of the frame edges and vertices, there are $d-4$
labels $k$ associated with a vertex $i$, so that the number of
these polynomials $P_{ijk}$ is at most $(d-4)n(n-1)$. Thus,
together with the ${n\choose 2}+n$ polynomials $P'_{ij}$ and
$Q'_j$, we have fewer than $dn^2$ polynomials, each of degree at
most $2n$.

It is easy to verify that, for any fixed labeling, the sign
pattern of these polynomials uniquely determines the graph $G_R$.
(Observe that if the label of a vertex $i$ is a $(d-4)$-tuple
containing the symbol $\sigma_k$, then from the sign pattern of
the above polynomials we can reconstruct the sequence of all
vertices that belong to the line of slope $\sigma_k$ passing
through $i$, from left to right. From this sequence, we can select
all elements whose label contains $\sigma_k$, and determine all
edges of $R$ along this line.)

To conclude the proof, we need the Thom-Milnor theorem \cite{BPR03}:
Given $N$ polynomials in $S\le N$ variables, each of degree at most
$2n$, the number of sign patterns determined by them is at most
$\left(CNn/S\right)^S$, for a suitable constant $C>0$.

In our case, the number of graphs $G_R$ is at most the number of
labelings ($< S^{(d-2)n}$) multiplied by the maximum number of
sign patterns of the above $<dn^2$ polynomials of degree at most
$2n$. By the Thom-Milnor theorem, this latter quantity is smaller
than $\left(Cdn^3\right)^S$. Thus, the number of $G_R$s is at most
$S^{(d-2)n}(Cdn^3)^S$. Comparing this to the lower bound
$(cn/d)^{(d-4)n/2}$ stated in the first paragraph of the proof, we
obtain that $S\ge n^{\frac{1}{2}-\frac{1}{d-2}-o(1)},$ as
required.
\hfill $\Box$

\clearpage
\section{Drawing Cubic Graphs with at most Five Slopes}\label{sec:slopenum}
This section is based on our paper with Bal\'azs Keszegh, J\'anos Pach and G\'eza T\'oth, Drawing cubic graphs with at most five slopes \cite{KPPT08}.

Our main result is

\begin{thm}\label{slopenum1} Every graph of maximum degree at
most three has slope number at most five.
\end{thm}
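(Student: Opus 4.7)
The plan is to prove the statement by induction on the number of vertices, after fixing five slopes in generic position, for instance the directions $\{k\pi/5 : k = 0, 1, 2, 3, 4\}$. At each step I add one vertex to a partial drawing, ensuring that all edges use one of the allowed slopes and that the drawing remains proper (no two vertices coincide, no edge passes through a non-incident vertex, no two edges with a common endpoint share a slope).

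The easy case is when $G$ contains a vertex $v$ of degree at most two. I delete $v$, apply the inductive hypothesis to $G - v$, and then reinsert $v$. Since $v$ has at most two already-placed neighbors, its position lies at the intersection of (at most) two lines of allowed slope through these neighbors. There are $\binom{5}{2}\cdot 2$ assignments of slopes to the two incident edges (plus continuous scaling freedom when only one neighbor exists), so many candidate positions are available. The ``bad'' positions --- those coinciding with an existing vertex or lying on an existing edge, or those causing a new edge to pass through an existing vertex --- form a finite union of points and lines, so a generic valid placement always exists.

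The hard case is when $G$ is $3$-regular. Removing any vertex $v$ leaves its three neighbors of degree two, but reinserting $v$ requires three lines of allowed slope through those three neighbors to be concurrent at the point where $v$ is placed; this is a non-generic condition that a naive induction will not produce. To handle this, I would strengthen the inductive hypothesis so as to produce drawings subject to certain prescribed-slope constraints at designated vertices. Then, for a $3$-regular $G$, I pick a vertex $v$ and recursively draw $G - v$ with the \emph{constraint} that the three neighbors of $v$ lie on three prescribed concurrent lines of allowed slope meeting at a chosen point; then $v$ is placed at that point.

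The main obstacle is carrying through the strengthened inductive hypothesis. One must show that at each inductive step, the local constraints inherited from the recursion can always be satisfied simultaneously with the global genericity requirements. The bound of five slopes is tight for this argument: three slopes are needed locally at any degree-$3$ vertex, and the two additional slopes provide just enough combinatorial flexibility --- roughly $\binom{5}{3}\cdot 3! = 60$ slope orderings per triple of neighbors --- to ensure that a compatible configuration exists for every vertex encountered during the recursive placement. Making this flexibility argument rigorous, likely via a careful case analysis of the local structure of $G$ near each removed vertex, is where I expect the bulk of the technical work to lie.
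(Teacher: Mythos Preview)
Your proposal is a plan rather than a proof, and the plan has gaps beyond the one you flag.

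\medskip

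\textbf{The ``easy'' case is not easy.} When $v$ has degree two with neighbours $u_1,u_2$, the position of $v$ is the intersection of a line of an available slope through $u_1$ with a line of an available slope through $u_2$. Each $u_i$ has at most two incident edges in $G-v$, so at least three of the five slopes are free there; this gives you at most nine candidate intersection points (fewer, once you discard parallel pairs). That is a \emph{finite} set with no continuous parameter, so the genericity argument you invoke does not apply: in a large drawing there is no reason any of these nine points avoids all vertices and all edges, or that the two new segments miss all other vertices. Some additional structure on the drawing of $G-v$ is required, and your inductive hypothesis supplies none.

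\medskip

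\textbf{The hard case is harder than the paper's.} You propose deleting a \emph{vertex} and forcing its three neighbours onto three concurrent lines of allowed slope. The paper instead deletes an \emph{edge} from each $3$-regular component, reducing to the subcubic case; reinserting an edge only requires its two endpoints to lie on a common line of some slope, which is a one-parameter family rather than a codimension-two concurrency condition. This is why the paper can afford to reserve the fifth slope solely for that one reinserted edge per component, drawing everything else with four slopes.

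\medskip

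\textbf{What replaces your genericity argument.} The paper's strengthened induction (Theorem~\ref{slopenum2}) is that any connected subcubic non-cycle can be drawn with the \emph{four} slopes $0,\pi/4,\pi/2,3\pi/4$ so that the $x$-coordinates of the low-degree vertices take \emph{arbitrary prescribed rationally independent} values, together with vertical-visibility conditions at those vertices. Rational independence is the device that rules out accidental collinearities (via Lemma~\ref{slopenumlem21}) and replaces your appeal to ``bad positions form a finite union of points and lines.'' The induction then splits into four structural cases (degree-one vertex; $\Theta$-subgraph; no cycle through a degree-two vertex; cycle through a degree-two vertex), the last handled by an explicit \textsc{Embedding Procedure} that peels off a shortest such cycle and places its vertices on rays above the recursively drawn remainder. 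For the $3$-regular case one deletes an edge $u_0u_0'$, runs this procedure with a free height parameter $L$, and chooses $L$ so that the segment $u_0u_0'$ attains a common fifth slope $\pi/2+\varepsilon$ across all components.
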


Our terminology is somewhat unorthodox: by the {\em slope} of a
line $\ell$, we mean the angle $\alpha$ modulo $\pi$ such that a
counterclockwise rotation through $\alpha$ takes the $x$-axis to a
position parallel to $\ell$. The slope of an edge (segment) is the
slope of the line containing it. In particular, the slopes of the
lines $y=x$ and $y=-x$ are $\pi/4$ and $-\pi/4$, and they are
called {\em Northeast} (or Southwest) and {\em Northwest} (or
Southeast) lines, respectively.

For any two points $p_1=(x_1,y_1), p_2=(x_2,y_2)\in {\bf R}^2$, we
say that $p_2$ is {\em to the North} (or {\em to the South} of $p_1$ if
$x_2=x_1$ and $y_2>y_1$ (or $y_2<y_1$). Analogously, we say that
$p_2$ is to the Northeast (to the Northwest) of $p_1$ if $y_2>y_1$
and $p_1p_2$ is a Northeast (Northwest) line. Directions are often
abbreviated by their first letters: N, NE, E, SE, etc. These four
directions are referred to as {\em basic}. That is, a line $\ell$
is said to be of one of the four basic directions if $\ell$ is
parallel to one of the axes or to one of the NE and NW lines $y=x$
and $y=-x$.

The main tool of our proof is the following result of independent
interest.

\begin{thm}\label{slopenum2} Let $G$ be a connected graph that
is not a cycle and whose every vertex has degree at most three.
Suppose that $G$ has at least one vertex of degree less than
three, and denote by $v_1, ..., v_m$ the vertices of degree at most two $\;(m\ge 1)$.

Then, for any sequence $x_1, x_2, \ldots , x_m$ of real numbers,
linearly independent over the rationals, $G$ has a straight-line
drawing with the following properties:

\noindent(1) {\em Vertex $v_i$ is mapped into a point with
$x$-coordinate $x(v_i)=x_i\; (1\le i\le m)$;}

\noindent(2) {\em The slope of every edge is $0, \pi/2, \pi/4,$ or
$-\pi/4.$}

\noindent(3) {\em No vertex is to the North of any vertex of
degree {\it two}.}

\noindent(4) {\em No vertex is to the North or to the Northwest of
any vertex of degree {\it one}.}
\end{thm}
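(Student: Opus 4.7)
The plan is to proceed by induction on $|V(G)|$. The base cases (a single edge, a path of length two or three) admit direct constructions, since the prescribed rationally independent x-coordinates, together with conditions (2)--(4), leave only a small finite number of admissible placements for each vertex. For the inductive step, I would split into two cases according to whether $G$ contains a vertex of degree exactly one.

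\textbf{Case 1: $G$ has a degree-one vertex $v$ with neighbor $u$.} Set $G'=G-v$. In $G'$ the vertex $u$ has degree one or two, so $u$ joins the list of degree-$\le 2$ vertices and needs an x-coordinate; I would assign $x_u$ to be very large and rationally independent from all other prescribed values. Provided $G'$ still satisfies the hypotheses (connected, not a cycle, having a degree-$\le 2$ vertex), the induction hypothesis produces a drawing of $G'$ meeting (1)--(4). To reinsert $v$, note that since $x_v\neq x_u$ by rational independence, the vertical slope for edge $uv$ is unavailable, leaving horizontal, NE, or NW. With $x_u$ chosen very large the NW option places $v$ at height $y_v = y_u + (x_u - x_v)$, which strictly exceeds every other y-coordinate in the drawing; thus $v$ is the topmost vertex and condition (4) at $v$ is automatic. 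Rational independence prevents any other vertex from lying exactly on the vertical or NW line through $v$, so (3) and (4) at the remaining vertices are preserved. Degenerate sub-cases (for instance when $u$ has degree three in $G$ and $G-v$ is a cycle, or when $G'$ is a single vertex) are handled by explicit small constructions.

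\textbf{Case 2: every degree-$\le 2$ vertex has degree exactly two.} Since $G$ is not a cycle, $G$ contains a degree-three vertex and therefore a maximal path $P$ of degree-two vertices ending at degree-three vertices. Pick a degree-two vertex $v$ on $P$ with neighbors $u_1,u_2$, remove $v$, and apply induction to the resulting $G'$ (rewiring or introducing new x-coordinates for the freshly exposed degree-$\le 2$ vertices, again chosen extreme and rationally independent). Reinsertion of $v$ at its prescribed $x_v$ requires that each of $vu_1$ and $vu_2$ has a basic slope; by choosing the freshly assigned x-coordinates of $u_1,u_2$ with suitable magnitudes we obtain a consistent $y_v$, and by placing $v$ high in the drawing condition (3) at $v$ is ensured.

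\textbf{Main obstacle.} The difficult part will be the careful bookkeeping of degenerate sub-cases in both cases (where the reduced graph becomes a cycle, disconnects, or otherwise fails the induction hypothesis), and the verification that the reinsertion of $v$ never creates a violation of (3) or (4) at a vertex other than $v$ itself. The rational-independence hypothesis, combined with the freedom to pick extreme x-coordinates for newly-freed degree-$\le 2$ vertices, is the essential tool: it simultaneously rules out accidental alignments along vertical, horizontal, NE, and NW lines, and guarantees that at least one of the three available basic slopes at each reinsertion step is compatible with all four conditions of the theorem.
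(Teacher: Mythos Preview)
Your Case 2 has a genuine gap. When you delete a single degree-two vertex $v$ and apply induction, the positions of $u_1$ and $u_2$ are fixed by the black-box inductive drawing; you must then find one value $y(v)$ making \emph{both} $vu_1$ and $vu_2$ basic-slope segments, with $x(v)=x_v$ prescribed. Each edge allows at most three values of $y(v)$ (horizontal, NE, NW; vertical is excluded by rational independence whenever $u_i$ has degree $\le 2$ in $G$), and nothing forces these two three-element sets to intersect. You propose choosing the $x$-coordinates of $u_1,u_2$ ``with suitable magnitudes,'' but on a maximal path of degree-two vertices at least one $u_i$ itself has degree two in $G$ and hence already carries a prescribed $x$-coordinate; and even when some $u_i$ has degree three in $G$, you select its $x$-coordinate \emph{before} invoking the induction, which then determines $y(u_i)$ outside your control. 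Moreover, $G-v$ can be disconnected or can be a cycle, so the induction hypothesis may fail outright. A smaller version of the same issue arises in your Case~1: when $u$ has degree two in $G$ its $x$-coordinate is already prescribed and cannot be taken ``very large.''

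The paper avoids these obstructions by never removing a lone degree-two vertex. In the degree-one case it strips an entire path $v_1\ldots v_k w$ up to the first degree-three vertex $w$, so that the attachment point always receives a freely chosen $x$-coordinate (in fact $x(w)=x_k$). In the degree-two case it removes an entire shortest cycle $C$ through a degree-two vertex and reinserts all of $C$ at once via an explicit {\sc Embedding Procedure}: each $u_i\in C$ is placed on a vertical or NW ray emanating from its unique neighbor in $G\setminus C$, with the rays chosen so that consecutive $u_i$'s are joined by E or NE segments. This global, simultaneous placement of the whole cycle is exactly what makes the slopes compatible --- something a one-vertex reinsertion cannot achieve. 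Two further structural cases (a $\Theta$-subgraph, contracted and replaced by a small square; no cycle through a degree-two vertex, reduced via a cut vertex) are treated separately. Your two-case split is too coarse to capture this.
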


Before this theorem only the following special cases were known.

It was shown by Dujmovi\'c at al. \cite{DESW07} that every planar
graph with maximum degree three has a drawing with non-crossing
straight-line edges of at most three different slopes, except that
three edges of the outer-face may have a bend.

Max Engelstein \cite{En05}, a student from Stuyvesant High School,
New York has shown that every graph of maximum degree {\em three}
that has a Hamiltonian cycle can be drawn with edges of at most
{\em five} different slopes.

\subsection{Embedding Cycles}

Let $C$ be a straight-line drawing of a cycle in the plane. A
vertex $v$ of $C$ is said to be a {\em turning point} if the
slopes of the two edges meeting at $v$ are not the same.

We start with two simple auxiliary statements.

\begin{lem}\label{slopenumlem21} Let $C$ be a straight-line drawing
of a cycle such that the slope of every edge is $0$, $\pi/4$, or
$-\pi/4$. Then the $x$-coordinates of the vertices of $C$ are {\it
not} independent over the rational numbers.

Moreover, there is a vanishing linear combination of the
$x$-coordinates of the vertices, with 
as many nonzero
(rational) coefficients as many turning points $C$ has.
\end{lem}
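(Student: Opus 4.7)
The plan is to exploit the closure of the cycle through its $y$-coordinate balance. Traversing $C$ and summing the signed $y$-increments along each edge must yield $0$. Because only slopes $0, \pi/4, -\pi/4$ appear, on every edge $e$ from $u$ to $v$ we have $y(v)-y(u)=\varepsilon(e)\bigl(x(v)-x(u)\bigr)$ with $\varepsilon(e)\in\{-1,0,+1\}$ according to whether $e$ is NW, horizontal, or NE. This already gives one rational linear relation among the $\Delta x$'s, but we need one among the $x$'s themselves, supported on the turning points.

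The key step is to collapse maximal monochromatic arcs. Let $p_1,\dots,p_t$ be the turning points in cyclic order along $C$; the vertices lying strictly between two consecutive turning points all sit on a common line of one of the three allowed slopes, hence contribute $0$ to the relation we will build. For the arc from $p_i$ to $p_{i+1}$, the slope is constant, so $y(p_{i+1})-y(p_i)=\varepsilon_i\bigl(x(p_{i+1})-x(p_i)\bigr)$ for some $\varepsilon_i\in\{-1,0,+1\}$ depending only on the common slope of that arc. Summing over the cycle and telescoping,
\[
0=\sum_{i=1}^{t}\bigl(y(p_{i+1})-y(p_i)\bigr)=\sum_{i=1}^{t}\varepsilon_i\bigl(x(p_{i+1})-x(p_i)\bigr)=\sum_{i=1}^{t}(\varepsilon_{i-1}-\varepsilon_i)\,x(p_i),
\]
with indices taken mod $t$.

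This is the desired vanishing rational linear combination. The coefficient of $x(p_i)$ is $\varepsilon_{i-1}-\varepsilon_i$; by the very definition of a turning point, the slopes of the two arcs meeting at $p_i$ differ, hence $\varepsilon_{i-1}\neq\varepsilon_i$ and the coefficient is a nonzero integer in $\{-2,-1,1,2\}$. All remaining vertices of $C$ have coefficient $0$. Extending trivially to a combination of the $x$-coordinates of \emph{all} vertices of $C$ (with zero coefficients on the non-turning vertices) gives a vanishing rational linear combination with exactly as many nonzero coefficients as there are turning points, proving both claims.

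There is essentially no obstacle: the only point requiring a small check is that a cycle drawn with only three allowed non-vertical slopes must actually contain at least one turning point (so that the relation is non-trivial), which follows because a closed polygonal path cannot consist of parallel segments only; but even without this, the identity above holds vacuously and the ``moreover'' part is what carries the content.
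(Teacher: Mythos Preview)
Your proof is correct and follows essentially the same approach as the paper's. Both write $y(v_{i+1})-y(v_i)=\lambda_i\bigl(x(v_{i+1})-x(v_i)\bigr)$ with $\lambda_i\in\{-1,0,1\}$, sum around the cycle, and rearrange to obtain $\sum_i(\lambda_{i-1}-\lambda_i)x(v_i)=0$; the only cosmetic difference is that you first collapse the constant-slope arcs to their endpoints (the turning points) before writing the identity, whereas the paper writes the identity over all vertices and then observes that the coefficient at $v_i$ vanishes exactly when $\lambda_{i-1}=\lambda_i$, i.e., when $v_i$ is not a turning point.
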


\noindent{\bf Proof.} Let $v_1, v_2,\ldots, v_n$ denote the
vertices of $C$ in cyclic order ($v_{n+1}=v_1$). Let $x(v_i)$ and
$y(v_i)$ be the coordinates of $v_i$. For any $i\; (1\le i\le n)$,
we have
$y(v_{i+1})-y(v_i)=\lambda_i\left(x(v_{i+1})-x(v_i)\right),$ where
$\lambda_i=0,1,$ or $-1$, depending on the slope of the edge
$v_iv_{i+1}$. Adding up these equations for all $i$, the left-hand
sides add up to zero, while the sum of the right-hand sides is a
linear combination of the numbers $x(v_1), x(v_2),\ldots, x(v_n)$
with integer coefficients of absolute value at most {\em two}.

Thus, we are done with the first statement of the lemma, unless
all of these coefficients are zero. Obviously, this could happen
if and only if $\lambda_1=\lambda_2=\ldots=\lambda_n$, which is
impossible, because then all points of $C$ would be collinear,
contradicting our assumption that in a proper {\em straight-line
drawing} no edge is allowed to pass through any vertex other than
its endpoints.

To prove the second statement, it is sufficient to notice that the
coefficient of $x(v_i)$ vanishes if and only if $v_i$ is not a
turning point. \hfill $\Box$
\medskip

Lemma \ref{slopenumlem21} shows that Theorem \ref{slopenum2} does not hold if $G$ is a cycle.
Nevertheless, according to the next claim, cycles satisfy a very
similar condition. Observe, that the main difference is that here 
we have an exceptional vertex, denoted by $v_0$.

\begin{lem}\label{slopenumlem22} Let $C$ be a cycle with vertices
$v_0, v_1, \ldots , v_m$, in this cyclic order.

Then, for any real numbers $x_1, x_2, \ldots , x_m$, linearly
independent over the rationals, $C$ has a straight-line drawing
with the following properties:

\noindent(1) {\em Vertex $v_i$ is mapped into a point with
$x$-coordinate $x(v_i)=x_i\; (1\le i\le m)$;}

\noindent(2) {\em The slope of every edge is $0, \pi/4,$ or
$-\pi/4.$}

\noindent(3) {\em No vertex is to the North of any other vertex.}

\noindent(4) {\em No vertex has a larger $y$-coordinate than
$y(v_0)$.}
\end{lem}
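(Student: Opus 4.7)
My plan is to prove Lemma~\ref{slopenumlem22} by induction on~$m$.

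For the base case $m=2$, the cycle is a triangle $v_0v_1v_2$. Assuming $x_1<x_2$ (otherwise reflect), I would assign to the three edges $v_0v_1$, $v_1v_2$, $v_2v_0$ the slopes $\pi/4$, $0$, $-\pi/4$, respectively. Closing the drawing around the triangle gives the single equation $(x_1-x(v_0))+0+(x_2-x(v_0))=0$, which forces $x(v_0)=(x_1+x_2)/2$. This value lies strictly in $(x_1,x_2)$, and by the $\mathbb{Q}$-linear independence of $x_1,x_2$ it differs from both; a direct computation then gives $y(v_0)-y(v_1)=y(v_0)-y(v_2)=(x_2-x_1)/2>0$, so properties (3) and (4) hold.

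For the inductive step, assuming the lemma for cycles with $m-1$ non-$v_0$ vertices, I would delete $v_m$ and identify the edges $v_{m-1}v_m$ and $v_mv_0$ with a single new edge $v_{m-1}v_0$, obtaining a smaller cycle to which the hypothesis applies with the still $\mathbb{Q}$-linearly independent coordinates $x_1,\ldots,x_{m-1}$. This yields a drawing in which the contracted edge has some slope $\lambda\in\{0,\pm\pi/4\}$ and $v_0$ sits at some abscissa $x'_0$. I would then reinsert $v_m$ at the prescribed abscissa $x_m$ by splitting the edge $v_{m-1}v_0$ into a two-edge path $v_{m-1}v_mv_0$ with new slopes $\mu_1,\mu_2\in\{0,\pm\pi/4\}$, simultaneously sliding $v_0$ to a new abscissa $x''_0$ along the line through $v_1$ of slope $\lambda_0$ (the inductive slope of the edge $v_0v_1$). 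The requirement that the path lands exactly at this shifted $v_0$ yields a single linear equation in $x''_0$, uniquely solvable whenever $\mu_2\ne\lambda_0$; its solution is a rational linear combination of $x_1,\ldots,x_m$ with enough nonzero coefficients that, by $\mathbb{Q}$-linear independence, $x''_0\ne x_j$ for every~$j$.

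The main obstacle is the case analysis showing that among the admissible pairs $(\mu_1,\mu_2)$ we can always select one for which (i) $v_m$ lands strictly below the shifted $v_0$, so $v_0$ keeps the topmost property and (4) is preserved, and (ii) no edge becomes collinear with another edge or passes through a third vertex. Both kinds of collision would amount to a nontrivial rational relation among $x_1,\ldots,x_m$ and are hence impossible by the independence hypothesis; this disposes of (ii) cleanly. Condition (i) is the more delicate geometric one, since the direction of the shift of $v_0$ depends on $\lambda_0$ and on the order of $x_{m-1}$, $x_m$, $x'_0$ along the real line. In the few configurations where no pair $(\mu_1,\mu_2)$ works for this particular choice of deleted vertex, the remedy is to choose a different vertex to delete inductively, for instance the one whose abscissa is extreme among $x_1,\ldots,x_m$: such a vertex always has enough geometric room to be reinserted without disturbing the valley shape of the drawing around~$v_0$.
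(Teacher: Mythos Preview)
Your inductive approach differs from the paper's, which gives a direct greedy construction: assuming $x_2>x_1$, place $v_1$ at $(x_1,0)$; for each $i\ge 1$ place $v_{i+1}$ to the West of $v_i$ if $x_{i+1}<x_i$ and to the Southeast of $v_i$ if $x_{i+1}>x_i$; finally place $v_0$ at the intersection of the Northeast ray from $v_m$ with the $x$-axis. This rule forces $v_2,\ldots,v_m$ strictly below the $x$-axis while $v_0$ and $v_1$ sit on it, so property~(4) is automatic, and the remaining verifications---that the drawing is proper and that $x(v_0)\neq x_i$ for all $i$---reduce to applications of Lemma~\ref{slopenumlem21}.

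Your plan has a genuine gap precisely where you flag it. You never carry out the case analysis for condition~(i): sliding $v_0$ along the line through $v_1$ of slope $\lambda_0$ can lower $y(v_0)$ below some $y(v_j)$, and you give no argument that a good pair $(\mu_1,\mu_2)$ exists in every configuration. The proposed fallback of deleting instead a vertex of extremal abscissa is asserted, not argued: if the deleted $v_k$ is not adjacent to $v_0$ you avoid moving $v_0$, but you must still reinsert $v_k$ at abscissa $x_k$ by splitting the edge $v_{k-1}v_{k+1}$, and whether $y(v_k)\le y(v_0)$ can be arranged depends on the inductive drawing in a way you do not control (with only three slopes available for each of the two new segments and one abscissa fixed, only a handful of positions for $v_k$ are reachable). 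The paper's construction sidesteps this entirely by building the path so that every vertex lands on or below a fixed horizontal line from the outset, so that the ``topmost'' requirement on $v_0$ never has to be maintained through a perturbation.
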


\noindent {\bf Proof.}
We can assume without loss of generality that $x_2>x_1$.
Place $v_1$ at any point $(x_1,0)$ of the
$x$-axis. Assume that for some $i<m$, we have already determined
the positions of $v_1, v_2, \ldots v_{i}$, satisfying conditions
(1)--(3). If $x_{i+1}>x_i$, then place $v_{i+1}$ at the (unique)
point Southeast of $v_i$, whose $x$-coordinate is $x_{i+1}$. If
$x_{i+1}<x_i$, then put $v_{i+1}$ at the point West of $x_i$,
whose $x$-coordinate is $x_{i+1}$. Clearly, this placement of
$v_{i+1}$ satisfies (1)--(3), and the segment $v_iv_{i+1}$ does
not pass through any point $v_j$ with $j<i$.

After $m$ steps, we obtain a noncrossing straight-line drawing of
the path $v_1v_2\ldots v_{m}$, satisfying conditions (1)--(3). We
still have to find a right location for $v_0$. Let $R_W$ and
$R_{SE}$ denote the rays (half-lines) starting at $v_1$ and
pointing to the West and to the Southeast. Further, let $R$ be the
ray starting at $v_m$ and pointing to the Northeast. It follows
from the construction that all points $v_2, \ldots, v_{m}$ lie in
the convex cone below the $x$-axis, enclosed by the rays $R_W$ and
$R_{SE}$.

Place $v_0$ at the intersection point of $R$ and the
$x$-axis. Obviously, the segment $v_mv_0$ does not pass through
any other vertex $v_j\;(0<j<m)$. Otherwise, we could find a
drawing of the cycle $v_jv_{j+1}\ldots v_m$ with slopes $0,
\pi/4,$ and $-\pi/4$. By Lemma \ref{slopenumlem21}, this would imply that the
numbers $x_j, x_{j+1}, \ldots, x_m$ are {\em not} independent over
the rationals, contradicting our assumption. It is also clear that
the horizontal segment $v_0v_1$ does not pass through any vertex
different from its endpoints because all other vertices are
below the horizontal line determined by $v_0v_1$.
Hence, we obtain a proper
straight-line drawing of $C$ satisfying conditions (1),(2), and
(4).

It remains to verify (3). The only thing we have to check is that
$x(v_0)$ does not coincide with any other $x(v_i)$. Suppose it
does, that is, $x(v_0)=x(v_i)=x_i$ for some $i>0$. By the second
statement of Lemma \ref{slopenumlem21}, there is a vanishing linear combination
$$\lambda_0x(v_0)+\lambda_1x_1+\lambda_2x_2+\ldots+\lambda_mx_m=0$$
with rational coefficients $\lambda_i$, where the number of
nonzero coefficients is at least the number of turning points,
which cannot be smaller than {\em three}. Therefore, if in this
linear combination we replace $x(v_0)$ by $x_i$, we still obtain a
nontrivial rational combination of the numbers $x_1, x_2,\ldots,
x_m$. This contradicts our assumption that these numbers are
independent over the rationals. \hfill $\Box$
\medskip

\subsection{Subcubic Graphs - Proof of Theorem \ref{slopenum2}}

First we settle Theorem \ref{slopenum2} in a special case.
 
\begin{lem}\label{slopenumlem31} Let $m,k\ge 2$ and let $G$ be a graph consisting of two
disjoint cycles, $C=\{v_0, v_1, \ldots , v_m\}$ and $C'=\{v_0', v_1', \ldots , v_m'\}$,
connected by a single edge $v_0v'_0$. 

Then, for any sequence $x_1, x_2, \ldots , x_m, x'_1, x'_2, \ldots , x'_k$  
of real numbers, linearly independent over the rationals, $G$ has a 
straight-line drawing satisfying the following conditions:

\noindent(1) {\em The vertices $v_i$ and $v'_j$ are mapped into points with
$x$-coordinates $x(v_i)=x_i\; (1\le i\le m)$ and $x(v_j)=x'_j\; (1\le j\le k)$.}

\noindent(2) {\em The slope of every edge is $0, \pi/2, \pi/4,$ or $-\pi/4.$}

\noindent(3) {\em No vertex is to the North of any vertex of degree {\it two}.}

\end{lem}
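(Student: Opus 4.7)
First I would apply Lemma~\ref{slopenumlem22} separately to each of the cycles $C=v_0v_1\cdots v_mv_0$ and $C'=v_0'v_1'\cdots v_k'v_0'$, using the prescribed $x$-coordinates $x_1,\ldots,x_m$ and $x_1',\ldots,x_k'$ respectively. This yields two drawings in which the distinguished vertices $v_0$ and $v_0'$ sit on the top horizontal line of their respective drawings (together with $v_1$ and $v_1'$), while every other vertex of the cycle lies strictly below; all slopes used are in $\{0,\pi/4,-\pi/4\}$. Write $a:=x(v_0)$ and $a':=x(v_0')$ for the $x$-coordinates thus produced. Using Lemma~\ref{slopenumlem21} (or the explicit formula $a=x_m-y(v_m)$), $a$ is a nontrivial rational combination of $x_1,\ldots,x_m$ and $a'$ is a nontrivial rational combination of $x_1',\ldots,x_k'$; combined with the rational independence of the whole list, this forces $a,a',x_1,\ldots,x_m,x_1',\ldots,x_k'$ to be pairwise distinct.

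Then I would vertically translate the drawing of $C'$ by $a-a'$, placing $v_0'$ at $(a',a-a')$, and connect $v_0=(a,0)$ to $v_0'=(a',a-a')$ by a straight segment; this segment has slope $-1=\tan(-\pi/4)$, which lies in the admissible set. Condition~(1) is preserved because only a vertical translation was applied to $C'$, and condition~(2) now holds globally. For condition~(3), within each cycle the claim is supplied by Lemma~\ref{slopenumlem22}; across the two cycles no $x$-coordinate of one can equal an $x$-coordinate of the other (by the pairwise distinctness just noted), so no North-relation arises; and since $v_0,v_0'$ have degree three, nothing above them is forbidden.

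The main obstacle is verifying that the drawing is proper, i.e., that no edge passes through a non-endpoint vertex. Intra-cycle edges are safe by Lemma~\ref{slopenumlem22}. For the connecting edge $v_0v_0'$ (supported on the line $y=a-x$), passage through some $v_i$ reduces, after substituting $a=x_m-y(v_m)$, to the rational equation $y(v_i)+y(v_m)=x_m-x_i$, and passage through $v_j'$ to its primed analogue. The right-hand side of the first equation has $x_1$-coefficient $0$ or $-1$, whereas the explicit Lemma~\ref{slopenumlem22} formula $y(v_l)=\sum_{j\in S_l}(x_j-x_{j+1})$ (with the WLOG hypothesis $x_2>x_1$) forces every $y(v_l)$ with $l\ge 2$ to contain $x_1$ with coefficient $+1$, so the left-hand side contributes $+1$ or $+2$: rational independence rules out the equality. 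The remaining incidence problems --- an edge of $C$ meeting a vertex of $C'$ or vice versa --- are eliminated by the same style of coefficient comparison, selecting in each case an appropriate coordinate (typically $x_1$ or $x_1'$, occasionally $x_k'$) whose coefficients on the two sides of the candidate equation cannot be reconciled. This coefficient bookkeeping is elementary but is the only substantive content of the proof.
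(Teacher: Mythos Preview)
Your proposal is correct, but it takes a different and more laborious route than the paper's proof. The paper also applies Lemma~\ref{slopenumlem22} to each cycle and shows $x(v_0)\ne x(v_0')$ by rational independence; the key divergence is what happens next. The paper \emph{reflects} the drawing of $C'$ about the $x$-axis before vertically shifting it so that $v_0'$ lies to the Northeast of $v_0$. After reflection, every vertex of $C'$ has $y$-coordinate at least $y(v_0')$, while every vertex of $C$ has $y$-coordinate at most $y(v_0)$, and $y(v_0)<y(v_0')$. This vertical separation makes all the properness checks immediate: the interior of $v_0v_0'$ lies strictly between these two levels, and no edge of one cycle can possibly reach a vertex of the other.

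Your translation-only approach forfeits that separation; the two cycle drawings can interleave in the $y$-direction, and you are then obliged to rule out every cross-cycle incidence by hand. Your coefficient arguments do go through --- each such incidence reduces either to an equation of the form (rational combination of primed) $=$ (rational combination of unprimed), forcing both sides to vanish, or to a purely one-sided equation that the $x_1$- (respectively $x_1'$-) coefficient kills under the WLOG assumption $x_2>x_1$ inherited from the proof of Lemma~\ref{slopenumlem22}. But this is a nontrivial case analysis that you have largely waved at. The paper's reflection trick buys you all of it for free and is the cleaner idea to remember here.
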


\noindent {\bf Proof.}
Apply Lemma \ref{slopenumlem22} to cycle $C$ with vertices $v_0, v_1, \ldots , v_m$ and with 
assigned $x$-coordinates $x_1, x_2, \ldots , x_m$, and analogously,
to the cycle $C'$, with vertices $v'_0, v'_1, \ldots , v'_k$ and with assigned 
$x$-coordinates $x'_1, x'_2, \ldots , x'_k$. For simplicity, the resulting 
drawings are also denoted by $C$ and $C'$.

Let $x_0$ and $x'_0$ denote the $x$-coordinates of $v_0\in C$ and $v'_0\in C'.$
It follows from Lemma \ref{slopenumlem21} that $x_0$
is a linear combination of $x_1, x_2, \ldots , x_m$, and $x_0'$
is a linear combination of $x'_1, x'_2, \ldots , x'_k$) with rational coefficients.
Therefore, if $x_0=x'_0$, then there is a nontrivial 
linear combination of $x_1, x_2, \ldots , x_m, x'_1, x'_2, \ldots , x'_k$ 
that gives $0$, contradicting the assumption that these numbers are 
independent over the rationals.  Thus, we can conclude that $x_0\ne x'_0$. 
Assume without loss of generality that $x_0<x'_0$.  
Reflect $C'$ about the $x$-axis, and shift it in the vertical 
direction so that $v'_0$ ends up to the Northeast from $v_0$. Clearly, we can 
add the missing edge $v_0v'_0$. Let $D$ denote the resulting drawing of $G$.  
We claim that $D$ meets all the requirements of the Theorem. Conditions (1), 
(2), and (3) are obviously satisfied, we only have to check that no vertex lies
in the interior of an edge. It follows from Lemma \ref{slopenumlem22} that the $y$-coordinates 
of $v_1, \ldots , v_m$ are all smaller than or equal to the $y$-coordinate of 
$v_0$ and the $y$-coordinates of $v'_1, \ldots , v'_k$ are all greater than or 
equal to the $y$-coordinate of $v'_0$.  We also have $y(v_0)<y(v'_0)$.
Therefore, there is no vertex in the interior of $v_0v'_0$.
Moreover, no edge of $C$ (resp. $C'$) can contain any vertex 
of $v'_0, v'_1, \ldots , v'_k$ (resp. $v_0, v_1, \ldots , v_m$)
in its interior. \hfill $\Box$

\medskip

The rest of the proof is by induction on the number of vertices of $G$. The
statement is trivial if the number of vertices is at most {\em
two}. Suppose that we have already established Theorem \ref{slopenum2} for all
graphs with fewer than $n$ vertices.

Suppose that $G$ has $n$ vertices, it is not a cycle and not
the union of two cycles
connected by one edge.
Let $v_1, v_2, \ldots , v_m$ be the vertices of $G$ with degree
less than {\em three}, and let the $x$-coordinates assigned to
them be $x_1, x_2, \ldots , x_m$.

We distinguish several cases.
\medskip

\noindent{\bf Case 1:} {\em $G$ has a vertex of degree {\it one}}.
\smallskip

Assume, without loss of generality, that $v_1$ is such a vertex.
If $G$ has no vertex of degree {\em three}, then it consists of a
simple path $P=v_1v_2\ldots v_m$, say. Place $v_m$ at the point
$(x_m,0)$. In general, assuming that $v_{i+1}$ has already been
embedded for some $i<m$, and $x_i<x_{i+1}$, place $v_{i}$ at the
point West of $v_{i+1}$, whose $x$-coordinate is $x_{i}$. If
$x_i>x_{i+1}$, then put $v_{i}$ at the point Northeast of
$v_{i+1}$, whose $x$-coordinate is $x_{i}$. The resulting drawing
of $G=P$ meets all the requirements of the theorem. To see this,
it is sufficient to notice that if $v_j$ would be Northwest of
$v_m$ for some $j<m$, then we could apply Lemma \ref{slopenumlem21} to the cycle
$v_jv_{j+1}\ldots v_m$, and conclude that the numbers $x_j,
x_{j+1},\ldots, x_m$ are dependent over the rationals. This
contradicts our assumption.

Assume next that $v_1$ is of degree {\em one}, and that $G$ has at
least one vertex of degree {\em three}. Suppose without loss of
generality that $v_1v_2\ldots v_kw$ is a path in $G$, whose
internal vertices are of degree {\em two}, but the degree of $w$
is {\em three}. Let $G'$ denote the graph obtained from $G$ by
removing the vertices $v_1, v_2, \ldots , v_{k}$. Obviously, $G'$
is a connected graph, in which the degree of $w$ is {\em two}.

If $G'$ is a {\em cycle}, then apply Lemma \ref{slopenumlem22} to $C=G'$ with $w$
playing the role of the vertex $v_0$ which has no preassigned
$x$-coordinate. We obtain an embedding of $G'$ with edges of
slopes $0, \pi/4,$ and $-\pi/4$ such that $x(v_i)=x_i$ for all
$i>k$ and there is no vertex to the North, to the Northeast, or to
the Northwest of $w$. By Lemma \ref{slopenumlem21}, the numbers $x(w),
x_{k+1},\ldots, x_m$ are not independent over the rationals.
Therefore, $x(w)\neq x_k$, so we can place $v_k$ at the point to
the Northwest or to the Northeast of $w$, whose $x$-coordinate is
$x_k$, depending on whether $x(w)>x_k$ or $x(w)<x_k$. After this,
embed $v_{k-1}, \ldots , v_1$, in this order, so that $v_i$ is
either to the Northeast or to the West of $v_{i+1}$ and
$x(v_i)=x_i$. According to property (4) in Lemma \ref{slopenumlem21}, the path
$v_1v_2\ldots v_k$ lies entirely above $G'$, so that no point of
$G$ can lie to the North or to the Northwest of $v_1$.

If $G'$ is {\em not a cycle}, then use the induction hypothesis to
find an embedding of $G'$ that satisfies all conditions of Theorem
\ref{slopenum2}, with $x(w)=x_k$ and $x(v_i)=x_i$ for every $i>k$. Now place
$v_k$ very far from $w$, to the North of it, and draw $v_{k-1},
\ldots , v_1$, in this order, in precisely the same way as in the
previous case. Now if $v_k$ is far enough, then none of the points
$v_k, v_{k-1},\ldots, v_1$ is to the Northwest or to the Northeast
of any vertex of $G'$. It remains to check that condition (4) is true
for $v_1$, but this follows from the fact that there is no point
of $G$ whose $y$-coordinate is larger than that of $v_1$.

\smallskip

{}  From now on, we can and will assume that $G$ has {\em no vertex of
degree one}.

A graph with {\em four} vertices and {\em five} edges between them
is said to be a {\em $\Theta$-graph}.

\medskip
\noindent{\bf Case 2:} {\em $G$ contains a $\Theta$-subgraph.}
\smallskip

Suppose that $G$ has a $\Theta$-subgraph with vertices $a,b,c,d,$
and edges $ab$, $bc$, $ac$, $ad$, $bd$. 
If neither $c$ nor $d$ has a third
neighbor, then $G$ is identical to this graph, which can easily be
drawn in the plane with all conditions of the theorem satisfied.

If $c$ and $d$ are connected by an edge, then all four points of
the $\Theta$-subgraph have degree {\em three}, so that $G$ has no
other vertices. So $G$ is a complete graph of four vertices, and it
has a drawing that meets the
requirements.

Suppose that $c$ and $d$ have a common neighbor $e\neq a,b$. If
$e$ has no further neighbor, then $a,b,c,d,e$ are the only
vertices of $G$, and again we can easily find a proper drawing.
Thus, we can assume that $e$ has a third neighbor $f$. By the
induction hypothesis, $G'=G\setminus \{a,b,c,d,e\}$ has a drawing
satisfying the conditions of Theorem \ref{slopenum2}. In particular, no vertex
of $G'$ is to the North of $f$ (and to the Northwest of $f$,
provided that the degree of $f$ in $G'$ is {\em one}). Further,
consider a drawing $H$ of the subgraph of $G$ induced by the
vertices $a,b,c,d,e$, which satisfies the requirements. We
distinguish two subcases.

If the degree of $f$ in $G'$ is {\em one}, then take a very small
{\em homothetic} copy of $H$ (i.e., similar copy in parallel
position), and rotate it about $e$ in the clockwise direction
through $3\pi/4$. There is no point of this drawing, denoted by
$H'$, to the Southeast of $e$, so that we can translate it into a
position in which $e$ is to the Northwest of $f\in V(G')$ and very
close to it. Connecting now $e$ to $f$, we obtain a drawing of $G$
satisfying the conditions. Note that it was important to make $H'$
very small and to place it very close to $f$, to make sure that
none of its vertices is to the North of any vertex of $G'$ whose
degree is at most {\em two}, or to the Northwest of any vertex of
degree {\em one} (other than $f$).

If the degree of $f$ in $G'$ is {\em two}, then we follow the same
procedure, except that now $H'$ is a small copy of $H$, rotated by
$\pi$. We translate $H'$ into a position in which $e$ is to the
North of $f$, and connect $e$ to $f$ by a vertical segment. It is
again clear that the resulting drawing of $G$ meets the
requirements in Theorem \ref{slopenum2}. Thus, we are done if $c$ and $d$ have a
common neighbor $e$.

Suppose now that only one of $c$ and $d$ has a third neighbor,
different from $a$ and $b$. Suppose, without loss of generality,
that this vertex is $c$, so that the degree of $d$ is {\em two}.
Then in $G'=G\setminus \{a,b,d\}$, the degree of $c$ is {\em one}.
Apply the induction hypothesis to $G'$ so that the $x$-coordinate
originally assigned to $d$ is now assigned to $c$ (which had no
preassigned $x$-coordinate in $G$). In the resulting drawing, we
can easily reinsert the remaining vertices, $a, b, d$, by adding a
very small square whose lowest vertex is at $c$ and whose
diagonals are parallel to the coordinate axes. The highest vertex
of this square will represent $d$, and the other two vertices will
represent $a$ and $b$.

We are left with the case when both $c$ and $d$ have a third
neighbor, other than $a$ and $b$, but these neighbors are
different. Denote them by $c'$ and $d'$, respectively. Create a
new graph $G'$ from $G$, by removing $a, b, c, d$ and adding a new
vertex $v$, which is connected to $c'$ and $d'$. Draw $G'$ using
the induction hypothesis, and reinsert $a,b,c,d$ in a small
neighborhood of $v$ so that they form the vertex set of a very
small square with diagonal $ab$. (See Figure \ref{slopenumfig1}.) As before, we
have to choose this square sufficiently small to make sure that
$a, b, c, d$ are not to the North of any vertex $w\neq c',d',v$ of
$G'$, whose degree is at most {\em two}, or to the Northwest of
any vertex of degree {\em one}. Thus, we are done if $G$ has a
$\Theta$-subgraph.

\smallskip

So, from now on we assume that $G$ has {\em no
$\Theta$-subgraph}.

\begin{figure}[htb]
\epsfxsize=10truecm
\begin{center}
\epsffile{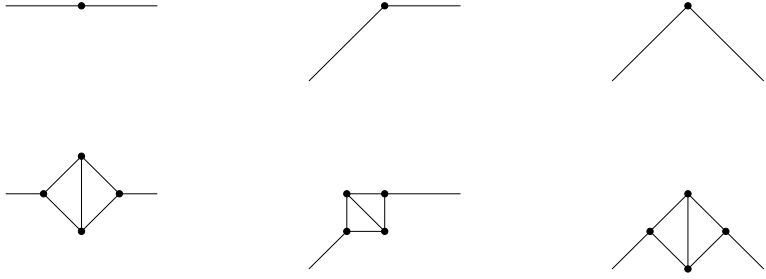}
\caption{Replacing $v$ by $\Theta$.}
\label{slopenumfig1}
\end{center}
\end{figure}

\medskip
\noindent{\bf Case 3:} {\em $G$ has no cycle that passes through a
vertex of degree {\em two}.}
\smallskip

Since $G$ is not three-regular, it contains at least one vertex of
degree {\em two}. Consider a decomposition of $G$ into
two-connected blocks and edges. If a block contains a vertex of
degree {\em two}, then it consists of a single edge. The block
decomposition has a treelike structure, so that there is a vertex
$w$ of degree {\em two}, such that $G$ can be obtained as the
union of two graphs, $G_1$ and $G_2$, having only the vertex $w$
in common, and there is no vertex of degree {\em two} in $G_1$.

By the induction hypothesis, for any assignment of rationally
independent $x$-coordinates to all vertices of degree less than
{\em three}, $G_1$ and $G_2$ have proper straight-line embeddings
(drawings) satisfying conditions (1)--(4) of the theorem. The only
vertex of $G_1$ with a preassigned $x$-coordinate is $w$. Applying
a vertical translation, if necessary, we can achieve that in both
drawings $w$ is mapped into the same point. Using the induction
hypothesis, we obtain that in the union of these two drawings,
there is no vertex in $G_1$ or $G_2$ to the North or to the
Northwest of $w$, because the degree of $w$ in $G_1$ and $G_2$ is
{\em one} (property (4)). This is stronger than what we need:
indeed, in $G$ the degree of $w$ is {\em two}, so that we require
only that there is no point of $G$ to the North of $w$ (property
(3)).

The superposition of the drawings of $G_1$ and $G_2$ satisfies all
conditions of the theorem. Only two problems may occur:
\begin{enumerate}
 \item A vertex of $G_1$ may end up at a point to the North
of a vertex of $G_2$ with degree {\em two}.
 \item The (unique) edges in $G_1$ and $G_2$, incident to $w$,
may partially overlap.
\end{enumerate}
Notice that both of these events can be avoided by enlarging the
drawing of $G_1$, if necessary, from the point $w$, and rotating
it about $w$ by $\pi/4$ in the clockwise direction. The latter
operation is needed only if problem 2 occurs. This completes the
induction step in the case when $G$ has no cycle passing through a
vertex of degree {\em two}.
\smallskip

It remains to analyze the last case.

\medskip

\noindent{\bf Case 4:} {\em $G$ has a cycle passing through a
vertex of degree {\em two}}.
\smallskip

By assumption, $G$ itself is not a cycle. Therefore, we can also
find a {\em shortest} cycle $C$ whose vertices are denoted by $v,
u_1, \ldots , u_{k}$, in this order, where the degree of $v$ is
{\em two} and the degree of $u_1$ is {\em three}. The length of
$C$ is $k+1$.

It follows from the minimality of $C$ that $u_i$ and $u_j$ are not
connected by an edge of $G$, for any $|i-j|>1$. Moreover, if
$|i-j|>2$, then $u_i$ and $u_j$ do not even have a common neighbor
$(1\le i\neq j\le k)$. This implies that any vertex $v\in
V(G\setminus C)$ has at most {\em three} neighbors on $C$, and
these neighbors must be consecutive on $C$. However, {\em three}
consecutive vertices of $C$, together with their common neighbor,
would form a $\Theta$-subgraph in $G$ (see Case 2). Hence, we can
assume that every vertex belonging to $G\setminus C$ is joined to
at most {\em two} vertices on $C$.

Let $B_i$ denote the set of all vertices of $G\setminus C$ that
have precisely $i$ neighbors on $C\; (i=0,1,2)$. Thus, we have
$V(G\setminus C)=B_0\cup B_1\cup B_2$. Further, $B_1=B_1^2\cup
B_1^3$, where an element of $B_1$ belongs to $B_1^2$ or $B_1^3$,
according to whether its degree in $G$ is {\em two} or {\em
three}.

Consider the list $v_1, v_2,\ldots, v_m$ of all vertices of $G$
with degree {\em two}. (Recall that we have already settled the
case when $G$ has a vertex of degree {\em one}.) Assume without
loss of generality that $v_1=v$ and that $v_i$ belongs to $C$ if
and only if $1\le i\le j$ for some $j\le m$.

\medskip
Let ${\bf x}$ denote the {\em assignment} of $x$-coordinates to
the vertices of $G$ with degree {\em two}, that is, ${\bf
x}=(x(v_1), x(v_2), \ldots,$$x(v_m))$$=(x_1, x_2, \ldots, x_m)$.
Given $G$, $C$, ${\bf x}$, and a real parameter $L$, we define the
following so-called {\sc Embedding Procedure$(G, C, {\bf x}, L)$}
to construct a drawing of $G$ that meets all requirements of the
theorem, and satisfies the additional condition that the
$y$-coordinate of every vertex of $C$ is at least $L$ higher than
the $y$-coordinates of all other vertices of $G$.

\smallskip

\noindent{\sc Step} 1: If $G':=G\setminus C$ is {\em not} a cycle, then 
construct recursively a drawing of
$G':=G\setminus C$ satisfying the conditions of Theorem \ref{slopenum2} with the
assignment ${\bf x}'$ of $x$-coordinates $x(v_i)=x_i$ for $j<i\le
m$, and $x(u_1')=x_1$, where $u_1'$ is the unique vertex in
$G\setminus C$, connected by an edge to $u_1\in V(C)$.
\smallskip

If $G'=G\setminus C$ is a cycle, then, by assumption, there are at least two
edges between $C$ and $G'$. One of them connects $u_1$ to $u_1'$.
Let $u_{\alpha}u'_{\alpha}$ be another such edge, where 
$u_{\alpha}\in C$ and $u'_{\alpha}\in G'$. Since the maximum degree is three, 
$u'_1\ne u'_{\alpha}$. 
Now construct recursively a drawing of
$G':=G\setminus C$ satisfying the conditions of Lemma \ref{slopenumlem22}, with the
assignment ${\bf x}'$ of $x$-coordinates $x(v_i)=x_i$ for $j<i\le
m$, $x(u_1')=x_1$, and with exceptional vertex $u'_{\alpha}$.

\smallskip

\noindent{\sc Step} 2: For each element of $B_1^2\cup B_2$, take
two rays starting at this vertex, pointing to the Northwest and to
the North. Further, take a vertical ray pointing to the North from
each element of $B_1^3$ and each element of the set $B_{\bf
x}:=\{(x_2, 0), (x_3, 0), \ldots , (x_j, 0)\}$. Let ${\cal R}$
denote the set of all of these rays. Choose the $x$-axis above all
points of $G'$ and all intersection points between the rays in
$\cal R$.

For any $u_h\; (1\le h\le k)$ whose degree in $G$ is {\em three},
define $N(u_h)$ as the unique neighbor of $u_h$ in $G\setminus C$.
If $u_h$ has degree {\em two} in $G$, then  $u_h=v_i$ for some
$1\le i\le j$, and let $N(u_h)$ be the point $(x_i, 0)$.
\smallskip

\noindent{\sc Step} 3: Recursively place $u_1, u_2, \ldots u_{k}$
on the rays belonging to ${\cal R}$, as follows. Place $u_1$ on
the vertical ray starting at $N(u_1)=u_1'$ such that $y(u_1)=L$.
Suppose that for some $i<k$ we have already placed $u_1, u_2,
\ldots u_{i}$, so that $L\le y(u_1)\le y(u_2)\le\ldots\le y(u_i)$
and there is no vertex to the West of $u_i$. Next we determine the
place of $u_{i+1}$.

If $N(u_{i+1})\in B_1^2$, then let $r\in{\cal R}$ be the ray
starting at $N(u_{i+1})$ and pointing to the Northwest. If
$N(u_{i+1})\in B_1^3\cup B_{\bf x}$, let $r\in{\cal R}$ be the ray
starting at $N(u_{i+1})$ and pointing to the North. In both cases,
place $u_{i+1}$ on $r$: if $u_i$ lies on the left-hand side of
$r$, then put $u_{i+1}$ to the Northeast of $u_i$; otherwise, put
$u_{i+1}$ to the West of $u_i$.

If $N(u_{i+1})\in B_2$, then let $r\in{\cal R}$ be the ray
starting at $N(u_{i+1})$ and pointing to the North, or, if we have
already placed a point on this ray, let $r$ be the other ray from
$N(u_{i+1})$, pointing to the Northwest, and proceed as before.
\smallskip

\begin{figure}[htb]
\epsfxsize=6.5truecm
\begin{center}
\epsffile{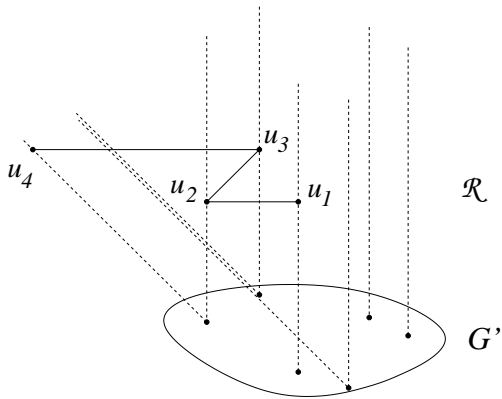}
\caption{Recursively place $u_1, u_2, \ldots u_{k}$
on the rays belonging to ${\cal R}$.}
\label{slopenumfig2}
\end{center}
\end{figure}

\noindent{\sc Step} 4: Suppose we have already placed $u_{k}$. It
remains to find the right position for $u_0:=v$, which has only
two neighbors, $u_1$ and $u_{k}$. Let $r$ be the ray at $u_{1}$,
pointing to the North. If $u_{k}$ lies on the left-hand side of
$r$, then put $u_0$ on $r$ to the Northeast of $u_k$; otherwise,
put $u_0$ on $r$, to the West of $u_{k}$.

During the whole procedure, we have never placed a vertex on any
edge, and all other conditions of Theorem \ref{slopenum2} are satisfied \hfill $\Box$.
\medskip

Remark that the $y$-coordinates of the vertices $u_0=v, u_1,
\ldots , u_{k}$ are at least $L$ higher than the $y$-coordinates
of all vertices in $G\setminus C$. If we fix $G, C,$ and ${\bf
x}$, and let $L$ tend to infinity, the coordinates of the vertices
given by the above {\sc Embedding Procedure$(G, C, {\bf x}, L)$}
change continuously.

\begin{figure}[htb]
\epsfxsize=7truecm
\begin{center}
\epsffile{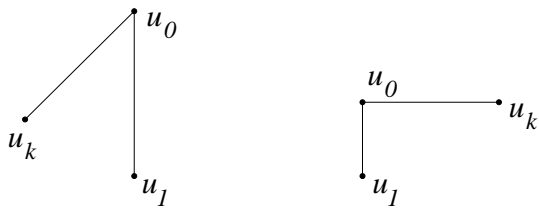}
\caption{Find the right position for $u_0$.}
\label{slopenumfig3}
\end{center}
\end{figure}

\subsection{Cubic Graphs - Proof of Theorem \ref{slopenum1}}

We are going to show that any graph $G$ with maximum degree three permits a straight-line
drawing using only the four basic directions (of slopes $0, \pi/2,
\pi/4,$ and $-\pi/4$), and perhaps one further direction, which is
almost vertical and is used for at most one edge in each connected
component of $G$.

Denote the connected components of $G$ by $G_1, G_2, \ldots, G_t$.
If a component $G_s$ is not three-regular, or if it is a complete
graph with {\em four} vertices, then, by Theorem \ref{slopenum2}, it can be
drawn using only the four basic directions. If $G_s$ has a
$\Theta$-subgraph, one can argue in the same way as in Case 2 of
the proof of Theorem \ref{slopenum2}: Embed recursively the rest of the graph,
and attach to it a small copy of this subgraph such that all edges
of the $\Theta$-subgraph, as well as the edges used for the
attachment, are parallel to one of the four basic directions.
Actually, in this case, $G_s$ itself can be drawn using the four
basic directions, so the fifth direction is not needed.

Thus, in the rest of the proof we can assume that $G_s$ is
three-regular, it has more than {\em four} vertices, and it
contains no $\Theta$-subgraph. For simplicity, we drop the
subscript and we write $G$ instead of $G_s$. Choose a shortest
cycle $C=u_0u_1 \ldots u_k$ in $G$. Each vertex of $C$ has
precisely one neighbor in $G\setminus C$. On the other hand, as in
the proof of the last case of Theorem \ref{slopenum2}, all vertices in
$G\setminus C$ have at most two neighbors in $C$.

We distinguish two cases.

\smallskip

\noindent {\bf Case 1.} $G\setminus C$ is a cycle.
Since $G$ is three-regular, $C$ and $G\setminus C$
are of the same size and the remaining edges of $G$ 
form a matching between the vertices of $C$
and the vertices of $G\setminus C$. 
For any $i$, $0\le i\le k$, let $u'_i$ 
denote the vertex of $G\setminus C$ which is connected to 
$u_i$.
Denote the vertices of $G\setminus C$ by $v_0, v_1, \ldots , v_k$,
in cyclic order, so that $v_1=u'_1$. Then we have
$v_i=u'_0$, for some $i>1$.
Apply Lemma \ref{slopenumlem22} to $G\setminus C$ with a rationally independent 
assignment ${\bf x}$ of $x$-coordinates to the vertices
$v_1, \ldots , v_k$, such that $x(v_1)=1$, $x(v_i)=\sqrt{2}$, and
the $x$-coordinates of the other vertices
are all greater than $\sqrt{2}$. (Recall that $v_0$ is an exceptional vertex 
with no assigned $x$-coordinate.)
It is not hard to see that if we follow the construction described
in the proof of Lemma \ref{slopenumlem22}, we also have $x(v_0)>\sqrt{2}$.
\smallskip

\noindent {\bf Case 2.} $G\setminus C$ is {\em not} a cycle.
Let $u_0'$ denote the neighbor of $u_0$ in $G\setminus C$. Since
$G$ has no $\Theta$-subgraph, $u_0'$ cannot be joined to both
$u_1$ and $u_k$. Assume without loss of generality that $u_0'$ is
not connected to $u_1$. Let $u_1'$ denote the neighbor of $u_1$ in
$G\setminus C$. 

Fix a rationally independent
assignment ${\bf x}$ of $x$-coordinates to the vertices of degree
at most {\em two} in $G\setminus C$, such that $x(u_0')=\sqrt{2}$,
$x(u_1')=1$, and the $x$-coordinates of the other vertices
are all greater than $\sqrt{2}$. 
Consider a drawing of $G\setminus C$,
meeting the requirements of Theorem \ref{slopenum2}. 

\medskip

Now in both cases,  let $G'$ denote the graph obtained from $G$ after
the removal of the edge $u_0u_0'$. Clearly $G\setminus C=G'\setminus C$,
and for any $L$, {\sc Embedding Procedure}$(G',C,{\bf x},L)$ gives a drawing
of
$G'$.
It follows from the construction,
that $x(u_0)=x(u_1)=x(u_1')=1$, $x(u'_0)=\sqrt{2}$.
Therefore, for any sufficiently small $\varepsilon>0$
there is an $L>0$ such that 
{\sc Embedding Procedure}$(G',C,{\bf x},L)$ gives a drawing of
$G'$, in which the slope of the line connecting $u_0$ and $u_0'$
is $\frac{\pi}{2}+\varepsilon$. 

We want to add the segment $u_0u_0'$ to this drawing. 
Since there is no
vertex with $x$-coordinate between $1$ and $\sqrt{2}$,
the 
segment $u_0u_0'$
cannot pass through any vertex of $G$.

Summarizing: if $\varepsilon$ is sufficiently small (that is, if
$L$ is sufficiently large), then each component of the graph has a
proper drawing in which all edges are of one of the four basic
directions, with the exception of at most one edge whose slope is
$\frac{\pi}{2}+\varepsilon$. If we choose an $\varepsilon>0$ that
works for all components, then the whole graph can be drawn using
only at most {\em five} directions. This concludes the proof of
Theorem \ref{slopenum1}. \hfill $\Box$


\subsection{Algorithm}

Based on the proof, it is not hard to design an algorithm to find a proper
drawing, in quadratic time.

First, if our graph is a circle, we have no problem drawing it in $O(n)$ steps.
If our graph has a vertex of degree \emph{one}
then the procedure of Case 1 of the proof of Theorem \ref{slopenum2} requires at most $O(m)$
time when we reinsert $v_1, \ldots, v_m$.

We can check if our graph has any $\Theta$-subgraph in $O(n)$
time. If we find one, we can proceed by induction as in Case 2 of
the proof of Theorem \ref{slopenum2}. We can reinsert the $\Theta$-subgraph as
described in Case 2 in $O(1)$ time.

Now assume that we have a vertex $v$ of degree \emph{two}. Execute
a breadth first search from any vertex, and take a {\em minimal}
vertex of degree two, that is, a vertex $v$ of degree two, all of
whose descendants are of degree three. If there is an edge in the
graph connecting a descendant of $v$ with a non-descendant, then
there is a cycle through $v$; we can find a minimal one with a
breadth first search from it and proceed as in Case 4. Otherwise,
$v$ can play the role of $w$ in Case 3, and we can proceed
recursively.

Finally, if the graph is 3-regular, then we draw each component
separately, except the last step, when we have to pick an
$\epsilon$ small enough simultaneously for all components, this
takes $O(n)$ steps. We only have to find the greatest slope and
pick an $\varepsilon$ such that $\frac{\pi}{2}+\varepsilon$ is
even steeper.

\medskip

We believe that this algorithm is far from being optimal. It may
perform a breadth first search for each induction step, which is
probably not necessary. One may be able to replace this step by
repeatedly updating the results of the first search. We cannot
even rule out that the problem can be solved in linear time.

\clearpage
\section{Slope Parameter of Cubic Graphs}\label{sec:slopepar}

This section is based on our paper with Bal\'azs Keszegh, J\'anos Pach and G\'eza T\'oth, Cubic graphs have bounded slope parameter \cite{KPPT10}.

Let us recall the definition of the {\em slope parameter}.
Given a set $P$ of points in
the plane and a set $\Sigma$
of slopes, define $G(P,\Sigma)$ as
the graph on the vertex set $P$, in which two vertices $p,q\in P$
are connected by an edge if and only if the slope of the line $pq$
belongs to $\Sigma$. The {\em slope parameter} $s(G)$ of $G$ is
the size of the smallest set $\Sigma$
of slopes such that $G$ is
isomorphic to $G(P,\Sigma)$ for a suitable set of points $P$ in
the plane. 

Any graph $G$ of maximum degree {\em two} splits into
vertex-disjoint cycles, paths, and possibly isolated vertices. Hence, for
such graphs we have $s(G)\le 3$. In contrast, as was shown by
Bar\'at {\it et al.} \cite{BMW06}, for any $d\ge 5$, there exist
graphs of maximum degree $d$, whose slope parameters are arbitrarily
large.

Remember that a graph is said to be {\em cubic} if the degree of each of its
vertices is at most {\em three}. A cubic graph is {\em subcubic}
if each of its connected components has a vertex of degree smaller
than {\em three}.

The main result of this section is

\begin{thm}\label{slopepar1} Every cubic graph has slope
parameter at most {\em seven}.
\end{thm}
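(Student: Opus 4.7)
The plan is to mimic the proof of Theorem \ref{slopenum1} but strengthen the inductive statement so that, in addition to drawing the graph, one rules out all ``accidental'' incidences: no line whose slope lies in the chosen set passes through two non-adjacent vertices. The seven slopes will consist of the four basic directions $\{0,\pi/2,\pi/4,-\pi/4\}$, which form the backbone of the construction, plus three additional ``almost-vertical'' slopes $\pi/2+\varepsilon_1,\pi/2+\varepsilon_2,\pi/2+\varepsilon_3$ that will be used only to carry a few special edges across the drawing without creating unwanted alignments. Rational independence of the chosen $x$-coordinates (over an appropriate extension of $\mathbb{Q}$ that includes $1, \sqrt 2$ and the tangents of the $\varepsilon_i$) will be the tool that automatically forbids spurious collinearities.

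First I would prove a strengthened version of Theorem \ref{slopenum2}: every connected subcubic graph, together with an assignment of rationally independent $x$-coordinates to its vertices of degree $<3$, admits a straight-line drawing satisfying conditions (1)--(4) of Theorem \ref{slopenum2} \emph{and} the extra condition that no line of one of the four basic slopes meets two non-adjacent vertices. The inductive case analysis is the same as in Theorem \ref{slopenum2} (vertex of degree one, $\Theta$-subgraph, no cycle through a degree-two vertex, cycle through a degree-two vertex). In each case, one checks that a hypothetical unintended alignment of two non-adjacent vertices on a basic-slope line, together with the edges of the constructed drawing, would form a closed path all of whose edges have basic slopes; Lemma \ref{slopenumlem21} then produces a nontrivial rational dependence among the prescribed $x$-coordinates, contradicting their rational independence. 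The boundary cases are cycles, for which Lemma \ref{slopenumlem22} gives the required drawing, and the same Lemma~\ref{slopenumlem21} argument rules out extra basic-slope alignments there.

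For a connected $3$-regular component $G$, I would remove one carefully chosen edge $u_0 u_0'$ (as in Case~2 of Section~\ref{sec:slopenum}), apply the strengthened theorem to the resulting subcubic graph, and reinsert $u_0 u_0'$ using one of the three extra slopes $\pi/2+\varepsilon_i$ chosen so that no other vertex lies on the line through $u_0$ and $u_0'$. Since the number of lines of slope $\pi/2+\varepsilon$ passing through two given points of the drawing is a discrete set as $\varepsilon$ varies, for each component all but countably many choices of $\varepsilon$ avoid every forbidden configuration; the rational independence of the coordinates with respect to $\tan(\pi/2+\varepsilon_i)$ ensures that no three vertices land on a common line of one of the extra slopes. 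Three extra slopes suffice: different components can reuse the same $\varepsilon_i$, and the three options give enough flexibility to separate the (at most three) incompatibility equations that can arise when different components are superimposed.

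The main obstacle is the strengthened inductive step: in Theorem \ref{slopenum2} one only had to avoid passing through a vertex with an edge, whereas here one must also forbid two non-adjacent vertices from lying on a line of any of the seven chosen slopes. The standard ``generic position'' intuition is made rigorous by the rational-independence argument sketched above, but one has to verify it carefully in each recursive case, and in particular check the Embedding Procedure of Section~\ref{sec:slopenum} Case~4, where many vertices are placed on a family of parallel rays and one must ensure that no pair of them accidentally shares a basic slope. Once this is established, the counting $4+3=7$ gives the claimed bound on $s(G)$.
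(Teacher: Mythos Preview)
Your proposal has a genuine gap. The construction of Theorem~\ref{slopenum2} that you plan to reuse produces drawings in which certain \emph{non-adjacent} vertices are collinear on a basic-slope line for structural reasons, independently of the choice of the prescribed $x$-coordinates. Concretely, look at Case~4 of the proof of Theorem~\ref{slopenum2}: {\sc Step}~3 places $u_1$ on the vertical ray from $N(u_1)=u_1'$, and then {\sc Step}~4 places $u_0$ on the vertical ray from $u_1$. Hence $u_0$, $u_1$, $u_1'$ all have the same $x$-coordinate, and in fact $u_1$ lies on the segment $u_0u_1'$. But $u_0$ and $u_1'$ are not adjacent (the neighbours of $u_0$ are $u_1$ and $u_k$, while $u_1'\in V(G\setminus C)$). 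This vertical alignment is forced by the combinatorics of the procedure, not by any rational coincidence among the $x_i$, so your rational-independence argument via Lemma~\ref{slopenumlem21} cannot remove it. There are further such structural alignments hidden in the Embedding Procedure whenever several rays in $\mathcal{R}$ share a direction.

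This is precisely why the paper does \emph{not} recycle the four-slope drawing. Instead it builds a new construction from scratch using the \emph{five} basic slopes $i\pi/5$, $0\le i\le 4$, placing each $u_i$ far from the origin so that $\overline{Ou_i}>1.5\,\overline{Ou_{i-1}}$; the pentagonal geometry together with this exponential distance growth is what rules out every accidental basic-slope alignment (Claims~\ref{slopeparc1} and~\ref{slopeparc4}). The passage to the $3$-regular case then costs only \emph{two} extra slopes (one of the form $i\pi/5\pm\varepsilon$ for the edge $u_0u_k$, and one more for the reinserted edge $u_0t_0$), giving $5+2=7$. Your count $4+3=7$ happens to match, but the underlying drawing does not exist.
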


This theorem is not likely to be tight. The best lower bound we are aware of is {\em four}. This bound is attained, for example, for the 8-vertex subcubic graph that can be obtained from the graph formed by the edges of a 3-dimensional cube by deleting one of its edges.

We will refer to the angles $i\pi/5, \; 0\le i\le 4,$ as the {\em
five basic slopes}. We start by proving the following
statement, which constitutes the first step of the proof of
Theorem \ref{slopepar1}.

\begin{thm}\label{slopepar2} Every subcubic graph has slope
parameter at most {\em five}. Moreover, this can be realized by a
straight-line drawing such that no {\em three} vertices are on a
line and each edge has one of the {\em five} basic slopes.
\end{thm}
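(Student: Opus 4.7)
The plan is to prove the statement by induction on $|V(G)|$. Since $G$ is subcubic, each connected component contains a vertex of degree at most $2$; pick such a vertex $v$ and consider $G' = G \setminus \{v\}$. The graph $G'$ is still subcubic, because any neighbor of $v$ now has degree at most $2$ in $G'$, so by the inductive hypothesis $G'$ admits a realization in the plane with no three vertices collinear, each edge along one of the five basic slopes $i\pi/5$ ($0\le i\le 4$), and, crucially, no non-edge lying along any basic slope (this last condition is what makes the drawing realize the slope parameter and not merely bound the number of edge slopes).

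For the induction to succeed I would strengthen the hypothesis, asking additionally that around every vertex $u$ of degree at most $2$ there is a small open neighborhood which is free of other vertices and free of the basic-slope lines determined by other pairs of drawn vertices. This is harmless, since at every finite stage the set of forbidden positions is a finite union of lines and is therefore of measure zero, so such neighborhoods exist after any drawing step.

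With this strengthened hypothesis, reinserting $v$ splits into cases by degree. If $\deg(v)=0$, place $v$ at a generic point. If $\deg(v)=1$ with neighbor $u$, choose any one of the five basic-slope lines through $u$ and slide $v$ along it into the free neighborhood of $u$; the points on this line that are forbidden (they would make $v$ collinear with two existing vertices, or create a basic-slope alignment with a non-neighbor) form a finite set, so almost every choice works. The delicate case is $\deg(v)=2$ with neighbors $u,w$: now $v$ must lie at the intersection of a basic-slope line through $u$ with a basic-slope line through $w$, which yields only a finite set of at most $25$ candidate positions, and one must verify that at least one candidate lies in both free neighborhoods and is safe from all forbidden alignments.

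This last case is where I expect the main obstacle. Pure measure-theoretic genericity is unavailable because the candidate set is discrete. My plan to bypass this difficulty is to exploit scaling freedom in the inductive drawing: before reinserting $v$, apply a small homothety centered at $u$ to the component of the drawing attached to $G'$ through $u$, which moves the candidate intersection points continuously along the basic-slope lines through $w$ while preserving every basic slope and the non-collinearity of the existing drawing. Since the set of scales producing a forbidden alignment at any of the $25$ candidates is the zero set of finitely many non-trivial linear equations in the scale parameter, a generic choice of scale leaves a safe candidate position for $v$. Extending the strengthened hypothesis to $v$ after reinsertion (by shrinking free neighborhoods once more) closes the induction.
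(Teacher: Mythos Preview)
Your argument handles $\deg(v)\le 1$ correctly, but the $\deg(v)=2$ case has a genuine gap: the homothety trick does not do what you claim when $u$ and $w$ lie in the same component of $G'=G\setminus\{v\}$ (equivalently, when $v$ lies on a cycle of $G$). Take $u$ as the origin and apply the homothety with ratio $\lambda$. Then $w\mapsto\lambda w$, every other vertex $z\mapsto\lambda z$, and each candidate position $v_0$ (the intersection of a basic-slope line through $u=0$ with a basic-slope line through $\lambda w$) is sent to $\lambda v_0$. Hence $v(\lambda)-z(\lambda)=\lambda(v_0-z)$, whose slope is \emph{independent} of $\lambda$. So if some candidate $v_0$ determines a basic slope with some $z\in V(G')$ before the homothety, it still does afterwards, for every $\lambda$. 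The ``set of bad scales'' is not finite; it is either empty or all of $\mathbb{R}\setminus\{0\}$. Your sentence ``moves the candidate intersection points continuously along the basic-slope lines through $w$'' is misleading because $w$ is moving too, and everything moves proportionally. (If $u$ and $w$ are in different components you get a little more freedom, but then scaling one component against the other can create new basic-slope alignments among the cross-component non-edges, and in any case this does not cover the cycle situation.)

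This is exactly why the paper does \emph{not} remove a single degree-$2$ vertex sitting on a cycle. Instead it removes an entire shortest cycle $C$ through such a vertex, draws $G\setminus C$ by induction, and then places the vertices of $C$ one by one on basic-slope rays emanating from their neighbors in $G\setminus C$, very far away from the already-drawn part. The distance growth (each successive cycle vertex is at least $1.5$ times farther from the origin than the previous one) is what guarantees that no unwanted basic-slope alignments arise; this is the content of Claims~\ref{slopeparc1}--\ref{slopeparc4}. Removing a whole cycle gives you a one-parameter family of placements (the starting distance $x$) along a single ray, which you simply do not have when you remove only one vertex with both neighbors pinned down.
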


Using the fact that in the drawing guaranteed by Theorem \ref{slopepar2} no {\em
three} vertices are collinear, we can also conclude that the slope
{\em number} of every subcubic graph is at most {\em five}. In
the Section \ref{sec:slopenum}, however, it was shown that this number
is at most {\em four} and for cubic graphs it is at most {\em five}. 

\subsection{Subcubic Graphs - Proof of Theorem \ref{slopepar2}}

The proof is by induction on the number of vertices of the graph.
Clearly, the statement holds for graphs with fewer than {\em
three} vertices. Let $n$ be fixed and suppose that we have already
established the statement for graphs with fewer than $n$ vertices.
Let $G$ be a subcubic graph of $n$ vertices. We can assume that
$G$ is connected, otherwise we can draw each of its connected
components separately and translate the resulting drawings through
suitable vectors so that no two points in distinct components
determine a line of basic slope.

To obtain a straight-line
drawing of $G$, we have to find proper locations for
its vertices. At each inductive step,
we start with a drawing of a subgraph
of $G$ satisfying the conditions of Theorem \ref{slopepar2}
and extend it by adding a vertex.
At a given stage of the procedure, for any vertex $v$ that has
already been added, consider the (basic) slopes of all edges
adjacent to $v$ that have already been drawn, and let $\s(v)$
denote the set of integers $0\le i<5$
for which $i\pi/5$ is
such a slope. That is, at the beginning $\s(v)$ is undefined, then
it gets defined, and later it may change (expand). Analogously,
for any edge $uv$ of $G$, denote by $\s(uv)$ the integer $0\le i<5$
for which the slope of $uv$ is $i\pi/5$.

\smallskip

\noindent{\bf Case 1:} {\em $G$ has a vertex of degree {\em one}}.

Assume without loss of generality, that $v$ is a vertex of degree
{\em one}, and let $w$ denote its only neighbor. Deleting $v$ from
$G$, the degree of $w$ in the resulting graph $G'$ is at most {\em
two}. Therefore, by the induction hypothesis, $G'$ has a drawing
meeting the requirements. As $w$ has degree at most {\em two},
there is a basic slope $\sigma$ such that no other vertex of $G'$
lies on the line $\ell$ of slope $\sigma$ that passes through $w$.
Draw all {\em five} lines of basic slopes through each vertex of
$G'$. These lines intersect $\ell$ in finitely many points. We can
place $v$ at any other point of $\ell$, to obtain a proper drawing
of $G$.

\smallskip

From now on, assume that $G$ has no vertex of degree one.

\smallskip

\noindent{\bf Case 2:} {\em $G$ has no cycle that passes through a
vertex of degree {\em two}.}

Since $G$ is subcubic, it contains a
vertex $w$ of degree {\em two} such that $G$ is the union of two
graphs, $G_1$ and $G_2$, having only vertex $w$ in common. Both
$G_1$ and $G_2$ are subcubic and have fewer than $n$ vertices, so
by the induction hypothesis both of them have a drawing satisfying
the conditions. Translate the drawing of $G_2$ so that the points
representing $w$ in the two drawings coincide. Since $w$ has
degree {\em one} in both $G_1$ and $G_2$, by a possible rotation
of $G_2$ about $w$ through an angle that is a multiple of $\pi/5$,
we can achieve that the
two edges adjacent to $w$ are not parallel. By scaling $G_2$ from
$w$, if necessary, we can also achieve that the slope of no
segment between a vertex of $G_1\setminus w$ and a vertex of
$G_2\setminus w$ is a basic slope. Thus, the resulting drawing of
$G$ meets the requirements.

\smallskip

\noindent{\bf Case 3:} {\em $G$ has a cycle passing through a
vertex of degree {\em two}}.

If $G$ itself is a cycle, we can
easily draw it. If it is not the case, let $C$ be a {\em shortest}
cycle which contains a vertex of degree two. Let $u_0, u_1, \ldots
, u_{k}$ denote the vertices of $C$, in this cyclic order, such that
$u_0$ has degree {\em two} and $u_1$ has degree {\em three}. The
indices are understood mod $k+1$, that is, for instance,
$u_{k+1}=u_0$. It follows from the minimality of $C$ that $u_i$
and $u_j$ are not connected by an edge of $G$ whenever $|i-j|>1$.

Since $G\setminus C$ is subcubic, by assumption, it admits a
straight-line drawing satisfying the conditions. Each $u_i$ has at
most {\em one} neighbor in $G\setminus C$. Denote this
neighbor by $t_i$, if it exists. For every $i$ for which $t_i$
exists, we place $u_i$ on a line passing through $t_i$. We place
the $u_i$'s one by one, ``very far" from $G\setminus C$, starting
with $u_1$. Finally, we arrive at $u_0$, which has no neighbor in
$G\setminus C$, so that it can be placed at the intersection of
two lines of basic slope, through $u_1$ and $u_{k}$, respectively.
We have to argue that our method does not create ``unnecessary''
edges, that is, we never place two independent vertices in such a
way that the slope of the segment connecting them is a basic
slope. In what follows, we make this argument precise.


\begin{figure}[ht]
\begin{center}
\scalebox{\figsize}{\includegraphics{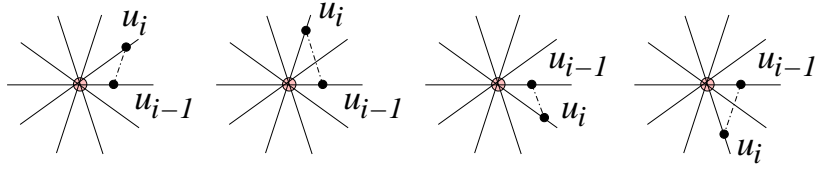}}
\caption{The four possible locations of $u_{i}$.}
\label{slopeparfig1}
\end{center}
\end{figure}

The locations of the vertices $u_0, u_1, \ldots, u_k$ are determined by using the following algorithm, {\sc Procedure}$(G, C, u_0, u_1, x)$, where $G$ is the input
subcubic graph, $C$ is a shortest cycle passing through a vertex
of degree {\em two}, $u_0$, that has a degree {\em three} neighbor, $u_1$, 
and $x$ is a real
parameter. Note that {\sc Procedure}$(G, C, u_0, u_1, x)$ is a
{\em nondeterministic} algorithm, as we have more than one choice
at certain steps. (However, it is very easy to make it deterministic.)\\

\bigskip

\noindent {\sc Procedure}$(G, C, u_0, u_1, x)$

\smallskip

\begin{itemize}
\item{\sc Step $0$.} Since $G\setminus C$ is subcubic, it has
    a representation with the {\em five} basic slopes. Take
    such a representation, scaled and translated in such a way
    that $t_1$ (which exists since the degree of $u_1$ is three)
    is at the origin, and all other vertices are
    within unit distance from it.

For any $i$, $2\le i\le k$, for which $u_{i}$ does not have a
neighbor in $G\setminus C$, let $t_{i}$ be any unoccupied
point closer to the origin than $1$, such that the slope of
none of the lines connecting $t_i$ to $t_1, t_2, \ldots
t_{i-1}$ or to any other already embedded point of $G\setminus
C$ is a basic slope.
\end{itemize}

For any point $p$ and for any $0\le i\le 4$, let $\ell_i(p)$ denote
the line with $i$th basic slope, $i\pi/5$, passing through $p$.
Let $\ell_i$ stand for $\ell_i(O)$, where $O$ denotes the origin.

We will place $u_1,\ldots,u_{k}$ recursively, so that $u_j$ is
placed on $\ell_i(t_j)$, for a suitable $i$. Once 
$u_j$ has been placed on some $\ell_i(t_j)$, define
$ind(u_j)$, the {\em index} of $u_j$, to be $i$. (The
indices are taken mod $5$. Thus, for example, $|i-i'|\ge 2$ is
equivalent to saying that $i\neq i'$ and $i\neq i'\pm 1\mod 5$.)
Start with $u_1$. The degree of $t_1$ in $G\setminus C$ is at most
{\em two}, so that at the beginning the set $\s(t_1)$ (defined in
the first paragraph of this section) has at most {\em two}
elements. Let $l\notin\s(t_1)$. Direct the line $\ell_l(t_1)$
arbitrarily, and place $u_1$ on it at distance $x$ from $t_1$ in
the positive direction. (According to this rule, if $x<0$, then
$u_1$ is placed on $\ell_l(t_1)$ at distance $|x|$ from $t_1$ in
the {\em negative} direction.)

Suppose that $u_1$, $u_2, \ldots$, $u_{i-1}$ have been already
placed and that $u_{i-1}$ lies on the line $\ell_l(t_{i-1})$, that
is, we have $ind(u_{i-1})=l$.

\smallskip
\begin{itemize}
\item{\sc Step $i$.} We place $u_i$ at one of the following
    four locations (see Figure \ref{slopeparfig1}):

(1) the intersection of $\ell_{l+1}(t_i)$ and
$\ell_{l+2}(u_{i-1})$;\\
(2) the intersection of $\ell_{l+2}(t_i)$ and
$\ell_{l+3}(u_{i-1})$;\\
(3) the intersection of $\ell_{l-1}(t_i)$ and
$\ell_{l-2}(u_{i-1})$;\\
(4) the intersection of $\ell_{l-2}(t_i)$ and
$\ell_{l-3}(u_{i-1})$.

Choose from the above four possibilities so that the edge
$u_it_i$ is not parallel to any other edge already drawn and
adjacent to $t_i$, i.e., before adding the edge $u_it_i$ to
the drawing, $\s(t_i)$ did not include $\s(u_it_i)$.
\end{itemize}

It follows directly from (1)--(4) that $\s(u_{i-1})\subset \{ l, l-1, l+1
\mod 5\}$, while $\s(u_iu_{i-1})\subset \{ l-2, l+2
\mod 5\}$, that is,
before adding the edge $u_iu_{i-1}$ to the
drawing, we had $\s(u_iu_{i-1})\notin \s(u_{i-1})$.
Avoiding for $u_it_i$ the slopes of the edges already incident to $t_i$,
leaves available two of the choices (1), (2), (3), (4).

Let $u_{i-1}'$ be the translation of $u_{i-1}$ by the vector $\overrightarrow{t_{i-1}O}$,
and similarly, let $u_{i}'$ be the translation of $u_{i}$ by the vector $\vec{t_{i}O}$.
That is, $Ou'_{i-1}u_{i-1}t_{i-1}$ and
$Ou'_{i}u_{i}t_{i}$ are
parallelograms.
We have
$$\overline{Ou_{i-1}}-1<\overline{Ou_{i-1}'}<\overline{Ou_{i-1}}+1,$$
$$\overline{Ou_{i}}-1<\overline{Ou_{i}'}<\overline{Ou_{i}}+1,$$
and
$$2\cos\left({\pi\over 5}\right)\overline{Ou_{i-1}'}=\overline{Ou_{i}'}.$$
Therefore,
for any possible
location of $u_i$, we have
$$
1.6\overline{Ou_{i-1}}-4<
2\cos\left({\pi\over 5}\right)\overline{Ou_{i-1}}-4<
\overline{Ou_i}<
2\cos\left({\pi\over 5}\right)\overline{Ou_{i-1}}+4<
1.7\overline{Ou_{i-1}}+4.
$$
Suppose that $|x|\ge 50$.
Clearly, $|x|-1<\overline{Ou_1}$, and by the previous calculations
it is easy to show by induction that
$|x|-1<\overline{Ou_{i}}$ for all $i\le k$.
Therefore,
$1.5\overline{Ou_{i-1}}<1.6\overline{Ou_{i-1}}-4$ so we obtain
\begin{equation}\label{egyenlotlenseg}
1.5\overline{Ou_{i-1}}<\overline{Ou_i}.
\end{equation}
We have to verify that the above procedure does not produce
``unnecessary'' edges, that is, the following statement is true.

\begin{claim}\label{slopeparc1} Suppose that $|x|\ge 50$.

(i) {\em The slope of $u_iu_j$ is not a basic slope, for any
$j<i-1$.}

(ii) {\em The slope of $u_iv$ is not a basic slope, for any $v\in
V(G\setminus C)$, $v\ne t_i$.}

\end{claim}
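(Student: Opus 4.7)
The plan is to convert both assertions into quantitative angular estimates grounded in the geometric structure imposed by the procedure. Write $u_i = t_i + u_i'$, so that $u_i'$ lies on the basic-slope line $\ell_{ind(u_i)}$ through the origin $O$. Every $t_i$ lies in the closed unit disk around $O$ (by Step $0$), and iterating (\ref{egyenlotlenseg}) yields $\overline{Ou_i}\ge (1.5)^{i-1}(|x|-1)$. For $|x|\ge 50$ this forces $|u_i|\ge 49$, so that the direction $\overrightarrow{Ou_i}$ differs from $\ell_{ind(u_i)}$ by at most $\arcsin(1/|u_i|)<\pi/45$. This ``almost-basic-direction'' phenomenon is the engine of both parts.

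For part (ii), I would assume for contradiction that $u_iv$ has basic slope $m\pi/5$ for some $v\in V(G\setminus C)\setminus\{t_i\}$. Since $|v|\le 1$ and $|t_i|\le 1$, the vector $u_i-v=u_i'+(t_i-v)$ has magnitude at least $47$ and deviates in direction from $u_i'$ by at most $\arcsin(2/47)$, which is safely below $\pi/10$. As consecutive basic slopes differ by $\pi/5$, this pins down $m=ind(u_i)$, forcing $v\in\ell_{ind(u_i)}(t_i)$. Now I split according to how $t_i$ was defined: if $t_i$ is the genuine $G\setminus C$-neighbor of $u_i$, then the inductive drawing of $G\setminus C$ realizes it as $G(P,\Sigma)$ with $\Sigma$ the five basic slopes, so the presence of $v\neq t_i$ on a basic-slope line through $t_i$ produces an edge $vt_i$ of slope $ind(u_i)\pi/5$, contradicting the selection rule $ind(u_i)\notin\s(t_i)$ used in Step $i$; if instead $t_i$ was inserted in Step $0$, that step chose $t_i$ specifically to avoid basic-slope alignment with any previously embedded vertex of $G\setminus C$, a contradiction of the same flavor.

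For part (i), I would run the same argument with $u_j$ in the role of $v$. When $i-j\ge 3$ the ratio $|u_i|/|u_j|\ge (1.5)^3>3$ is large enough that the angular perturbation $\arcsin(|u_j|/|u_i-u_j|)$ still lies below $\pi/10$, so the very same dichotomy applies: either the slope of $u_i-u_j$ is not basic, or $u_j\in\ell_{ind(u_i)}(t_i)$, and the contradiction drops out of the selection rules (the smallness of $|t_i-t_j|\le 2$ keeps $u_j'$ on this line up to a negligible error that can be absorbed into the choice of $|x|$).

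The main obstacle is the short-range case $j=i-2$: here the true ratio $|u_i|/|u_j|$ is only about $(2\cos(\pi/5))^{2}=\varphi^{2}\approx 2.618$, so the crude angular estimate produces a spread comparable to $\pi/5$ and no longer singles out a unique candidate basic slope. I plan to handle this by writing out coordinates explicitly using the four intersection rules (1)--(4) applied both at Step $i-1$ and at Step $i$: this realizes $u_i-u_{i-2}$ as an affine combination of the two edge vectors $u_{i-1}-u_{i-2}$ and $u_i-u_{i-1}$ (both of known basic slopes) with coefficients in $\mathbb{Q}(\varphi)$ plus perturbations in $t_{i-2},t_{i-1},t_i$, and then verifying by a pentagonal trigonometric identity that the resulting direction is basic only under algebraic relations excluded by the rules $ind(u_{i-1})\notin\s(t_{i-1})$ and $ind(u_i)\notin\s(t_i)$ together with the genericity of the $t_i$'s introduced in Step $0$. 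Should any residual degeneracy remain, it can be removed by perturbing $|x|$ within $[50,\infty)$, since slopes depend continuously on $x$ while the offending set is nowhere dense.
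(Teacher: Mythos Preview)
Your argument for part (ii) is essentially the same as the paper's: the direction of $u_i-v$ deviates from $\ell_{ind(u_i)}$ by much less than $\pi/5$, so a basic slope would force $v\in\ell_{ind(u_i)}(t_i)$, which is ruled out either by the slope-parameter realization of $G\setminus C$ together with $ind(u_i)\notin\s(t_i)$, or by the genericity built into Step~0. That part is fine.

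Part (i), however, does not go through. First, the angular estimate is too weak even for $i-j=3$: from $|u_i|>1.5^3|u_j|$ you only get $|u_j|/|u_i-u_j|<1/2.375\approx 0.42$, whereas you need it below $\sin(\pi/10)\approx 0.31$; so the case $i-j=3$ is just as bad as $i-j=2$. Second, even when the angle bound does force the slope of $u_iu_j$ to equal $ind(u_i)\pi/5$, the conclusion $u_j\in\ell_{ind(u_i)}(t_i)$ gives no contradiction: $u_j$ is not a vertex of $G\setminus C$, so the selection rule $ind(u_i)\notin\s(t_i)$ says nothing about it, and there is no reason why $ind(u_i)=ind(u_j)$ should be impossible (indices can repeat after two steps). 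Third, the fallback of ``perturbing $|x|$ within $[50,\infty)$'' is illegitimate here: Claim~\ref{slopeparc1} must hold for \emph{every} $|x|\ge 50$, not merely for generic $x$; the perturbation idea belongs to Claim~\ref{slopeparc5}, which is a different statement with a weaker conclusion.

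The paper's argument for (i) is not angular at all; it is a pure distance comparison. If $u_iu_j$ had a basic slope, then, since $u_i$ and $u_j$ lie (up to an error of at most $1$) on basic-slope lines through the origin, the law of sines in the triangle $O u_i' u_j'$ with all angles multiples of $\pi/5$ gives $\overline{Ou_i}<2\cos(\pi/5)\,\overline{Ou_j}+4<2\,\overline{Ou_j}$, using that the largest ratio of sines of nonzero multiples of $\pi/5$ is $\sin(2\pi/5)/\sin(\pi/5)=2\cos(\pi/5)$. This contradicts $\overline{Ou_i}>1.5^{\,i-j}\overline{Ou_j}\ge 2.25\,\overline{Ou_j}$ from inequality~(\ref{egyenlotlenseg}), handling all $j<i-1$ uniformly.
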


\noindent {\bf Proof.} (i) Suppose that the slope of $u_iu_j$ is a
basic slope for some $j<i-1$. By repeated application of
inequality~(\ref{egyenlotlenseg}), we obtain that
$\overline{Ou_i}>1.5^{i-j}\overline{Ou_j}>2\overline{Ou_j}$. On
the other hand, if $u_iu_j$ has a basic slope, then easy geometric
calculations show that $\overline{Ou_i}< 2\cos\left({\pi\over
5}\right)\overline{Ou_{j}}+4<2\overline{Ou_j}$, a contradiction.

(ii) Suppose for simplicity that $t_iu_i$ has slope $0$, i.e., it
is horizontal. By the construction, no vertex $v$ of $G\setminus
C$ determines a horizontal segment with $t_i$, but all of them are
within distance $2$ from $t_i$. As $\overline{Ou_i}> x-1$, segment
$vu_i$ is almost, but not exactly horizontal. That is, we have
$0<|\angle t_iu_iv|< \pi/5$, contradiction. \hfill $\Box$

\smallskip

Suppose that {\sc Step $0$, Step $1$, \ldots, Step $k$} have
already been completed. It remains to determine the position of
$u_0$. We need some preparation.
The notation $|x|\ge 2 \mod 5$ means that $x=2$ or $x=3 \mod 5$.

\begin{claim}\label{slopeparc2} There exist two integers 
$0\le \alpha, \beta <5$ with $|\alpha -\beta |\ge 2$ $\mod 5$ such that starting the {\sc
Procedure} with $ind(u_1)=\alpha$ and with $ind(u_1)=\beta$, we can
continue so that $ind(u_2)$ is the same in both cases.
\end{claim}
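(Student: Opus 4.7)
The plan is to reduce the claim to a pigeonhole argument on residues modulo~$5$. First I would identify the admissible values at the two relevant steps: in Step~$1$, any $l\notin\s(t_1)$ may serve as $ind(u_1)$, and since the edge $u_1t_1$ already takes up one of the at most three slots at $t_1$, we have $|\s(t_1)|\le 2$ in the drawing of $G\setminus C$, so $A:=\{0,1,2,3,4\}\setminus\s(t_1)$ has at least three elements. In Step~$2$, starting from $ind(u_1)=l$, the four options (1)--(4) produce the four distinct values $ind(u_2)\in\{l\pm 1,l\pm 2\}\pmod 5$, i.e.\ exactly $\{0,1,2,3,4\}\setminus\{l\}$; the valid ones are those lying in $B:=\{0,1,2,3,4\}\setminus\s(t_2)$. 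A symmetric degree argument gives $|B|\ge 3$; the only subtle case is $t_1=t_2$, where $t_1$ already has two incident edges in $C$ (to $u_1$ and $u_2$), so its degree inside $G\setminus C$ is at most~$1$, and even after Step~$1$ adds the slope of $u_1t_1$ to $\s(t_2)$, we still have $|\s(t_2)|\le 2$.

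Next I would translate the claim into a combinatorial requirement. A common target $\gamma=ind(u_2)$ reachable from both $ind(u_1)=\alpha$ and $ind(u_1)=\beta$ must satisfy $\gamma\in B$, $\gamma\neq\alpha$ and $\gamma\neq\beta$, i.e.\ $\gamma\in B\setminus\{\alpha,\beta\}$. Because $|B|\ge 3$, this set is automatically nonempty for \emph{any} choice of distinct $\alpha,\beta$. Hence the whole task collapses to finding $\alpha,\beta\in A$ with $|\alpha-\beta|\ge 2\pmod 5$.

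This last step is the heart of the argument, and it is elementary: the cyclic-adjacency graph on $\{0,1,2,3,4\}$ is the pentagon $C_5$, which is triangle-free, so every $3$-element subset of its vertex set contains a pair of cyclic distance $\ge 2$. Applying this to the $\ge 3$-element set $A$ yields the desired $\alpha$ and $\beta$; picking any $\gamma\in B\setminus\{\alpha,\beta\}$ and then choosing, in each of the two runs, the unique option among (1)--(4) that produces $ind(u_2)=\gamma$, completes the proof. I expect no real obstacle beyond the careful bookkeeping of $|\s(t_2)|$ in the coincident case $t_1=t_2$ handled above.
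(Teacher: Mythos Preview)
Your proposal is correct and follows essentially the same approach as the paper: identify at least three admissible values for $ind(u_1)$ (hence a non-adjacent pair $\alpha,\beta$ in $C_5$), at least three admissible values for $ind(u_2)$, and use that the four options in Step~2 yield every value except $l$ to find a common target. Your direct count $|B\setminus\{\alpha,\beta\}|\ge 1$ is a slightly cleaner phrasing of the paper's pigeonhole ``at least two of $\{x,y,z\}$ work for each of $\alpha,\beta$''; your treatment of the case $t_1=t_2$ is also more explicit than the paper's, though note that in that case $B$ actually depends on the run (it excludes the chosen $ind(u_1)$), so the common target should more precisely be taken in $\{0,\ldots,4\}\setminus\{s,\alpha,\beta\}$---which still has at least two elements.
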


\noindent {\bf Proof.} Suppose that the degrees of $t_1$ and $t_2$
in $G\setminus C$ are {\em two}, that is, there are two forbidden
lines for both $u_1$ and $u_2$. In the other cases, when the
degree of $t_1$ or the degree of $t_2$ is less than {\em two}, or
when $t_1=t_2$, the proof is similar, but simpler. We can place
$u_1$ on $\ell_l(t_1)$ for any $l\notin\s(t_1)$. Therefore, we
have three choices, two of which, $\ell_{\alpha}(t_1)$ and
$\ell_{\beta}(t_1)$, are not consecutive, so that
$|\alpha-\beta|\ge 2 \mod 5$.

The vertex $u_2$ cannot be placed on $\ell_m(t_2)$ for any
$m\in\s(t_2)$, so there are three possible lines for $u_2$:
$\ell_{x}(t_2)$, $\ell_{y}(t_2)$, $\ell_{z}(t_2)$, say. For any
fixed location of $u_1$, we can place $u_2$ on four possible lines,
so on at least two of the
lines $\ell_{x}(t_2)$, $\ell_{y}(t_2)$, and $\ell_{z}(t_2)$.
Therefore, at least one of them, say $\ell_{x}(t_2)$, can be used
for both locations of $u_1$. \hfill $\Box$

\begin{claim}\label{slopeparc3} We can place the vertices $u_1, u_2,
\ldots , u_k$ using the {\sc Procedure} so that for all $k$ we have
$|ind(u_1)-ind(u_k)|\ge 2\  \mod 5.$
\end{claim}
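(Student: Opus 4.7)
The plan is to use Claim \ref{slopeparc2} together with the flexibility available at the last step of the {\sc Procedure}. First I will invoke Claim \ref{slopeparc2} to secure two valid choices $\alpha_1, \alpha_2$ for $ind(u_1)$ at cyclic distance $|\alpha_1 - \alpha_2| \ge 2 \mod 5$, both extending to the same $ind(u_2) = \mu$. For every subsequent step $i \ge 3$, the four candidate values for $ind(u_i)$ depend only on $ind(u_{i-1})$, while the forbidden set $\s(t_i)$ is determined once and for all by the fixed subgraph $G \setminus C$ (drawn in Step 0). Hence the set of admissible values at each step is the same in the two parallel runs; by making identical nondeterministic choices, I will ensure that the entire sequences $ind(u_2), \ldots, ind(u_{k-1})$ agree in both runs. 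Denote the common last value by $l := ind(u_{k-1})$.

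At step $k$, the four candidate values $\{l+1, l+2, l-1, l-2\} \mod 5$ for $ind(u_k)$ are reduced by at most $|\s(t_k)| \le 2$ exclusions, so at least two admissible options remain. The key combinatorial observation is that a value $\gamma \in \mathbb{Z}/5$ satisfies $|\alpha_i - \gamma| \le 1 \mod 5$ for both $i \in \{1,2\}$ exactly when it lies in $\{\alpha_1 - 1, \alpha_1, \alpha_1 + 1\} \cap \{\alpha_2 - 1, \alpha_2, \alpha_2 + 1\}$, and since $|\alpha_1 - \alpha_2| = 2 \mod 5$, this intersection consists of a single element, namely the ``midpoint'' of $\alpha_1$ and $\alpha_2$. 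Thus at most one value of $\mathbb{Z}/5$ is simultaneously bad for both starting choices, so among the two or more admissible values of $ind(u_k)$ at least one, call it $\gamma^*$, is good for some $\alpha_{i^*}$. Taking $\alpha_{i^*}$ as the starting value $ind(u_1)$ and placing $u_k$ at the location realizing $\gamma^*$ yields $|ind(u_1) - ind(u_k)| \ge 2 \mod 5$, as required.

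The main obstacle I foresee is the verification that the ``parallel runs'' argument is sound: that the admissibility of a candidate at step $i$ depends only on $ind(u_{i-1})$ and $\s(t_i)$, and not on the particular embedded positions of earlier $u_j$'s (which do differ between the two runs, since the four step-$i$ candidate locations are defined relative to both $t_i$ and $u_{i-1}$). This is fine because the validity constraint concerns only the slope of the edge $u_it_i$, not any of its geometric position data, so identical index choices can indeed be made synchronously. A secondary point to check is that the four step-$k$ options genuinely realize all four indices $l\pm 1, l\pm 2$; this is immediate from cases (1)--(4) in the definition of the {\sc Procedure}.
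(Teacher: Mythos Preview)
Your argument is essentially the same as the paper's: invoke Claim~\ref{slopeparc2} to obtain two non-neighboring starting indices $\alpha_1,\alpha_2$ with a common value of $ind(u_2)$, synchronize the choices at all later steps so that $ind(u_2),\ldots,ind(u_{k-1})$ agree in both runs, and then at step~$k$ use that only one element of $\mathbb{Z}/5$ is within distance~$1$ of both $\alpha_1$ and $\alpha_2$, so among the (at least two) admissible values for $ind(u_k)$ one must work for some $\alpha_{i^*}$. You spell out the synchronization more explicitly than the paper does, which simply asserts that two such placements with $ind(u_i)=ind(u'_i)$ for $i\ge 2$ exist.

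One small caveat: your assertion that ``$\s(t_i)$ is determined once and for all by the fixed subgraph $G\setminus C$'' is not literally true in the edge case where $t_i=t_1$ for some $i\ge 3$ (i.e.\ $u_1$ and $u_i$ share their off-cycle neighbor). Then $\s(t_i)$ at step~$i$ already contains $ind(u_1)\in\{\alpha_1,\alpha_2\}$, which differs between the two runs. This does not break the argument, however: such a $t_i$ has at most one neighbor inside $G\setminus C$, so the union of forbidden indices over both runs has size at most $|\s_{orig}(t_i)|+|\{\alpha_1,\alpha_2\}|\le 3$, and among the four candidates $\{l\pm 1,l\pm 2\}$ at least one remains admissible in both runs. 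The paper's proof is equally terse on this point.
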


\noindent {\bf Proof.} By Claim \ref{slopeparc2}, there are two placements of the
vertices of $C\setminus \{u_0, u_k\}$, denoted by $u_1, u_2,
\ldots , u_{k-1}$ and by $u'_1, u'_2, \ldots , u'_{k-1}$ such that
$|ind(u_1)-ind(u'_1)|\ge 2 \mod 5$, and $ind(u_i)=ind(u'_i)$ for
all $i\ge 2$. That is, we can start placing the vertices on two
non-neighboring lines so that from the second step of the {\sc
Procedure} we use the same lines. We show that we can place $u_k$
so that $u_1$ and $u_k$, or $u'_1$ and $u_k$ are on
non-neighboring lines. Having placed $u_{k-1}$ (or $u'_{k-1}$), we
have four choices for $ind(u_k)$. Two of them can be ruled out by
the condition $ind(u_k)\notin\s(t_k)$. We still have two choices.
Since $u_1$ and $u'_1$ are on non-neighboring lines, there is only
one line which is a neighbor of both of them. Therefore, we still
have at least one choice for $ind(u_k)$ such that
$|ind(u_1)-ind(u_k)|\ge 2$ or $|ind(u'_1)-ind(u_k)|\ge 2$. \hfill $\Box$

\begin{figure}[ht]
\begin{center}
\scalebox{\figsize}{\includegraphics{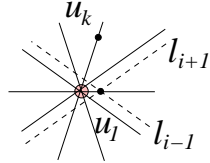}}
\caption{$\ell_{i+1}(u_1)$, does not separate the
    vertices of $G\setminus C$ from $u_k$, $\ell_{i-1}(u_1)$ does.}
\label{slopeparfig2}   
\end{center}
\end{figure}

\begin{itemize}
\item{\sc Step $k+1$.} Let $i=ind(u_1)$, $j=ind(u_k)$, and
    assume, by Claim \ref{slopeparc3}, that $|i-j|\ge 2$ $\mod 5$. Consider
    the lines $\ell_{i-1}(u_1)$ and $\ell_{i+1}(u_1)$. One of
    them, $\ell_{i+1}(u_1)$, say, does not separate the
    vertices of $G\setminus C$ from $u_k$, the other one does. See Fig. \ref{slopeparfig2}.
    
Place $u_0$ at the intersection of $\ell_{i+1}(u_1)$ and $\ell_{i}(u_k)$.


\end{itemize}


\begin{figure}[ht]
\begin{center}
\scalebox{\figsize}{\includegraphics{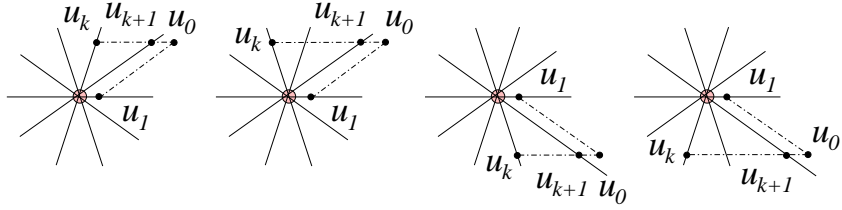}}
\caption{The four possible locations of $u_{0}$.}
\label{slopeparfig3}
\end{center}
\end{figure}


\begin{claim}\label{slopeparc4} Suppose that $|x|\ge 50$.

(i) {\em The slope of $u_0u_j$ is not a basic slope, for any
$1<j<k$.}

(ii) {\em The slope of $u_0v$ is not a basic slope, for any $v\in
V(G\setminus C)$.}

\end{claim}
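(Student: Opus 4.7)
The plan is to mimic the strategy of Claim~\ref{slopeparc1}, after first showing that $u_0$ is placed far from the origin. First I would set up coordinates so that $t_1=O$ and $u_1=xe_i$, where $e_j$ denotes the unit vector of slope $j\pi/5$ and $i=ind(u_1)$. Writing $u_k$ in the (non-orthogonal) basis $\{e_i,e_{i+1}\}$ as $u_k=\alpha e_i+\beta e_{i+1}$, the intersection conditions $u_0\in\ell_{i+1}(u_1)\cap\ell_i(u_k)$ force
\[
u_0=xe_i+\beta e_{i+1}.
\]
Using $|i-i_k|\ge 2\pmod 5$ (where $i_k=ind(u_k)$) and $u_k=t_k+r_ke_{i_k}$ with $|t_k|\le 1$ and $|r_k|$ comparable to $\overline{Ou_k}$, the explicit expansion of $e_{i_k}$ in the basis $\{e_i,e_{i+1}\}$ (a short case analysis for $i_k-i\in\{\pm 2,\pm 3\}\pmod 5$) gives $|\beta|=2\cos(\pi/5)|r_k|\pm O(1)$, hence
\[
\overline{Ou_0}\ \ge\ 2\cos(\pi/5)\,\overline{Ou_k}-O(1).
\]

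For part (i), suppose for contradiction that $u_0u_j$ has basic slope for some $1<j<k$. The parallelogram estimate from Claim~\ref{slopeparc1}(i) applies verbatim to the pair $(u_0,u_j)$ and yields $\overline{Ou_0}<2\cos(\pi/5)\,\overline{Ou_j}+4$. Combining the lower bound above with the exponential growth $\overline{Ou_k}>1.5^{\,k-j}\,\overline{Ou_j}$ already proved in Claim~\ref{slopeparc1} gives $\overline{Ou_0}\ge 2\cos(\pi/5)\cdot 1.5^{\,k-j}\,\overline{Ou_j}-O(1)>2\overline{Ou_j}+4$, a contradiction. The tightest case is $j=k-1$, where $2\cos(\pi/5)\cdot 1.5\approx 2.43$ comfortably exceeds $2$; the additive $O(1)$ terms are dominated because $\overline{Ou_{k-1}}\ge |x|-1\ge 49$.

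For part (ii), assume $u_0v$ has basic slope $m$ for some $v\in V(G\setminus C)$. Then $v\in\ell_m(u_0)$ and $|v|\le 1$, so the distance from $O$ to the line $\ell_m(u_0)$ is at most $1$. For $m=i+1$ this line coincides with $\ell_{i+1}(u_1)$, which lies at distance $|x|\sin(\pi/5)\ge 50\sin(\pi/5)>1$ from $O$, immediate contradiction. For the other four basic slopes $m\in\{i,i-1,i+2,i+3\}$, using the explicit expression $u_0=xe_i+\beta e_{i+1}$ one computes directly that the distance from $O$ to $\ell_m(u_0)$ is bounded below by a quantity of order $|\beta|$; since $|\beta|$ is of order $\overline{Ou_k}\gg 1$, this distance greatly exceeds $1$, again contradicting $|v|\le 1$.

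The main obstacle I foresee is the potentially degenerate configuration in which the direction $Ou_0$ happens to coincide with a basic slope, causing one of the distance estimates in part (ii) to collapse. Tracing through the procedure, $\beta$ is an affine function of the parameter $x$, so this accidental coincidence would force a single linear equation in $x$ to hold; either one chooses $x$ (still with $|x|\ge 50$) in the full-measure set avoiding these finitely many exceptional values, or else rules out the degeneracy structurally, by invoking Claim~\ref{slopeparc1}(ii) applied to $u_k$ to prevent $u_k$ from lying on $\ell_i(O)$ and thereby forcing $\beta\ne 0$ with the correct order of magnitude.
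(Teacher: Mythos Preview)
Your part~(i) has a genuine gap. You correctly compute that the $e_{i+1}$-coordinate $\beta$ of $u_0$ satisfies $|\beta|\approx 2\cos(\pi/5)\,\overline{Ou_k}$, but then you assert $\overline{Ou_0}\ge 2\cos(\pi/5)\,\overline{Ou_k}-O(1)$, which does not follow: since $u_0=xe_i+\beta e_{i+1}$ with $|x|\ge 50$, the $xe_i$ term can pull $\overline{Ou_0}$ well below $|\beta|$ (for instance $x=50$, $\beta=-100$ gives $\overline{Ou_0}\approx 66<100$). More seriously, the estimate from Claim~\ref{slopeparc1}(i) does \emph{not} apply verbatim to the pair $(u_0,u_j)$: that bound relies on both points being within distance~$1$ of a basic-slope line through the origin (each $u_i$, $i\ge 1$, has the form $t_i+r_i e_{ind(u_i)}$ with $|t_i|\le 1$), whereas $u_0$ sits at distance $|x|\sin(\pi/5)>29$ from $\ell_{i+1}(O)$ and is not close to any such line. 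The paper repairs both issues at once by introducing the auxiliary point $u_{k+1}:=\ell_{i+1}(O)\cap\ell_i(u_k)=\beta e_{i+1}$; this point \emph{does} lie on a basic line through $O$, so inequality~(\ref{egyenlotlenseg}) extends one more step to give $\overline{Ou_{k+1}}>1.5^{\,k+1-j}\,\overline{Ou_j}>2\overline{Ou_j}$, and the same geometric argument as in Claim~\ref{slopeparc1} now legitimately yields the matching upper bound $\overline{Ou_{k+1}}<2\cos(\pi/5)\,\overline{Ou_j}+4$.

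For part~(ii), your distance-to-line analysis can be pushed through with enough care (the slopes $m=i\pm 2$ involve a cancellation between the $x$- and $\beta$-contributions that you do not actually carry out), but the paper's argument is a one-liner: by the non-separating choice in Step~$k+1$, every $v\in V(G\setminus C)$ lies strictly inside the $\pi/5$-wedge at $u_0$ bounded by $\ell_i(u_0)=\ell_i(u_k)$ and $\ell_{i+1}(u_0)=\ell_{i+1}(u_1)$, so the slope of $u_0v$ is strictly between $i\pi/5$ and $(i+1)\pi/5$ and hence not basic. Your ``main obstacle'' paragraph is a red herring---the real difficulty is the one above in~(i), not a measure-zero degeneracy in~(ii).
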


\noindent {\bf Proof.} (i) Denote by  $u_{k+1}$ the intersection
of $\ell_{i+1}(O)$ and $\ell_{i}(u_k)$. Suppose that the slope of
$u_0u_j$ is a basic slope for some $1<j<k$. As in the proof of Claim
\ref{slopeparc1}, by repeated application of inequality~(\ref{egyenlotlenseg}), we
obtain that $\overline{Ou_{k+1}}>
1.5^{k+1-j}\overline{Ou_j}>2\overline{Ou_j}$. On the other hand,
by an easy geometric argument, if the slope of $u_0u_j$ is a basic
slope, then $\overline{Ou_{k+1}}< 2\cos\left({\pi\over
5}\right)\overline{Ou_{j}}+4<2\overline{Ou_j}$, a contradiction,
provided that $|x|\ge 50$.

(ii) For any vertex $v\in G\setminus C$, the slope of the segment
$u_0v$ is strictly between $i\pi/5$ and ${(i+1)\pi/5}$, therefore,
it is not a basic slope. See Figure \ref{slopeparfig3}. This concludes the proof
of the claim and hence Theorem \ref{slopepar2}. \hfill $\Box$

\subsection{Cubic Graphs - Proof of Theorem \ref{slopepar1}}

First we note that if $G$ is connected, then Theorem \ref{slopepar1} is an easy
corollary to Theorem \ref{slopepar2}. Indeed, delete any vertex, and then put it
back using two extra directions. If $G$ is not connected, the only
problem that may arise is that these extra directions can differ
for different components. We will define a family of drawings for
each component $G^i$ of $G$, depending on parameters $\varepsilon_i$, and
then choose the values of these parameters in such a way that the
extra directions will coincide.

Suppose that $G$ is a cubic graph. If a connected component is not
3-regular then, by Theorem \ref{slopepar2}, it can be drawn using the {\em five}
basic slopes. If a connected component is a complete graph $K_4$
on {\em four} vertices, then it can also be drawn using the basic
slopes. For the sake of simplicity, suppose that we do not have such
components, i. e. each connected component $G^1, \ldots, G^m$
of $G$ is 3-regular and none of them is isomorphic to $K_4$.

First we concentrate on $G^1$. Let $C$ be a shortest cycle in
$G^1$. We distinguish two cases.

\smallskip

\noindent{\bf Case 1:} {\em $C$ is not a triangle.} Denote by
$u_0, \ldots , u_k$ the vertices of $C$, and let $t_0$ be the
neighbor of $u_0$ not belonging to $C$. Delete the edge $u_0t_0$,
and let $\bar{G}$ be the resulting graph.\\

\noindent{\bf Case 2:} {\em $C$ is a triangle.} Every vertex
of $C$ has precisely {\em one} neighbor that does not belong to
$C$. If all these neighbors coincide, then $G^1$ is a complete
graph on {\em four} vertices, contradicting our assumption. So one
vertex of $C$, $u_0$, say, has a neighbor $t_0$ which does not
belong to $C$ and which is not adjacent to the other two vertices,
$u_1$ and $u_2$, of $C$. Delete the edge $u_0t_0$, and let
$\bar{G}$ be the resulting graph.

\smallskip

Observe that in both cases, $u_k$ and $t_0$ are not connected in
$G^1$. Indeed, suppose for a contradiction that they are
connected. In the first case, $G^1$ would contain the triangle
$u_0u_kt_0$, contradicting the minimality of $C$. In the second
case, the choice of $u_0$ would be violated.

There will be exactly two edges with extra directions, $u_0u_k$ and $u_0t_0$.
The slope of $u_0u_k$ will be very close to a basic slope and
the slope of $u_0t_0$ will be decided at the end, but we will show that almost any choice will do.

For any nonnegative $\varepsilon$ and real $x$, {\sc
Modified\-Procedure}$(\bar{G}, C, u_0, u_1, x, \varepsilon)$ is defined as
follows. {\sc Steps} $0, 1, \ldots, k$ are identical to {\sc Steps} $0, 1, \ldots, k$ of {\sc Procedure}$(\bar{G}, C, u_0,
u_1, x)$.

\smallskip

\begin{itemize}
\item{\sc Step $k+1$.} If there is a
    segment, determined by the vertices of $G\setminus C$, of
    slope $i\pi/5+\varepsilon$ or $i\pi/5-\varepsilon$, for
    any $0\le i< 5$, then {\sc Stop}. In this case, we
    say that $\varepsilon$ is {\em 1-bad} for $\bar{G}$.

Otherwise, when $\varepsilon$ is {\em 1-good}, let
$i=ind(u_1)$ and $j=ind(u_k)$. We can assume by Claim \ref{slopeparc3}
that $|i-j|\ge 2
\mod 5$. Consider the lines $\ell_{i-1}(u_1)$ and
$\ell_{i+1}(u_1)$. One of them does not separate the vertices
of $G\setminus C$ from $u_k$, the other one does.

If $\ell_{i-1}(u_1)$ separates $G\setminus C$ from $u_k$, then
place $u_0$ at the intersection of $\ell_{i+1}(u_1)$ and the
line through $u_k$ with slope ${i\pi/5+\varepsilon}$. If
$\ell_{i+1}(u_1)$ separates $G\setminus C$ from $u_k$,
then place $u_0$ at the intersection of $\ell_{i-1}(u_1)$ and
the line through $u_k$ with slope ${i\pi/5-\varepsilon}$.


\end{itemize}

Since {\sc Steps} $0, 1, \ldots, k$ are identical in {\sc Procedure}\-$(\bar{G}, C, u_0, u_1, x)$ and in {\sc Modified\-Procedure}$(\bar{G}, C, u_0, u_1, x, \varepsilon)$, Claims \ref{slopeparc1}, \ref{slopeparc2}, and
\ref{slopeparc3} are also true for the {\sc Modified\-Procedure}.

Moreover, it is easy to see that an analogue of Claim \ref{slopeparc4} also
holds with an identical proof, provided that $\varepsilon$ is
sufficiently small: $0< \varepsilon< 1/100$.

\smallskip

\noindent {\bf Claim \ref{slopeparc4}'.} {\em Suppose that $|x|\ge 50$ and $0< \varepsilon< 1/100$.}

(i) {\em The slope of $u_0u_j$ is not a basic slope, for any
$1<j<k$.}

(ii) {\em The slope of $u_0v$ is not a basic slope, for any $v\in
V(\bar{G}\setminus C)$.}  \hfill $\Box$

\smallskip

Perform {\sc Modified\-Procedure}$(\bar{G}, C, u_0, u_1, x,
\varepsilon)$ for a fixed $\varepsilon$, and observe how the
drawing changes as $x$ varies. For any vertex $u_i$ of $C$, let
$u_i(x)$ denote the position of $u_i$, as a function of $x$. For
every $i$, the function $u_i(x)$ is linear, that is, $u_i$ moves
along a line as $x$ varies.

\begin{claim}\label{slopeparc5} With finitely many exceptions, for
every value of $x$, {\sc Modified\-Procedure}\-$(\bar{G}, C, u_0,
u_1, x, \varepsilon)$ produces a proper drawing of $\bar{G}$,
provided that $\varepsilon$ is 1-good.
\end{claim}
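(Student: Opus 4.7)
The plan is to exploit the fact that every vertex position produced by the Modified Procedure is an affine linear function of the real parameter $x$, so every “bad event” that could spoil properness of the drawing translates into a single polynomial equation in $x$; almost all $x$ then avoid all these finitely many equations. Concretely, I would first prove by induction on $i$ that each $u_i(x)$ has the form $u_i(x)=a_i+x\,b_i$, where $a_i\in\mathbb{R}^2$ is a fixed point and $b_i\in\mathbb{R}^2$ is a fixed vector whose direction is one of the five basic slopes. The base case is immediate from Step 1 of the Procedure, since $u_1(x)=t_1+x\,v_{ind(u_1)}$. The inductive step uses that $u_{i+1}(x)$ is defined as the intersection of the fixed line $\ell_{ind(u_{i+1})}(t_{i+1})$ with a line of fixed slope through $u_i(x)$: as $u_i(x)$ translates linearly, the second line translates linearly, and its intersection with the first slides linearly along $\ell_{ind(u_{i+1})}(t_{i+1})$. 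Step $k+1$ is analogous, so $u_0(x)$ is also affine in $x$.

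Next I enumerate what can go wrong. The drawing of $\bar G$ fails to realize the slope set $\Sigma$ (the five basic slopes together with $i\pi/5\pm\varepsilon$) precisely when one of the following bad events occurs: (a) two vertices coincide; (b) some non-edge pair of vertices $p,q$ spans a segment with slope in $\Sigma$. Because $G\setminus C$ has already been drawn and is fixed in $x$, the only pairs that actually depend on $x$ are those of the form $(u_i,u_j)$ or $(u_i,v)$ with $v\in G\setminus C$. For any such pair, both coincidence and the slope condition are captured by a linear equation $f(x)=0$; hence each event is either identically satisfied or contributes at most one bad value of $x$, and there are only finitely many such events to consider.

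Ruling out the identically-satisfied case is where the actual work lies, and I would split it by slope type. For coincidences and for slope conditions involving one of the five basic slopes, Claims 1 and 4' carry over verbatim to the Modified Procedure — since Steps $0,\dots,k$ of the two procedures coincide — and they guarantee that all such events fail for every $|x|\ge 50$; thus the corresponding linear equations in $x$ cannot vanish identically. For the extra slope $\sigma=i\pi/5\pm\varepsilon$ and a pair $(u_i,v)$ with $v\in G\setminus C$ and $v\neq t_i$, identical satisfaction would mean that $v$ lies on the line of motion of $u_i$ \emph{and} that this line has slope $\sigma$; but we have just shown that this line has basic slope, whereas $\sigma$ is not basic, so this case is impossible. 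For pairs entirely within $G\setminus C$ the slope is independent of $x$, and the $1$-goodness of $\varepsilon$ precisely forbids slope $\sigma$ on such segments.

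The residual difficulty — and I expect the main obstacle — is the case of a non-adjacent pair $u_i,u_j$ with $(i,j)\neq(0,k)$: the slope condition is $\bigl((a_i-a_j)+x(b_i-b_j)\bigr)\perp \vec n_\sigma$, which is identically satisfied in $x$ iff both $a_i-a_j$ and $b_i-b_j$ are parallel to $\sigma$. Since $b_i$ and $b_j$ are fixed vectors in basic directions and $\sigma=i\pi/5\pm\varepsilon$ is not basic, their difference is parallel to $\sigma$ only for a discrete set of values of $\varepsilon$; I would handle this by noting that $\varepsilon$ may be perturbed slightly within the $1$-good range without losing $1$-goodness, thereby avoiding these finitely many exceptional values, and observing that the assertion of Claim 5 tolerates this perturbation because the whole argument is robust under small changes of $\varepsilon$. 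Once this identically-satisfied case is excluded, each of the finitely many bad events contributes at most one bad $x$, and their union is a finite exceptional set, as required.
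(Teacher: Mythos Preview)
Your approach is essentially the paper's: the positions $u_i(x)$ are affine in $x$, each ``bad event'' becomes a linear equation in $x$, and one must only rule out the identically-satisfied case. The paper carries this out more economically, however. Rather than analysing each target slope separately, it simply observes that Claims~1--4' already establish properness for \emph{every} $|x|\ge 50$; any linear condition in $x$ that fails on an entire half-line cannot be identically satisfied, so it can hold for at most one value of $x$. Summing over the finitely many pairs and the finitely many slopes gives the finite exceptional set in one stroke, with no case split on the type of slope.

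Your separate treatment of the extra slope $\sigma=i\pi/5\pm\varepsilon$ goes beyond what the paper's proof actually checks (the paper only verifies that non-edges avoid the five \emph{basic} slopes), and your proposed fix for the residual $(u_i,u_j)$ case does not prove Claim~5 as stated: you suggest perturbing $\varepsilon$, but $\varepsilon$ is \emph{fixed} in the hypothesis (``provided that $\varepsilon$ is $1$-good''), so you are not permitted to change it mid-proof. Saying ``the assertion of Claim~5 tolerates this perturbation'' is circular, since Claim~5 is precisely what you are trying to establish. If this case genuinely worries you, the honest resolution is to absorb those finitely many exceptional values of $\varepsilon$ into the later notion of ``good~$\varepsilon$'' used for Claim~8 and the final assembly of Theorem~\ref{slopepar1}, not into Claim~5 itself.
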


\noindent {\bf Proof.} Claims \ref{slopeparc1}, \ref{slopeparc2}, \ref{slopeparc3}, and \ref{slopeparc4}' imply Claim \ref{slopeparc5} for
$|x|\ge 50$. Let $u$ and $v$ be two vertices of $\bar{G}$. Since
$u(x)$ and $v(x)$ are linear functions, their difference,
$\vec{uv}(x)$, is also linear.

If $uv$ is an edge of $\bar{G}$, then the direction of
$\vec{uv}(x)$ is the same for all $|x|\ge 50$. Therefore, it is
the same for all values of $x$, with the possible exception of one
value, for which $\vec{uv}(x)=0$ holds.

If $uv$ is not an edge of $\bar{G}$, then the slope of
$\vec{uv}(x)$ is not a basic slope for any $|x|\ge 50$. Therefore,
with the exception of at most {\em five} values of $x$, the slope
of $\vec{uv}(x)$ is never a basic slope, nor does $\vec{uv}(x)=0$
hold. \hfill $\Box$

\smallskip

Take a closer look at the relative position of the endpoints of
the missing edge, $u_0(x)$ and $t_0(x)$. Since $t_0\in
\bar{G}\setminus C$, $t_0=t_0(x)$ is the same for all values of
$x$. The position of $u_0=u_0(x)$ is a linear function of $x$. Let
$\ell$ be the line determined by the function $u_0(x)$. If $\ell$
passes through $t_0$, then we say that $\varepsilon$ is {\em
2-bad} for $\bar{G}$. If $\varepsilon$ is 1-good and it is not
2-bad for $\bar{G}$, then we say that it is {\em 2-good} for
$\bar{G}$. If $\varepsilon$ is 2-good, then by varying $x$ we can
achieve almost any slope for the edge $t_0u_0$. This will turn out
to be crucially important, because we want to attain that these
slopes coincide in all components.

\begin{claim}\label{slopeparc6} Suppose that the values $\varepsilon\neq\delta$,
$0<\varepsilon, \delta<1/100$, are 1-good for $\bar{G}$. Then at
least one of them is 2-good for $\bar{G}$.
\end{claim}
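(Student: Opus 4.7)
The plan is to show that among all 1-good parameters, at most one value can be 2-bad for $\bar G$; the conclusion of the claim then follows immediately. Assume, for contradiction, that both $\varepsilon$ and $\delta$ are 2-bad. By definition the line $\ell_{\varepsilon}$ traced by $u_0(x,\varepsilon)$ (and analogously $\ell_{\delta}$) passes through $t_0$, so there exist $x_\varepsilon$ and $x_\delta$ with
\[ u_0(x_\varepsilon,\varepsilon)=t_0=u_0(x_\delta,\delta). \]

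Next I would unpack the defining construction of $u_0$ in Step~$k+1$ of the {\sc Modified Procedure}. Up to the symmetric case, $u_0(x,\eta)$ is the intersection of $\ell_{i+1}(u_1(x))$ with the line through $u_k(x)$ of slope $i\pi/5+\eta$, where $i=\mathrm{ind}(u_1)$. Hence $u_0(x,\eta)=t_0$ is equivalent to the conjunction of
\begin{itemize}
\item[(a)] $u_1(x)\in \ell_{i+1}(t_0)$, and
\item[(b)] $u_k(x)$ lies on the line through $t_0$ of slope $i\pi/5+\eta$.
\end{itemize}
Observe that (a) does not involve $\eta$. Because $u_1(x)$ slides along $\ell_i(t_1)$, which has slope $i\pi/5$, while $\ell_{i+1}(t_0)$ has slope $(i+1)\pi/5$, these two lines meet in exactly one point, so (a) has a unique solution $x=x^*$ independent of $\eta$. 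Thus $x_\varepsilon=x_\delta=x^*$, and condition (b) at $x=x^*$ then forces $i\pi/5+\eta$ to equal the (single) slope of the segment $u_k(x^*)\,t_0$. This pins $\eta$ down to one value, contradicting $\varepsilon\ne\delta$.

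The main obstacle will be the degenerate situation $u_k(x^*)=t_0$, in which (b) becomes vacuous and every $\eta$ becomes 2-bad, breaking the dichotomy. This happens exactly when the basic-slope line along which $u_k(x)$ slides passes through $t_0$, and the corresponding parameter value $x^*$ is then one of the finitely many exceptional values identified in the preceding claim. I would dispose of this case either by perturbing the embedding of $G\setminus C$ produced in Step~$0$---using the freedom granted by Theorem~\ref{slopepar2} together with the freedom in placing the auxiliary points $t_2,\dots,t_k$---to ensure that the line carrying $u_k(x)$ misses $t_0$, or equivalently by choosing the rationally-independent coordinates in Step~$0$ generically enough to exclude this coincidence. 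Once this is arranged, the argument of the previous paragraph yields uniqueness of the 2-bad parameter among 1-good values, completing the proof.
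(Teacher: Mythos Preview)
Your main reduction is right and matches the paper's Case~2: writing $u_0(x,\eta)=t_0$ as the conjunction of (a) $u_1(x)\in\ell_{i+1}(t_0)$ and (b) $u_k(x)$ on the line of slope $i\pi/5+\eta$ through $t_0$, observing that (a) has a unique solution $x^*$, and then reading off $\eta$ from (b), is exactly the mechanism behind the paper's argument.

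The gap is in your treatment of the degenerate case $u_k(x^*)=t_0$. Your proposed fix by perturbation does not work in this setting. First, in the $3$-regular situation at hand every $u_i$ with $1\le i\le k$ has a genuine neighbor in $G\setminus C$, so there are \emph{no} auxiliary points $t_2,\dots,t_k$ to move; both $t_0$ and $t_k$ are actual vertices of $\bar G\setminus C$ whose positions are dictated by the Step~$0$ drawing. Second, there are no ``rationally independent coordinates'' in Step~$0$ here---that device belongs to Section~\ref{sec:slopenum}, not to the {\sc Procedure} of this section. Third, the condition you would like to impose (that the line carrying $u_k(x)$ miss $t_0$) involves $ind(u_k)$, which is only determined \emph{after} Step~$0$, so you cannot ``choose Step~$0$ generically'' to avoid it without circularity.

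The paper disposes of this case structurally, not by perturbation: it was arranged \emph{before} the claim that $u_k$ and $t_0$ are non-adjacent in $G^1$ (via the minimality of $C$ in the non-triangle case, and via the choice of $u_0$ in the triangle case). From this one argues that $t_0$ cannot lie on the line $\ell_{ind(u_k)}(t_k)$ along which $u_k(x)$ moves: if $t_0=t_k$ then $u_kt_0\in E(G^1)$, contradicting non-adjacency; if $t_0\neq t_k$ and $t_0\in\ell_{ind(u_k)}(t_k)$ then the segment $t_0t_k$ has a basic slope, which by the slope-parameter property of the Step~$0$ drawing forces $t_0t_k\in E(G\setminus C)$ and hence $ind(u_k)\in\s(t_k)$---but the {\sc Procedure} chose $ind(u_k)\notin\s(t_k)$. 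Either way the degenerate case is impossible, and your argument for the non-degenerate case then finishes the proof. You should replace the perturbation paragraph with this observation.
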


\noindent {\bf Proof.} Suppose, for simplicity, that $ind(u_1)=0$,
$ind(u_k)=2$, and that $u_1$ and $u_k$ are in the right
half-plane (of the vertical line through the origin). The other cases can be settled analogously. To
distinguish between {\sc Modified\-Procedure}$(\bar{G}, C, u_0,
u_1, x, \varepsilon)$ and {\sc Modified\-Procedure}$(\bar{G}, C,
u_0, u_1, x, \delta)$, let $u^{\varepsilon}_0(x)$ denote the
position of $u_0$ obtained by the first procedure and
$u^{\delta}_0(x)$ its position obtained by the second. Let
$\ell^{\varepsilon}$ and $\ell^{\delta}$ denote the lines
determined by the functions $u^{\varepsilon}_0(x)$ and
$u^{\delta}_0(x)$. Suppose that $x$ is very large. Since, by inequality~(\ref{egyenlotlenseg}), we have
$\overline{u_k(x)O}>1.5\overline{u_1(x)O}$, both
$u^{\varepsilon}_0(x)$ and $u^{\delta}_0(x)$ are on the line
$\ell_1(u_1(x))$, very far in the positive direction. Therefore, both of them
are
above the line
$\ell_{\pi/10}$. On the other hand, if $x<0$ is very small (i.e.,
if $|x|$ is very big), both $u^{\varepsilon}_0(x)$ and
$u^{\delta}_0(x)$ lie below the line $\ell_{\pi/10}$. It follows
that the slopes of $\ell^{\varepsilon}$ and $\ell^{\delta}$ are
larger than $\pi/10$, but smaller than $\pi/5$.

Suppose that neither $\varepsilon$ nor $\delta$ is 2-good. Then
both $\ell^{\varepsilon}$ and $\ell^{\delta}$ pass through $t_0$.
That is, for a suitable value of $x$, we have
$u^{\varepsilon}_0(x)=t_0$. We distinguish two cases.

\noindent{\bf Case 1:} $u^{\varepsilon}_0(x)=t_0=u_k(x)$. Then, as
$x$ varies, the line determined by $u_k(x)$ coincides with
$\ell_2(t_0)$. Consequently, $t_0$ and $u_k$ are connected in
$G^1$, a contradiction.

\noindent{\bf Case 2:} $u^{\varepsilon}_0(x)=t_0\neq u_k(x)$. In
order to get a contradiction, we try to determine the position of
$u^{\delta}_0(x)$. If we consider {\sc Step} $k+1$ in {\sc
Modified\-Procedure}$(\bar{G}, C, u_0, u_1, x, \varepsilon)$ and
in
{\sc Modified\-Procedure}\-$(\bar{G}, C, u_0, u_1, x, \delta)$, we
can conclude that $u_1(x)$ lies on $\ell_1(u^{\varepsilon}_0)=\ell_1(t_0)$,
$u^{\delta}_0(x)$ lies on $\ell_1(u_1(x))$, therefore,
$u^{\delta}_0(x)$ lies on $\ell_1(t_0)$. On the other hand,
$u^{\delta}_0(x)$ lies on $\ell^{\delta}$, and, by assumption,
$\ell^{\delta}$ passes through $t_0$. However, we have shown that
$\ell^{\delta}$ and $\ell_1(t_0)$ have different slopes,
therefore, $u^{\delta}_0(x)$ must be at their intersection point,
so we have $u^{\delta}_0(x)=u^{\varepsilon}_0(x)=t_0$.

Considering again {\sc Step} $k+1$ in {\sc
Modi\-fied\-Procedure}$(\bar{G}, C, u_0, u_1, x, \varepsilon)$ and
in {\sc Modi\-fied\-Procedure}$(\bar{G}, C, u_0, u_1, x, \delta)$,
we can conclude that the point
$u^{\delta}_0(x)=t_0=u^{\varepsilon}_0(x)$ belongs to both
$\ell_{\varepsilon}(u_k(x))$ and $\ell_{\delta}(u_k(x))$. This
contradicts our assumption that $u_k(x)$ is different from
$u^{\delta}_0(x)=t_0=u^{\varepsilon}_0(x)$. \hfill $\Box$

\smallskip

By Claim \ref{slopeparc5}, for every $\varepsilon< 1/100$ and with finitely many
exceptions for every value of $x$, {\sc
Modified\-Procedure}\-$(\bar{G}, C, u_0, u_1, x, \varepsilon)$
produces a proper drawing of $\bar{G}$. When we want to add the
edge $u_0t_0$, the slope of $u_0(x)t_0$ may coincide with the
slope of $u(x)u'(x)$, for some $u, u'\in \bar G$. The following
statement guarantees that this does not happen ``too often''. We
use $\alpha(\vec{u})$ to denote the {\em slope} of a vector
$\vec{u}$.

\begin{claim} \label{slopeparc7} Let $\vec{u}(x)$ and $\vec{v}(x)$:
${\mbox R}\rightarrow {\mbox R}^2$ be two linear functions, and
let $\ell(u)$ and $\ell(v)$ denote the lines determined by
$\vec{u}(x)$ and $\vec{v}(x)$. Suppose that for some
$x_1<x_2<x_3$, the vectors $\vec{u}, \vec{v}$ do not vanish and
that their slopes coincide, that is,
$\alpha(\vec{u}(x_1))=\alpha(\vec{v}(x_1))$,
$\alpha(\vec{u}(x_2))=\alpha(\vec{v}(x_2))$, and
$\alpha(\vec{u}(x_3))=\alpha(\vec{v}(x_3))$. Then $\ell(u)$ and
$\ell(v)$ must be parallel.
\end{claim}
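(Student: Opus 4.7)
The plan is to translate the slope-equality condition into a polynomial identity in the parameter $x$ and then use a degree argument. Since $\vec{u}$ and $\vec{v}$ are linear in $x$, I will write
\[
\vec{u}(x) = \vec{u}_0 + x\vec{u}_1, \qquad \vec{v}(x) = \vec{v}_0 + x\vec{v}_1,
\]
where $\vec{u}_1$ and $\vec{v}_1$ are the direction vectors of the lines $\ell(u)$ and $\ell(v)$. The goal is to show $\vec{u}_1 \parallel \vec{v}_1$.

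The key observation is that for non-vanishing vectors in the plane, having the same slope is equivalent to being parallel, which is equivalent to the $2\times 2$ determinant (the scalar cross product) vanishing. Thus I will introduce
\[
D(x) \;=\; \vec{u}(x) \times \vec{v}(x) \;=\; (\vec{u}_0 \times \vec{v}_0) + x\bigl(\vec{u}_0 \times \vec{v}_1 + \vec{u}_1 \times \vec{v}_0\bigr) + x^2\bigl(\vec{u}_1 \times \vec{v}_1\bigr),
\]
which is a polynomial of degree at most $2$ in $x$. By hypothesis $\vec{u}(x_i),\vec{v}(x_i)\neq 0$ and they share a slope for $i=1,2,3$, so $D(x_1)=D(x_2)=D(x_3)=0$.

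A polynomial of degree at most $2$ with three distinct roots is identically zero, so all three coefficients above must vanish. In particular the leading coefficient gives $\vec{u}_1 \times \vec{v}_1 = 0$, which is exactly the statement that the direction vectors of $\ell(u)$ and $\ell(v)$ are parallel, finishing the proof. (If either direction vector is zero, the corresponding ``line'' degenerates to a point and the statement is vacuous; otherwise the argument above applies verbatim.)

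There is no real obstacle here; the only thing worth a second of care is the equivalence between ``same slope'' and ``cross product zero,'' which is why the hypothesis that neither $\vec{u}(x_i)$ nor $\vec{v}(x_i)$ vanishes is essential---it rules out the degenerate case in which the slope would be undefined yet $D(x_i)$ still vanishes for a trivial reason.
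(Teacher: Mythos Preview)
Your proof is correct and substantially simpler than the paper's. The paper argues geometrically: it treats the map sending $\vec{u}(x)$ to the intersection of $\ell(v)$ with the line through the origin and $\vec{u}(x)$ as a projective transformation $\ell(u)\to\ell(v)$, and then uses preservation of cross ratios to conclude that this map agrees with $\vec{v}(x)$ for all $x$, which forces $\ell(u)\parallel\ell(v)$ after a short contradiction argument (choosing $x$ so that $\vec{u}(x)$ is parallel to $\ell(v)$). Your approach bypasses all of this by encoding ``same slope'' as the vanishing of the scalar cross product $D(x)=\vec{u}(x)\times\vec{v}(x)$ and observing that $D$ is a polynomial of degree at most $2$ with three distinct roots, hence identically zero; the leading coefficient $\vec{u}_1\times\vec{v}_1$ then gives the conclusion directly. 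This is more elementary, shorter, and avoids the case analysis the paper needs for lines through the origin.
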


\begin{figure}[ht]
\begin{center}
\scalebox{0.42}{\includegraphics{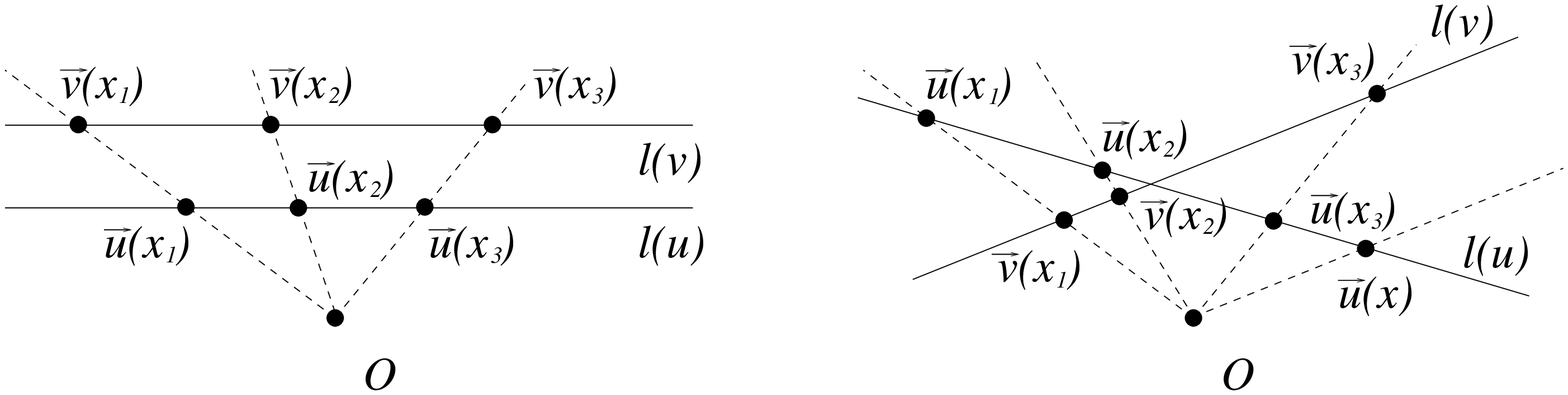}}
\caption{$\ell(u)$ and
$\ell(v)$ must be parallel.}
\end{center}
\end{figure}

\noindent {\bf Proof.} If $\ell(u)$ passes through the origin,
then for every value of $x$, $\vec{u}(x)$ has the same slope. In
particular,
$\alpha(\vec{v}(x_1))=\alpha(\vec{v}(x_2))=\alpha(\vec{v}(x_3))$.
Therefore, $\ell(v)$ also passes through the origin and is
parallel to $\ell(u)$. (In fact, we have $\ell(u)=\ell(v)$.) We
can argue analogously if $\ell(u)$ passes through the origin.
Thus, in what follows, we can assume that neither $\ell(u)$ nor
$\ell(v)$ passes through the origin.

Suppose that $\alpha(\vec{u}(x_1))=\alpha(\vec{v}(x_1))$,
$\alpha(\vec{u}(x_2))=\alpha(\vec{v}(x_2))$, and
$\alpha(\vec{u}(x_3))=\alpha(\vec{v}(x_3))$. For any $x$, define
$\vec{w}(x)$ as the intersection point of $\ell(v)$ and the line
connecting the origin to $\vec{u}(x)$, provided that they
intersect. Clearly, $\vec{v}(x)=\vec{w}(x)$ for $x=x_1, x_2, x_3$,
and $\vec{u}(x)$ and $\vec{w}(x)$ have the same slope for every
$x$. The transformation $\vec{u}(x)\rightarrow \vec{w}(x)$ is a
projective transformation from $\ell(u)$ to $\ell(v)$, therefore,
it preserves the cross ratio of any four points. That is, for any
$x$, we have
$$\left( \vec{u}(x_1), \vec{u}(x_2);
\vec{u}(x_3), \vec{u}(x)\right)= \left( \vec{w}(x_1),
\vec{w}(x_2); \vec{w}(x_3), \vec{w}(x)\right).$$ Since both
$\vec{u}(x)$ and $\vec{v}(x)$ are linear functions, we also have
$$\left( \vec{u}(x_1), \vec{u}(x_2);
\vec{u}(x_3), \vec{u}(x)\right)= \left( \vec{v}(x_1),
\vec{v}(x_2); \vec{v}(x_3), \vec{v}(x)\right).$$ Hence, we can
conclude that $\vec{v}(x)=\vec{w}(x)$ for all $x$. However, this
is impossible, unless $\ell(u)$ and $\ell(v)$ are parallel.
Indeed, suppose that $\ell(u)$ and $\ell(v)$ are not parallel, and
set $x$ in such a way that $\vec{u}(x)$ is parallel to $\ell(v)$.
Then $\vec{w}(x)$ cannot have the same slope as $\vec{u}(x)$, a
contradiction. \hfill $\Box$

\smallskip

Suppose that $\varepsilon$ is 2-good and let us fix it. As above,
let $u^{\varepsilon}_0(x)$ be the position of $u_0$ obtained by
{\sc Modified\-Procedure}$(\bar{G}, C, u_0, u_1, x, \varepsilon)$,
and let $\ell^{\varepsilon}$ be the line determined by
$u^{\varepsilon}_0(x)$.

Suppose also that there exist two independent vertices of $\bar
G$, $u, u'\neq u_0$, such that the line determined by
$\vec{uu'}(x)$ is parallel to $\ell^{\varepsilon}$. Then we say
that $\varepsilon$ is {\em 3-bad} for $\bar{G}$. If $\varepsilon$
is 2-good and it is not 3-bad for $\bar{G}$, then we say that it
is {\em 3-good} for $\bar{G}$.

It is easy to see that, for any $0<\varepsilon, \delta< 1/100$,
$\ell^{\varepsilon}$ and $\ell^{\delta}$ are not parallel,
therefore, for any fixed $u, u'$, there is at most one value of
$\varepsilon$ for which the line determined by $\vec{uu'}(x)$ is
parallel to $\ell^{\varepsilon}$. Thus, with finitely many
exceptions, all values $0<\varepsilon< 1/100$ are 3-good.

Summarizing, we have obtained the following.

\begin{claim}\label{slopeparc8} Suppose that $\varepsilon$ is 3-good
for $\bar{G}$. With finitely many exceptions, for every value of
$x$, {\sc Modified\-Procedure}\-$(\bar{G}, C, u_0, u_1, x,
\varepsilon)$ gives a proper drawing of ${G^1}$.\hfill $\Box$
\end{claim}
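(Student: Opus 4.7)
The plan is to leverage Claim~\ref{slopeparc5} and then argue that the single segment $u_0(x) t_0$, which still needs to be inserted to obtain $G^1$, can be drawn without introducing defects for all but finitely many $x$. By Claim~\ref{slopeparc5}, there is a finite exceptional set $B_0$ of values of $x$ on which Modified Procedure already fails to produce a proper drawing of $\bar{G}$. Fix any $x \notin B_0$. To upgrade the resulting proper drawing of $\bar{G}$ to a proper drawing of $G^1$, I must verify two properties after the segment $u_0(x) t_0$ has been inserted: (i) no vertex other than $u_0$ and $t_0$ lies in its interior, and (ii) the slope $\tau(x)$ of this segment coincides with no non-edge pair of $G^1$.

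The key to both properties is that $u_0(x)$ traces the line $\ell^{\varepsilon}$, which by $2$-goodness does not pass through $t_0$, so the line through $u_0(x)$ and $t_0$ rotates nontrivially about $t_0$ and $\tau(x)$ is genuinely non-constant. Property (i) follows because for each fixed vertex $v$ this rotating line passes through $v$ for at most one value of $x$. For (ii) I split the non-edges into three groups. First, non-edges $(p,q)$ with $p, q \in V(\bar{G}\setminus C)$ have a constant slope that $\tau(x)$ attains at most once. Second, non-edges $(p,q)$ with at least one endpoint on $C$ and both endpoints distinct from $u_0$ are handled by applying Claim~\ref{slopeparc7} to $\vec{u}(x) = u_0(x) - t_0$ and $\vec{v}(x) = p(x) - q(x)$: three coincidences would force $\ell(v)$ to be parallel to $\ell(u) \parallel \ell^{\varepsilon}$, contradicting the $3$-goodness of $\varepsilon$.

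The remaining non-edges are of the form $(u_0, q)$ with $q \notin \{u_0, u_1, u_k, t_0\}$; here Claim~\ref{slopeparc7} does not apply directly, since both vectors share the endpoint $u_0(x)$. Slope coincidence in this case is equivalent to the collinearity of $u_0(x)$, $t_0$, and $q$ (or $q(x)$), a polynomial condition in $x$ of degree at most $2$. When $q \in V(\bar{G}\setminus C)\setminus\{t_0\}$, the fixed line through $t_0$ and $q$ differs from $\ell^{\varepsilon}$ and meets it in at most one point. When $q = u_i$ with $1 < i < k$, the polynomial has at most two roots unless it vanishes identically, and identity forces the line $L_i$ traced by $u_i(x)$ to be parallel to $\ell^{\varepsilon}$. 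The main obstacle is this last eventuality: the natural witnessing pair $(u_i, t_0)$ may fail to be independent when $t_i = t_0$, so I would have to exhibit a different independent pair $(u_i, v)$ with $v \neq u_0$ fixed; this can be done by a short structural argument using that $G^1$ is $3$-regular, distinct from $K_4$, and has $C$ as a shortest cycle through a vertex of degree two of $\bar{G}$, which together force $|V(\bar{G}\setminus C)| \geq 2$ and supply the required $v$. Summing the finitely many bad values over the finitely many vertices and non-edge pairs yields a finite exceptional set, completing the proof.
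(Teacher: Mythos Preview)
Your argument is correct and follows the paper's line: Claim~\ref{slopeparc5} handles $\bar{G}$, while Claim~\ref{slopeparc7} together with $3$-goodness bounds the exceptional values of $x$ coming from the seventh slope $\tau(x)$. The paper actually gives no separate proof (the $\Box$ follows the statement directly, Claim~\ref{slopeparc8} being presented as a summary of the preceding discussion), so your explicit treatment of the non-edges $(u_0,q)$---which the condition $u,u'\neq u_0$ in the definition of $3$-bad leaves out---and your structural argument forcing $|V(\bar G\setminus C)|\ge 2$ go beyond what the paper spells out.
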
 

Now we are in a position to complete the proof of Theorem \ref{slopepar1}.
Proceed with each of the components as described above for $G^1$.
For any fixed $i$, let $u^i_0v^i_0$ be the edge deleted from
$G^i$, and denote the resulting graphs by $\bar{G}^1, \ldots,
\bar{G}^m$. Let $0<\varepsilon<1/100$ be fixed in such a way that
$\varepsilon$ is 3-good for all graphs $\bar{G}^1, \ldots,
\bar{G}^m$. This can be achieved, in view of the fact that there
are only finitely many values of $\varepsilon$ which are not
3-good. Perform {\sc Modified\-Procedure}$(\bar{G}^i, C^i, u^i_0,
u^i_1, x^i, \varepsilon)$. Now the line $\ell^i$ determined by all
possible locations of $u^i_0$ does not pass through $t^i_0$.

Notice that when {\sc Modified\-Procedure}$(\bar{G}^i, C^i, u^i_0,
u^i_1, x^i, \varepsilon)$ is executed, apart from edges with basic
slopes, we use an edge with slope $r\pi/5\pm\varepsilon$, for some
integer $r \mod 5$. By using rotations through $\pi/5$ and a
reflection, if necessary, we can achieve that each component $\bar{G}^i$ is
drawn using the basic slopes and one edge of slope $\varepsilon$.

It remains to set the values of $x_i$ and draw the missing edges
$u^i_0v^i_0$. Since the line $\ell^i$ determined by the possible
locations of $u^i_0$ does not pass through $t^i_0$, by varying the
value of $x^i$, we can attain any slope for the missing edge
$t^i_0u^i_0$, except for the slope of $\ell^i$. By Claim \ref{slopeparc8}, with
finitely many exceptions, all values of $x^i$ produce a proper
drawing of $G^i$. Therefore, we can choose $x^1, \ldots,
x^m$ so that all segments $t^i_0u^i_0$ have the same slope and
every component $G^i$ is properly drawn using the same seven slopes.
Translating the resulting drawings
through suitable vectors gives a proper drawing of $G$, this completes the proof
of Theorem \ref{slopepar1}.

\subsection{Concluding Remarks}

In the proof of Theorem \ref{slopepar1}, the slopes we use depend on the graph
$G$. However, the proof shows that one can simultaneously embed
all cubic graphs using only {\em seven} fixed slopes.

It is unnecessary to use $|x|\ge 50$, in every step, we could pick
any $x$, with finitely many exceptions.

It seems to be only a technical problem that we needed {\em two}
extra directions in the proof of Theorem \ref{slopepar1}. We believe that {\em
one} extra direction would suffice.

The most interesting problem that remains open is to decide
whether the number of slopes needed for graphs of maximum degree
{\em four} is bounded.

Another not much investigated question is to estimate the complexity of computing the slope parameter of a graph. A related problem is to decide under what conditions a graph can be drawn on a polynomial sized grid using a fixed number of slopes. 

\begin{quest}\label{polygrid} Is it possible to draw all cubic graphs with a bounded number of slopes on a polynomial sized grid?
\end{quest}

\clearpage
\section{Drawing Planar Graphs with Few Slopes}\label{sec:slopeplan}

This section is based on our paper with Bal\'azs Keszegh and J\'anos Pach, Drawing planar graphs of bounded degree with few slopes \cite{KPP10}.

In this section, we will be concerned with drawings of planar graphs. Unless it is stated otherwise, all {\em drawings} will be {\em non-crossing}, that is, no two arcs that represent different edges have an interior point in common.

Every planar graph admits a straight-line drawing \cite{F48}. From the practical and aesthetical point of view, it makes sense to minimize the number of slopes we use \cite{WC94}. Remember that the {\em planar slope number} of a planar graph $G$ is the smallest number $s$
with the property that $G$ has a straight-line drawing with edges
of at most $s$ distinct slopes. If $G$ has a vertex of degree $d$, then its planar slope number is at least $\lceil d/2\rceil$, because in a straight-line drawing no two edges are allowed to overlap.

Dujmo\-vi\'c, Eppstein, Suderman, and Wood \cite{DESW07} raised the question whether there exists a function $f$ with the property that the planar slope number of every planar graph with maximum degree $d$ can be bounded from above by $f(d)$. Jelinek et al.~\cite{JJ10} have shown that the answer is yes for {\em outerplanar} graphs, that is, for planar graphs that can be drawn so that all of their vertices lie on the outer face. In Section \ref{sec:slopeplan}.2, we answer this question in full generality. We prove the following.

\begin{thm}\label{one} Every planar graph with maximum degree $d$ admits a straight-line drawing, using segments of $O(d^2(3+2\sqrt 3)^{12d})\le K^d$ distinct slopes.
\end{thm}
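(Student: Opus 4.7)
The plan is to prove Theorem 5.1 by induction, building the drawing incrementally through a canonical vertex ordering of the planar graph, while maintaining throughout a fixed finite palette $\Sigma$ of slopes with $|\Sigma|=O(d^{2}(3+2\sqrt{3})^{12d})$. First I would reduce to the case of $3$-connected plane triangulations: by adding $O(1)$ auxiliary vertices per face it is standard to reduce to triangulations, and since adding edges can only increase the planar slope number (the maximum degree goes up by at most a constant factor that is hidden inside the base of the exponent), it suffices to treat triangulations of maximum degree $O(d)$.

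For a plane triangulation I would use a Shift-type canonical ordering $v_{1},v_{2},\dots ,v_{n}$ in the spirit of de Fraysseix--Pach--Pollack, with the property that the outer boundary of the subgraph $G_{k}$ induced by $v_{1},\dots ,v_{k}$ is a path $P_{k}$, and that $v_{k+1}$ is attached to a contiguous sub-path $w_{p},w_{p+1},\dots ,w_{q}$ of $P_{k}$. The drawing is built by placing $v_{1},v_{2},v_{3}$ in a small base triangle and then, at stage $k$, choosing a position for $v_{k+1}$ above the current drawing such that every edge $v_{k+1}w_{i}$ has slope in $\Sigma$ and the new edges are crossing-free. The key geometric invariant I would carry along is that the path $P_{k}$ is drawn as an \emph{$x$-monotone}, mildly convex polygonal arc whose edge slopes all lie in $\Sigma$, and that for each vertex $w_{i}\in P_{k}$ the wedge of directions still available for new incident edges is wide enough (quantitatively, of angular size at least some $\alpha_{d}>0$) to contain enough slopes from $\Sigma$.

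To construct $\Sigma$ I would take, for every choice of an ``angular signature'' of at most $d$ slopes around a vertex, all the finitely many slopes that can arise as edges to a new vertex placed at the unique intersection point determined by the signature. Each single placement step needs only $O(d)$ slopes (one per neighbor), but the shift step may force a rescaling and translation of already-drawn vertices, and hence the slope of every previously placed edge must remain in $\Sigma$ after arbitrarily many such shifts. This is the source of the exponential-in-$d$ blow-up: I would show by a direct geometric calculation that each shift step can change the aspect ratio of the ``safe wedge'' at any boundary vertex by at most a multiplicative factor $3+2\sqrt{3}$, and that roughly $12d$ such factors accumulate per vertex of the boundary during the inductive construction, whence $|\Sigma|=O(d^{2}(3+2\sqrt{3})^{12d})$ suffices.

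The hard part will be verifying the two quantitative invariants simultaneously at every inductive step: (i) that the wedge of permissible slopes at each boundary vertex remains bounded away from degeneracy, so that a slope from $\Sigma$ is always available for the next incident edge, and (ii) that the global shape of $P_{k}$ remains compatible with the bookkeeping on slopes after the Shift operation is performed to accommodate $v_{k+1}$. I would handle (i) by proving a stability lemma showing the wedge shrinks by a controlled factor under each Shift, and (ii) by tracking the accumulated distortion via a potential argument; combining the two then pins down the constants $3+2\sqrt{3}$ and $12d$ in the final bound.
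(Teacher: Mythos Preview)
Your approach has a genuine gap, and it differs fundamentally from the paper's proof. The paper does \emph{not} use a canonical ordering or the de Fraysseix--Pach--Pollack shift method at all. Instead, it invokes Koebe's circle packing theorem together with the Malitz--Papakostas lemma: the vertices of a triangulated planar graph of maximum degree $d$ can be represented by tangent disks in which the ratio of radii of any two adjacent disks is at least $\alpha^{d-2}$ with $\alpha=1/(3+2\sqrt{3})$. One then snaps each disk center to a nearby point of a \emph{multi-scale integer grid} (coarser where the disks are larger), checks that triangle orientations survive the snapping, and observes that after rescaling each edge by the local grid size, its endpoints are integer points within distance $O(d/\alpha^{2d})$ of each other. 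The slope count is then just the number of lattice points in a disk of that radius. The constant $3+2\sqrt{3}$ is precisely $1/\alpha$ from Malitz--Papakostas, and the exponent $12d$ arises because triangulating multiplies the degree by at most $3$ and the radius bound is squared.

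In your plan, by contrast, the constants are reverse-engineered from the target rather than derived. There is no evident reason why a single FPP shift would change a wedge's aspect ratio by the very specific factor $3+2\sqrt{3}$, nor why only $O(d)$ shifts would affect any given vertex: in the FPP construction a boundary vertex can stay on $P_k$ and be shifted $\Theta(n)$ times, and the standard grid embedding gives $\Theta(n^2)$ slopes with no dependence on $d$ alone. Your ``stability lemma'' and ``potential argument'' would have to overcome exactly this obstacle, and nothing in the proposal indicates how. The description of $\Sigma$ via ``angular signatures'' is also underspecified, since the slopes to a newly placed vertex depend on actual coordinates, not just on the combinatorial pattern of incident directions. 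The missing idea is precisely the circle-packing input: it is what converts the degree bound into a \emph{metric} constraint (bounded ratio of neighboring scales) that a grid-snapping argument can exploit.
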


The proof is based on a paper of Malitz and Papakostas \cite{MP94}, who used Koebe's theorem \cite{K36} on disk representations of planar graphs to prove the existence of drawings with relatively large angular resolution. As the proof of these theorems, our argument is nonconstructive; it only yields a nondeterministic algorithm with running time $O(dn)$. However, if one combines our result with a polynomial time algorithm that computes the $\epsilon$-approximation of the disk representation (see e.g. Mohar \cite{M93}), then one can obtain a deterministic algorithm running in time exponential in $d$ but polynomial in $n$.

For $d=3$, much stronger results are known than the one given by our theorem. Dujmovi\'c at al. \cite{DESW07} showed that every planar graph with maximum degree 3 admits a straight-line drawing using at most 3 different slopes, except for at most 3 edges of the outer face, which may require 3 additional slopes. This complements Ungar's old theorem \cite{U53}, according to which 3-regular, 4-edge-connected planar graphs require only 2 slopes and 4 extra edges.

The exponential upper bound in Theorem~\ref{one} is probably far from being optimal. However, we were unable to give any superlinear lower bound for the largest planar slope number of a planar graph with maximum degree $d$. The best constructions we are aware of are presented in Section \ref{sec:slopeplan}.5.

We also show that significantly fewer slopes are sufficient if we are allowed to represent the edges by short noncrossing poly\-gonal paths. If such a path consists of $k+1$ segments, we say that the edge is drawn by $k$ {\em bends}. In Section \ref{sec:slopeplan}.3, we show if we allow one bend per edge, then every planar graph can be drawn using segments with $O(d)$ slopes.

\begin{thm}\label{onebend}
Every planar graph $G$ with maximum degree $d$ can be drawn with at most $1$ bend per edge, using at most $2d$ slopes.
\end{thm}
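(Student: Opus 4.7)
The plan is to use a planar straight-line drawing of $G$ with well-controlled angular resolution at every vertex---such as the Malitz-Papakostas drawing \cite{MP94} obtained from Koebe's circle packing, which is the same tool used in the proof of Theorem \ref{one}---as a starting point, and then to convert it into a $1$-bend drawing that uses only the $2d$ equally spaced slopes $\sigma_k = k\pi/(2d)$, $k = 0, 1, \ldots, 2d-1$. Each slope provides two opposite half-lines at any vertex, so the set $\Sigma = \{\sigma_k\}$ yields $4d$ possible outgoing directions at each vertex, which is comfortably more than the maximum degree $d$.

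First, at every vertex $v$, I would assign to each of the (at most $d$) incident edges a distinct outgoing direction chosen from these $4d$ directions, respecting the cyclic order of edges around $v$ prescribed by the planar embedding. Concretely, the true outgoing direction of each edge in the initial drawing is rounded to a nearby one of the $4d$ directions; the angular resolution of the Malitz-Papakostas drawing ensures that almost all rounded values are distinct, and any residual collision at $v$ is resolved by nudging one of the offending edges to an adjacent unused direction, which preserves the cyclic order.

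Second, each edge $uv$ is drawn as a $1$-bend polyline whose two segments follow the assigned outgoing directions at $u$ and $v$, meeting at the bend point $b_{uv}$ where the two rays intersect. Because each of the assigned directions is close to the true direction of the edge in the initial drawing, the two rays do meet on the correct side of both endpoints, and the polyline lies within a narrow corridor around the original straight segment from $u$ to $v$.

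The main obstacle, and the part that requires the most care, is to show that the collection of $1$-bend polylines is pairwise non-crossing. Near each vertex $v$, the short segments of the $d_v$ incident edges emerge into disjoint angular cones specified by the cyclic-order-preserving assignment, so locally they do not cross each other. Away from vertex neighborhoods, each polyline lies in a corridor whose width is controlled by $1/d$ and by the angular resolution of the initial drawing. Since the original straight segments of the Malitz-Papakostas drawing are non-crossing and separated by gaps of the same order, the corridors can be made pairwise disjoint outside of shared endpoints, and the new drawing is planar. Degenerate situations where a true direction coincides with some $\sigma_k$ or where two bend points nearly collide are handled by a small generic perturbation of the vertex positions.
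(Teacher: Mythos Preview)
Your approach differs substantially from the paper's, and it has a genuine gap. The Malitz--Papakostas drawing only guarantees angular resolution of order $\alpha^{d}$ with $\alpha=\frac{1}{3+2\sqrt3}$, which is \emph{exponentially} small in $d$, not $\Theta(1/d)$. So at a vertex $v$ all $d$ incident edges may be squeezed into an angular window of width $d\cdot\alpha^{d}\ll \pi/(2d)$. When you then force them onto $d$ distinct directions out of your $4d$ equally spaced ones, some edge must be assigned a direction that differs from its true direction by as much as $\Theta(1)$ (up to roughly $\pi/4$). At that point the claims ``each of the assigned directions is close to the true direction'' and ``the polyline lies within a narrow corridor around the original straight segment'' simply fail: the two rays you shoot from $u$ and $v$ need not meet on the correct side, and even when they do, the bend point can land arbitrarily far from the segment $uv$, so the corridor argument for non-crossing collapses.

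For comparison, the paper avoids angular resolution entirely. It uses the $T$-shape contact representation of de~Fraysseix, Ossona de Mendez and Rosenstiehl: every edge $uv$ already comes with a canonical horizontal-then-vertical $1$-bend path through the tangency point $p_{uv}$. These paths overlap only along the hat or leg of a common $T$-shape, and the paper separates them by replacing ``horizontal'' and ``vertical'' by $d$ almost-horizontal slopes $\alpha_1,\ldots,\alpha_d$ and $d$ almost-vertical slopes $\beta_1,\ldots,\beta_d$, assigned according to the order in which the tangency points occur along each $T$-shape. That combinatorial ordering (not any metric closeness) is what guarantees non-crossing, and it gives exactly $2d$ slopes.
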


Allowing {\em two} bends per edge yields an optimal result: almost all planar graphs with maximum degree $d$ can be drawn with $\left\lceil d/2 \right\rceil$ slopes. In Section \ref{sec:slopeplan}.4, we establish

\begin{thm}\label{twobendsimproved}\label{twobends}
Every planar graph $G$ with maximum degree $d\ge 3$ can be drawn with at most 2 bends per edge, using segments of at most $\lceil d/2\rceil$ distinct slopes. The only exception is the graph formed by the edges of an octahedron, which is 4-regular, but requires 3 slopes. These bounds are best possible.
\end{thm}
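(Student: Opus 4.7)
The lower bound $\lceil d/2\rceil$ is immediate from the fact that the edges incident to a vertex of maximum degree must leave it in pairwise distinct directions, and each slope supplies only two opposite directions. For the octahedron (which is $K_{2,2,2}$), I would argue that no $2$-slope drawing exists even with two bends per edge: since every vertex has degree $4$, every vertex must be incident to two edges of each of the two slopes, so locally the drawing is forced to look orthogonal. A short case analysis on the three 4-cycles sharing a common vertex, combined with a parity argument on the closed curves formed, rules out such a drawing and shows that $3$ slopes are actually required. Combined with the straight exception-free upper bound below, this settles the lower-bound side.

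For the upper bound, set $s=\lceil d/2\rceil$ and let $\Sigma=\{k\pi/s:0\le k<s\}$, which supplies $2s\ge d$ outgoing directions. I would first fix a planar embedding of $G$ and assign, at each vertex $v$, a distinct direction from these $2s$ directions to each incident edge, respecting the cyclic order around $v$. Each edge then carries a prescribed outgoing direction at each endpoint. The geometric construction proceeds incrementally along a canonical ordering $v_1,\ldots,v_n$ in the spirit of de Fraysseix, Pach and Pollack: after drawing $G_{k-1}$ on the vertices $v_1,\ldots,v_{k-1}$ so that its outer face is $x$-monotone and ``visible from above,'' I place $v_k$ far above the contact subpath $u_{j_1},\ldots,u_{j_t}$ of the current outer face to which it must be joined.

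Each new edge $v_k u_{j_i}$ is then drawn as a $3$-segment polyline using at most two bends: a first segment leaving $u_{j_i}$ along the direction prescribed there (which, after a small overall rotation, can be assumed to point into the upper half-plane), a last segment entering $v_k$ along the direction prescribed at $v_k$, and a middle connector whose slope is chosen from $\Sigma$. The key geometric sublemma I would prove is that, given any horizontally ordered set of source points on the top profile of $G_{k-1}$, each equipped with an upward-pointing direction from $\Sigma$, and a destination $v_k$ placed sufficiently high with prescribed incoming directions from $\Sigma$, one can always realize all of these as non-crossing $3$-segment polylines with middle segments in $\Sigma$. This is done by processing the neighbors in the cyclic order around $v_k$ (which matches their left-to-right order on the outer face by planarity), so that the ``fans'' near $u_{j_i}$ and near $v_k$ are automatically non-crossing, and the middle connectors can be chained together so that successive polylines form nested paths.

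The principal obstacle is the simultaneous routing: one must verify that the prescribed boundary directions at both endpoints of every new edge are realizable by a $3$-segment polyline with internal slopes restricted to $\Sigma$, with all new polylines pairwise non-crossing and disjoint from $G_{k-1}$. I expect this to require scaling arguments (placing $v_k$ at height proportional to the horizontal span of the contact path and stretching initial segments appropriately) and a genericity perturbation to avoid accidental collinearities. Once the sublemma is established, the incremental construction yields a drawing of $G$ with at most two bends per edge using exactly $\lceil d/2\rceil$ slopes for every planar graph with $d\ge 3$ other than the octahedron; the ``best possible'' assertion then follows by combining the degree-based slope bound with the octahedron example that shows one extra slope is unavoidable in that single case, and with the fact that with only one bend per edge, Theorem~\ref{onebend} shows $2d$ slopes can be forced, so two bends are also necessary to attain the $\lceil d/2\rceil$ bound.
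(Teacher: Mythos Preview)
Your incremental-construction outline has the right shape, but it skips the one idea that actually makes the argument go through, and your ``key geometric sublemma'' is precisely where the difficulty lives.

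In the paper's proof (following Biedl--Kant) the crucial simplification is that the \emph{middle} segment of every edge is forced to be \emph{vertical}. Thus the pending edges at stage $i$ form a comb of parallel vertical rays crossing an auxiliary horizontal line $\ell_i$; the pending-order is just the left-to-right order of these intersection points, and placing $v_{i+1}$ above the median point lets you attach the third segments as a contiguous fan of regular slopes. All the non-crossing verification becomes trivial because the middle pieces are parallel. Your proposal instead allows the middle segment to be any slope in $\Sigma$ and asserts that ``successive polylines form nested paths''; without fixing the middle slope you have no control over where these middle segments go, and the nesting claim is exactly what needs proof. Relatedly, you pre-assign the outgoing direction at each $u_{j_i}$ from the cyclic order and then say a single global rotation makes all these first segments point upward---but those directions were fixed independently at each vertex and at different stages, so no single rotation works. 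The paper avoids this by not pre-assigning directions globally; instead it uses the freedom at each step to make the segments at $v_{i+1}$ a contiguous interval, and the ``$t$-wedge'' invariant (a good drawing fitting inside a prescribed cone at a low-degree vertex $t$) is what lets the pieces glue together across cut vertices.

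Two smaller points. First, canonical ordering in the de Fraysseix--Pach--Pollack sense needs a triangulation (or $3$-connectivity), which you cannot obtain without raising the maximum degree; the paper instead uses an $st$-ordering of each biconnected block (Lemma~\ref{stlemma}) together with a separate induction over cut vertices (Lemma~\ref{cutclaim}). Second, your reading of ``best possible'' is off: Theorem~\ref{onebend} is an \emph{upper} bound, not a lower bound, so it does not show that two bends are necessary. The ``best possible'' refers only to the slope count $\lceil d/2\rceil$ (forced at a max-degree vertex) and to the octahedron exception.
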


It follows from the proof of Theorem~\ref{twobends} that in the cyclic order of directions, the slopes of the edges incident to any given vertex form a contiguous interval. Moreover, the $\lceil d/2\rceil$ directions we use can be chosen to be equally spaced in $[0,2\pi)$. We were unable to guarantee such a nice property in Theorem~\ref{onebend}: even for a fixed $d$, as the number of vertices increases, the smallest difference between the $2d-2$ slopes we used tends to zero. We suspect that this property is only an unpleasant artifact of our proof technique.

\subsection{Straight-line Drawings - Proof of Theorem~\ref{one}}

Note that it is sufficient to prove the theorem for triangulated planar graphs,  because any planar graph can be triangulated by adding vertices and edges so that the degree of each vertex increases only by a factor of at most three \cite{PT06}, so at the end we will lose this factor.

We need the following result from \cite{MP94}, which is not displayed as a theorem there, but is stated right above Theorem 2.2.

\begin{lem}\label{mp} {\rm (Malitz-Papakostas)} The vertices of any triangulated planar graph $G$ with maximum degree $d$ can be represented by nonoverlapping disks in the plane so that two disks are tangent to each other if and only if the corresponding vertices are adjacent, and the ratio of the radii of any two disks that are tangent to each other is at least $\alpha^{d-2},$ where $\alpha=\frac 1{3+2\sqrt 3}\approx 0.15$.
\end{lem}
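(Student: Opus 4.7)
The plan is to combine Koebe's circle packing theorem with a quantitative local estimate known as the Ring Lemma (due to Rodin and Sullivan). Koebe's theorem already yields a disk representation of any triangulated planar graph $G$ in which two disks are tangent exactly when the corresponding vertices are adjacent; what is missing is control of the ratio of radii of tangent disks, which is a purely metric statement about such a packing. I would therefore fix a Koebe representation and work solely with its geometry.

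The key technical ingredient is the classical formula for the angle of the triangle $c_uc_vc_w$ formed by the centres of three pairwise-tangent disks. A direct application of the law of cosines to the triangle with side lengths $r_u+r_v$, $r_v+r_w$, $r_u+r_w$ gives
\[
\sin\!\left(\tfrac{\theta_v}{2}\right)=\sqrt{\frac{r_u\,r_w}{(r_u+r_v)(r_v+r_w)}},
\]
where $\theta_v$ is the angle at $c_v$. Fix a tangent pair $D_u,D_v$ and consider the ring of disks $D_{u_1}=D_u,D_{u_2},\ldots,D_{u_{d_v}}$ corresponding to the neighbours of $v$ in cyclic order; because $G$ is triangulated, consecutive disks in the ring are tangent, so the formula applies to every triangle $u_ivu_{i+1}$ and the resulting angles at $c_v$ sum to $2\pi$. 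This reduces the claim to the analytic Ring Lemma: for any positive radii $r_1,\ldots,r_n$ (with $r_v$ normalized to $1$) satisfying this angle-sum identity, one has $\min_i r_i\ge\alpha^{n-2}$.

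I would prove the Ring Lemma by induction on $n$. If some $r_j$ is smaller than the target bound, then both angles $\theta_{j-1},\theta_j$ at $c_v$ involving $D_{u_j}$ are small, and applying the angle formula at the vertex $c_{u_j}$ of the two triangles sharing the edge $u_jv$ shows that the two angles at $c_{u_j}$ together come close to filling the full $2\pi$, which in turn forces a precise inequality between $r_j$ and its ring neighbours $r_{j\pm1}$. Iterating this inequality along the ring produces the multiplicative factor $3+2\sqrt3$ per step and the bound $\alpha^{n-2}$; the extremal configuration is a one-parameter family of degenerating ring packings. The hard part will be the extremal analysis pinning down exactly this constant, as opposed to a weaker one that a crude estimate would yield. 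Once the Ring Lemma is established, setting $n=d_v\le d$ gives $r_u/r_v\ge\alpha^{d_v-2}\ge\alpha^{d-2}$. A minor technical point is that the outer face of a planar Koebe packing is not a triangle; I would handle this by invoking Koebe's theorem on the sphere (or by adding a single triangulating vertex at infinity) so that the ring argument applies uniformly to every vertex of $G$.
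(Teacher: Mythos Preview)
Your approach is essentially the one sketched in the paper: the lemma is quoted from Malitz--Papakostas \cite{MP94}, and the paper only indicates that it follows from Koebe's theorem together with the ring observation that any internal disk is surrounded by at most $d$ mutually tangent disks, none of which can be much smaller than the central one. Your write-up fleshes out exactly this Ring Lemma step.

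The one substantive difference is how the three outer-face vertices are handled. The paper applies a M\"obius (conformal) map to the Koebe packing so that the three outer disks have \emph{equal} radii; then the pairwise ratios among the outer disks are $1$, and every other vertex is interior and has a full ring of at most $d$ neighbours, so the Ring Lemma applies directly. Your suggestions (spherical Koebe, or adjoining a vertex at infinity) aim at the same issue but are a bit more delicate: a spherical packing has to be brought back to the plane without losing control of radius ratios, and adjoining an extra vertex bumps the degree of the outer vertices by one, which would cost a factor of $\alpha$ in the bound. The conformal normalization avoids both complications and is the cleaner fix.
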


Lemma~\ref{mp} can be established by taking any representation of the vertices of $G$ by tangent disks, as guaranteed by Koebe's theorem, and applying a conformal mapping to the plane that takes the disks corresponding to the three vertices of the outer face to disks of the same radii. The lemma now follows by the observation that any internal disk is surrounded by a ring of at most $d$ mutually touching disks, and the radius of none of them can be much smaller than that of the central disk.

The idea of the proof of Theorem \ref{one} is as follows. Let $G$ be a triangulated planar graph with maximum degree $d$, and denote its vertices by $v_1, v_2, \ldots$. Consider a disk representation of $G$ meeting the requirements of Lemma~\ref{mp}. Let $D_i$ denote the disk that represents $v_i$, and let $O_i$ be the center of $D_i$. By properly scaling the picture if necessary, we can assume without loss of generality that the radius of the smallest disk $D_i$ is sufficiently large. Place an integer grid on the plane, and replace each center $O_i$ by the nearest grid point. Connecting the corresponding pairs of grid points by segments, we obtain a straight-line drawing of $G$. The advantage of using a grid is that in this way we have control of the slopes of the edges. The trouble is that the size of the grid, and thus the number of slopes used, is very large. Therefore, in the neighborhood of each disk $D_i$, we use a portion of a grid whose side length is proportional to the radius of the disk. These grids will nicely fit together, and each edge will connect two nearby points belonging to grids of comparable sizes. Hence, the number of slopes used will be bounded. See Figure \ref{fig:discs}.

\begin{figure}[ht]
\centering
		\includegraphics[scale=0.65]{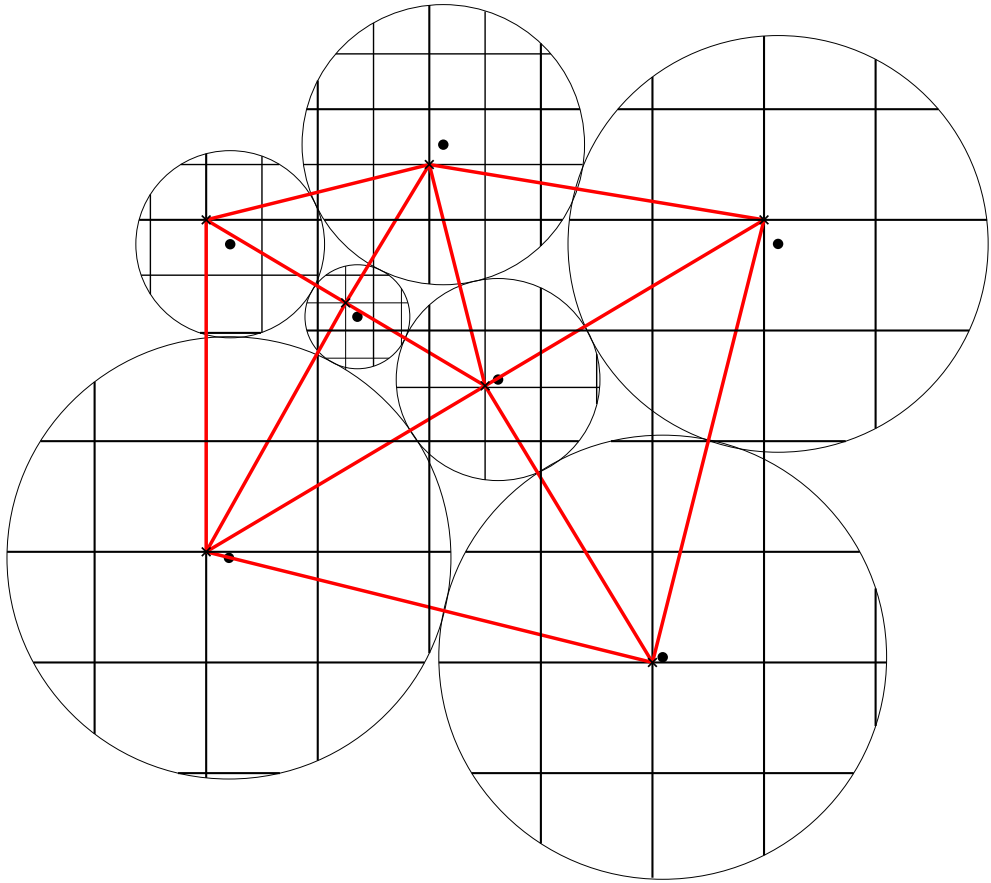}
               \hspace{5mm}
		 \caption{Straight-line graph from disk representation}
		\label{fig:discs}
\end{figure}

Now we work out the details. Let $r_i$ denote the radius of $D_i\; (i=1,2\ldots)$, and suppose without loss of generality that $r^*$, the radius of the smallest disk is
$$r^*=min_i r_i=\sqrt{2}/\alpha^{d-2}>1,$$
where $\alpha$ denotes the same constant as in Lemma~\ref{mp}.

Let $s_i=\lfloor\log_d (r_i/r^*)\rfloor\ge 0$, and represent each vertex $v_i$ by the integer point nearest to $O_i$ such that both of its coordinates are divisible by $d^{s_i}$. (Taking a coordinate system in general position, we can make sure that this point is unique.) For simplicity, the point representing $v_i$ will also be denoted by $v_i$. Obviously, we have that the distance between $O_i$ and $v_i$ satisfies
$$\overline{O_iv_i}< \frac{d^{s_i}}{\sqrt{2}}.$$

Since the centers $O_i$ of the disks induce a (crossing-free) straight-line drawing of $G$, in order to prove that moving the vertices to $v_i$ does not create a crossing, it is sufficient to verify the following statement.

\begin{lem} For any three mutually adjacent vertices, $v_i, v_j, v_k$ in $G$, the orientation of the triangles $O_iO_jO_k$ and $v_iv_jv_k$ are the same.
\end{lem}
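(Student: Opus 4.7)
My plan is to show that replacing each $O_m$ by the nearby grid point $v_m$ produces only a small perturbation in the signed area of the triangle $O_iO_jO_k$, small enough that its sign is preserved. The orientation statement is equivalent to showing that $(v_j-v_i)\times(v_k-v_i)$ has the same sign as $(O_j-O_i)\times(O_k-O_i)$.

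First I would quantify the perturbation $\epsilon_m := v_m - O_m$ for each $m\in\{i,j,k\}$. By construction $|\epsilon_m| < d^{s_m}/\sqrt{2}$, and $d^{s_m} \le r_m/r^*$ by the definition of $s_m$. Since $r^* = \sqrt{2}/\alpha^{d-2}$, this yields the clean bound $|\epsilon_m| < \tfrac{1}{2}\alpha^{d-2}\, r_m$.

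Next I would extract the ``size'' of the triangle $O_iO_jO_k$. Because $v_i,v_j,v_k$ are mutually adjacent in the triangulation, the disks $D_i,D_j,D_k$ are pairwise tangent, so $|O_pO_q| = r_p+r_q$ and the area is
$$[O_iO_jO_k] = \sqrt{r_ir_jr_k(r_i+r_j+r_k)}$$
by the standard formula for a triangle of three mutually tangent disks. By Lemma~\ref{mp}, each ratio $r_p/r_q$ of two tangent radii lies in $[\alpha^{d-2},\alpha^{-(d-2)}]$, so $r_i, r_j, r_k$ are all comparable up to a factor $\alpha^{d-2}$.

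The main step is a direct expansion of the signed area. Writing $2[v_iv_jv_k] - 2[O_iO_jO_k] = (O_j-O_i)\times(\epsilon_k-\epsilon_i) + (\epsilon_j-\epsilon_i)\times(O_k-O_i) + (\epsilon_j-\epsilon_i)\times(\epsilon_k-\epsilon_i)$ and taking the index $i$ to correspond to the smallest radius in the triple, each of the three terms is bounded by $|O_pO_q|\cdot|\epsilon_r-\epsilon_s|$ (or $|\epsilon_p-\epsilon_q|\cdot|\epsilon_r-\epsilon_s|$). Using $|O_pO_q| = r_p+r_q$ and $|\epsilon_p-\epsilon_q| \le \tfrac12\alpha^{d-2}(r_p+r_q)$, the total change is bounded by
$$\bigl(r_i+r_j\bigr)\bigl(r_i+r_k\bigr)\,\alpha^{d-2}\left(1+\tfrac14\alpha^{d-2}\right).$$

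Finally I would verify that this perturbation is strictly smaller than $2\sqrt{r_ir_jr_k(r_i+r_j+r_k)}$, which forces the signed area to retain its sign. I expect this last comparison to be the main obstacle: one has to check the inequality uniformly over all admissible triples $(r_i,r_j,r_k)$ with $r_i\le r_j\le r_k$ and $r_i \ge \alpha^{d-2}r_k$. A short case analysis (symmetric triple, one small radius, two small radii) shows that in every case the ratio of perturbation to $2\cdot\text{Area}$ is bounded by a constant multiple of $\sqrt{\alpha^{d-2}}$ (or even $\alpha^{d-2}$ itself), which is well below $1$ as soon as $d\ge 3$, since $\alpha = 1/(3+2\sqrt{3})\approx 0.15$. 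Hence the orientation is preserved, as claimed.
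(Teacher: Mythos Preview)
Your proof is correct, but it takes a noticeably more computational route than the paper's. The paper argues in two lines: with $r_i\ge r_j\ge r_k$, the smallest altitude of the triangle $O_iO_jO_k$ (the one from $O_k$, opposite the longest side $O_iO_j=r_i+r_j$) is at least $r_k$; it then invokes the elementary fact that if each vertex of a triangle moves by less than half the smallest altitude, the orientation cannot change; and finally it checks that each displacement satisfies $\overline{O_mv_m}<\tfrac12\alpha^{d-2}r_m\le\tfrac12\alpha^{d-2}r_i\le r_k/2$.

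You instead expand the signed area directly, bound each cross-product term, and reduce to the explicit inequality $(r_i+r_j)(r_i+r_k)\,\alpha^{d-2}\bigl(1+\tfrac14\alpha^{d-2}\bigr)<2\sqrt{r_ir_jr_k(r_i+r_j+r_k)}$ over all admissible triples. Both arguments start from the same two inputs ($|\epsilon_m|<\tfrac12\alpha^{d-2}r_m$ and the pairwise ratio bound $\ge\alpha^{d-2}$), so the difference is purely in packaging: the paper hides the numerics inside the ``half the smallest altitude'' fact and the one-line lower bound for that altitude, while you make everything explicit via the Heron-type area formula and a case check. Your version is self-contained and does not need that geometric black box; the paper's version is shorter and avoids the final case analysis.
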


\begin{proof} By Lemma~\ref{mp}, the ratio between the radii of any two adjacent disks is at least $\alpha^{d-2}$. Suppose without loss of generality that $r_i\ge r_j\ge r_k\ge \alpha^{d-2}r_i$.
For the orientation to change, at least one of $\overline{O_iv_i}$, $\overline{O_jv_j}$, or $\overline{O_kv_k}$ must be at least half of the smallest altitude of the triangle $O_iO_jO_k$, which is at least $\frac{r_k}{2}$.

On the other hand, as we have seen before, each of these numbers is smaller than
$$\frac{d^{s_i}}{\sqrt{2}}\le\frac{r_i/r^*}{\sqrt{2}}
=\frac{{\alpha}^{d-2}r_i}{2}\le \frac{r_k}{2}$$
which completes the proof.
\end{proof}

Now we are ready to complete the proof of Theorem~\ref{one}. Take an edge $v_iv_j$ of $G$, with $r_i\ge r_j\ge \alpha^{d-2}r_i$.
The length of this edge can be bounded from above by
$$\overline{v_iv_j}\le  \overline{O_iO_j}+\overline{O_iv_i}+\overline{O_jv_j}
\le r_i+r_j+\frac{d^{s_i}}{\sqrt{2}}+\frac{d^{s_j}}{\sqrt{2}}
\le 2r_i+{\sqrt{2}}{d^{s_i}}
\le 2r_i+{\sqrt{2}}{r_i/r^*}$$
$$ \le r_i/r^*(2r^*+\sqrt{2})
\le \frac{r_j/r^*}{\alpha^{d-2}}(2r^*+\sqrt{2})
< \frac{d^{s_j+1}}{\alpha^{d-2}}(\frac{2\sqrt{2}}{\alpha^{d-2}}+\sqrt{2}).$$

According to our construction, the coordinates of $v_j$ are integers divisible by $d^{s_j}$, and the coordinates of $v_i$ are integers divisible by $d^{s_i}\ge d^{s_j}$, thus also by $d^{s_j}$.

Thus, shrinking the edge $v_iv_j$ by a factor of $d^{s_j}$, we obtain a segment whose endpoints are integer points at a distance at most
$\frac{d}{\alpha^{d-2}}(\frac{2\sqrt{2}}{\alpha^{d-2}}+\sqrt{2}).$
Denoting this number by $R(d)$, we obtain that the number of possible slopes for $v_iv_j$, and hence for any other edge in the embedding, cannot exceed the number of integer points in a disk of radius $R(d)$ around the origin. Thus, the planar slope number of any triangulated planar graph of maximum degree $d$ is at most roughly $R^2(d)\pi=O(d^2/\alpha^{4d})$, which completes the proof. \hfill $\square$

\medskip

Our proof is based on the result of Malitz and Papakostas that does not have an algorithmic version. However, with some reverse engineering, we can obtain a nondeterministic algorithm for drawing a triangulated planar graph of bounded degree with a bounded number of slopes. Because of the enormous constants in our expressions, this algorithm is only of theoretical interest. Here is a brief sketch.
\medskip

{\em Nondeterministic algorithm.} First, we guess the three vertices of the outer face and their coordinates in the grid scaled according to their radii. Then embed the remaining vertices one by one. For each vertex, we guess the radius of the corresponding disk as well as its coordinates in the proportionally scaled grid. This algorithm runs in nondeterministic $O(dn)$ time.

\subsection{One Bend per Edge - Proof of Theorem~\ref{onebend}}

In this section, we represent edges by noncrossing polygonal paths, each consisting of at most two segments. Our goal is to establish Theorem~\ref{onebend}, which states that the total number of directions assumed by these segments grows at most linearly in $d$.

The proof of Theorem~\ref{onebend} is based on a result of Fraysseix et al. \cite{FMR94}, according to which every planar graph can be represented as a contact graph of $T$-shapes. A {\em $T$-shape} consists of a vertical and a horizontal segment such that the upper endpoint of the vertical segment lies in the interior of the horizontal segment. The vertical and horizontal segments of $T$ are called its {\em leg} and {\em hat}, while their point of intersection is the {\em center} of the $T$-shape. The two endpoints of the hat and the bottom endpoint of the leg are called {\em ends} of the $T$-shape.

Two $T$-shapes are {\em noncrossing} if the interiors of their segments are disjoint. Two $T$-shapes are {\em tangent} to each other if they are noncrossing but they have a point in common.

\begin{lem}\label{Tshapes} {\rm (Fraysseix et al.)} The vertices of any planar graph with $n$ vertices can be represented by noncrossing $T$-shapes such that
\begin{enumerate}
\item two $T$-shapes are tangent to each other if and only if the corresponding vertices are adjacent;
\item the centers and the ends of the $T$-shapes belong to an $n\times n$ grid.
\end{enumerate}
Moreover, such a representation can be computed in linear time.
\end{lem}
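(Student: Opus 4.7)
The plan is to reduce to the case where $G$ is a (2-connected) planar triangulation, since any planar graph can be augmented by new edges to become a triangulation, and throwing away the auxiliary tangencies at the end only weakens the ``if and only if'' into ``only if'' — to preserve the stronger statement we instead keep the augmented graph in mind and argue that non-adjacent vertices in the original graph can be made to have disjoint $T$-shapes (the auxiliary edges will never force an extra tangency once we are careful). Concretely, I would fix a planar embedding of the triangulation with a distinguished outer face $\{v_1,v_2,v_n\}$ and use it to produce a combinatorial skeleton from which the coordinates of the legs, hats and centers can simply be read off.

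The combinatorial skeleton I would use is a \emph{Schnyder wood} on the triangulation: an orientation and 3-coloring of the internal edges (say red, blue, green) such that every internal vertex has exactly one outgoing edge of each color, the outgoing edges leave $v$ in a fixed cyclic order, and the edges of each color form a tree rooted at one of $v_1,v_2,v_n$. The existence of such a wood is the classical theorem of Schnyder, so I would take it for granted. The $T$-shape for an internal vertex $v$ is then defined by letting its \emph{hat} be horizontal at height $y(v)$ and its \emph{leg} be vertical at $x$-coordinate $x(v)$, where $(x(v),y(v))$ are integer coordinates extracted from the wood as follows: $x(v)$ counts the number of faces on one side of the red path from $v$ to its root, and $y(v)$ counts faces above the blue path from $v$ to its root (this is essentially Schnyder's coordinate assignment, up to a linear transformation to make the coordinates axis-aligned rather than barycentric). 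The left and right endpoints of the hat of $v$, and the bottom endpoint of its leg, are chosen as the centers of the three neighbors of $v$ that are the first outgoing blue, outgoing red, and incoming green edges, respectively — this is the step where one needs to check carefully that the cyclic order around $v$ in the Schnyder wood translates into ``hat of $v$ above, leg of $v$ below''.

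I would then verify the two required properties. For (2), the coordinates produced from the Schnyder wood lie on an $n\times n$ integer grid because each coordinate is a count of faces, and the triangulation has fewer than $2n$ faces; a small rescaling (or the standard variant where one uses $n-2$ directly) gives the claimed bound. For (1), one has to show: (a) every edge $uv$ of $G$ yields a tangency between the $T$-shapes of $u$ and $v$; and (b) $T$-shapes of nonadjacent vertices never meet. Both follow from the key structural property of Schnyder woods, namely that for any edge $uv$ directed from $u$ to $v$ in (say) color red, the coordinate comparisons $x(u)\le x(v)$, $y(u)\le y(v)$ and geometric nesting hold in a precise way — this comparability is what guarantees that the tip of one leg actually lands on the hat of the correct neighbor.

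The main obstacle, and the step that takes the most care, will be (b): verifying that no accidental tangencies occur. Because $T$-shapes are long and extend across the picture, it is not a priori obvious that two $T$-shapes for non-adjacent vertices do not share a grid point. The way to handle this is to exploit the ``region decomposition'' of the wood: Schnyder woods partition the triangle $v_1v_2v_n$ into three monotone regions at each vertex, and one shows that a $T$-shape of $v$ only enters the unique cell of the arrangement to which $v$ belongs, so that two $T$-shapes touching each other is a purely local event determined by the wood. Once this is in place, the linear-time claim is automatic: a Schnyder wood can be computed in linear time from a canonical ordering, and the face-counting coordinates can be obtained in a single traversal of each of the three trees.
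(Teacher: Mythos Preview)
The paper does not actually prove this lemma; it cites it as a result of de Fraysseix, Ossona de Mendez and Rosenstiehl, noting only that ``the proof of the lemma is based on the canonical ordering of the vertices of a planar graph, introduced in \cite{FPP89}.'' So there is no detailed argument to compare against, but the intended route is the incremental one: add vertices one by one in canonical order, maintaining a $T$-shape representation of the current subgraph with the pending attachment points exposed along the upper boundary. Your proposal via Schnyder woods is a genuinely different (though closely related) route, and it is a known alternative way to obtain segment-contact and $T$-contact representations; the two structures are essentially dual views of the same combinatorics on triangulations.

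That said, your write-up has real gaps. First, the reduction to triangulations is not handled: you correctly observe that adding auxiliary edges creates unwanted tangencies, but ``the auxiliary edges will never force an extra tangency once we are careful'' is not an argument. In the canonical-ordering construction one simply shortens the relevant segment by one grid unit to break an unwanted contact, and you would need to say explicitly how the analogous step works in your coordinates without destroying other tangencies or the grid bound. Second, your coordinate assignment is too vague to verify property~(1): defining the ends of the hat of $v$ as ``the centers of the three neighbors'' does not type-check geometrically (the hat endpoints must lie on the horizontal line $y=y(v)$, not at other vertices' centers), and you never specify how the three outer vertices $v_1,v_2,v_n$ are represented. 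Third, the crucial non-crossing/no-accidental-tangency check is deferred to ``the region decomposition of the wood'' without stating the inequalities you actually need; this is exactly the place where a Schnyder-based proof requires the most work, and it cannot be left at the level of a slogan. The overall strategy is sound, but as written it is a sketch rather than a proof.
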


The proof of the lemma is based on the canonical ordering of the vertices of a planar graph, introduced in \cite{FPP89}.

\begin{proof}[Proof of Theorem \ref{onebend}]
Consider a representation of $G$ by $T$-shapes satisfying the conditions in the lemma. See Figure \ref{fig:tshapesleft}. For any $v\in V(G)$, let $T_v$ denote the corresponding $T$-shape. We define a drawing of $G$, in which the vertex $v$ is mapped to the center of $T_v$. To simplify the presentation, the center of $T_v$ is also denoted by $v$. For any $uv\in E(G)$, let $p_{uv}$ denote the point of tangency of $T_u$ and $T_v$. The polygonal path $up_{uv}v$ consists of a horizontal and a vertical segment, and these paths together almost form a drawing of $G$ with one bend per edge, using segments of two different slopes. The only problem is that these paths partially overlap in the neighborhoods of their endpoints. Therefore, we modify them by replacing their horizontal and vertical pieces by almost horizontal and almost vertical ones, as follows.

For any $1\le i\le d$, let $\alpha_i$ denote the slope of the (almost horizontal) line connecting the origin $(0,0)$ to the point $(2in,-1)$. Analogously, let $\beta_i$ denote the slope of the (almost vertical) line passing through $(0,0)$ and $(1,2in)$.

Fix a $T$-shape $T_v$ in the representation of $G$. It is tangent to at most $d$ other $T$-shapes. Starting at its center $v$, let us pass around $T_v$ in the counterclockwise direction, so that we first visit the upper left side of its hat, then its lower left side, then the left side and right side of its leg, etc. We number the points of tangencies along $T_v$ in this order. (Note that there are no points of tangencies on the lower side of the hat.)

Suppose now that the hat of a $T$-shape $T_u$ is tangent to the leg of $T_v$, and let $p_{uv}$ be their point of tangency. Assume that $p_{uv}$ was the number $i$ point of tangency along $T_u$ and the number $j$ point of tangency along $T_v$. Let $p'_{uv}$ denote the unique point of intersection of the (almost horizontal) line through $u$ with slope $\alpha_i$ and the (almost vertical) line through $v$ with slope $\beta_j$. In our drawing of $G$, the edge $uv$ will be represented by the polygonal path $up'_{uv}v$. See Figure \ref{fig:tshapesright} for the resulting drawing and Figure \ref{fig:tshapesmiddle} for a version distorted for the human eye to show the underlying structure.

\begin{figure}[ht]
\centering
  \subfigure[]
{		\includegraphics[scale=0.4]{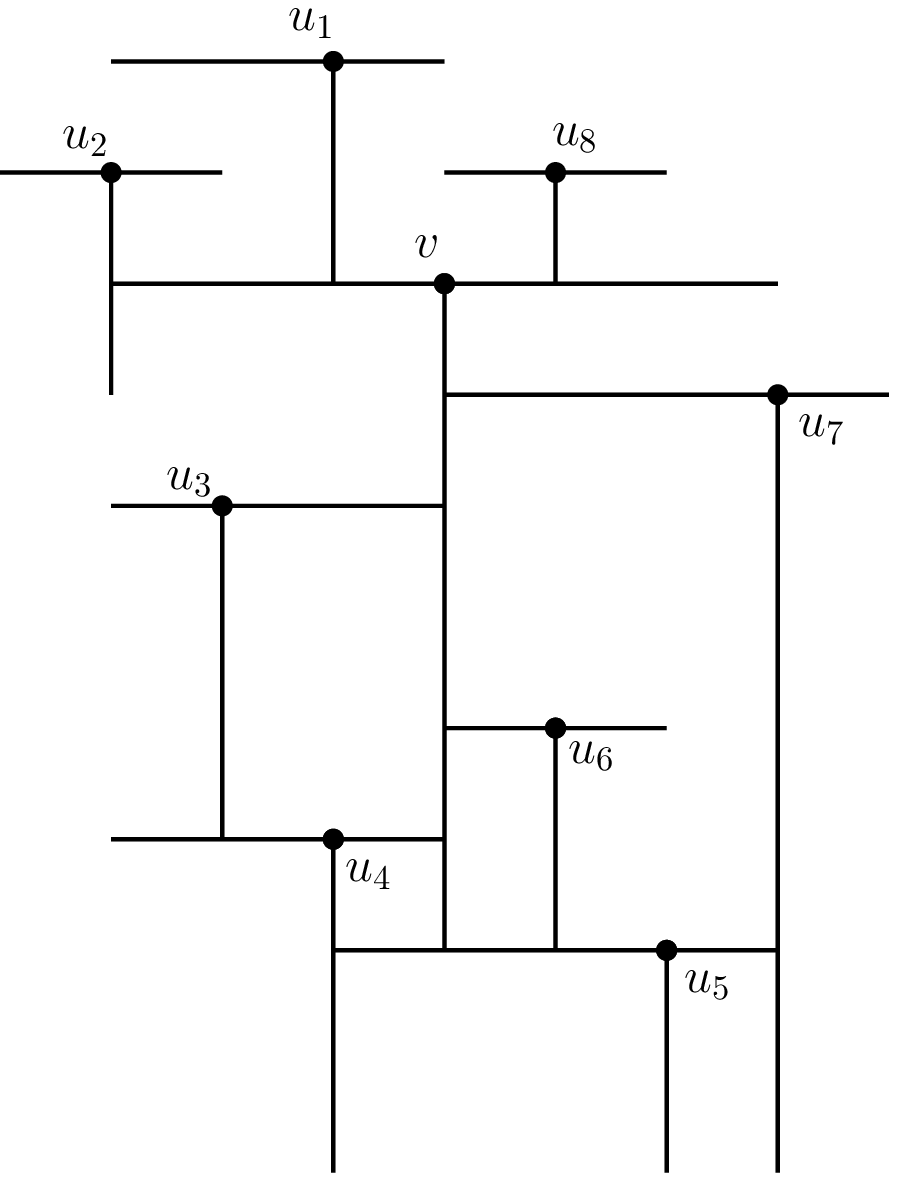}
		\hspace{1cm}
				\label{fig:tshapesleft}
	}
	\subfigure[]
{
		\includegraphics[scale=0.5]{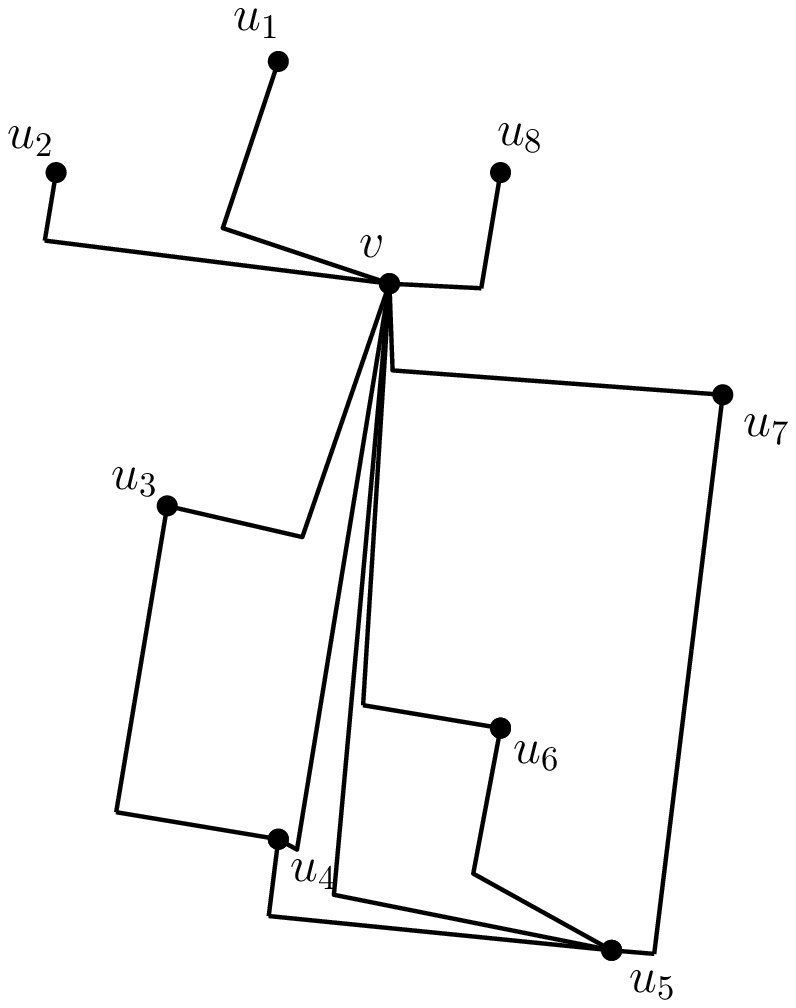}
		\hspace{1cm}
				\label{fig:tshapesmiddle}
		}	
  \subfigure[]
{\includegraphics[scale=0.5]{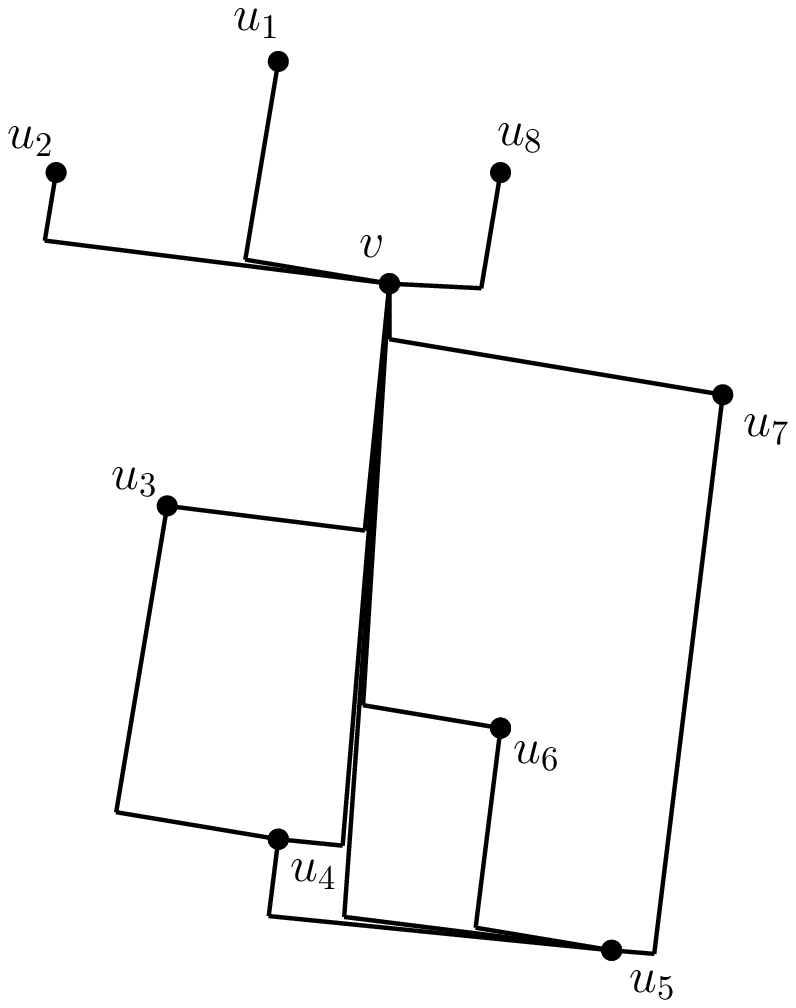}
               \hspace{5mm}
		 		\label{fig:tshapesright}
		 }
		 \caption{Representation with $T$-shapes and the drawing with one bend per edge}
		\label{fig:tshapes}
\end{figure}

Since the segments we used are almost horizontal or vertical, the modified edges $up'_{uv}v$ are very close (within distance $1/2$) of the original polygonal paths $up_{uv}v$. Thus, no two nonadjacent edges can cross each other. On the other hand, the order in which we picked the slopes around each $v$ guarantees that no two edges incident to $v$ will cross or overlap. This completes the proof.
\end{proof}

\subsection{Two Bends per Edge - Proof of Theorem~\ref{twobendsimproved}}

In this section, we draw the edges of a planar graph by polygonal paths with at most {\em two} bends. Our aim is to establish Theorem~\ref{twobendsimproved}.

Note that the statement is trivially true for $d=1$ and is false for $d=2$. It is sufficient to prove Theorem~\ref{twobendsimproved} for even values of $d$. For $d=4$, the assertion was first proved by Liu et al. \cite{LMS91} and later, independently, by Biedl and Kant \cite{BK98} (also that the only exception is the octahedral graph). The latter approach is based on the notion of $st$-ordering of biconnected ($2$-connected) graphs from Lempel et al. \cite{LEC67}. We will show that this method generalizes to higher values of $d\ge 5$. As it is sufficient to prove the statement for even values of $d$, from now on we suppose that $d\ge 6$ even. 
We will argue that it is enough to consider {\em biconnected} graphs. Then we review some crucial claims from \cite{BK98} that will enable us to complete the proof. We start with some notation.

Take $d\ge 5$ lines that can be obtained from a vertical line by clockwise rotation by $0, \pi/d, 2\pi/d,$ $\ldots, (d-1)\pi/d$ degrees. Their slopes are called the $d$ {\em regular slopes}. We will use these slopes to draw $G$. Since these slopes depend only on $d$ and not on $G$, it is enough to prove the theorem for connected graphs. If a graph is not connected, its components can be drawn separately. 

In this section we always use the term ``slope'' to mean a regular slope. The {\em  directed slope} of a directed line or segment is defined as the angle (mod $2\pi$) of a clockwise rotation that takes it to a position parallel to the upward directed $y$-axis. Thus, if the directed slopes of two segments differ by $\pi$, then they have the same slope. We say that the slopes of the segments incident to a point $p$ form a {\em contiguous interval} if the set $\vec S\subset \{0, \pi/d, 2\pi/d,\ldots, (2d-1)\pi/d\}$ of directed slopes of the segments directed away from $p$, has the property that for all but at most one $\alpha\in \vec S$, we have that $\alpha +\pi/d \mod 2\pi \in \vec S$ (see Figure \ref{fig:twobendsboth}).

Finally, we say that $G$ admits a {\em good drawing} if $G$ has a planar drawing such that every edge has at most $2$ bends, every segment of every edge has one of the $\lceil d/2 \rceil$ regular slopes, and the slopes of the segments incident to any vertex form a contiguous interval.
If $t$ is a vertex whose degree is at least two but less than $d$, then we can define the two {\em extremal segments} at $t$ as the segments corresponding to the slopes at the two ends of the contiguous interval formed by the slopes of all the segments incident to $t$.
Also define the {\em $t$-wedge} as the infinite cone bounded by the extension of the two extremal segments, which contains all segments incident to $t$ and none of the ``missing'' segments. See Figure \ref{fig:tcone}. For a degree one vertex $t$ we define the {\em $t$-wedge} as the infinite cone bounded by the extension of the rotations of the segment incident to $t$ around $t$ by $\pm\pi/2d$.

\begin{figure}[ht]
\centering
  \subfigure[]
{		\includegraphics[scale=0.4]{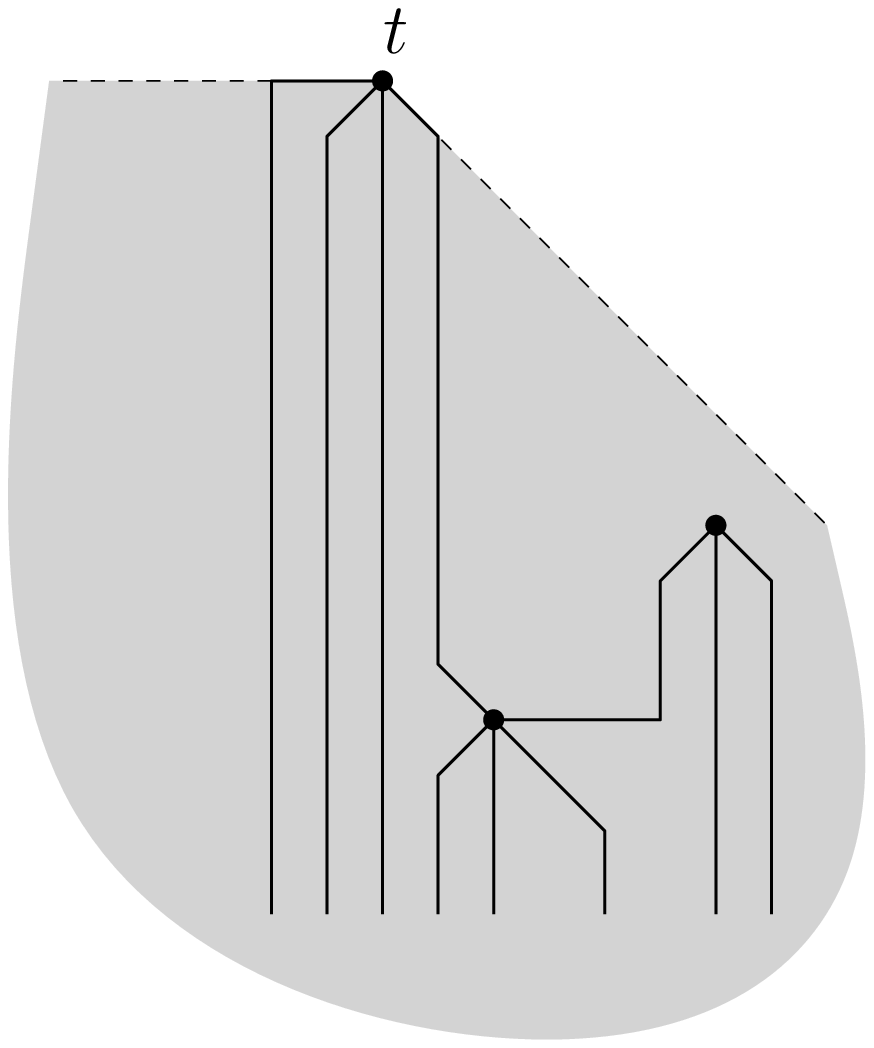}
		\hspace{2cm}
				\label{fig:tcone2}
	}
	\subfigure[]
{
		\includegraphics[scale=0.4]{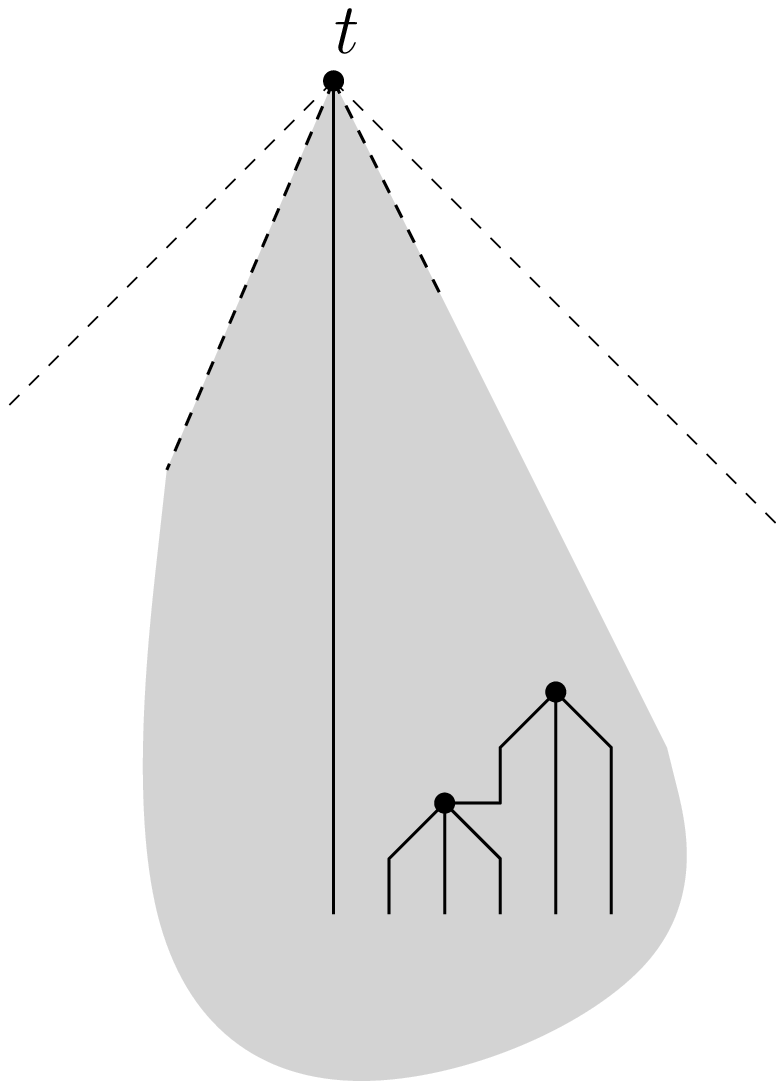}
						\label{fig:tcone1}
		}	
		
		 \caption{The $t$-wedge}
		 		\label{fig:tcone}
\end{figure}

To prove Theorem~\ref{twobendsimproved}, we show by induction that every connected planar graph with maximum degree $d\ge 6$ with an arbitrary $t$ vertex whose degree is strictly less than $d$ admits a good drawing that is contained in the $t$-wedge. Note that such a vertex always exist because of Euler's polyhedral formula, thus Theorem \ref{twobends} is indeed a direct consequence of this statement. First we show how the induction step goes for graphs that have a cut vertex, then (after a lot of definitions) we prove the statement also for biconnected graphs (without the induction hypothesis).

\begin{lem}\label{cutclaim} Let $G$ be a connected planar graph of maximum degree $d$, let $t\in V(G)$ be a vertex whose degree is strictly smaller than $d$, and let $v\in V(G)$ be a cut vertex. Suppose that for any connected planar graph $G'$ of maximum degree $d$, which has fewer than $|V(G)|$ vertices, and for any vertex $t'\in V(G')$ whose degree is strictly smaller than $d$, there is a good drawing of $G'$ that is contained in the $t'$-wedge. Then $G$ also admits a good drawing that is contained in the $t$-wedge.
\end{lem}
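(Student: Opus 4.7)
Proof plan:

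Decompose $G - v$ into its connected components $G_1, \ldots, G_k$ (with $k \ge 2$), and let $H_i := G[V(G_i) \cup \{v\}]$. Each $H_i$ is a connected planar graph with strictly fewer vertices than $G$ and maximum degree at most $d$. Setting $d_i := \deg_{H_i}(v)$, the fact that $v$ is a cut vertex gives $1 \le d_i < \deg_G(v) \le d$, so $v$ is a legal ``special'' vertex in each $H_i$. By planarity of $G$, the neighbor sets $N(v) \cap V(G_i)$ occupy pairwise disjoint, cyclically contiguous arcs in the rotation at $v$.

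If $t = v$, I apply the inductive hypothesis to every $H_i$ with special vertex $v$, producing good drawings $\Gamma_i$ of $H_i$ inside the $v$-wedge of $\Gamma_i$. If $t \ne v$, then $t$ lies in a unique component, say $G_1$, and $\deg_{H_1}(t) = \deg_G(t) < d$; I apply the hypothesis to $H_1$ with special vertex $t$ (producing $\Gamma_1$ inside its $t$-wedge) and to each $H_i$ ($i \ge 2$) with special vertex $v$ (producing $\Gamma_i$ inside its $v$-wedge). Then I allocate disjoint contiguous blocks $I_1, \ldots, I_k$ of consecutive regular directed slopes at $v$, with $|I_i| = d_i$, so that $\bigcup_i I_i$ is a single contiguous interval of length $\deg_G(v)$ whose cyclic arrangement at $v$ reproduces the cyclic order of the component arcs of $G$; in the case $t \ne v$, $I_1$ is the block already forced by $\Gamma_1$, and the remaining $I_i$ are placed next to $I_1$ on the sides dictated by the embedding.

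Since the set of regular slopes is invariant under rotation by integer multiples of $\pi/d$, each $\Gamma_i$ (other than $\Gamma_1$ in case $t \ne v$) can be rotated about its copy of $v$ so that the directed slopes of its edges at $v$ are exactly $I_i$ in the prescribed cyclic order. Translate all drawings so that the copies of $v$ coincide. In case $t \ne v$, further scale each $\Gamma_i$ ($i \ge 2$) about $v$ by a sufficiently small positive factor so that $\Gamma_i$ fits inside a neighborhood $B$ of $v$ that is contained in $\Gamma_1$'s $t$-wedge and meets $\Gamma_1$ only along the segments of $\Gamma_1$ incident to $v$. The resulting figure inherits ``only regular slopes'' and ``at most two bends per edge'' from the $\Gamma_i$; the contiguous-interval property at any vertex other than $v$ is inherited from the unique $\Gamma_i$ containing it, and at $v$ it holds by construction of the $I_i$. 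No crossings occur: the rotated $v$-wedges of distinct $\Gamma_i$ are pairwise disjoint cones at $v$ (because the $I_i$ are disjoint slope-blocks), and in case $t \ne v$ the scaled $\Gamma_i$ with $i \ge 2$ sit inside $B$, disjoint from the rest of $\Gamma_1$.

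For containment in the final $t$-wedge: in case $t = v$, the $v$-wedge of the full drawing is the cone spanned by the extremes of $\bigcup_i I_i$, which contains the $v$-wedge of each $\Gamma_i$ (whose bounding rays correspond to the extremes of $I_i \subseteq \bigcup_i I_i$), so every $\Gamma_i$ lies inside it; in case $t \ne v$, the $t$-wedge of the full drawing coincides with that of $\Gamma_1$ (every neighbor of $t$ lies in $H_1$), and each scaled $\Gamma_i$ ($i \ge 2$) lies in $B$, hence in this wedge. The main technical obstacle is the existence of the neighborhood $B$ in case $t \ne v$ when $v$ lies on an extremal boundary ray of $\Gamma_1$'s $t$-wedge. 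This is handled by a slope count: the directed slopes at $v$ whose outgoing rays from $v$ remain in the closed $t$-wedge form a contiguous block of $d + 1$ consecutive regular directions, and since $\deg_G(v) \le d < d + 1$ the contiguous interval $\bigcup_i I_i$ can be chosen inside this block, guaranteeing in the degenerate case that a suitable one-sided neighborhood $B$ on the $t$-wedge side of the boundary line still exists.
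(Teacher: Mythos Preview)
Your proposal is correct and follows essentially the same approach as the paper: split at the cut vertex $v$ into the blocks $H_i$, apply the induction hypothesis with special vertex $v$ (or $t$ for the block containing $t$), then rotate by multiples of $\pi/d$, translate to identify the copies of $v$, and in the case $t\ne v$ scale the remaining blocks down so they fit near $v$. You are actually more careful than the paper in two places: you explicitly invoke planarity to justify that the neighbor sets of $v$ in the different blocks occupy disjoint contiguous arcs in the rotation at $v$ (so the glued drawing is still planar with the correct embedding), and you address the edge case where $v$ lies on the boundary of $\Gamma_1$'s $t$-wedge via the $d+1$ directed-slope count; the paper simply writes ``rotating and possibly scaling down the components'' without discussing either point.
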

\begin{proof} Let $G_1, G_2,\ldots$ denote the connected components of the graph obtained from $G$ after the removal of the cut vertex $v$, and let $G^*_i$ be the subgraph of $G$ induced by $V(G_i)\cup\{v\}$.

If $t=v$ is a cut vertex, then by the induction hypothesis each $G_i^*$ has a good drawing in the $v$-wedge\footnote{Of course the $v$-wedges for the different components are different.}. After performing a suitable rotation for each of these drawings, and identifying their vertices corresponding to $v$, the lemma follows because the slopes of the segments incident to $v$ form a contiguous interval in each component.

If $t\ne v$, then let $G_j$ be the component containing $t$. Using the induction hypothesis, $G_j^*$ has a good drawing. Also, each $G_i^*$ for $i\ge 2$ has a good drawing in the $v$-wedge. As in the previous case, the lemma follows by rotating and possibly scaling down the components for $i\ne j$ and again identifying the vertices corresponding to $v$.
\end{proof}

In view of Lemma~\ref{cutclaim}, in the sequel we consider only biconnected graphs. We need the following definition.

\begin{defi} An ordering of the vertices of a graph, $v_1, v_2,\ldots,v_n$, is said to be an {\em $st$-ordering} if $v_1=s$, $v_n=t$, and if for every $1<i<n$ the vertex $v_i$ has at least one neighbor that precedes it and a neighbor that follows it.
\end{defi}

In \cite{LEC67}, it was shown that any biconnected graph has an $st$-ordering, for any choice of the vertices $s$ and $t$. In \cite{BK98}, this result was slightly strengthened for planar graphs, as follows.

\begin{lem}\label{stlemma} {\rm (Biedl-Kant)} Let $D_G$ be a drawing of a biconnected planar graph, $G$, with vertices $s$ and $t$ on the outer face. Then $G$ has an $st$-ordering for which $s=v_1$, $t=v_n$ and $v_2$ is also a vertex of the outer face and $v_1v_2$ is an edge of the outer face.
\end{lem}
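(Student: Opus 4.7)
Proof plan for Lemma~\ref{stlemma}:

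The statement is the classical planar refinement of the Lempel--Even--Cederbaum $st$-ordering theorem, which asserts that every biconnected graph with two designated vertices $s,t$ admits an $st$-ordering. The LEC theorem is typically proved by exhibiting a \emph{bipolar orientation} of $G$ --- an acyclic orientation in which $s$ is the unique source and $t$ the unique sink --- and taking any topological sort of it. For our planar refinement we additionally need $v_2$ to be an outer-face neighbor of $s$, since the conditions that $v_2$ lies on the outer face and that $v_1v_2$ is an outer-face edge together force $v_2$ to be such a neighbor.

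My plan is to reduce to the classical theorem by an edge-contraction. Let $w$ be an outer-face neighbor of $v_1 = s$ in the embedding $D_G$, and form the graph $G^\ast := G/sw$ by contracting the edge $sw$ to a single vertex $s^\ast$. The embedding $D_G$ induces a planar embedding of $G^\ast$ in which $s^\ast$ and $t$ remain on the outer face. If $\{s,w\}$ is not a separating pair of $G$, then $G^\ast$ is still biconnected, and the LEC theorem applied to $(G^\ast, s^\ast, t)$ delivers an $s^\ast t$-ordering $u_1 = s^\ast, u_2, \ldots, u_{n-1} = t$. Uncontracting $s^\ast$ into the pair $s,w$ and placing them as $v_1 = s$, $v_2 = w$, $v_{i+1} = u_i$ for $i \ge 2$ produces the desired $st$-ordering: every $u_i$ ($i \ge 2$) formerly adjacent to $s^\ast$ in $G^\ast$ is adjacent to $s$ or $w$ in $G$, both of which precede it, and its later neighbors are unchanged; the edge $v_1 v_2 = sw$ lies on the outer face by choice of $w$.

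The main obstacle is the case when $\{s, w\}$ is a separating pair of $G$, so that the contraction destroys biconnectivity. Since $sw$ is an outer-face edge, the components of $G - \{s,w\}$ other than the one containing $t$ lie on the inner side of $sw$; each such ``side lobe'' has all of its vertices adjacent only to $s$, $w$, or each other, and if one naively sets $v_2 = w$, the last vertex of such a side lobe to be placed has no later neighbor available. The fix is to choose $w$ as the outer-face neighbor of $s$ on the side where $\{s,w\}$ is \emph{not} a separating pair when this is possible, and otherwise to decompose $G$ along its SPQR tree at the separating pair and recurse on the split component containing $t$ (which is itself biconnected and contains $sw$ as an outer-face edge) before interleaving the ordering of the side-lobe split components in the spirit of Kant's canonical-ordering construction for biconnected planar graphs. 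This recursion, together with the verification that the interleaving respects the planar embedding and the earlier/later-neighbor condition at each vertex, is the technical heart of the argument.
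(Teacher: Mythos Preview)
The paper does not prove this lemma; it is attributed to Biedl--Kant \cite{BK98} and invoked as a black box, so there is no in-paper proof to compare your plan against.

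More to the point, the statement as written is false, and the gap in your plan lands exactly where it breaks. You correctly observe that when $\{s,w\}$ is a separating pair with a side lobe $C$ (a component of $G\setminus\{s,w\}$ not containing $t$), the last-placed vertex of $C$ has no later neighbour. This obstruction is not a technicality to be dissolved by interleaving: if $v_1=s$ and $v_2=w$, then \emph{every} vertex of $C$ has all of its neighbours in $\{v_1,v_2\}\cup C$, so the $C$-vertex of largest index has no successor, however the rest is ordered. Hence no $st$-ordering with $v_2=w$ exists in this situation, and no SPQR-plus-Kant recursion can manufacture one. A concrete witness: take the outer $4$-cycle $s\,w_1\,t\,w_2$ and add interior vertices $p$ (adjacent only to $s$ and $w_1$) and $q$ (adjacent only to $s$ and $w_2$). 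This graph is biconnected and planar with the $4$-cycle as outer face; both outer-face neighbours $w_1,w_2$ of $s$ form separating pairs with $s$, and by the argument above no $st$-ordering has $v_2\in\{w_1,w_2\}$. Your contraction argument for the non-separating case is clean and correct, and trying the other outer-face neighbour first is the right instinct---but when both fail, so does the stated claim. The paper's application chooses $s$ (and in principle the embedding) freely, so what is actually needed, and what \cite{BK98} presumably supplies, is a weaker assertion.
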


We define $G_i$ to be the subgraph of $G$ induced by the vertices $v_1, v_2, \ldots, v_i$. Note that $G_i$ is connected. If $i$ is fixed, we call the edges between $V(G_i)$ and $V(G)\setminus V(G_i)$ the {\em pending edges}.
For a drawing of $G$, $D_G$, we denote by $D_{G_i}$ the drawing restricted to $G_i$ and to an initial part of each pending edge connected to $G_i$.

\begin{prop}\label{stremark0}
In the drawing $D_G$ guaranteed by Lemma \ref{stlemma}, $v_{i+1},\ldots v_n$ and the pending edges are in the outer face of $D_{G_i}$.
\end{prop}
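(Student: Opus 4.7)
The plan is to prove the proposition by contradiction, arguing that if some $v_j$ with $j > i$ were to lie in a bounded face of $D_{G_i}$, the $st$-ordering would force $v_n=t$ into that same bounded face, contradicting the fact that $t$ lies on the outer face of $D_G$.

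First I would isolate the key planarity observation: since $D_G$ is crossing-free, any continuous arc in $D_G$ that avoids the vertex set $V(G_i)$ (for instance, the image of an edge of $G$ with neither endpoint in $G_i$) is a curve in $\mathbb{R}^2 \setminus D_{G_i}$, hence lies entirely in a single face of $D_{G_i}$. Consequently, if two vertices $v_j, v_k \notin V(G_i)$ are connected in the subgraph $G \setminus V(G_i)$, then in $D_G$ they occupy the same face of $D_{G_i}$.

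The main step is to use the $st$-ordering to propagate from an arbitrary $v_j$ with $j > i$ to $v_n$. By the defining property of an $st$-ordering (Lemma~\ref{stlemma}), every vertex $v_k$ with $1 < k < n$ has a neighbor with strictly larger index; iterating this starting from $v_j$ produces a forward-path $v_j = v_{k_0}, v_{k_1}, \ldots, v_{k_m} = v_n$ with $i < j = k_0 < k_1 < \cdots < k_m = n$. Every vertex and edge of this path avoids $V(G_i)$, so by the observation above, $v_j$ and $v_n$ share the same face $F$ of $D_{G_i}$. But $D_{G_i}$ is a subdrawing of $D_G$ (plus initial stubs of pending edges that do not enclose anything), so erasing edges and vertices can only merge faces; hence the outer (unbounded) face of $D_G$ is contained in the outer face of $D_{G_i}$. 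Since $v_n = t$ lies on the outer face of $D_G$ by hypothesis, $F$ must be the outer face of $D_{G_i}$, and so does $v_j$.

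Finally, for the pending edges: such an edge $e = v_kv_j$ has $v_k \in V(G_i)$ and $v_j \notin V(G_i)$, and its drawing in $D_G$ is an arc that meets $D_{G_i}$ only at the endpoint $v_k$. Therefore $e \setminus \{v_k\}$ is contained in one face of $D_{G_i}$, namely the face containing $v_j$, which by the previous paragraph is the outer face. The main obstacle I anticipate is being careful about the stubs of pending edges already present in $D_{G_i}$ (they protrude into the outer face but do not enclose new regions); once one observes that these stubs are disjoint arcs attached only at boundary vertices of $G_i$, they cannot create bounded faces, and the face-containment argument goes through.
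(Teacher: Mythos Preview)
Your proof is correct and follows essentially the same approach as the paper: both arguments use the $st$-ordering to produce a path from $v_j$ to $v_n$ avoiding $V(G_i)$, observe that such a path lies in a single face of $D_{G_i}$, and conclude from $v_n=t$ being on the outer face that $v_j$ (and hence each pending edge) lies there too. Your added care about the pending-edge stubs in $D_{G_i}$ is a nice touch that the paper leaves implicit.
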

\begin{proof}
Suppose for contradiction that for some $i$ and $j>i$, $v_j$ is not in the outer face of $D_{G_i}$. We know that $v_n$ is in the outer face of $D_{G_i}$ as it is on the outer face of $D_G$, thus $v_n$ and $v_j$ are in different faces of $D_{G_i}$. On the other hand, by the definition of $st$-ordering, there is a path in $G$ between $v_j$ and $v_n$ using only vertices from $V(G)\setminus V(G_i)$. The drawing of this path in $D_G$ must lie completely in one face of $D_{G_i}$. Thus, $v_j$ and $v_n$ must also lie in the same face, a contradiction.
Since the pending edges connect $V(G_i)$ and $V(G)\setminus V(G_i)$, they must also lie in the outer face.
\end{proof}

\begin{figure}[ht]
\centering
  \subfigure[The pending-order of the pending edges in $D_{G_i}$]
{		\includegraphics[scale=0.7]{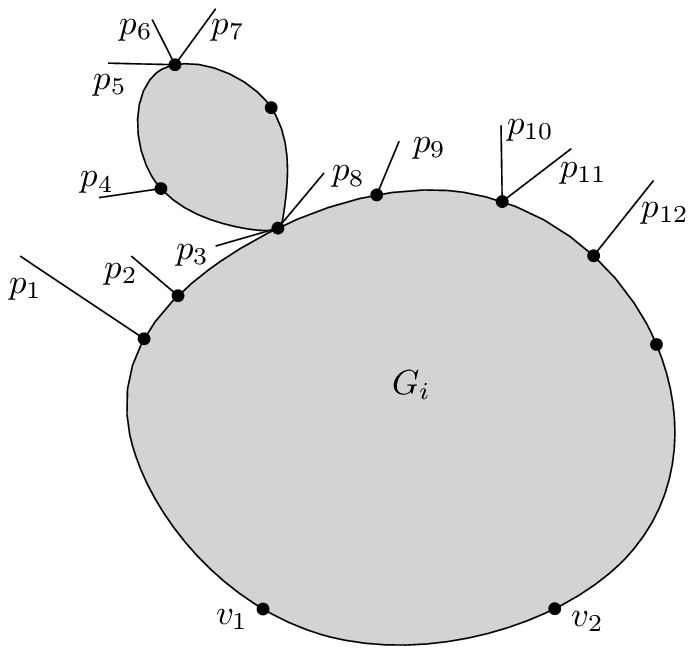}
\hspace{5mm}
				\label{fig:storder_a}
	}
       \hspace{2cm}
  \subfigure[The preceding neighbors of $v_{i+1}$ are consecutive in the pending-order]
{		
\includegraphics[scale=0.7]{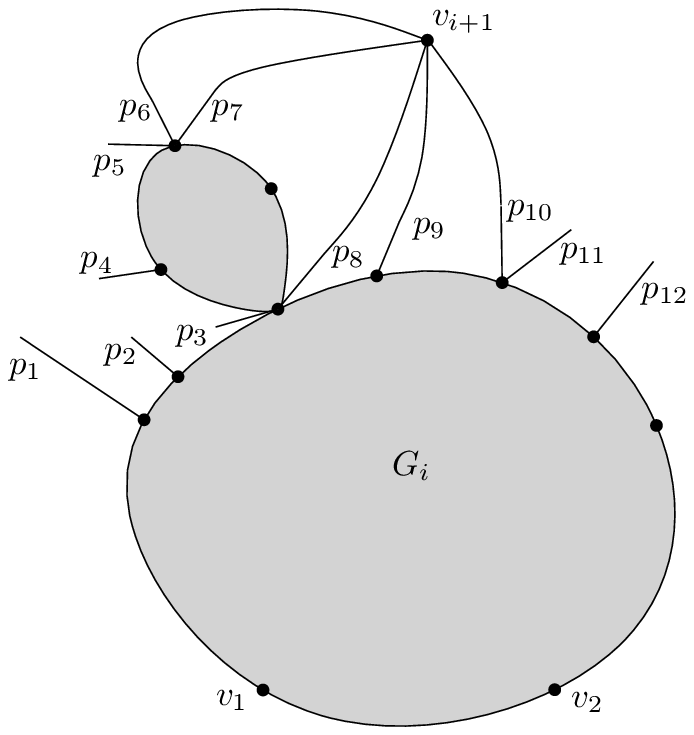}
\hspace{5mm}
				\label{fig:storder_b}
	}		 
		 \caption{Properties of the $st$-ordering}
\end{figure}

By Lemma \ref{stlemma}, the edge $v_1v_2$ lies on the boundary of the outer face of $D_{G_i}$, for any $i\ge 2$.
Thus, we can order the pending edges connecting $V(G_i)$ and $V(G)\setminus V(G_i)$ by walking in $D_G$ from $v_1$ to $v_2$ around $D_{G_i}$ on the side that does not consist of only the $v_1v_2$ edge, see Figure \ref{fig:storder_a}. We call this the {\em pending-order} of the pending edges between $V(G_i)$ and $V(G)\setminus V(G_i)$ (this order may depend on $D_G$). Proposition \ref{stremark0} implies 

\begin{prop}\label{stremark}
The edges connecting $v_{i+1}$ to vertices preceding it form an interval of consecutive elements in the pending-order of the edges between $V(G_i)$ and $V(G)\setminus V(G_i)$.
\end{prop}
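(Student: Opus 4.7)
The plan is to argue by contradiction using planarity together with Proposition \ref{stremark0} applied to $G_{i+1}$. Suppose the conclusion fails: then there exist three pending edges $e_1,e_2,e_3$, consecutive in the pending-order in this cyclic sense, such that $e_1$ and $e_3$ are both incident to $v_{i+1}$ while $e_2$ is incident to some other vertex $v_j$ with $j \in V(G)\setminus V(G_i)$. Since $j>i$ and $j\neq i+1$, we have $j>i+1$, and I would aim to show that $v_j$ is forced to lie in a bounded region that cannot contain vertices arising later in the $st$-ordering.

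First I would isolate the key region. Let $F$ denote the outer face of $D_{G_i}$, whose boundary is the closed walk traversed by the pending-order. The arc $e_1\cup\{v_{i+1}\}\cup e_3$ sits inside $F$ and connects the footpoints of $e_1$ and $e_3$ on $\partial F$. Together with the sub-arc $\gamma$ of $\partial F$ between these two footpoints that contains the footpoint of $e_2$ (i.e.\ the sub-arc traversed between $e_1$ and $e_3$ during the pending-order walk), this forms a simple closed curve in $\overline{F}$ bounding a region $R$. Crucially, since $\gamma$ sits between two pending edges encountered during the walk from $v_1$ to $v_2$, it does not contain $v_1$ or $v_2$, and in particular $v_1v_2$ lies on the opposite arc; hence $R$ is the bounded side of the closed curve.

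Next I would place $v_j$ inside $R$. The footpoint of $e_2$ lies on $\gamma\subseteq\partial R$, and $e_2$ emanates from it into $F$. By planarity, $e_2$ cannot cross $e_1$, $e_3$, any edge of $D_{G_i}$, or pass through $v_{i+1}$, so it is trapped in $R$; therefore $v_j\in R$. Finally I would apply Proposition \ref{stremark0} to $G_{i+1}$: since $j>i+1$, the vertex $v_j$ must lie in the outer face of $D_{G_{i+1}}$, which is unbounded, while $R$ is bounded and disjoint from this outer face (the boundary of $R$ lies entirely in $D_{G_{i+1}}$, separating $R$ from the unbounded region). This is the required contradiction.

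The main obstacle I expect is the orientation bookkeeping in the second step, namely the verification that it is precisely the short sub-arc $\gamma$ (not its complement containing $v_1,v_2$) which bounds a \emph{bounded} region together with $e_1\cup\{v_{i+1}\}\cup e_3$, and correspondingly that $e_2$ enters $R$ rather than its unbounded complement. This follows from the convention that the pending-order walk keeps $F$ on a fixed side, but it is the one place where a careful picture-check is genuinely required; everything else is a straightforward combination of planarity and Proposition \ref{stremark0}.
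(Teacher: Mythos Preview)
Your argument is correct and is precisely the fleshing-out the paper has in mind: in the paper, the entire proof is the single phrase ``Proposition~\ref{stremark0} implies'' preceding the statement, with no further detail. Your contradiction argument via the Jordan region $R$ bounded by $e_1\cup\{v_{i+1}\}\cup e_3\cup\gamma$, combined with Proposition~\ref{stremark0} applied to $G_{i+1}$, is exactly how one makes that implication explicit.

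Two small points of phrasing, neither of which affects the argument. First, you do not need $e_1,e_2,e_3$ to be \emph{consecutive}, nor is there anything ``cyclic'' about the pending-order (it is a linear order from $v_1$ to $v_2$); you only need $e_1<e_2<e_3$ in that linear order with $e_1,e_3$ incident to $v_{i+1}$ and $e_2$ not. Second, your claim that $\gamma$ avoids $v_1$ and $v_2$ can fail in edge cases (e.g.\ if $e_1$ has footpoint $v_1$), but what you actually need---and what does hold---is that $\gamma$ does not contain the \emph{edge} $v_1v_2$, since $\gamma$ lies on the long arc of $\partial F$ traversed by the pending-order walk while $v_1v_2$ is the short arc. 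That is enough to pin the unbounded region to the other side of your closed curve.
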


For an illustration see Figure \ref{fig:storder_a}.

Two drawings of the same graph are said to be {\em equivalent} if the circular order of the edges incident to each vertex is the same in both drawings. Note that in this order we also include the pending edges (which are differentiated with respect to their yet not drawn end).

Now we are ready to finish the proof of Theorem \ref{twobends}, as the following lemma is the only missing step.

\begin{figure}[ht]
\centering
  \subfigure[Drawing $v_1$, $v_2$ and the edges incident to them]
  {
		\includegraphics[scale=0.36]{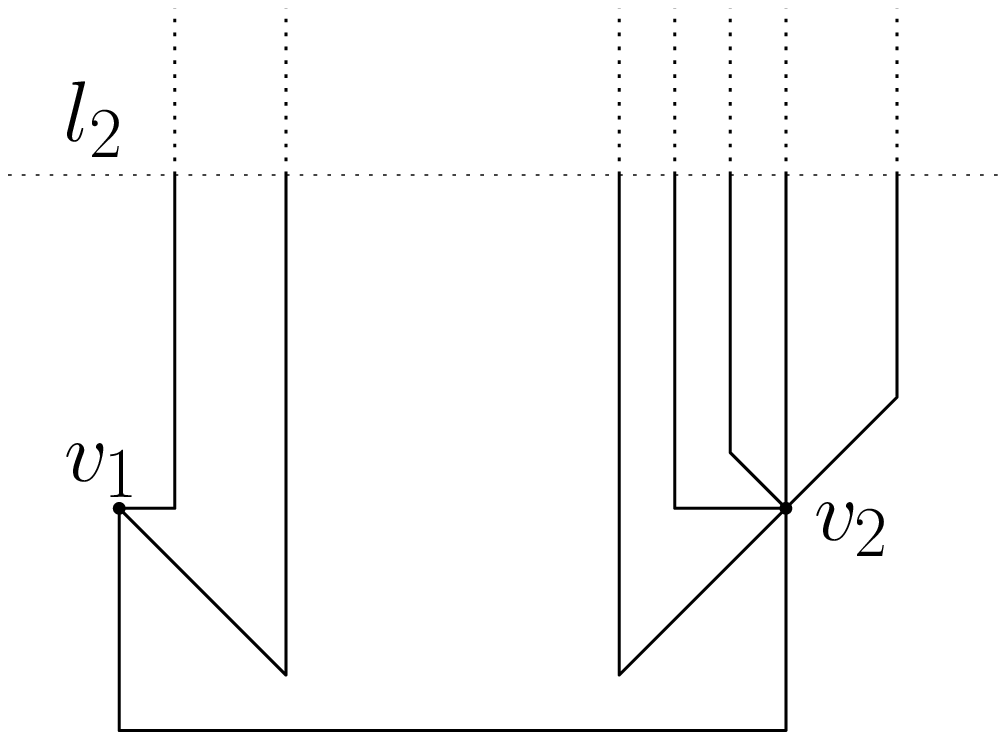}
               \hspace{5mm}
		 \label{fig:twobends}
      }
       \hspace{1cm}
      \subfigure[Adding $v_{i}$; partial edges added in this step are drawn with dashed lines]{
		\includegraphics[scale=0.36]{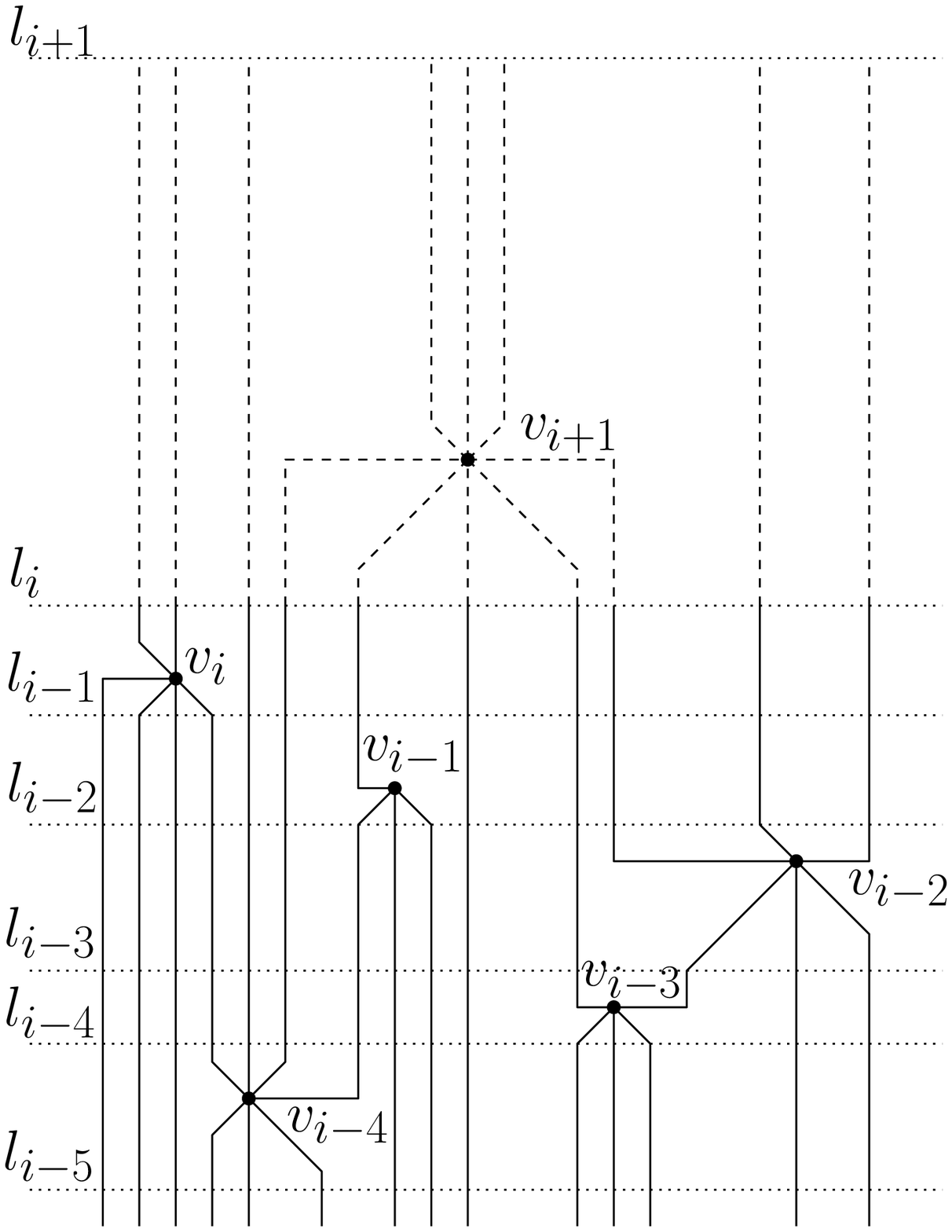}
               \hspace{5mm}
		 \label{fig:twobendsstep}
      }
      \caption{Drawing with at most two bends}
      \label{fig:twobendsboth}
\end{figure}

\begin{lem}
For any biconnected planar graph $G$ with maximum degree $d\ge 6$ and for any vertex $t\in V(G)$ with degree strictly less then $d$, $G$ admits a good drawing that is contained in the $t$-wedge.
\end{lem}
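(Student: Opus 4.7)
The plan is to construct the drawing inductively along a carefully chosen $st$-ordering. Fix any planar embedding $D_G$ of $G$ in which $t$ lies on the outer face, pick a neighbor $s$ of $t$ on the outer face, and apply Lemma~\ref{stlemma} with $v_1 = s$ and $v_n = t$ to obtain an $st$-ordering in which $v_1v_2$ is an outer-face edge of $D_G$. Orient the coordinate system so that directed slope $0$ points upward; we shall place $v_1, \ldots, v_n$ with strictly increasing $y$-coordinates, so that $t = v_n$ ends up at the top. During the construction, every still-incomplete edge leaving $G_i$ is represented by a straight \emph{stub} issuing upward from its endpoint in $G_i$, i.e., with directed slope in the open upper semicircle.

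We build good partial drawings $D_{G_i}$ together with pending stubs by induction on $i$. Place $v_1$ and $v_2$ at the bottom, connected by an upward segment, and emanate pending stubs at distinct upward regular slopes for the successor edges, matching the cyclic order dictated by $D_G$. Assume inductively that $D_{G_i}$ has been built so that (i) every segment has a regular slope, (ii) every completed edge uses at most two bends, (iii) slopes at each drawn vertex form a contiguous interval, (iv) all pending stubs are upward and appear in the correct pending-order, and (v) the cyclic order around every drawn vertex agrees with $D_G$. By Proposition~\ref{stremark}, the stubs of the preceding neighbors of $v_{i+1}$ form a consecutive block in the pending-order. Place $v_{i+1}$ sufficiently high above the top endpoints of those stubs and extend each such stub by at most two additional regular-slope segments so that the edge enters $v_{i+1}$ along a chosen regular slope---this costs at most two bends per edge. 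Finally, launch fresh upward stubs from $v_{i+1}$ at regular slopes chosen for its successor edges, in the cyclic order inherited from $D_G$.

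The induction step turns on the claim that at each $v_{i+1}$ one can assign a contiguous interval of $\deg(v_{i+1}) \le d$ directed slopes to its incident edges so that (a) preceding-neighbor edges receive downward directed slopes (allowing them to be routed from below), (b) successor edges receive upward directed slopes (allowing upward stubs), and (c) the resulting cyclic order matches the planar embedding $D_G$. Since $v_{i+1}$ lies on the outer face of $G_{i+1}$ with all of its $G_i$-neighbors on one side, the preceding and succeeding neighbors each form a contiguous arc in the cyclic order around $v_{i+1}$, so (a)--(c) reduce to positioning a single contiguous block of $\deg(v_{i+1})$ directed slopes whose downward portion has exactly the right length. The hypothesis $d \ge 6$ guarantees at least three regular slopes with upward-directed versions strictly inside the upper semicircle, providing enough room to fit any required block of up to $d$ consecutive directed slopes that straddles the horizontal appropriately. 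Planarity is preserved by placing $v_{i+1}$ high enough and performing the bends within a small neighborhood of $v_{i+1}$, so that the new segments lie in a thin slab separated from $D_{G_i}$.

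For the last vertex $v_n = t$, the same step yields a good drawing with only $\deg(t) \le d-1$ consecutive directed slopes used at $t$, so the complementary arc is nonempty and the $t$-wedge is a proper downward-opening cone with apex at $v_n$. By placing $v_n$ far enough above $D_{G_{n-1}}$, the horizontal extent of $D_{G_{n-1}}$ subtends an arbitrarily small angle at $v_n$ and hence sits inside the wedge; together with the edges at $v_n$ (which lie in the wedge by definition), this gives the required containment. The main obstacle is combinatorial rather than geometric: orchestrating the slope assignments at each insertion so that the contiguous-interval property and the planar cyclic order are simultaneously preserved while keeping outgoing stubs upward. This is precisely where the hypothesis $d \ge 6$ is used, furnishing enough slack beyond the $d = 4$ case of Biedl and Kant~\cite{BK98} to make the choices possible.
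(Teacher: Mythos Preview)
Your outline follows the same skeleton as the paper---an $st$-ordering with $t$ last, incremental placement from bottom to top, pending ``stubs'' that get completed when the upper endpoint arrives---but it omits the one structural device that makes the construction actually go through: in the paper every edge is drawn so that its \emph{middle} segment is vertical. Concretely, when a vertex $v_j$ is placed, each pending edge leaving it is drawn as a short segment at some regular slope followed immediately by a vertical segment extended up to the current auxiliary horizontal line $l_j$. All pending pieces are therefore parallel vertical rays; they can be extended to any height without ever crossing one another, and their left-to-right order on any horizontal line coincides with the pending-order. When $v_{i+1}$ arrives, it is placed directly above the median of the relevant vertical segments, and short final segments at consecutive regular directed slopes fan out from $v_{i+1}$ down to the tops of those verticals.

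Your version instead launches single-segment stubs at \emph{distinct} upward regular slopes and then asserts that ``planarity is preserved by placing $v_{i+1}$ high enough and performing the bends within a small neighborhood of $v_{i+1}$''. This is where the gap is. If two stubs leave their lower endpoints at different non-vertical slopes, extending them upward makes them converge or diverge; there is no height at which you can guarantee they have not crossed each other or some unrelated part of the drawing, and you cannot simultaneously keep the first segment short and put both bends near $v_{i+1}$ while reaching $v_{i+1}$ from far below. Something has to keep the pending pieces mutually non-crossing across \emph{all} later steps, and ``sufficiently high'' does not supply it; the paper's vertical-middle-segment trick is precisely that mechanism. Your reading of the hypothesis $d\ge 6$ is also off: in the paper it is not a geometric slack condition at the insertion step but merely the reduction of odd $d$ to the next even value, with $d=4$ already handled by Biedl--Kant; the inductive step itself only uses that each $v_{i+1}$ has at most $d-1$ predecessors (and $\deg t<d$ at the end) so that the required contiguous interval of directed slopes fits among the $d$ available.
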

\begin{proof}
Take a planar drawing $D_G$ of $G$ such that $t$ is on the outer face and pick another vertex, $s$, from the outer face. Apply Lemma \ref{stlemma} to obtain an $st$-ordering with $v_1=s, v_2,$ and $v_n=t$ on the outer face of $D_G$ such that $v_1v_2$ is an edge of the outer face. We will build up a good drawing of $G$ by starting with $v_1$ and then adding $v_2,v_3,\ldots, v_n$ one by one to the outer face of the current drawing. As soon as we add a new vertex $v_i$, we also draw the initial pieces of the pending edges, and we make sure that the resulting drawing is equivalent to the drawing $D_{G_i}$.

Another property of the good drawing that we maintain is that every edge consists of precisely three pieces. (Actually, an edge may consist of fewer than 3 segments, because two consecutive pieces are allowed to have the same slope and form a longer segment) The middle piece will always be vertical, except for the middle piece of $v_1v_2$.

Suppose without loss of generality that $v_1$ follows directly after $v_2$ in the clockwise order of the vertices around the outer face of $D_G$. Place $v_1$ and $v_2$ arbitrarily in the plane so that the $x$--coordinate of $v_1$ is smaller than the $x$--coordinate of $v_2$. Connect $v_1$ and $v_2$ by an edge consisting of three segments: the segments incident to $v_1$ and $v_2$ are vertical and lie below them, while the middle segment has an arbitrary non-vertical regular slope.
Draw a horizontal auxiliary line $l_2$ above $v_1$ and $v_2$. Next, draw the initial pieces of the other (pending) edges incident to $v_1$ and $v_2$, as follows. For $i=1,2$, draw a short segment from $v_i$ for each of the edges incident to it (except for the edge $v_1v_2$, which has already been drawn) so that the directed slopes of the edges (including $v_1v_2$) form a contiguous interval and their circular order is the same as in $D_G$. Each of these short segments will be followed by a vertical segment that reaches above $l_2$. These vertical segments will belong to the middle pieces of the corresponding pending edges. Clearly, for a proper choice of the lengths of the short segments, no crossings will be created during this procedure. So far this drawing, including the partially drawn pending edges between $V(G_2)$ and $V(G)\setminus V(G_2)$, will be equivalent to the drawing $D_{G_2}$. As the algorithm progresses, the vertical segments will be further extended above $l_2$, to form the middle segments of the corresponding edges. For an illustration, see Figure \ref{fig:twobends}.

The remaining vertices $v_{i}, i>2$, will be added to the drawing one by one, while maintaining the property that the drawing is equivalent to $D_{G_i}$ and that the pending-order of the actual pending edges 
coincides with the order in which their vertical pieces reach the auxiliary line $l_i$. At the beginning of step $i+1$, these conditions are obviously satisfied. Now we show how to place $v_{i+1}$. 

Consider the set $X$ of intersection points of the vertical (middle) pieces of all pending edges between 
$V(G_i)$ and $V(G)\setminus V(G_i)$ with the auxiliary line $l_i$. By Proposition \ref{stremark}, the intersection points corresponding to the pending edges incident to $v_{i+1}$ must be consecutive elements of $X$. Let $m$ be (one of) the median element(s) of $X$. Place $v_{i+1}$ at a point above $m$, so that the $x$-coordinates of $v_{i+1}$ and $m$ coincide, and connect it to $m$. (In this way, the corresponding edge has only one bend, because its second and third piece are both vertical.) We also connect $v_{i+1}$ to the upper endpoints of the appropriately extended vertical segments passing through the remaining elements of $X$, so that the directed slopes of the segments leaving $v_{i+1}$ form a contiguous interval of regular slopes. For an illustration see Figure \ref{fig:twobendsstep}.
Observe that this step can always be performed, because, by the definition of $st$-orderings, the number of edges leaving $v_{i+1}$ is strictly smaller than $d$. This is not necessarily true in the last step, but then we have $v_n=t$, and we assumed that the degree of $t$ was smaller than $d$.
To complete this step, draw a horizontal auxiliary line $l_{i+1}$ above $v_{i+1}$ and extend the vertical portions of those pending edges between $V(G_i)$ and $V(G)\setminus V(G_i)$ that were not incident to $v_{i+1}$ until they hit the line $l_{i+1}$. (These edges remain pending in the next step.)
Finally, in a small vicinity of $v_{i+1}$, draw as many short segments from $v_{i+1}$ using the remaining directed slopes as many pending edges connect $v_{i+1}$ to $V(G)\setminus V(G_{i+1})$. Make sure that the directed slopes used at $v_{i+1}$ form a contiguous interval and the circular order is the same as in $D_G$. Continue each of these short segments by adding a vertical piece that hits the line $l_{i+1}$. The resulting drawing, including the partially drawn pending edges, is equivalent to $D_{G_{i+1}}$.

In the final step, if we place the auxiliary line $l_{n-1}$ high enough, then the whole drawing will be contained in the $v_n$-wedge and we obtain a drawing that meets the requirements.
\end{proof}

\subsection{Lower Bounds}
In this section, we construct a sequence of planar graphs, providing a nontrivial lower bound for the planar slope number of bounded degree planar graphs. They also require more than the trivial number ($\lceil d/2\rceil$) slopes, even if we allow one bend per edge.
Remember that if we allow {\em two} bends per edge, then, by Theorem \ref{twobends}, for all graphs with maximum degree $d\ge 3$, except for the octahedral graph, $\lceil d/2\rceil$ slopes are sufficient, which bound is optimal.

\medskip

\begin{thm} For any $d\ge 3$, there exists a planar graph $G_d$ with maximum degree $d$, whose planar slope number is at least $3d-6$.
In addition, any drawing of $G_d$ with at most one bend per edge requires at least $\frac 34(d-1)$ slopes.
\end{thm}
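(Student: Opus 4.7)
I propose to build $G_d$ as a carefully nested planar triangulation. A natural candidate is obtained by iterated stacking: start from a wheel $W_d$ with center $v^\star$ of degree $d$ and rim cycle $u_1u_2\ldots u_d$, and then inside each bounded face $u_iu_{i+1}v^\star$ (or on the outside) recursively insert fresh vertices of degree $d$, halting once every $u_i$ has attained degree $d$. This keeps the graph planar and triangulated, has maximum degree exactly $d$, and produces many mutually ``conflicting'' high-degree vertices while keeping the vertex count linear in $d$. The role of the recursive gadgets around each $u_i$ is to provide many small triangular faces that force local slope distinctness between edges that would otherwise be unconstrained.

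For the straight-line lower bound $3d-6$, the central observation is that in any straight-line drawing of a triangulation each triangular face is non-degenerate, so its three bounding edges have pairwise distinct slopes. I would isolate inside $G_d$ a spanning subtriangulation on $d$ vertices (for instance $v^\star$ together with its $d-1$ critically chosen neighbors and the chords among them), which by Euler's formula has exactly $3d-6$ edges, and argue that in any straight-line drawing of $G_d$ no two of these $3d-6$ edges can share a slope. The forcing mechanism is: if two edges $e,e'$ of the subtriangulation received the same slope, then the recursive gadget nested in the face straddling $e$ and $e'$ would contain a triangle whose boundary involves parallel copies of $e$ and $e'$, collapsing that triangle to a degenerate (zero-area) configuration, contradicting planarity of the drawing. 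Carrying out this forcing carefully requires choosing the gadgets so that every pair of edges of the subtriangulation is ``protected'' by at least one triangle of a gadget.

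For the one-bend lower bound $\tfrac{3}{4}(d-1)$, I follow the same scheme but must account for the fact that each edge now consists of two segments and may therefore use two distinct slopes. At each high-degree vertex $v$, the $d$ outgoing first segments still occupy at least $\lceil d/2\rceil$ slopes, and the forcing argument from the previous paragraph survives, but only after paying a multiplicative penalty reflecting that each edge can ``reuse'' its partner edge's slope via a well-placed bend. A careful bookkeeping — counting how many pairs of edges in the spanning subtriangulation can simultaneously save a slope via a shared bend direction — shows that the saving is at most one quarter of the edges, which yields the claimed bound $\tfrac{3}{4}(d-1)$. Since this is strictly larger than the trivial $\lceil d/2\rceil$, it establishes the promised separation even in the one-bend model.

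The main obstacle is the combinatorial design of the gadgets together with the geometric forcing lemma: one has to prove that whenever two edges of the spanning subtriangulation are parallel, some triangle of an attached gadget degenerates. A clean formulation requires a ``local-to-global'' lemma showing that the slope set appearing inside one gadget, as seen from the rest of the drawing, is essentially rigid up to the ambient slope directions; without such a lemma the bound $3d-6$ weakens to a mere $\Theta(d)$ with a worse constant. The one-bend case is the more delicate of the two, since a well-placed bend can subvert the forcing unless the gadgets are arranged so that both segments of every subtriangulation edge lie inside a constraining triangle.
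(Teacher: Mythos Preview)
Your proposal has a genuine gap at its core. The ``forcing lemma'' you rely on---that gadgets can be attached so that whenever two edges $e,e'$ of your spanning subtriangulation become parallel, some triangle in an adjacent gadget degenerates---is neither proved nor plausible in the generality you need. In a planar straight-line triangulation, two edges can perfectly well be parallel without any face collapsing; parallelism of $e$ and $e'$ only forces degeneracy when $e$ and $e'$ bound the \emph{same} triangle. To cover all $\binom{3d-6}{2}$ pairs you would need a separate gadget per pair, each containing a triangle with two sides constrained to be parallel to the given pair---and you give no construction achieving this while keeping the maximum degree at $d$. Without this lemma your argument yields nothing beyond the trivial $\lceil d/2\rceil$ bound at a single vertex. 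The one-bend argument inherits the same gap and adds a further unsupported claim (``the saving is at most one quarter of the edges'') with no mechanism explaining why a bend cannot rescue more coincidences.

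The paper's construction sidesteps all of this with a single clean observation. Take a triangle $abc$ and force (via a mirrored copy $a'b'c'$) a cycle of $3(d-3)$ vertices to lie \emph{inside} it, with $d-3$ of them joined to each corner. The key fact is purely angular: the three interior angles of $abc$ sum to $\pi$, so the three arcs of slopes ``pointing from a corner into the triangle'' partition slope-space $[0,\pi)$. Hence all $3(d-3)$ corner-to-cycle edges, together with the three sides, have pairwise distinct slopes---giving $3d-6$ immediately, with no gadgets and no forcing lemma. For one bend per edge, the bent triangle becomes a hexagon; the same angle-sum count shows any slope is used by at most four of the relevant segments, yielding $\frac{3(d-3)+6}{4}=\frac{3}{4}(d-1)$. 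The geometric content you are trying to manufacture with recursive gadgets is already present, for free, in the angles of a single triangle.
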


\medskip

\begin{figure}[ht]
\centering
  \subfigure[A straight line drawing of $G_6$]{
		\includegraphics[scale=0.5]{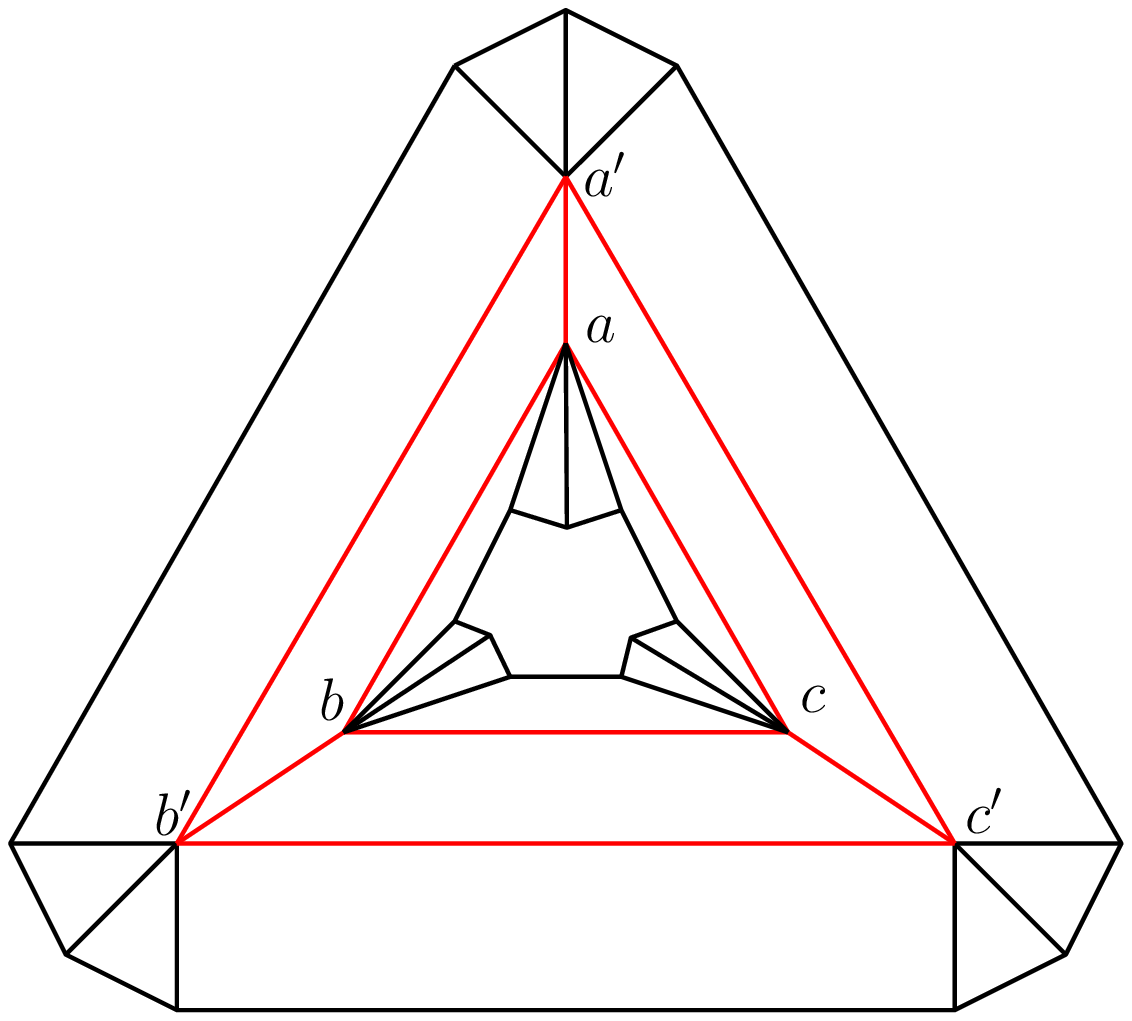}
       \label{fig:counterex}
      }		
	\hspace{2cm}
  \subfigure[At most four segments starting from $a,b,c$ can use the same slope in a drawing of $G_d$ with one bend per edge]
  {
		\includegraphics[scale=0.4]{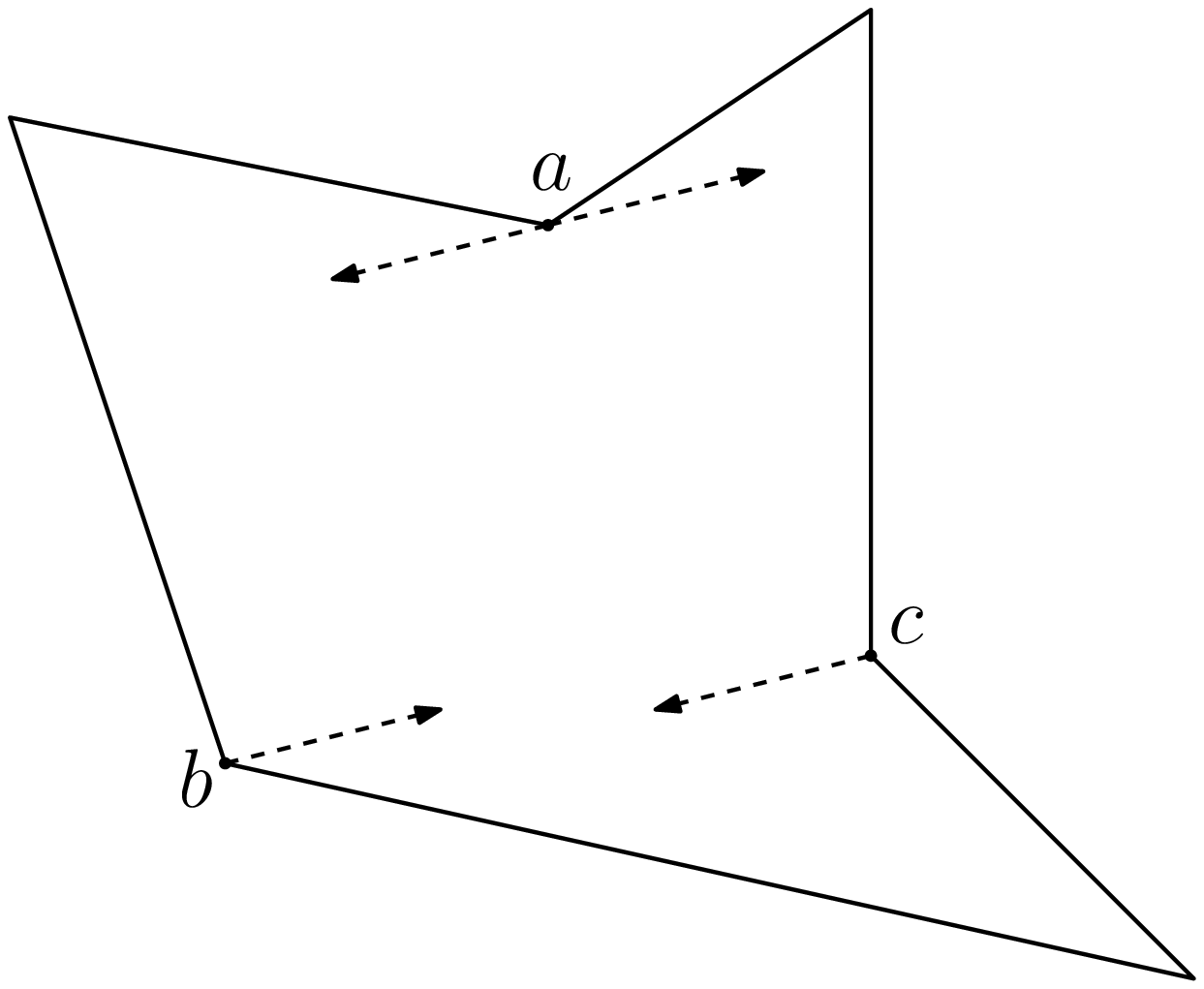}
      \label{fig:counterex1bend}
      }

      \caption{Lower bounds}
\end{figure}

\begin{proof}
The construction of the graph $G_d$ is as follows. Start with a graph of $6$ vertices, consisting of two triangles, $abc$ and $a'b'c'$, connected by the edges $aa'$, $bb'$, and $cc'$ (see Figure \ref{fig:counterex}). Add to this graph a cycle $C$ of length $3(d-3)$, and connect $d-3$ consecutive vertices of $C$ to $a$, the next $d-3$ of them to $b$, and the remaining $d-3$ to $c$. Analogously, add a cycle $C'$ of length $3(d-3)$, and connect one third of its vertices to $a'$, one third to $b'$, one third to $c'$. In the resulting graph, $G_d$, the maximum degree of the vertices is $d$.

In any crossing-free drawing of $G_d$, either $C$ lies inside the triangle $abc$ or $C'$ lies inside the triangle $a'b'c'$. Assume by symmetry that $C$ lies inside $abc$, as in Figure \ref{fig:counterex}.

If the edges are represented by straight-line segments, the slopes of the edges incident to $a,b,$ and $c$ are all different, except that $aa', bb',$ and $cc'$ may have the same slope as some other edge. Thus, the number of different slopes used by any straight-line drawing of $G_d$ is at least $3d-6$.

Suppose now that the edges of $G_d$ are represented by polygonal paths with at most one bend per edge. Assume, for simplicity, that every edge of the triangle $abc$ is represented by a path with exactly one bend (otherwise, an analogous argument gives an even better result). Consider the $3(d-3)$ polygonal paths connecting $a$, $b$, and $c$ to the vertices of the cycle $C$. Each of these paths has a segment incident to $a$, $b$, or $c$. Let $S$ denote the set of these segments, together with the $6$ segments of the paths representing the edges of the triangle $abc$.

\begin{claim} \label{4slopelemma}
The number of segments in $S$ with any given slope is at most $4$.
\end{claim}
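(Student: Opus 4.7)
My plan is to start from a trivial counting bound and then sharpen it by invoking planarity. Since every element of $S$ is incident to $a$, $b$, or $c$, and two segments of the same slope emanating from a common vertex must extend in opposite directions along the unique line of that slope through the vertex (otherwise they would overlap), each of $a$, $b$, $c$ contributes at most $2$ segments of any given slope $\sigma$ to $S$. This gives the easy upper bound of $6$.

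To sharpen this to $4$, fix a slope $\sigma$ and let $\ell_a, \ell_b, \ell_c$ be the three parallel lines of slope $\sigma$ through $a$, $b$, $c$, respectively. After relabeling we may assume that $\ell_b$ lies strictly between $\ell_a$ and $\ell_c$. The target reduction is: if $b$ already contributes $2$ segments of slope $\sigma$ to $S$ (one going in each direction along $\ell_b$), then $a$ and $c$ together contribute at most $2$ further segments of slope $\sigma$. The two degenerate configurations, where two of the lines coincide or the three vertices are collinear with $\ell_a = \ell_b = \ell_c$, should be handled by separate, easier arguments of the same flavor.

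The main lever is planarity. At each vertex $v \in \{a,b,c\}$ the cyclic order of incident edges is determined by the planar embedding: the edge $vv'$ is separated in cyclic order from a contiguous block consisting of the two triangle edges at $v$ together with the $d-3$ edges from $v$ to its $C$-neighbors, and that block points into the region $R$ bounded by the three bent triangle edges, inside which the cycle $C$ lives. If $b$ already uses both directions of slope $\sigma$, then the corresponding two first segments at $b$, together with their bent continuations, split $R$ into subregions, and the $d-1$ first segments at $b$ span an angular interval greater than $\pi$. The hard step of the argument is the ensuing geometric case analysis: one enumerates how many of the four candidate slope-$\sigma$ segments at $a \in \ell_a$ and $c \in \ell_c$ can be drawn inside $R$ without crossing a segment at $b$, a triangle-edge segment, or one another, and one shows that in every case at most two survive. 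The combinatorial data driving this case analysis are the direction of each hypothetical horizontal segment (toward $\ell_b$ or away) and the planar cyclic orders at $a$ and $c$; each combination should be ruled out by exhibiting a forced intersection inside $R$. Once this is done, adding the two contributions from $b$ to the at most two surviving contributions from $a$ and $c$ gives the bound $4$ in the critical case, and the easier distributions $(n_a,n_b,n_c)$ with $n_b \le 1$ are handled by a symmetric but shorter planarity argument.
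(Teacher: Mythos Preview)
Your trivial bound of $6$ is fine, but the reduction to $4$ is not a proof: you explicitly label the core case analysis as ``the hard step'' and then only describe what would have to be checked, without checking any of it. Nothing you wrote rules out, say, the distribution $(n_a,n_b,n_c)=(2,1,2)$; you merely assert that such cases are ``handled by a symmetric but shorter planarity argument.'' As written this is a plan, not an argument, and the plan is sensitive to details (reflex angles at $a,b,c$, sides of $R$ that themselves have slope $\sigma$) that you have not addressed.

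The paper bypasses all of this with a one-line global count. The observation you are missing is that the three bent triangle edges bound a simple hexagon $H$ whose vertices alternate between $\{a,b,c\}$ and the three bend points, and every segment of $S$ is either a side of $H$ or a segment from one of $a,b,c$ into the interior of $H$. Now use the elementary fact that in any simple $k$-gon the interior angles sum to $(k-2)\pi$, so each slope is covered by the interior angles exactly $k-2$ times in total (a side counting $\tfrac12$ at each of its endpoints). For $k=6$ this is $4$. A segment of $S$ at $v\in\{a,b,c\}$ with slope $\sigma$ witnesses that the interior angle at $v$ covers $\sigma$ once (or $\tfrac12$ if it is a side of $H$), and each side of $H$ contributes its other $\tfrac12$ at a bend vertex; hence the number of slope-$\sigma$ segments in $S$ is at most the total coverage, namely $4$.

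So your route might be completable, but it trades a two-sentence angle-sum argument for a multi-case planarity analysis that you have not written down.
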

\begin{proof}
The sum of the degrees of any polygon on $k$ vertices is $(k-2)\pi$.
Every direction is covered by exactly $k-2$ angles of a $k$-gon (counting each side $1/2$ times at its endpoints). Thus, if we take every other angle of a hexagon, then, even including its sides, every direction is covered at most $4$ times. (See Figure \ref{fig:counterex1bend}.)
\end{proof}

The claim now implies that for any drawing of $G$ with at most one bend per edge, we need at least $(3(d-3)+6)/4=\frac {3}{4}(d-1)$ different slopes.
\end{proof}

\cleardoublepage
\part{Conjectures, Bibliography and CV}
\section{Summary of Interesting Open Questions and Conjectures}\label{sec:conj}
\subsection{Questions about Decomposition of Coverings}

\noindent {\bf Conjecture. (Pach)} {\em All planar convex sets are
cover-decomposable.}

\medskip

\noindent {\bf Conjecture \ref{conj:quad}.} {\em There is a constant $m$ such that any
$m$-fold covering of the plane with translates of a convex quadrilateral
can be decomposed into two coverings.}

\medskip

\noindent {\bf Conjecture \ref{conj:multipoly}.} {\em For any cover-decomposable polygon $P$, $m_k(P)=O(k)$.}

\medskip

In fact, the following, more general version seems to be open.

\begin{quest} Is it true that for any set $P$, $m_k(P)=O(k)$?
\end{quest}

We cannot even say anything about decomposition into three coverings.

\begin{quest}\label{quest:m3} Is it true that for any set $P$, if $m_2(P)$ exists then $m_3(P)$ also exists?
\end{quest}

This latter can be asked about abstract sets instead of geometric sets in the following way.

Suppose we have a finite system of sets, $\mathcal F$.
We say that a multiset $\mathcal M$ is a multiset of $\mathcal F$ if its elements are from $\mathcal F$.
We say that $\mathcal M$ is $t$-fold if for every element of the ground set there are at least $t$ sets from $\mathcal M$ that contain it.
We say that $\mathcal M$ is $k$-wise decomposable if we can color the sets from $\mathcal M$ with $k$ colors such that every element is contained in a set of each color.
Define $m_k$ as the smallest number such that if a multiset of $\mathcal F$ is $m_k$-fold, then it is also $k$-wise decomposable.
This number always exists as it is easy to see that $m_k\le (k-1)|{\mathcal F}|+1$.

\begin{quest}
  Can we bound $m_3({\mathcal F})$ with some function of $m_2({\mathcal F})$?\\ (Independently from $\mathcal F$.)
\end{quest}

\begin{quest}
  Is it true that $m_k({\mathcal F})=O(k\cdot m_2({\mathcal F}))$?
\end{quest}

Tardos \cite{T09} proved a strongly related result. For any $m$ he constructed a set system, $\mathcal F$, that covers the ground set $m$-fold and any $2$-fold covering of the ground set with a subsystem of $\mathcal F$ is decomposable into two coverings but $\mathcal F$ cannot be decomposed into three coverings.

\medskip

\noindent {\bf Conjecture \ref{conj:threed}.} {\em Three-dimensional convex sets are not cover-decomposable.}

\medskip

\noindent {\bf Conjecture \ref{conj:closed}.} {\em  Closed, convex polygons are cover-decomposable.}

\medskip

\noindent {\bf Question \ref{quest:pcd}.} {\em  Are there polygons that are not totally-cover-decomposable but plane-cover-decomposable?}

\clearpage

Finally, another finite problem that has a strong connection to cover-decomposition. 

\begin{quest}
For $A\subset [n]$ denote by $a_i$ the $i^{th}$ smallest element of $A$.

For two $k$-element sets, $A,B\subset [n]$, we say that $A\le B$ if $a_i\le b_i$ for every $i$.

A $k$-uniform hypergraph ${\mathcal H}\subset [n]$ is called a {\em shift-chain} if for any hyperedges, $A, B \in {\mathcal H}$, we have $A\le B$ or $B\le A$. (So a shift-chain has at most $k(n-k)+1$ hyperedges.)

Is it true that shift-chains have Property B\footnote{A hypergraph  ${\mathcal H}$ has Property B if we can color its vertices with two colors such that no hyperedge is monochromatic.} if $k$ is large enough?

\end{quest}

An affirmative answer would  be a huge step towards Pach's conjecture, that all planar convex sets are cover-decomposable. To see this, for any fixed convex set $C$ and natural $k$, and any $y$ real number, define  $C(k;y)$ as the translate of $C$ which\\
(1) contains exactly $k$ points of a given point set, $S$,\\
(2) the center of $C$ has $y$-coordinate $y$,\\
(3) the center of $C$ has minimal $x$-coordinate,\\
if such a translate exists.
If we associate $i\in [n]$ to the element of $S$ with the $i^{th}$ smallest $y$-coordinate, then an easy geometric argument shows that ${\mathcal H}=\{C(k;y)\cap S|y\in {\mathbb R}\}$ is a shift-chain.\\

For $k=2$ there is a trivial counterexample to the question: (12),(13),(23).\\

A magical counterexample was found for $k=3$ by a computer program by Fulek \cite{F10}:

(123),(124),(125),(135),(145),(245),(345),(346),(347),(357),

(367),(467),(567),(568),(569),(579),(589),(689),(789).\\

If we allow the hypergraph to be the union of two shift-chains (with the same order), then the construction in Section \ref{sec:concave} gives a counterexample for any $k$, so arguments using that the average degree is small (like the Lov\'asz Local Lemma) probably fail.

\clearpage
\subsection{Questions about Slope Number of Graphs}

\medskip

\noindent {\bf Conjecture \ref{slope4}.} {\em  The slope number of graphs with maximum degree 4 is unbounded.}

\begin{quest} Can every cubic graph be drawn with the four basic directions\footnote{Vertical, horizontal and the two diagonal ($45^\circ$) directions.}?

What if no three vertices can be collinear?
\end{quest}

\noindent {\bf Question \ref{polygrid}.} {\em Is it possible to draw all cubic graphs with a bounded number of slopes on a polynomial sized grid?}

\begin{quest} Does the planar slope number of planar graphs with maximum degree $d$ grow exponentially or polynomially with $d$?
\end{quest}

\begin{conj} We can fix $O(d)$ slopes such that any planar graph with maximum degree $d$ can be drawn with these slopes if each edge can have one bend. 
\end{conj}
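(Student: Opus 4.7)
The plan is to adapt and combine the two methods already developed in the section: the T-shape approach behind Theorem~\ref{onebend}, which uses $2d$ slopes but in an $n$-dependent way, and the $st$-ordering approach behind Theorem~\ref{twobends}, which uses only $\lceil d/2\rceil$ fixed regular slopes but requires two bends. The first step I would take is to fix once and for all a set $\Sigma$ of $2d$ evenly-spaced directed slopes $k\pi/(2d)$ for $k=0,1,\ldots,2d-1$, and to try to realise an arbitrary planar graph of maximum degree $d$ by a one-bend drawing with every segment parallel to an element of $\Sigma$.

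For the construction itself I would start from the $T$-shape representation of Lemma~\ref{Tshapes} and ``inflate'' each $T$-shape to a thick, axis-aligned cross-shape whose arms scale linearly with $d$; one should think of each vertex $v$ as occupying a small disk out of which $d$ short segments leave, one in each slope of $\Sigma$ (up to reflection), in the cyclic order prescribed by the combinatorial embedding. Then, following the proof of Theorem~\ref{onebend}, I would walk around every $T_v$ counterclockwise starting at its center and assign the slopes of $\Sigma$ to the tangency points in the order they are encountered, using the nearly-horizontal slope of appropriate index for hat-tangencies and the nearly-vertical slope of appropriate index for leg-tangencies. For a contact $p_{uv}$ that was the $i$-th tangency along $T_u$ and the $j$-th along $T_v$, the edge $uv$ would be the one-bend polyline from $u$ at slope $\alpha_i\in\Sigma$ to $v$ at slope $\beta_j\in\Sigma$, its unique bend point being the intersection of those two lines.

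The main obstacle is that, unlike in the proof of Theorem~\ref{onebend}, the slopes in $\Sigma$ are \emph{not} arbitrarily close to horizontal or vertical; each near-horizontal slope makes a nontrivial angle of roughly $\pi/(2d)$ with the hat of the corresponding $T$-shape, so the polyline $up'_{uv}v$ is no longer contained in a thin neighborhood of the original polyline $up_{uv}v$, and edges that are far from each other in the $T$-shape picture might collide. To control this, the inflated $T$-shapes need to carry enough slack: I would scale the construction so that the horizontal hat of each $T_v$ becomes a horizontal band of height proportional to the vertical gap to the nearest tangency above or below, and the bends of the one-bend edges would be forced to lie inside the inflated hat-or-leg band of one of their endpoints. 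Verifying that this can be done coherently across the whole drawing, and in particular that within each band the bends can be ordered consistently with the pending-order used in the $st$-construction of Theorem~\ref{twobends}, is where the real work lies.

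If the rigid choice of $2d$ exactly equispaced slopes proves too tight, I would allow the constant to be larger: with say $Cd$ slopes one gains additional degrees of freedom per vertex, enough to simulate the $\alpha_i,\beta_i$ of the original proof up to a constant angular perturbation that does not depend on $n$. A plausible fallback is to argue via $st$-ordering as in Theorem~\ref{twobends}, but replacing the long vertical middle segment by a short oblique segment whose slope is chosen from $\Sigma$ so that the resulting polyline has only one bend; here the hard part will be to adapt the ``median pending edge'' rule in such a way that, at every insertion step, some admissible slope of $\Sigma$ is available both at the new vertex and at its preceding neighbors without producing a crossing.
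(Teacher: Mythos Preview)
The statement you are attempting to prove is a \emph{conjecture}, listed by the paper in its ``Summary of Interesting Open Questions and Conjectures'' section; the paper does not prove it and offers no proof to compare against. Theorem~\ref{onebend} gives $2d$ slopes but, as the paper notes just after Theorem~\ref{twobends}, those slopes depend on $n$ (the smallest angular gap goes to zero as the number of vertices grows), and the authors explicitly say they ``suspect that this property is only an unpleasant artifact of our proof technique''. So you are attacking an open problem, not reproducing an argument from the paper.

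Your proposal is honest about this: you identify the exact obstruction (the slopes $\alpha_i,\beta_j$ in the proof of Theorem~\ref{onebend} are chosen with angular spacing $\Theta(1/n)$, not $\Theta(1/d)$, precisely to keep each bent edge in a $1/2$-neighborhood of the original axis-parallel $T$-contact path), and you correctly locate the missing step when you write that verifying coherent inflation ``is where the real work lies''. But that sentence is the entire difficulty; nothing before or after it constitutes a proof. The inflation idea is natural, yet once the near-horizontal segments make an angle of order $1/d$ with the hats, a single long hat can send its bent edges a distance proportional to $(\text{hat length})/d$ away from the original contact point, and hat lengths in the Fraysseix--Ossona de Mendez--Rosenstiehl representation can be $\Theta(n)$. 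Your ``scale the bands proportionally to the nearest vertical gap'' suggestion does not obviously resolve this, since those gaps can themselves be $\Theta(1)$ while hats are $\Theta(n)$. The $st$-ordering fallback has the analogous problem: dropping the vertical middle segment forces the two remaining segments to meet, and there is no evident reason the required meeting slope lies in a fixed $O(d)$-size set independent of the combinatorics of the pending edges.

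In short: there is no proof in the paper to match, your plan correctly diagnoses why Theorem~\ref{onebend} falls short, but the proposal is a sketch of a research program rather than a proof, with the crucial geometric control left unestablished.
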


\cleardoublepage
\section{Bibliography}

\cleardoublepage
\section{CURRICULUM VIT\AE}\label{sec:cv}

\vspace{2cm}
\noindent

\!\!\!\!\!\!\!\!\!\!\!\!\!\!\!\!\!\!\!\!\!\!\!\!\!\!{\bf \underline{PERSONAL DATA}}\\

\noindent
First name: {\bf D\"om\"ot\"or}\\
Surname: {\bf P\'alv\"olgyi}\\
Date of birth: $4^{th}$ July, $1981$.\\
Nationality: Hungarian\\
Other languages: English (fluent), French (good), Italian, German, Telugu (basic)\\
e-mail: {\it doemoetoer.palvoelgyi@epfl.ch}\\

\bigskip
{\bf \underline{STUDIES}}\\

\indent
{\bf 2008-- \indent Ecole Polytechnique F\'ed\'erale de Lausanne (EPFL)}\\
\indent
\textit{Assistant-doctorant of J\'anos Pach}\\

{\bf 2000--2008 Eötvös Loránd University (ELTE), Budapest}\\
\indent
\textit{Graduate Studies in Pure Mathematics, Theoretical Computer Science}\\
\noindent
-Supervisor: Zolt\'an Kir\'aly.\\
-State scholarship for research in Communication Complexity, Complexity Theory, Combinatorial Geometry, Extremal Combinatorics, Network Information Flows since 2005.\\
-Member of the Communication Networks Laboratory.\\

\indent
\textit{Undergraduate Studies at Faculty of Science, Department of Mathematics}\\
\noindent
-Master thesis in Communication Complexity 2005.\\
-Holder of the State Scholarship Award (2004--2005).\\ 
-Holder of the Outstanding Student of the Faculty Award (2004).\\

{\bf 1994--2000          Fazekas Mihály High School, Budapest}\\
\indent
\textit{Class specialized in mathematics}\\
\noindent-Hungarian baccalaureate exams (qualification: outstanding), 2000.\\
-International Mathematical Olympiad, 
South Korea, silver medal, 2000.\\    
-National Mathematics Competition (OKTV),
Hungary, 2nd place, 2000\\
-International Hungarian Mathematics Competition,
Slovakia, 1st prize, 2000.\\

\bigskip
{\bf \underline{TEACHING EXPERIENCE}}\\
\noindent

{\bf 2008--}\\
-Teaching Assistant for Linear Algebra, Calculus, Geometric Graphs at EPFL.\\

{\bf 2005--2008}\\
-Lecturer and Teaching Assistant for Complexity Theory, Theory of Algorithms at ELTE.\\

{\bf 2002--2005}\\
-Teaching Assistant for Calculus, Real Analysis, Measure Theory at ELTE.\\

\bigskip
{\bf \underline{PUBLICATIONS}}\\


{\bf Submitted/Accepted:}\\
\noindent
-Bin Packing via Discrepancy of Permutations (with Friedrich Eisenbrand and Thomas Rothvoß), submitted.\\
-Almost optimal pairing strategy for Tic-Tac-Toe with numerous directions (with Padmini Mukkamala), submitted.\\
-Unique-maximum and conflict-free colorings for hypergraphs and tree graphs. (with Panagiotis Cheilaris and Balázs Keszegh), submitted.\\
-Indecomposable coverings with concave polygons, to appear in: Discrete and Computational Geometry.\\
-Permutations, hyperplanes and polynomials over finite fields (with András Gács, Tamás Héger and Zoltán Lóránt Nagy), to appear in: Finite Fields and Their Applications. (Earlier version: 22nd British Combinatorial Conference.)\\

{\bf 2010}\\
\noindent
-On weakly intersecting pairs of sets (with Zoltán Király, Zoltán Lóránt Nagy and Mirkó Visontai), presented at: 7th International Conference on Lattice Path Combinatorics and Applications.\\
-Vectors in a Box (with Kevin Buchin, Jiri Matousek and Robin A. Moser), presented at: Coimbra Meeting on 0-1 Matrix Theory and Related Topics.\\
-Consistent digital line segments (with Tobias Christ and Milos Stojakovic), in: SoCG 2010.
-Convex polygons are cover-decomposable (with G. Tóth), in: Discrete and Computational Geometry.\\
-Testing additive integrality gaps (with Friedrich Eisenbrand, Nicolai Hähnle and Gennady Shmonin), in: SODA 2010.\\
-Cubic Graphs Have Bounded Slope Parameter (with B. Keszegh, J. Pach, and G. Tóth), in: J. Graph Algorithms Appl. 14(1): 5-17 (2010).
(Earlier version: Proceedings of Graph Drawing 2008, 50--60.)\\
-Finding the biggest and smallest element with one lie (with D. Gerbner, B. Patkós and G. Wiener), in: Discrete Applied Mathematics 158(9): 988-995 (2010). (Earlier version: International Conference on Interdisciplinary Mathematical and Statistical Techniques - IMST 2008 / FIM.)\\
-Polychromatic Colorings of Arbitrary Rectangular Partitions (with D. Gerbner, B. Keszegh, N. Lemons, C. Palmer and B. Patkós), in: Discrete Math. 310, No. 1, 21-30 (2010). (Earlier version: 6th Japanese-Hungarian Symposium on Discrete Mathematics and Its Applications.)\\

{\bf 2009}\\
\noindent
-2D-TUCKER is PPAD-complete, in: WINE 2009.\\
-Combinatorial necklace splitting, in: Electronic J. Combinatorics 16 (1) (2009), R79.\\
-Deciding Soccer Scores and Partial Orientations of Graphs, in: Acta Universitatis Sapientiae, Mathematica, 1, 1 (2009) 35--42. (Earlier version in: EGRES Technical Reports 2008.)\\

{\bf 2008}\\
\noindent
-Drawing cubic graphs with at most five slopes (with B. Keszegh, J. Pach, and G. Tóth), in: Comput. Geom. 40 (2008), no. 2, 138--147.
(Earlier version: Graph Drawing 2006, Lecture Notes in Computer Science 4372, Springer, 2007, 114--125.)\\

{\bf 2007}\\
\noindent
-Revisiting sequential search using question-sets with bounded intersections, in: Journal of Statistical Theory and Practice Vol 1, Num 2 (2007).\\

{\bf 2006}\\
\noindent
-P2T is NP-complete, in: EGRES Quick-Proofs 2006.\\
-Bounded-degree graphs can have arbitrarily large slope numbers (with J. Pach), Electronic J. Combinatorics 13 (1) (2006), N1.\\

{\bf 2005}\\
\noindent
-Baljó S Árnyak, in: Matematikai Lapok (in Hungarian). English title: A short proof of the Kruskal-Katona theorem.\\
-Communication Complexity (Master's Thesis). Supervisor: Z. Kir\'aly.\\

\bigskip
{\bf \underline{ATTENDED CONFERENCES AND WORKSHOPS}}\\

{\bf 2010}\\
\noindent
-1st Eml\'ekt\'abla Workshop in Gy\"ongy\"ostarj\'an, July 26--29.\\
-8th Gremo's Workshop on Open Problems in Morschach (SZ), June 30--July 2.\\
-Coimbra Meeting on 0--1 Matrix Theory and Related Topics in Coimbra, June 17--19.\\
-3\`eme cycle romand de Recherche Op\'erationnelle in Zinal, January 17--21.\\

{\bf 2009}\\
\noindent
-WINE 2009 in Rome, December 16--18.\\
-Workshop on Combinatorics: Methods and Applications in Mathematics and Computer Science at IPAM, UCLA, September 7--November 7.\\
-The 14th International Conference on Random Structures and Algorithms in Poznan, August 3--7.\\
-7th Gremo's Workshop on Open Problems in Stels (GR), July 6--10.\\
-Algorithmic and Combinatorial Geometry in Budapest, June 15--19.\\
-6th Japanese-Hungarian Symposium on Discrete Mathematics and Its Applications in Budapest, May 16--19.\\
-3\`eme cycle romand de Recherche Op\'erationnelle in Zinal, January 18--22.\\

{\bf 2008}\\
\noindent
-16th International Symposium on Graph Drawing (GD 2008) in Heraklion, September 21--24.\\
-Building Bridges (L. Lovász 60), Fete of Combinatorics and Computer Science in Budapest and in Keszthely, Lake Balaton, August 5 -- 15.\\
-Bristol Summer School on Probabilistic Techniques in Computer Science, July 6--11.\\
-International Conference on Interdisciplinary Mathematical and Statistical Techniques
(IMST 2008 / FIM XVI) at University of Memphis, May 15--18.\\
-Expanders in Pure and Applied Mathematics at IPAM, UCLA, February 11--15.\\

{\bf 2007}\\
\noindent
-HSN Spring Workshop in Balatonkenese, May 31--June 1.\\
-Extremal Combinatorics Workshop at Rényi Institute of Mathematics in Budapest, June 4--8.\\
-Advanced Course on Analytic and Probabilistic Techniques in Combinatorics at Centre de Recerca Matem\`atica in Bar\-ce\-lo\-na, January 15--26.\\

{\bf 2006}\\
\noindent
-Final Combstru Workshop at Universitat Polit\'ecnica de Catalunya in Bar\-ce\-lo\-na, September 26--28.\\
-14th International Symposium on Graph Drawing (GD 2006) in Karlsruhe, September 18--20.\\ 
-Horizon of Combinatorics, EMS Summer School and Conference at Rényi Institute of Mathematics in Budapest and in Balatonalmádi, July 10--22.\\
-HSN Spring Workshop in Balatonkenese, May 23--24.\\

{\bf 2005}\\
\noindent
-European Conference on Combinatorics, Graph Theory, and Applications (EuroComb 2005) at Technische Universität in Berlin, September 5--9.\\

{\bf Summer 2004    RIPS, UCLA}\\
\indent
I participated in an REU (Research \& Education for Undergraduates) and I was assigned to a project sponsored by BioDiscovery. Our goal was to create an algorithm that would reveal which genes are responsible for cancer from an input database.\\
	
{\bf Spring 2002	    Hungarian-Dutch Exchange Program, Eindhoven}\\
\indent
The program was sponsored by Philips, and we were asked to work on various projects.

\end{document}